\title{Riemann-Hilbert correspondence for unit $F$-crystals on embeddable algebraic varieties}
\author{Sachio OHKAWA} 
\date{}
\begin{document}
\maketitle

\begin{abstract}
For a separated scheme $X$ of finite type over a perfect field $k$ of characteristic $p>0$ which admits an immersion into a proper smooth scheme over the truncated Witt ring $W_{n}$, we define the bounded derived category of locally finitely generated unit $F$-crystals with finite Tor-dimension on $X$ over $W_{n}$, independently of the choice of the immersion. Then we prove the anti-equivalence of this category with the bounded derived category of constructible \'etale sheaves of ${\mathbb Z}/{p^{n}{\mathbb Z}}$-modules with finite Tor-dimension. We also discuss the relationship of $t$-structures on these derived categories when $n=1$. 
\end{abstract}

\theoremstyle{plain}
\newtheorem{theo}{Theorem}[section]
\newtheorem{defi}[theo]{Definition}
\newtheorem{lemm}[theo]{Lemma}
\newtheorem{prop}[theo]{Proposition}
\newtheorem{coro}[theo]{Corollary}
\newtheorem{Prod}[theo]{Proposition-Definition}

\theoremstyle{definition}
\newtheorem{rema}[theo]{Remark}
\newtheorem{ex}[theo]{Example}

\renewcommand*\proofname{\upshape{\bfseries{Proof}}}

\section{Introduction}

For a complex manifold $X$,
Kashiwara \cite{Kas1}  and Mebkhout \cite{Me1} independently established an anti-equivalence, which is called the Riemann-Hilbert correspondence, between the triangulated category $D^{b}_{\rm rh}({\mathcal D}_{X})$ of ${\mathcal D}_{X}$-modules with regular holonomic cohomologies and that $D_{\rm c}^{b}(X, {\mathbb C})$ of sheaves of ${\mathbb C}$-vector spaces on $X$  with constructible cohomologies.
There is a significant property from the point of view of relative cohomology theories that this anti-equivalence respects Grothendieck's six operations 
$f^{!}$, $f_{!}$, $f^{*}$, $f_{*}$, ${\mathbb R}\underline{\rm Hom}$ and $\otimes^{\mathbb L}$ defined on $D^{b}_{\rm rh}({\mathcal D}_{X})$ and $D_{\rm c}^{b}(X, {\mathbb C})$.

In \cite{EK}, Emerton and Kisin studied a positive characteristic analogue of the Riemann-Hilbert correspondence.
Let $k$ be a perfect field of characteristic $p>0$.
We denote by $W_{n}:=W_{n}(k)$ the ring of Witt vectors of length $n$. 
For a smooth scheme $X$ over $W_{n}$,
Emerton and Kisin defined the sheaf ${\mathcal D}_{F, X}$ of ${\mathcal O}_{X}$-algebras by adjoining to ${\mathcal O}_{X}$ the differential operators of all orders on $X$ and a \lq\lq local lift of Frobenius''.
By using ${\mathcal D}_{F, X}$, they introduced the triangulated category $D^{b}_{\rm lfgu}({\mathcal D}_{F, X})$ of ${\mathcal D}_{X}$-modules with Frobenius structures with  locally finitely generated unit cohomologies  and proved the anti-equivalence
\begin{equation*}
D^{b}_{\rm lfgu}({\mathcal D}_{F, X})^{\circ}\xrightarrow{\cong} D^{b}_{\rm ctf}(X_{\rm \acute{e}t}, {\mathbb Z}/{p^{n}{\mathbb Z}})
\end{equation*}
 between the subcategory $D^{b}_{\rm lfgu}({\mathcal D}_{F, X})^{\circ}$ of $D^{b}_{\rm lfgu}({\mathcal D}_{F, X})$ consisting of complexes of finite Tor dimension over ${\mathcal O}_{X}$ and
the triangulated category $D^{b}_{\rm ctf}(X_{\rm \acute{e}t}, {\mathbb Z}/{p^{n}{\mathbb Z}})$ of  \'etale sheaves of ${\mathbb Z}/{p^{n}{\mathbb Z}}$-modules with constructible cohomologies  and of finite Tor dimension over ${\mathbb Z}/{p^{n}{\mathbb Z}}$, which they call the Riemann-Hilbert correspondence for unit $F$-crystals.
They also introduced three of  Grothendieck's six operations, which are the direct image $f_{+}$, the inverse image $f^{!}$ and the tensor product $\otimes^{\mathbb L}$ on $D^{b}_{\rm lfgu}({\mathcal D}_{F, X})$, and proved that their Riemann-Hilbert correspondence 
exchanges these to $f_{!}$, $f^{-1}$ and $\otimes^{\mathbb L}$ on $D^{b}_{\rm ctf}(X_{\rm \acute{e}t}, {\mathbb Z}/{p^{n}{\mathbb Z}})$.
\bigskip

Emerton and Kisin established the Riemann-Hilbert correspondence for unit $F$-crystals only for smooth schemes $X$ over $W_{n}$. 
Since the triangulated category $D^{b}_{\rm ctf}(X_{\rm \acute{e}t}, {\mathbb Z}/{p^{n}{\mathbb Z}})$ depends only on the mod $p$ reduction of $X$,
it is natural to expect that there exists a definition of the triangulated category $D^{b}_{\rm lfgu}({\mathcal D}_{F, X})^{\circ}$ and the Riemann-Hilbert correspondence depending only on the mod $p$ reduction of $X$.
Also, there should be the Riemann-Hilbert correspondence for algebraic varieties over $k$ which are not smoothly liftable to $W_{n}$.
The purpose of this article is to generalize  the Emerton-Kisin theory to the case of $W_{n}$-embeddable algebraic varieties over $k$.
Here 
we say a separated  $k$-scheme $X$ of finite type is $W_{n}$-embeddable if there exists a proper smooth $W_{n}$-scheme $P$ and an immersion $X\hookrightarrow P$ such that the diagram 
\begin{equation}\label{d3}
\vcenter{
\xymatrix{
{X} \ar[d] \ar@{^{(}-_>}[r] & P \ar[d] \\
{\rm Spec} k \ar[r] & {\rm Spec} W_{n} }
}
\end{equation}
is commutative.
A quasi projective variety over $k$ is a typical example of $W_{n}$-embeddable variety and thus 
$W_{n}$-embeddable varieties form a sufficiently wide class of algebraic varieties in some sense.
\bigskip

The first problem is to define a reasonable $\mathcal D$-module category for $W_{n}$-embeddable algebraic varieties over $k$.
Our construction is based on Kashiwara's theorem which roughly asserts that, for any closed immersion $X\hookrightarrow P$ of smooth algebraic varieties, the category of $\mathcal D$-modules on $P$ supported on $X$ is naturally equivalent to the category of $\mathcal D$-modules on $X$.
Using the characteristic $p>0$ analogue
of Kashiwara's theorem due to Emerton-Kisin \cite[Proposition 15.5.3]{EK}, we show that, when we are given the diagram (\ref{d3}), 
the full triangulated subcategory of $D^{b}_{\rm lfgu}({\mathcal D}_{F, P})^{\circ}$ consisting of complexes supported on $X$ does not depend on the choice of immersion $X\hookrightarrow P$ (Corollary \ref{c3}).
We denote this full subcategory by $D^{b}_{\rm lfgu}(X/W_{n})^{\circ}$.
Then we show the Riemann-Hilbert correspondence
\begin{equation*}
D^{b}_{\rm lfgu}(X/W_{n})^{\circ}\xrightarrow{\cong}D^{b}_{\rm ctf}(X_{\rm \acute{e}t}, {\mathbb Z}/{p^{n}{\mathbb Z}})
\end{equation*}
for any $W_{n}$-embeddable $k$-scheme $X$ (Theorem \ref{t5}).
As in the case of \cite{EK}, we can naturally introduce three of  Grothendieck's six operations, that is, direct and inverse images and tensor products.
We then prove that the Riemann-Hilbert correspondence respects these operations (Theorem \ref{t6}).
A striking consequence of the Riemann-Hilbert correspondence over complex numbers is that one can introduce an exotic $t$-structure on the topological side called the perverse $t$-structure, which corresponds to the standard $t$-structure on the $\mathcal D$-module side.
For an algebraic variety $X$ over $k$, Gabber introduced in \cite{Ga}  the perverse $t$-structure on $D^{b}_{\rm c}(X_{\rm \acute{e}t}, {\mathbb Z}/{p{\mathbb Z}})$, which we call Gabber's perverse $t$-structure.
In the case when $X$ is smooth over $k$, Emerton and Kisin showed that 
the standard $t$-structure on the $\mathcal D$-module side corresponds to Gabber's perverse $t$-structure.
In this paper, we generalize it to the case of $k$-embeddable $k$-schemes.
In the complex situation, conversely, a $t$-structure on the $\mathcal D$-module side corresponding to the standard $t$-structure on the topological side is explicitly described by Kashiwara in \cite{Kas2}. 
In this paper, we construct the analogue of Kashiwara's $t$-structure on $D^{b}_{\rm lfgu}(X/k)$ and discuss the relationship of it and the standard $t$-structure on $D^{b}_{\rm c}(X_{\rm \acute{e}t}, {\mathbb Z}/{p{\mathbb Z}})$.

\bigskip
The content of each section is as follows:
In the second section, we recall several notions, terminologies and cohomological operations on ${\mathcal D}_{F, P}$-modules from \cite{EK} which we often use in this paper.
We also recall the statement of the Riemann-Hilbert correspondence for unit $F$-crystals of Emerton-Kisin (Theorem \ref{t4}).
In the third section, we define the local cohomology functor ${\mathbb R}{\Gamma}_{Z}$ for ${\mathcal D}_{F, P}$-modules and prove compatibilities with ${\mathbb R}{\Gamma}_{Z}$ and other operations for ${\mathcal D}_{F, P}$-modules, which are essential tools to define and study the triangulated category $D^{b}_{\rm lfgu}(X/W_{n})^{\circ}$ for any $W_{n}$-embeddable $k$-scheme $X$.
In subsection \ref{s41}, we introduce the category $D^{b}_{\rm lfgu}(X/W_{n})^{\circ}$ for any $W_{n}$-embeddable $k$-scheme $X$ and
in subsection \ref{s42}, we construct three of Grothendieck's six operations on $D^{b}_{\rm lfgu}(X/W_{n})^{\circ}$.
Our arguments in these subsections are heavily inspired by that of Caro in \cite{Ca}.
In subsection \ref{s43}, we prove the Riemann-Hilbert correspondence for unit $F$-crystals on $W_{n}$-embeddable $k$-schemes, which is our main result.
In the fifth section, we discuss several properties on $D^{b}_{\rm lfgu}(X/k)$ (in the case $n=1$) related to $t$-structures.
In subsection \ref{s51}, we introduce the standard $t$-structure on $D^{b}_{\rm lfgu}(X/k)$ depending on the choice of the immersion $X\hookrightarrow P$. 
We prove that the standard $t$-structure corresponds to Gabber's perverse $t$-structure via the Riemann-Hilbert
correspondence.
As a consequence, we know that the definition of the standard $t$-structure is independent of the choice of $X\hookrightarrow P$ (Theorem \ref{t11}).
In subsection \ref{s52}, we define the abelian category $\mu_{{\rm lfgu}, X}$ as the heart of the standard $t$-structure on $D^{b}_{\rm lfgu}(X/k)$,
and prove that the natural functor $D^{b}(\mu_{{\rm lfgu}, X})\to D^{b}_{\rm lfgu}(X/k)$ is an equivalence of triangulated categories (Theorem \ref{b1}), which can be regarded as an analogue of Beilinson's theorem.
In subsection \ref{s53}, depending on the choice of the immersion $X\hookrightarrow P$, we introduce the constructible $t$-structure on $D^{b}_{\rm lfgu}(X/k)$ by following the arguments in \cite{Kas2} and prove that it corresponds to the standard $t$-structure on the \'etale side via the Riemann-Hilbert correspondence.
As a consequence, we see that the constructible $t$-structure does not depend on the choice of $X\hookrightarrow P$ (Corollary \ref{c5}).

\bigskip
After writing the first version of this article, the author learned that, in \cite{S} Schedlmeier also studied on closely related subjects independently  in terms of the notion of Cartier crystals.
In particular, he proved that, for a separated $F$-finite $k$-scheme $X$ which admits a closed immersion into a smooth (but not necessarily proper) $k$-scheme, there exists an equivalence of triangulated categories between the bounded derived category of Cartier crystals on $X$ and $D^{b}_{\rm c}(X_{\rm \acute{e}t}, {\mathbb Z}/{p{\mathbb Z}})$.

\section*{Acknowledgments} 
This article is the doctor thesis of the author in the university of Tokyo.
The author would like to express his profound gratitude to his supervisor Atsushi Shiho for valuable suggestions, discussions and the careful reading of this paper.
He would like to thank Kohei Yahiro for useful discussions and  Manuel Blickle for informing me a result of Schedlmeier in \cite{S}.
He would also like to thank the referee for reading the paper in detail, pointing out mistakes, and giving valuable comments.
This work was supported by the Program for Leading Graduate 
Schools, MEXT, Japan, the Grant-in-Aid for Scientific Research (KAKENHI No. 26-9259),  and the Grant-in-Aid for JSPS fellows.

\subsection*{Conventions}\label{con}

Throughout this paper, we fix a prime number $p$ and a perfect base field $k$ of characteristic $p$.
We denote by $W$ the ring of Witt vectors associated to $k$ and by $W_{n}$ the quotient ring $W/(p)^{n}$ for any natural number $n$.
For a scheme $X$, we denote the structure sheaf of $X$ by ${\mathcal O}_{X}$.
For a smooth $W_n$-scheme $X$, the dimension of $X$ is a continuous integer valued function on $X$ defined by
$$d_{X}: x\in X\mapsto \textit{dimension of the component of $X$ containing }x.$$
For a morphism $f: X\to Y$ of smooth $W_n$-scheme, we denote a function $d_{X}-d_{Y}\circ f$ by $d_{X/Y}$ and a function $-d_{X/Y}$ by $d_{Y/X}$.
For an abelian category $\mathcal C$,
we denote by $D({\mathcal C})$ the derived category of $\mathcal C$.
For a scheme $X$ and an ${\mathcal O}_{X}$-algebra ${\mathcal A}$,
we denote by $D({\mathcal A})$ the derived category of left ${\mathcal A}$-modules and 
by $D_{\rm qc}({\mathcal A})$ the full triangulated subcategory of $D({\mathcal A})$ consisting of complexes whose cohomology sheaves are quasi-coherent as ${\mathcal O}_{X}$-modules.
For ${\mathcal A}$-modules ${\mathcal F}$ and ${\mathcal G}$,
we denote by $\underline{\rm Hom}_{{\mathcal A}}({\mathcal F}, {\mathcal G})$ the sheaf of ${\mathcal A}$-linear homomorphism from ${\mathcal F}$ to ${\mathcal G}$. 
We denote a complex by a single letter such as $\mathcal M$
and by ${\mathcal M}^{n}$ the $n$-th term of ${\mathcal M}$.
For an object $\mathcal M$ in $D({\mathcal A})$, we denote
by ${\rm H}^{i}({\mathcal M})$ the $i$-th cohomology of $\mathcal M$ and by ${\rm Supp} {\mathcal M}$
the support of $\mathcal M$, which is defined as the closure of $\bigcup_{i} {\rm Supp}{\rm H}^{i}({\mathcal M})$. 

\section{Preliminaries}
In this section we recall the notion of locally finitely generated unit ${\mathcal D}_{F, X}$-modules
introduced in \cite{EK} and the Riemann-Hilbert correspondence for unit $F$-crystals in \cite{EK}.

\subsection{Locally finitely generated unit ${\mathcal D}_{F, X}$-modules}

For a smooth $W_{n}$-scheme $X$,
we denote by ${\mathcal D}_{X}$ the sheaf of differential operators of $X$ over $W_{n}$ defined in \cite[\S16]{EGA4}.
For a morphism of smooth $W_{n}$-schemes $f: X\to Y$ and a left ${\mathcal D}_{Y}$-module $\mathcal M$, 
$f^{*}{\mathcal M}:={\mathcal O}_{X}\otimes_{f^{-1}{\mathcal O}_{Y}} f^{-1}{\mathcal M}$ has a natural structure of left ${\mathcal D}_{X}$-modules. 
When there exists a lifting $F: X\to X$ of the absolute Frobenius on $X\otimes_{W_n} k$, the left ${\mathcal D}_{X}$-module structure on $F^{*}{\mathcal M}$ is known to be independent of the choice of the lifting $F$ up to canonical isomorphism by \cite[Proposition 13.2.1]{EK}.
Since the lifting $F$ above always exists Zariski locally on $X$, we obtain a functor
\begin{equation*}
F^{*}: (\textit{left ${\mathcal D}_{X}$-module})\to (\textit{left ${\mathcal D}_{X}$-module})
\end{equation*}
by glueing for any smooth $W_{n}$-scheme $X$.
We set 
\begin{equation*}
{\mathcal D}_{F, X}:=\bigoplus_{r\geq 0} (F^{*})^{r} {\mathcal D}_{X}.
\end{equation*}
Then ${\mathcal D}_{F, X}$ naturally forms a sheaf of associative $W_{n}$-algebras such that the natural embedding 
${\mathcal D}_{X}\to {\mathcal D}_{F, X}$ is a $W_{n}$-algebra homomorphism by \cite[Corollary 13.3.5]{EK}.
It is proved in \cite[Proposition 13.3.7]{EK} that giving a left ${\mathcal D}_{F, X}$-module ${\mathcal M}$ is equivalent to giving a ${\mathcal D}_{X}$-module ${\mathcal M}$ together with a morphism $\psi_{\mathcal M}: F^{*}{\mathcal M}\to {\mathcal M}$ of left ${\mathcal D}_{X}$-modules, which
we call the structural morphism of $\mathcal M$.

Next let us recall the notion of locally finitely generated unit ${\mathcal D}_{F, X}$-modules.
We say that a left ${\mathcal D}_{F, X}$-module ${\mathcal M}$ is unit if it is quasi-coherent as an ${\mathcal O}_{X}$-module 
and the structural morphism $\psi_{\mathcal M}: F^{*}{\mathcal M}\to {\mathcal M}$ is an isomorphism.
We say that a ${\mathcal D}_{F, X}$-module ${\mathcal M}$ is locally finitely generated unit if it is unit and Zariski locally on $X$, there exists a coherent ${\mathcal O}_{X}$-submodule $M\subset {\mathcal M}$ such that the natural morphism ${\mathcal D}_{F, X}\otimes_{{\mathcal O}_{X}}{M}\to {\mathcal M}$ is surjective.
Then the locally finitely generated unit left ${\mathcal D}_{F, X}$-modules form a thick subcategory of the category of quasi-coherent left ${\mathcal D}_{F, X}$-modules \cite[Proposition 15.3.4]{EK}.
We say that a locally finitely generated unit ${\mathcal D}_{F, X}$-module ${\mathcal M}$ is an $F$-crystal if ${\mathcal M}$ is locally free of finite rank as an ${\mathcal O}_{X}$-module.

Finally we introduce some notations on triangulated categories.
We denote by $D({\mathcal D}_{F, X})$ the derived category of the abelian category of left ${\mathcal D}_{F, X}$-modules and by
$D_{\rm qc}({\mathcal D}_{F, X})$ (resp. $D_{\rm lfgu}({\mathcal D}_{F, X})$) the full triangulated subcategory of $D({\mathcal D}_{F, X})$
consisting of those complexes whose cohomology sheaves are quasi-coherent as ${\mathcal O}_{X}$-modules (resp. are locally finitely generated unit left ${\mathcal D}_{F, X}$-modules).
If $\bullet$ is one of $\emptyset$, $-$, $+$, $b$, we denote by  $D^{\bullet}({\mathcal D}_{F, X})$
the full triangulated subcategories of $D({\mathcal D}_{F, X})$ consisting of those complexes satisfying the appropriate boundedness condition.
We use the notations  $D^{\bullet}_{\rm qc}({\mathcal D}_{F, X})$ and $D^{\bullet}_{\rm lfgu}({\mathcal D}_{F, X})$ in a similar manner.
We denote by $D^{b}_{\rm lfgu}({\mathcal D}_{F, X})^{\circ}$ the full triangulated subcategory of $D^{b}_{\rm lfgu}({\mathcal D}_{F, X})$ consisting of those complexes which are of finite Tor dimension as ${\mathcal O}_{X}$-modules.

\subsection{Cohomological operations for left ${\mathcal D}_{F, X}$-modules}

For a morphism $f: X\to Y$ of smooth $W_{n}$-schemes,
 $f^{*}{\mathcal D}_{F, Y}:={\mathcal O}_{X}\otimes_{f^{-1}{\mathcal O}_{Y}}f^{-1}{\mathcal D}_{F, Y}$ has a natural structure of left ${\mathcal D}_{F, X}$-module by \cite[Corollary 14.2.2]{EK}.
It also forms a right $f^{-1}{\mathcal D}_{F, Y}$-module via the right multiplication on $f^{-1}{\mathcal D}_{F, Y}$.
So $f^{*}{\mathcal D}_{F, Y}$ has a structure of $\left({\mathcal D}_{F, X}, f^{-1}{\mathcal D}_{F, Y}\right)$-bimodule, which we denote by ${\mathcal D}_{F, X\to Y}$.
For a ${\mathcal D}_{F, Y}$-module $\mathcal M$, we define  a left ${\mathcal D}_{F, X}$-module $f^{*}{\mathcal M}$ by ${\mathcal D}_{F, X\to Y}\otimes_{f^{-1}{\mathcal D}_{F, Y}}f^{-1} {\mathcal M}$.
Note that $f^{*}{\mathcal M}\cong {\mathcal O}_{X}\otimes_{f^{-1}{\mathcal O}_{Y}}f^{-1}{\mathcal M}$ as an ${\mathcal O}_{X}$-module.
We then define a functor 
\begin{equation*}
{\mathbb L}f^{*}: D^{-}({\mathcal D}_{F, Y})\to D^{-}({\mathcal D}_{F, X})
\end{equation*}
to be the left derived functor of $f^{*}$.
One has ${\mathbb L}f^{*}{\mathcal M}\cong {\mathcal O}_{X}\otimes^{\mathbb L}_{f^{-1}{\mathcal O}_{Y}}{\mathcal M}$ as a complex of ${\mathcal O}_{X}$-modules.
We also define a functor
\begin{equation*}
f^{!}: D^{-}({\mathcal D}_{F, Y})\to D^{-}({\mathcal D}_{F, X})
\end{equation*}
by $f^{!}{\mathcal M}:={\mathbb L}f^{*}{\mathcal M}[d_{X/Y}]$.
For the definition of the shift functor $(-)[d_{X/Y}]$ by the function $d_{X/Y}$, we refer the reader to \cite[\S0]{EK}.
The second inverse image functor is appropriate to formulate the compatibility with the Riemann-Hilbert correspondence (see Theorem \ref{t4} (2) bellow).
Let $\star$ be one of $\rm qc$ or $\rm lfgu$ and $\ast$ one of $\circ$ or $\emptyset$.
Then the functor $f^{!}$ restricts to a functor
\begin{equation*}
f^{!}: D^{b}_{\star}({\mathcal D}_{F, Y})^{\ast}\to D^{b}_{\star}({\mathcal D}_{F, X})^{\ast}
\end{equation*} 
by \cite[Proposition 14.2.6 and Proposition 15.5.1]{EK}.

Next let us define the direct image functor $f_{+}$ for ${\mathcal D}_{F, X}$-modules
for a morphism $f: X\to Y$ of smooth $W_{n}$-schemes.
First of all, we recall the definition of the direct image functor
\begin{equation*}
f^{\mathcal B}_{+}: D^{-}({\mathcal D}_{X}) \to D^{-}({\mathcal D}_{Y})
\end{equation*} 
for ${{\mathcal D}_{X}}$-modules.
For a smooth $W_{n}$-scheme $Y$, we denote by $\omega_{Y}$ the canonical bundle of $Y$ over $W_{n}$.
Then ${\mathcal D}_{Y}\otimes_{{\mathcal O}_{Y}}{\omega_{Y}^{-1}}$ has two natural left ${\mathcal D}_{Y}$-module structures.
The first one is the tensor product of left ${\mathcal D}_{Y}$-modules ${\mathcal D}_{Y}$ and ${\omega_{Y}^{-1}}$ (cf. \cite[1.2.7.(b)]{B2}).
On the other hand, using the right ${\mathcal D}_{Y}$-module structure on ${\mathcal D}_{Y}$ defined by the multiplication of ${\mathcal D}_{Y}$ on the right, one has the second left ${\mathcal D}_{Y}$-module structure on ${\mathcal D}_{Y}\otimes_{{\mathcal O}_{Y}}{\omega_{Y}^{-1}}$ by \cite[1.2.7.(b)]{B2}.
So ${\mathcal D}_{Y}\otimes_{{\mathcal O}_{Y}}{\omega_{Y}^{-1}}$ naturally forms a left $({\mathcal D}_{Y}, {\mathcal D}_{Y})$-bimodule.
For a morphism $f: X\to Y$ of smooth $W_{n}$-schemes,
by pulling  ${\mathcal D}_{Y}\otimes_{{\mathcal O}_{Y}}{\omega_{Y}^{-1}}$ back with respect to the second ${\mathcal D}_{Y}$-module structure, one has a left $(f^{-1}{\mathcal D}_{Y}, {\mathcal D}_{X})$-module $f^{*}_{\rm d}\left({\mathcal D}_{Y}\otimes_{{\mathcal O}_{Y}}{\omega_{Y}}^{-1}\right)$.
Here, in order to avoid confusion we use the notation $f^{*}_{\rm d}$ instead of $f^{*}$.
By tensoring ${\omega}_{X}$ on the right, one obtains an $(f^{-1}{\mathcal D}_{Y}, {\mathcal D}_{X})$-bimodule
\begin{equation*}
{\mathcal D}_{Y\leftarrow X}:=f_{\rm d}^{*}\left({\mathcal D}_{Y}\otimes_{{\mathcal O}_{Y}}{\omega}_{Y}^{-1}\right)\otimes_{{\mathcal O}_{X}}{\omega}_{X}.
\end{equation*}
On the other hand, one has a left $({\mathcal D}_{X}, f^{-1}{\mathcal D}_{Y})$-module $f^{*}_{\rm g}\left({\mathcal D}_{Y}\otimes_{{\mathcal O}_{Y}}{\omega_{Y}^{-1}}\right)$
by pulling back ${\mathcal D}_{Y}\otimes_{{\mathcal O}_{Y}}{\omega_{Y}^{-1}}$ with respect to the first ${\mathcal D}_{Y}$-module structure.
By tensoring $\omega_{X}$ on the left, we obtain 
 an $(f^{-1}{\mathcal D}_{Y}, {\mathcal D}_{X})$-bimodule
\begin{equation*}
{{\mathcal D}_{Y\leftarrow X}}':={\omega}_{X}\otimes_{{\mathcal O}_{X}}f_{\rm g}^{*}\left({\mathcal D}_{Y}\otimes_{{\mathcal O}_{Y}}{\omega}_{Y}^{-1}\right).
\end{equation*}
Then there exists the natural isomorphism of $(f^{-1}{\mathcal D}_{Y}, {\mathcal D}_{X})$-bimodules
\begin{equation*}
{{\mathcal D}_{Y\leftarrow X}}\xrightarrow{\cong}{{\mathcal D}_{Y\leftarrow X}}'.
\end{equation*}
For more details see \cite[3.4.1]{B2}.
We define a functor $f^{\mathcal B}_{+}: D^{-}({\mathcal D}_{X}) \to D^{-}({\mathcal D}_{Y})$ by
\begin{equation*}
f^{\mathcal B}_{+}{\mathcal M}:={\mathbb R}f_{*}\left({\mathcal D}_{Y\leftarrow X}\otimes_{{\mathcal D}_{X}}^{\mathbb L}{\mathcal M}\right).
\end{equation*}

Let us go back to the situation of ${\mathcal D}_{F, X}$-modules.
We define ${\mathcal D}_{F, Y\leftarrow X}$ by
$${\mathcal D}_{F, Y\leftarrow X}:={\omega}_{X}\otimes_{{\mathcal O}_{X}}f^{*}\left({\mathcal D}_{F, Y}\otimes_{{\mathcal O}_{Y}}{\omega}_{Y}^{-1}\right).$$
Then ${\mathcal D}_{F, Y\leftarrow X}$ has a natural $(f^{-1}{\mathcal D}_{F, Y}, {\mathcal D}_{F, X})$-bimodule structure (see \cite[\S14.3]{EK}).
We remark that ${\mathcal D}_{F, Y\leftarrow X}$ has finite Tor dimension as a right ${\mathcal D}_{F, X}$-module by \cite[Proposition 14.3.5]{EK} and, since $Y$ is a noetherian topological space, ${\mathbb R}f_{*}$ has finite cohomological amplitude.
We define a functor
\begin{equation*}
f_{+}: D^{-}({\mathcal D}_{F, X}) \to D^{-}({\mathcal D}_{F, Y})
\end{equation*}
by 
\begin{equation*}
f_{+}{\mathcal M}:={\mathbb R}f_{*}\left({\mathcal D}_{F, Y\leftarrow X}\otimes_{{\mathcal D}_{F, X}}^{\mathbb L}{\mathcal M}\right).
\end{equation*}
Let $\star$ be one of $\rm qc$ or $\rm lfgu$ and $\ast$ one of $\circ$ or $\emptyset$.
The functor $f_{+}$ restricts to a functor
\begin{equation*}
f_{+}: D^{b}_{\star}({\mathcal D}_{F, X})^{\ast}\to D^{b}_{\star}({\mathcal D}_{F, Y})^{\ast}
\end{equation*} 
by \cite[Proposition 14.3.9 and Proposition 15.5.1]{EK}.

\begin{rema}\label{r2}
Let $f: X\to Y$ be a morphism of smooth $W_{n}$-schemes.
The natural inclusion of $({\mathcal D}_{Y}, {\mathcal D}_{Y})$-bimodules ${\mathcal D}_{Y}\to {\mathcal D}_{F, Y}$ induces an
inclusion of $(f^{-1}{\mathcal D}_{Y}, {\mathcal D}_{X})$-bimodules $\iota: {\mathcal D}_{Y\leftarrow X}\to {\mathcal D}_{F, Y\leftarrow X}$.
We then obtain morphisms in the derived category of $(f^{-1}{\mathcal D}_{Y}, {\mathcal D}_{F, X})$-bimodules
\begin{equation*}
{\mathcal D}_{Y\leftarrow X}\otimes^{\mathbb L}_{{\mathcal D}_{X}}{\mathcal D}_{F, X}\to {\mathcal D}_{Y\leftarrow X}\otimes_{{\mathcal D}_{X}}{\mathcal D}_{F, X}\xrightarrow{D_{1}\otimes D_{2}\mapsto \iota(D_{1})D_{2}} {\mathcal D}_{F, Y\leftarrow X}.
\end{equation*}
For an object $\mathcal M$ in $D^{-}({\mathcal D}_{F, X})$, applying the functor ${\mathbb R}f_{*}(-\otimes^{\mathbb L}_{{\mathcal D}_{F, X}}{\mathcal M})$ to the composite of the above morphisms,
we obtain a ${\mathcal D}_{Y}$-linear morphism 
\begin{equation*}
f^{\mathcal B}_{+}{\mathcal M}\to f_{+}{\mathcal M}.
\end{equation*}
It is proved in \cite[\S14.3.10]{EK} that the morphism $f^{\mathcal B}_{+}{\mathcal M}\to f_{+}{\mathcal M}$ is an isomorphism.
\end{rema}

Let $X$ be a smooth $W_{n}$-scheme.
Let $\mathcal M$ and ${\mathcal N}$ be ${\mathcal D}_{F, X}$-modules with structural morphisms $\psi_{\mathcal M}$ and $\psi_{\mathcal N}$.
Then ${\mathcal M}\otimes_{{\mathcal O}_{X}}{\mathcal N}$ has a natural structure of left ${\mathcal D}_{X}$-modules.
We define the structural morphism on ${\mathcal M}\otimes_{{\mathcal O}_{X}}{\mathcal N}$ to be the composite of ${\mathcal D}_{X}$-linear morphisms
\begin{equation*}
F^{*}\left({\mathcal M}\otimes_{{\mathcal O}_{X}}{\mathcal N}\right){\cong} F^{*}{\mathcal M}\otimes_{{\mathcal O}_{X}}F^{*}{\mathcal N}\xrightarrow{\psi_{\mathcal M}\otimes\psi_{\mathcal N}} {\mathcal M}\otimes_{{\mathcal O}_{X}}{\mathcal N}.
\end{equation*}
Here the first isomorphism follows from \cite[2.3.1]{B2} and its proof.
We thus obtain the ${\mathcal D}_{F, X}$-module structure on ${\mathcal M}\otimes_{{\mathcal O}_{X}}{\mathcal N}$ and define a bi-functor
\begin{equation*}
D^{-}({\mathcal D}_{F, X})\times D^{-}({\mathcal D}_{F, X})\to  D^{-}({\mathcal D}_{F, X})
\end{equation*}
by $({\mathcal M}, {\mathcal N})\mapsto {\mathcal M}\otimes^{\mathbb L}_{{\mathcal O}_{X}}{\mathcal N}$.
This functor restricts to a bi-functor 
\begin{equation*}
D^{b}_{\rm lfgu}({\mathcal D}_{F, X})\times D^{b}_{\rm lfgu}({\mathcal D}_{F, X})^{\circ}\to  D^{b}_{\rm lfgu}({\mathcal D}_{F, X})
\end{equation*}
by \cite[Proposition 15.5.1]{EK}.

\begin{prop}\label{p5}
Let $f: X\to Y$ be a morphism of smooth $W_{n}$-schemes.
If ${\mathcal M}$ and ${\mathcal N}$ are objects in $D^{-}({\mathcal D}_{F, Y})$, then there are natural isomorphisms
\begin{equation*}
{\mathbb L}f^{*}{\mathcal M}\otimes_{{\mathcal O}_{X}}^{\mathbb L}{\mathbb L}f^{*}{\mathcal N}\xrightarrow{\cong} 
{\mathbb L}f^{*}\left({\mathcal M}\otimes_{{\mathcal O}_{Y}}^{\mathbb L}{\mathcal N}\right)
\end{equation*}
and
\begin{equation*}
f^{!}{\mathcal M}\otimes_{{\mathcal O}_{X}}^{\mathbb L}f^{!}{\mathcal N}[d_{Y/X}]\xrightarrow{\cong}
f^{!}\left({\mathcal M}\otimes_{{\mathcal O}_{Y}}^{\mathbb L}{\mathcal N}\right).
\end{equation*}
\end{prop}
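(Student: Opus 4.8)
The plan is to construct the first isomorphism first as an isomorphism of complexes of left $\mathcal{D}_X$-modules, where it is essentially the projection formula, then to upgrade it to $D^{-}(\mathcal{D}_{F,X})$ by checking compatibility with the structural morphisms, and finally to read off the second isomorphism from the first. For the $\mathcal{D}_X$-level statement, recall that $\mathbb{L}f^{*}(-)\cong\mathcal{O}_X\otimes^{\mathbb L}_{f^{-1}\mathcal{O}_Y}f^{-1}(-)$ as complexes of $\mathcal{O}_X$-modules, so associativity and commutativity of the derived tensor product together with $\mathcal{O}_X\otimes^{\mathbb L}_{\mathcal{O}_X}\mathcal{O}_X\cong\mathcal{O}_X$ yield a natural isomorphism $\mathbb{L}f^{*}\mathcal{M}\otimes^{\mathbb L}_{\mathcal{O}_X}\mathbb{L}f^{*}\mathcal{N}\xrightarrow{\cong}\mathbb{L}f^{*}(\mathcal{M}\otimes^{\mathbb L}_{\mathcal{O}_Y}\mathcal{N})$ of $\mathcal{O}_X$-complexes. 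Resolving $\mathcal{M}$ and $\mathcal{N}$ by bounded-above complexes of $\mathcal{D}_{F,Y}$-modules of the form $\mathcal{D}_{F,Y}\otimes_{\mathcal{O}_Y}L$ with $L$ flat over $\mathcal{O}_Y$ — which are flat over $\mathcal{O}_Y$ and remain $\mathcal{O}$-flat after applying $f^{*}$ and $\otimes_{\mathcal{O}}$ — realizes this map as an honest morphism of complexes, and its $\mathcal{D}_X$-linearity is the projection formula for $\mathcal{D}$-modules, i.e. the compatibility of the $\mathcal{D}$-module structures on $f^{*}$ and on $\otimes_{\mathcal{O}}$ (see \cite[2.3.1]{B2} and the analogous statement for $f^{*}$), which can also be checked directly in local coordinates.

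It remains to check that this isomorphism commutes with the structural morphisms; by \cite[Proposition 13.3.7]{EK}, applied termwise to the resolutions above, this makes it a morphism in $D^{-}(\mathcal{D}_{F,X})$. By construction the structural morphism of $\mathbb{L}f^{*}\mathcal{M}\otimes^{\mathbb L}_{\mathcal{O}_X}\mathbb{L}f^{*}\mathcal{N}$ is the composite of the canonical isomorphism $F^{*}(\mathbb{L}f^{*}\mathcal{M}\otimes^{\mathbb L}_{\mathcal{O}_X}\mathbb{L}f^{*}\mathcal{N})\cong F^{*}\mathbb{L}f^{*}\mathcal{M}\otimes^{\mathbb L}_{\mathcal{O}_X}F^{*}\mathbb{L}f^{*}\mathcal{N}$ of \cite[2.3.1]{B2} with $\psi_{\mathbb{L}f^{*}\mathcal{M}}\otimes\psi_{\mathbb{L}f^{*}\mathcal{N}}$, and $\psi_{\mathbb{L}f^{*}\mathcal{M}}$ is itself the composite of the canonical isomorphism $F^{*}\mathbb{L}f^{*}\mathcal{M}\cong\mathbb{L}f^{*}F^{*}\mathcal{M}$ — which exists because $f$ composed with a Frobenius lift on $X$ and a Frobenius lift on $Y$ composed with $f$ reduce to the same morphism modulo $p$, so their $\mathcal{D}$-module pullbacks agree up to canonical isomorphism by \cite[Proposition 13.2.1]{EK} — with $\mathbb{L}f^{*}\psi_{\mathcal{M}}$; the structural morphism of the target is described the same way. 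Unwinding these descriptions, the desired commutativity reduces to a coherence statement comparing the monoidality isomorphism of $F^{*}$ for $\otimes_{\mathcal{O}}$, the monoidality isomorphism of $\mathbb{L}f^{*}$ for $\otimes_{\mathcal{O}}$ and the isomorphism $F^{*}\mathbb{L}f^{*}\cong\mathbb{L}f^{*}F^{*}$. This is the main obstacle. Since all the functors and isomorphisms involved are local, one may argue Zariski-locally on $X$ and $Y$, where a Frobenius lift on $Y$ exists and the pullback appearing in $\psi_{\mathbb{L}f^{*}\mathcal{M}}$ can be computed via a Frobenius lift on $X$; using the explicit description of the canonical isomorphisms of \cite[Proposition 13.2.1]{EK} in terms of divided powers of the coordinates, the coherence diagram becomes a routine identity between formulas in local coordinates.

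For the second isomorphism, since $f^{!}(-)=\mathbb{L}f^{*}(-)[d_{X/Y}]$, $d_{Y/X}=-d_{X/Y}$, and shifting by an integer-valued locally constant function is additive under $\otimes^{\mathbb L}_{\mathcal{O}_X}$, one has
\[
f^{!}\mathcal{M}\otimes^{\mathbb L}_{\mathcal{O}_X}f^{!}\mathcal{N}[d_{Y/X}]\;\cong\;\bigl(\mathbb{L}f^{*}\mathcal{M}\otimes^{\mathbb L}_{\mathcal{O}_X}\mathbb{L}f^{*}\mathcal{N}\bigr)[2d_{X/Y}+d_{Y/X}]\;=\;\bigl(\mathbb{L}f^{*}\mathcal{M}\otimes^{\mathbb L}_{\mathcal{O}_X}\mathbb{L}f^{*}\mathcal{N}\bigr)[d_{X/Y}],
\]
and applying the shift $[d_{X/Y}]$ to the first isomorphism identifies this with $\mathbb{L}f^{*}(\mathcal{M}\otimes^{\mathbb L}_{\mathcal{O}_Y}\mathcal{N})[d_{X/Y}]=f^{!}(\mathcal{M}\otimes^{\mathbb L}_{\mathcal{O}_Y}\mathcal{N})$. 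The Koszul signs produced by the shifts appear symmetrically on both sides and cause no trouble.
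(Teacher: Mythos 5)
Your proposal is correct and its skeleton is the same as the paper's: resolve $\mathcal M$ and $\mathcal N$ by bounded-above complexes of flat (hence $\mathcal O_Y$-flat) ${\mathcal D}_{F,Y}$-modules, observe that the tensor product of the resolutions is again such a resolution, produce the natural comparison map on these representatives, check that it is an isomorphism in $D(\mathcal O_X)$, and deduce the $f^!$ statement by the shift bookkeeping $2d_{X/Y}+d_{Y/X}=d_{X/Y}$ (which you carry out correctly). The one organizational difference is where the ${\mathcal D}_{F,X}$-linearity is established. The paper works with honest complexes of ${\mathcal D}_{F,Y}$-modules throughout, so the map $f^{*}{\mathcal P}\otimes_{{\mathcal O}_{X}}f^{*}{\mathcal Q}\to f^{*}({\mathcal P}\otimes_{{\mathcal O}_{Y}}{\mathcal Q})$ is ${\mathcal D}_{F,X}$-linear by the universal property of the tensor product applied in the category of ${\mathcal D}_{F,X}$-modules; the compatibility with the structural Frobenius morphisms is then a termwise verification on genuine modules, not a derived-category coherence problem. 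You instead build the isomorphism first in $D(\mathcal O_X)$ and then upgrade it in two stages (${\mathcal D}_X$-linearity, then Frobenius), which forces you into the coherence diagram relating the monoidality of $F^{*}$, the monoidality of ${\mathbb L}f^{*}$, and $F^{*}{\mathbb L}f^{*}\cong{\mathbb L}f^{*}F^{*}$ — the step you flag as ``the main obstacle'' and leave as a local-coordinate computation. That check is the same verification the paper's formulation absorbs into the universal property, so there is no real gap, but you would shorten your argument considerably (and discharge the deferred computation) by defining the comparison map as a morphism of complexes of ${\mathcal D}_{F,X}$-modules from the outset rather than promoting an $\mathcal O_X$-linear isomorphism afterwards.
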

\begin{proof}
The second isomorphism follows from the first one.
Let $\mathcal P\to M$ (resp. $\mathcal Q\to N$) be a resolution of $\mathcal M$ (resp. $\mathcal N$) by flat ${\mathcal D}_{F, Y}$-modules.
Note that $\mathcal P$ and $\mathcal Q$ are complexes of flat ${\mathcal O}_{Y}$-modules.
So ${\mathcal P}\otimes_{{\mathcal O}_{Y}}{\mathcal Q}\to {\mathcal M}\otimes_{{\mathcal O}_{Y}}^{\mathbb L}{\mathcal N}$ gives a resolution of ${\mathcal M}\otimes_{{\mathcal O}_{Y}}^{\mathbb L}{\mathcal N}$ by flat ${\mathcal D}_{F, Y}$-modules (cf. \cite[Lemma 4.1]{Ha}) and $f^{*}{\mathcal P}$ is a complex of flat ${\mathcal O}_{X}$-modules.
By the universal mapping property of the tensor product,
one has a natural ${\mathcal D}_{F, X}$-linear morphism $f^{*}{\mathcal P}\otimes_{{\mathcal O}_{X}} f^{*}{\mathcal Q}\to f^{*}\left({\mathcal P}\otimes_{{\mathcal O}_{Y}} {\mathcal Q}\right)$.
Evidently it is an isomorphism as a morphism in $D({\mathcal O}_{X})$ and hence 
it is the desired isomorphism.

\end{proof}

\subsection{Riemann-Hilbert correspondence for unit $F$-crystals}
Let $X$ be a smooth $W_{n}$-scheme.
We denote by $D^{b}(X_{\rm \acute{e}t}, {\mathbb Z}/{p^{n}{\mathbb Z}})$  the bounded derived category of complexes of 
${\mathbb Z}/{p^{n}{\mathbb Z}}$-modules on the \'etale site $X_{\rm \acute{e}t}$.
We let $D^{b}_{\rm ctf}(X_{\rm \acute{e}t}, {\mathbb Z}/{p^{n}{\mathbb Z}})$ denote the full triangulated subcategory of $D^{b}_{\rm ctf}(X_{\rm \acute{e}t}, {\mathbb Z}/{p^{n}{\mathbb Z}})$ consisting of complexes whose cohomology sheaves are constructible and which have finite Tor dimension over ${\mathbb Z}/{p^{n}{\mathbb Z}}$.

For a morphism $f: X\to Y$ of smooth $W_{n}$-schemes,
 the inverse image 
\begin{equation*}
f^{-1}: D^{b}_{\rm ctf}(Y_{\rm \acute{e}t}, {\mathbb Z}/{p^{n}{\mathbb Z}})\to D^{b}_{\rm ctf}(X_{\rm \acute{e}t}, {\mathbb Z}/{p^{n}{\mathbb Z}})
\end{equation*}
and the direct image with proper support 
\begin{equation*}
f_{!}: D^{b}_{\rm ctf}(X_{\rm \acute{e}t}, {\mathbb Z}/{p^{n}{\mathbb Z}})\to D^{b}_{\rm ctf}(Y_{\rm \acute{e}t}, {\mathbb Z}/{p^{n}{\mathbb Z}})
\end{equation*}
are defined.
For a review of constructions of these functors, we refer the reader to \cite[\S 8]{EK}.

Let $X$ be a smooth $W_{n}$-scheme. 
We denote by $\pi_{X}: X_{\rm \acute{e}t}\to X$  the natural morphism of sites,
where $X$ means the Zariski site of $X$.
Then ${\mathcal D}_{F, X_{\rm \acute{e}t}}:=\pi^{*}_{X}{\mathcal D}_{F, X}$ naturally forms a sheaf of associative $W_{n}$-algebras on $X_{\rm \acute{e}t}$.
By \'etale descent, we have an equivalence of triangulated categories (cf. \cite[\S7 and 16.1.1]{EK})
\begin{equation*}
\pi^{*}_{X}: D^{b}_{\rm qc}({\mathcal D}_{F, X})\to D^{b}_{\rm qc}({\mathcal D}_{F, X_{\rm \acute{e}t}})
\end{equation*} 
with quasi-inverse $\pi_{X*}$.
For ${\mathcal M}\in D^{b}_{\rm lfgu}({\mathcal D}_{F, X})^{\circ}$, we set
\begin{equation*}
{\rm Sol}_{X}({\mathcal M})={\mathbb R}\underline{\rm Hom}_{{\mathcal D}_{F, X_{\rm \acute{e}t}}}(\pi_{X}^{*}({\mathcal M}), {\mathcal O}_{X_{\rm \acute{e}t}})[d_{X}].
\end{equation*}
Then this correspondence defines a contravariant functor
\begin{equation*}
{\rm Sol}_{X}: D^{b}_{\rm lfgu}({\mathcal D}_{F, X})^{\circ}\to D^{b}_{\rm ctf}(X_{\rm \acute{e}t}, {\mathbb Z}/{p^{n}{\mathbb Z}})
\end{equation*}
by \cite[Proposition 16.1.7]{EK}.
Conversely, for ${\mathcal L}\in D^{b}_{\rm ctf}(X_{\rm \acute{e}t}, {\mathbb Z}/{p^{n}{\mathbb Z}})$, we set
\begin{equation*}
{\rm M}_{X}({\mathcal L})=\pi_{X*}{\mathbb R}\underline{\rm Hom}_{{\mathbb Z}/{p^{n}{\mathbb Z}}}({\mathcal L}, {\mathcal O}_{X_{\rm \acute{e}t}})[d_{X}].
\end{equation*}
Then this correspondence defines a contravariant functor
\begin{equation*}
{\rm M}_{X}: D^{b}_{\rm ctf}(X_{\rm \acute{e}t}, {\mathbb Z}/{p^{n}{\mathbb Z}})\to D^{+}({\mathcal D}_{F, X}).
\end{equation*}
Now we may state one of the main results in \cite{EK}.
\begin{theo}\label{t4}
For a smooth $W_{n}$-scheme $X$,
the functor ${\rm Sol}_{X}$ is an anti-equivalence of triangulated categories between $D^{b}_{\rm lfgu}({\mathcal D}_{F, X})^{\circ}$ and $D^{b}_{\rm ctf}(X_{\rm \acute{e}t}, {\mathbb Z}/{p^{n}{\mathbb Z}})$ with quasi-inverse ${\rm M}_{X}$.
Furthermore ${\rm Sol}_{X}$ and ${\rm M}_{X}$ satisfy the following properties:

{\rm (1)} If $f: X\to Y$ is a morphism of smooth $W_{n}$-schemes, then  ${\rm Sol}_{X}$ and ${\rm M}_{X}$ interchange $f^{!}$ and $f^{-1}$.

{\rm (2)} Let $f$ be a morphism of smooth $W_{n}$-schemes such that $f$ can be factored as $f=g\circ h$,
where $g$ is an immersion of smooth $W_{n}$-schemes and $h$ is a proper smooth morphism of smooth $W_{n}$-schemes.
Then ${\rm Sol}_{X}$ and ${\rm M}_{X}$ interchange $f_{+}$ and $f_{!}$.

{\rm (3)} ${\rm Sol}_{X}$ and ${\rm M}_{X}$ interchange the functors $\otimes^{\mathbb L}_{{\mathcal O}_{X}}$ and $\otimes^{\mathbb L}_{{\mathbb Z}/{p^{n}{\mathbb Z}}}$ up to shift.
More precisely, for objects $\mathcal M$ and $\mathcal N$ in $D^{b}_{\rm lfgu}({\mathcal D}_{F, X})^{\circ}$,
there exists a canonical isomorphism
$${\rm Sol}_{X}({\mathcal M})\otimes^{\mathbb L}_{{\mathbb Z}/{p^{n}{\mathbb Z}}} {\rm Sol}_{X}({\mathcal N})\xrightarrow{\cong} {\rm Sol}_{X}\left({\mathcal M}\otimes^{\mathbb L}_{{\mathcal O}_{X}}{\mathcal N} \right)[d_{X}].$$
\end{theo}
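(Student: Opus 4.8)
The plan is to follow the argument of Emerton and Kisin, for whom this is the principal theorem of \cite{EK}, and to reconstruct its architecture. The functors ${\rm Sol}_{X}$ and ${\rm M}_{X}$ being already at hand, the task is to prove, by a single intertwined induction, both that they are mutually quasi-inverse anti-equivalences and that they exchange the three operations. Everything rests on one local computation: on the \'etale site, ${\mathbb R}\underline{\rm Hom}_{{\mathcal D}_{F, X_{\rm \acute{e}t}}}({\mathcal O}_{X_{\rm \acute{e}t}}, {\mathcal O}_{X_{\rm \acute{e}t}})$ is the constant sheaf ${\mathbb Z}/{p^{n}{\mathbb Z}}$ placed in degree $0$. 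To see this one uses the Spencer resolution of ${\mathcal O}_{X}$ over ${\mathcal D}_{X}$, which identifies ${\mathbb R}\underline{\rm Hom}_{{\mathcal D}_{X}}({\mathcal O}_{X}, {\mathcal O}_{X})$ with the de Rham complex; the Frobenius acts by zero on its positive-degree cohomology (differentials of $p$-th powers vanish), so that imposing the unit ($F$-)structure — taking the fibre of an Artin--Schreier-type operator $F-1$ — annihilates all higher degrees and leaves in degree $0$ the kernel of $F-1$ on ${\mathcal O}_{X_{\rm \acute{e}t}}$, which is ${\mathbb Z}/{p^{n}{\mathbb Z}}$, while $F-1$ is \'etale-locally surjective (for general $n$ one invokes the Witt-vector form of Artin--Schreier). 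It follows that ${\rm Sol}_{X}$ and ${\rm M}_{X}$ restrict to quasi-inverse anti-equivalences between unit $F$-crystals locally free over ${\mathcal O}_{X}$ and locally constant constructible ${\mathbb Z}/{p^{n}{\mathbb Z}}$-sheaves of finite Tor dimension, since such a sheaf is \'etale-locally trivial.

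Next I would pass to the full derived categories by d\'evissage. The characteristic $p$ Kashiwara theorem \cite[Proposition 15.5.3]{EK} replaces a complex supported on a closed $W_{n}$-subscheme by a complex on that subscheme, matched on the \'etale side with $i_{*}=i_{!}$ for the associated closed immersion $i$; the standard triangle relating $j_{!}j^{*}$ and $i_{*}i^{*}$ for a complementary open $j$ then reduces an arbitrary constructible complex to locally constant sheaves on smooth strata, and truncation triangles reduce bounded complexes to single sheaves. The natural biduality morphisms ${\mathcal M}\to{\rm M}_{X}{\rm Sol}_{X}({\mathcal M})$ and ${\mathcal L}\to{\rm Sol}_{X}{\rm M}_{X}({\mathcal L})$ are then verified to be isomorphisms on these generators, which suffices since they generate; this establishes the anti-equivalence. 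Compatibility (1) is afterwards essentially bookkeeping: unwinding definitions, ${\rm Sol}_{X}(f^{!}{\mathcal M}) = {\mathbb R}\underline{\rm Hom}_{{\mathcal D}_{F, X_{\rm \acute{e}t}}}(\pi_{X}^{*}{\mathbb L}f^{*}{\mathcal M}, {\mathcal O}_{X_{\rm \acute{e}t}})[d_{X}-d_{X/Y}]$, and since $d_{X}-d_{X/Y}=d_{Y}\circ f$ this coincides with $f^{-1}{\rm Sol}_{Y}({\mathcal M})$ through the natural base-change morphism $f^{-1}{\mathbb R}\underline{\rm Hom}_{{\mathcal D}_{F, Y_{\rm \acute{e}t}}}({\mathcal N}, {\mathcal O}_{Y_{\rm \acute{e}t}}) \to {\mathbb R}\underline{\rm Hom}_{{\mathcal D}_{F, X_{\rm \acute{e}t}}}({\mathbb L}f^{*}{\mathcal N}, {\mathcal O}_{X_{\rm \acute{e}t}})$, which one checks to be an isomorphism on the generators above — this is where the finite Tor dimension over ${\mathcal O}_{X}$ enters — and hence everywhere.

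For (2) I would factor $f=g\circ h$ with $g$ an immersion and $h$ proper smooth, and reduce via the composition laws $f_{+}=g_{+}\circ h_{+}$ and $f_{!}=g_{!}\circ h_{!}$ to two cases. Writing the immersion $g$ as a closed immersion followed by an open immersion, the closed part is the Kashiwara case of the previous paragraph, while for an open immersion $j$ one uses $j_{+}={\mathbb R}j_{*}$ together with the fact that ${\rm Sol}$, being a duality, interchanges ${\mathbb R}j_{*}$ with $j_{!}$. For $h$ proper and smooth the genuine content is relative duality: the bimodule ${\mathcal D}_{F, Y\leftarrow X}=\omega_{X}\otimes_{{\mathcal O}_{X}}f^{*}({\mathcal D}_{F, Y}\otimes_{{\mathcal O}_{Y}}\omega_{Y}^{-1})$ encodes integration along the fibres (Serre duality twisted by $\omega_{X/Y}$), and one must match the resulting natural transformation with the trace map together with smooth and proper base change on the \'etale side; noetherianness of the spaces and finiteness of the cohomological amplitude of ${\mathbb R}h_{*}$ keep everything bounded. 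For (3), the finite Tor dimension of ${\mathcal M}$ and ${\mathcal N}$ over ${\mathcal O}_{X}$ makes ${\rm Sol}_{X}({\mathcal M})$ and ${\rm Sol}_{X}({\mathcal N})$ perfect, hence dualizable, over ${\mathbb Z}/{p^{n}{\mathbb Z}}$, and then ${\rm M}_{X}({\mathcal L}\otimes^{\mathbb L}_{{\mathbb Z}/{p^{n}{\mathbb Z}}}{\mathcal L}')\cong {\rm M}_{X}({\mathcal L})\otimes^{\mathbb L}_{{\mathcal O}_{X}}{\rm M}_{X}({\mathcal L}')[d_{X}]$ drops out of the projection-type isomorphism for ${\mathbb R}\underline{\rm Hom}_{{\mathbb Z}/{p^{n}{\mathbb Z}}}(-, {\mathcal O}_{X_{\rm \acute{e}t}})$; applying ${\rm Sol}_{X}$ and invoking Proposition \ref{p5} to absorb the pullbacks that appear yields the stated formula. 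I expect the main obstacle to be the self-referential nature of the whole scheme — the anti-equivalence and the three compatibilities must be proved together — with the hardest single technical point being the base-case ${\mathbb R}\underline{\rm Hom}$ computation and, within (2), the identification of the ${\mathcal D}_{F}$-theoretic duality bimodule with the \'etale trace map.
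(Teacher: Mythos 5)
You should first be aware that the paper does not prove Theorem \ref{t4} at all: it is a recalled result, and the entire ``proof'' in the text is the citation to \cite[Proposition 16.1.10 and Corollary 16.2.6]{EK}. So your attempt can only be judged against the argument in \cite{EK} itself, and as a reconstruction of that argument it is broadly faithful in architecture: the Artin--Schreier computation of ${\mathbb R}\underline{\rm Hom}_{{\mathcal D}_{F,X_{\rm \acute{e}t}}}({\mathcal O}_{X_{\rm \acute{e}t}},{\mathcal O}_{X_{\rm \acute{e}t}})$, the Katz-type correspondence between locally free unit $F$-crystals and lisse sheaves, the d\'evissage through Kashiwara's theorem and open/closed decompositions, and the trace-map/relative-duality identification for the proper smooth factor in (2) are indeed the load-bearing steps there, and your accounting of the shifts in (1) (that $d_X-d_{X/Y}=d_Y\circ f$ makes ${\rm Sol}_X f^{!}$ and $f^{-1}{\rm Sol}_Y$ line up) is correct.

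Two places where your sketch elides what actually makes the argument in \cite{EK} work. First, the organizing principle there is to reduce \emph{at the outset} to $n=1$ via the triangle ${\mathcal M}\otimes^{\mathbb L}_{{\mathbb Z}/p^n{\mathbb Z}}{\mathbb Z}/p{\mathbb Z}\to{\mathcal M}\to{\mathcal M}\otimes^{\mathbb L}_{{\mathbb Z}/p^n{\mathbb Z}}{\mathbb Z}/p^{n-1}{\mathbb Z}$ and then to ${\mathcal O}_{F,X\otimes k}$-modules (this is exactly what Remark 2.7 of the present paper records); your stratification d\'evissage as written runs on a smooth $W_n$-scheme, where smooth strata of the special fibre are not smooth $W_n$-subschemes and Kashiwara's theorem in the form of \cite[Proposition 15.5.3]{EK} does not directly apply, so the mod-$p$ reduction must come first, after which your Spencer/de Rham computation collapses to the bare Artin--Schreier sequence for ${\mathcal O}_{F,X}$. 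Second, the assertion that ${\rm Sol}$, ``being a duality, interchanges ${\mathbb R}j_*$ with $j_!$'' is not a formal fact; it has to be extracted from the adjunctions $(j_!,j^{-1})$ and $(j^{!},j_+)$ together with the already-established compatibility with $j^{!}$, and proving that the resulting unit and counit match is a genuine (if routine) step. Neither point invalidates your plan, but both are where a write-up following your outline would have to do real work.
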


\begin{proof}
See \cite[Proposition 16.1.10 and Corollary 16.2.6]{EK}.
\end{proof}

\subsection{Remark in the case $n=1$}\label{s2}
Let $X$ be a smooth $k$-scheme and assume that $n=1$ in this subsection.
Let ${\mathcal O}_{F, X}$ denote a sheaf of the non-commutative polynomial ring ${\mathcal O}_{X}[F]$ in a formal variable $F$, which satisfies the relation $Fa=a^{p}F$ for $a\in {\mathcal O}_{X}$.
One can naturally regard ${\mathcal O}_{F, X}$ as a subring of ${\mathcal D}_{F, X}$.
Giving an ${\mathcal O}_{F, X}$-module $\mathcal M$ is equivalent to giving an ${\mathcal O}_{X}$-module $\mathcal M$ with a structural morphism $F^{*}{\mathcal M}\to {\mathcal M}$, 
where $F$ denotes the absolute Frobenius on $X$.
We say an ${\mathcal O}_{F, X}$-module $\mathcal M$ is unit if it is quasi-coherent as an ${\mathcal O}_X$-module and the structural morphism $F^{*}{\mathcal M}\to {\mathcal M}$ is an isomorphism.
We say an ${\mathcal O}_{F, X}$-module $\mathcal M$ is locally finitely generated unit if it is unit and locally finitely generated as an ${\mathcal O}_{F, X}$-module.
Similar to the case of ${\mathcal D}_{F, X}$-modules, the locally finitely generated unit ${\mathcal O}_{F, X}$-modules form a thick subcategory of the category of quasi-coherent ${\mathcal O}_{F, X}$-modules.
So one can consider the bounded derived category $D^{b}_{\rm lfgu}({\mathcal O}_{F, X})$ of complexes of ${\mathcal O}_{F, X}$-modules whose cohomology sheaves are locally finitely generated unit.
Similar to the case of ${\mathcal D}_{F, X}$-modules, one can define the inverse and direct image functors for a morphism of smooth $k$-schemes (see \cite[\S2 and \S3]{EK}) and the derived tensor product on $D^{b}_{\rm lfgu}({\mathcal O}_{F, X})$.
Then Emerton and Kisin proved that the natural functor 
\begin{equation*}
D^{b}_{\rm lfgu}({\mathcal D}_{F, X})\to D^{b}_{\rm lfgu}({\mathcal O}_{F, X})
\end{equation*}
induces an equivalence of categories with quasi inverse ${\mathcal D}_{F, X}\otimes^{\mathbb L}_{{\mathcal O}_{F, X}}(-)$, which is compatible with the functors $f_{+}$, $f^{!}$ and $\otimes^{\mathbb L}_{{\mathcal O}_{X}}$, 
where $f$ is a morphism of smooth $k$-schemes \cite[Proposition 15.4.3]{EK}.
\begin{rema}
In \cite{EK}, Emerton and Kisin firstly established the theory of ${\mathcal O}_{F, X}$-modules for smooth $k$-schemes.
They proved many properties of ${\mathcal D}_{F, X}$-modules for smooth $W_{n}$-schemes including Theorem \ref{t4} by reducing them to the corresponding properties of ${\mathcal O}_{F, X\otimes_{W_{n}} k}$-modules.
\end{rema}


\section{Local cohomology functor}
Let $P$ be a smooth $W_{n}$-scheme.
Let $Z$ be a closed subset of $P$ and $j_{Z}$ the canonical  open immersion $P\setminus Z\hookrightarrow P$.
For a sheaf $\mathcal F$ of abelian groups on $P$, we set ${\Gamma}_{Z}{\mathcal F}:={\rm Ker}({\mathcal F}\to j_{Z*}j_{Z}^{-1}{\mathcal F})$.
If $\mathcal M$ is a left ${\mathcal D}_{F, P}$-module, then ${\Gamma}_{Z}{\mathcal M}$ naturally forms a left ${\mathcal D}_{F, P}$-module.
We have a left exact functor ${\Gamma}_{Z}$ from the category of left ${\mathcal D}_{F, P}$-modules to itself.
Then the local cohomology functor
\begin{equation*}
{\mathbb R}{\Gamma}_{Z}: D^{+}({\mathcal D}_{F, P})\to D^{+}({\mathcal D}_{F, P})
\end{equation*}
is defined to be the right derived functor of ${\Gamma}_{Z}$.
By definition, we have a distinguished triangle 
\begin{equation}\label{e5}
{\mathbb R}{\Gamma}_{Z}{\mathcal M}\to {\mathcal M}\to {\mathbb R}j_{Z*}j_{Z}^{-1}{\mathcal M}\xrightarrow{+}
\end{equation}
for ${\mathcal M}\in D^{+}({\mathcal D}_{F, P})$.
Note that ${\mathbb R}j_{Z*}=j_{Z+}$ and $j_{Z}^{-1}=j_{Z}^{!}$.
We can also define the local cohomology functor 
$${\mathbb R}{\Gamma}_{Z}: D^{+}({\mathcal O}_{P})\to D^{+}({\mathcal O}_{P})$$
on the level of ${\mathcal O}_{P}$-modules.
Then the forgetful functor $D^{+}({\mathcal D}_{F, P})\to D^{+}({\mathcal O}_{P})$ commutes with ${\mathbb R}{\Gamma}_{Z}$.
It is proved by Grothendieck that ${\mathbb R}{\Gamma}_{Z}$ has finite cohomological amplitude.


\begin{lemm}\label{l5}
Let $P$ be a smooth $W_{n}$-scheme and $Z$ a closed subset of $P$.
Denote by $j_{Z}$ the open immersion $P\setminus Z\hookrightarrow P$.
Then the following conditions are equivalent for ${\mathcal M}\in D({\mathcal O}_{P})$.
\begin{enumerate}
\item ${\mathbb R}{\Gamma}_{Z}{\mathcal M}\xrightarrow{\cong} {\mathcal M}$.
\item ${\mathbb R}j_{Z*}j_{Z}^{-1}{\mathcal M}$=0.
\item ${\rm Supp}{\mathcal M}$ is contained in $Z$.
\end{enumerate}
\end{lemm}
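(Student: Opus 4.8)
The plan is to derive all the equivalences from the distinguished triangle
\begin{equation*}
{\mathbb R}{\Gamma}_{Z}{\mathcal M}\to {\mathcal M}\to {\mathbb R}j_{Z*}j_{Z}^{-1}{\mathcal M}\xrightarrow{+},
\end{equation*}
namely the ${\mathcal O}_{P}$-module analogue of (\ref{e5}). Since ${\mathbb R}{\Gamma}_{Z}$ has finite cohomological amplitude (Grothendieck) and ${\mathbb R}j_{Z*}$ likewise has finite cohomological amplitude because $P\setminus Z$ is a noetherian topological space of finite Krull dimension, both functors, and hence this triangle, extend from $D^{+}({\mathcal O}_{P})$ to all of $D({\mathcal O}_{P})$, so it is legitimate to invoke it for an arbitrary ${\mathcal M}\in D({\mathcal O}_{P})$. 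The equivalence (1) $\Leftrightarrow$ (2) is then purely formal: in a triangulated category the first morphism of a distinguished triangle $A\to B\to C\xrightarrow{+}$ is an isomorphism if and only if $C\cong 0$, and applying this to the triangle above shows that ${\mathbb R}{\Gamma}_{Z}{\mathcal M}\to {\mathcal M}$ is an isomorphism precisely when ${\mathbb R}j_{Z*}j_{Z}^{-1}{\mathcal M}=0$.

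For (3) $\Rightarrow$ (2), I note that if ${\rm Supp}{\mathcal M}\subseteq Z$ then each ${\rm H}^{i}({\mathcal M})$ is supported in $Z$ and hence restricts to $0$ on the open subscheme $P\setminus Z$; since $j_{Z}^{-1}$ is exact this gives $j_{Z}^{-1}{\mathcal M}=0$ in $D({\mathcal O}_{P\setminus Z})$, whence ${\mathbb R}j_{Z*}j_{Z}^{-1}{\mathcal M}=0$. For the converse (2) $\Rightarrow$ (3), I would apply $j_{Z}^{-1}$ to the assumed vanishing and use the standard isomorphism $j_{Z}^{-1}{\mathbb R}j_{Z*}\cong {\rm id}$ for the open immersion $j_{Z}$ (valid because $j_{Z}^{-1}$ is exact and $j_{Z}^{-1}j_{Z*}={\rm id}$); this yields $j_{Z}^{-1}{\mathcal M}=0$, i.e.\ ${\rm H}^{i}({\mathcal M})$ vanishes on $P\setminus Z$ for every $i$, so that ${\rm Supp}{\rm H}^{i}({\mathcal M})\subseteq Z$ for all $i$, and since $Z$ is closed the closure ${\rm Supp}{\mathcal M}=\overline{\bigcup_{i}{\rm Supp}{\rm H}^{i}({\mathcal M})}$ is contained in $Z$ as well.

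I do not expect a genuine obstacle here; the only points deserving care are the extension of the basic triangle to possibly unbounded complexes --- handled by the finite-amplitude remarks above --- and the bookkeeping caused by ${\rm Supp}$ being defined as a closure, which is harmless since $Z$ is closed. Everything else is formal manipulation of distinguished triangles together with the adjunction between $j_{Z}^{-1}$ and ${\mathbb R}j_{Z*}$.
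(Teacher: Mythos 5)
Your proof is correct and follows essentially the same route as the paper: the equivalence of (1) and (2) comes from the local cohomology distinguished triangle, and the support conditions are handled by restricting to $P\setminus Z$. The only cosmetic difference is that you close the cycle of implications via (2)$\Rightarrow$(3) using $j_{Z}^{-1}{\mathbb R}j_{Z*}\cong \mathrm{id}$, whereas the paper observes that (1)$\Rightarrow$(3) is evident; your added care about extending the triangle to unbounded complexes is a welcome refinement that the paper's terse argument glosses over.
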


\begin{proof}
The equivalence of $1$ and $2$ follows from (\ref{e5}).
Assuming that ${\rm Supp}{\mathcal M} \subset Z$,
one has $j_{Z}^{-1}{\mathcal M}=0$.
This shows $3\Rightarrow 2$.
Finally $1\Rightarrow 3$ is evident.
\end{proof}

\begin{lemm}\label{l3}
Let $P$ be a smooth $W_{n}$-scheme and $Z$ a closed subset of $P$.
There exists a natural ${\mathcal O}_{P}$-linear isomorphism 
\begin{equation}\label{e11}
{\mathbb R}{\Gamma}_{Z} \left({\mathcal M}\right) \otimes_{{\mathcal O}_{P}}^{\mathbb L} {\mathcal N}\xrightarrow{\cong} 
{\mathbb R}{\Gamma}_{Z} \left({\mathcal M}\otimes_{{\mathcal O}_{P}}^{\mathbb L} {\mathcal N} \right)
\end{equation}
for any ${\mathcal M}\in D^{-}({\mathcal O}_{P})$ and ${\mathcal N}\in D^{-}_{\rm qc}({\mathcal O}_{P})$.
Furthermore for any ${\mathcal M}\in D^{-}({\mathcal D}_{P})$ (resp. ${\mathcal M}\in D^{-}({\mathcal D}_{F, P})$) and ${\mathcal N}\in D^{-}_{\rm qc}({\mathcal D}_{P})$ (resp. ${\mathcal N}\in D^{-}_{\rm qc}({\mathcal D}_{F, P})$) (\ref{e11}) is a ${\mathcal D}_{P}$-linear (resp. ${\mathcal D}_{F, P}$-linear) isomorphism.
\end{lemm}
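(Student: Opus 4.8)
The plan is to construct the isomorphism (\ref{e11}) first on the level of $\mathcal{O}_P$-modules and then promote it to a $\mathcal{D}_P$-linear (resp. $\mathcal{D}_{F,P}$-linear) statement by checking the compatibilities are module-theoretic. The key input is the local description of $\mathbb{R}\Gamma_Z$ via the \v{C}ech-type resolution: if $Z$ is cut out (set-theoretically, locally) by functions $f_1,\dots,f_r$, then $\mathbb{R}\Gamma_Z(\mathcal{M})$ is computed by tensoring $\mathcal{M}$ with the (extended) \v{C}ech complex $\check{C}^\bullet(f_1,\dots,f_r)$, a bounded complex of flat $\mathcal{O}_P$-modules (each term is a localization of $\mathcal{O}_P$, hence flat). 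Writing $\check{C}^\bullet$ for this complex, one has $\mathbb{R}\Gamma_Z(\mathcal{M}) \cong \mathcal{M}\otimes^{\mathbb{L}}_{\mathcal{O}_P}\check{C}^\bullet \cong \mathcal{M}\otimes_{\mathcal{O}_P}\check{C}^\bullet$ for $\mathcal{M}$ a bounded-above complex, and the desired isomorphism becomes the associativity of tensor product:
\begin{equation*}
(\mathcal{M}\otimes_{\mathcal{O}_P}\check{C}^\bullet)\otimes^{\mathbb{L}}_{\mathcal{O}_P}\mathcal{N} \xrightarrow{\ \cong\ } (\mathcal{M}\otimes^{\mathbb{L}}_{\mathcal{O}_P}\mathcal{N})\otimes_{\mathcal{O}_P}\check{C}^\bullet.
\end{equation*}
The quasi-coherence hypothesis on $\mathcal{N}$ is used precisely to know that $j_Z^{-1}$, hence $\mathbb{R}j_{Z*}j_Z^{-1}$ and $\mathbb{R}\Gamma_Z$, behaves well with respect to $\otimes^{\mathbb{L}}$ — equivalently, that localization of a quasi-coherent sheaf is its naive localization, which is what makes the \v{C}ech computation of $\mathbb{R}\Gamma_Z(\mathcal{M}\otimes^{\mathbb L}\mathcal{N})$ valid.

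First I would treat the $\mathcal{O}_P$-linear statement. Replace $\mathcal{M}$ and $\mathcal{N}$ by bounded-above complexes of flat $\mathcal{O}_P$-modules (taking flat resolutions; for $\mathcal{N}$ one may even take a $K$-flat complex of quasi-coherent sheaves). Cover $P$ by affine opens on which $Z$ is defined by finitely many equations, fix the \v{C}ech complex $\check{C}^\bullet$ there, and note it consists of flat $\mathcal{O}_P$-modules. Then on each such open, both sides of (\ref{e11}) are represented by the triple tensor product $\mathcal{M}\otimes_{\mathcal{O}_P}\check{C}^\bullet\otimes_{\mathcal{O}_P}\mathcal{N}$ (all terms flat, so no higher Tor), and the canonical associativity/commutativity isomorphism of tensor products furnishes the comparison map; it is an isomorphism of complexes of $\mathcal{O}_P$-modules, hence an isomorphism in $D(\mathcal{O}_P)$. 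The maps glue over the affine cover because the \v{C}ech resolutions are functorial and the comparison is the canonical one, so one gets a globally defined isomorphism in $D^-(\mathcal{O}_P)$; independence of the choices (resolutions, cover) is the usual argument.

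Next I would upgrade to the $\mathcal{D}_P$- and $\mathcal{D}_{F,P}$-linear versions. Here the point is that the comparison isomorphism constructed above is natural and is built only out of the tensor operations, and each of the functors involved — $\mathbb{R}\Gamma_Z$, $\otimes^{\mathbb L}_{\mathcal{O}_P}$ — lifts to the $\mathcal{D}_P$- (resp. $\mathcal{D}_{F,P}$-) linear setting in a way compatible with the forgetful functor to $D(\mathcal{O}_P)$: recall from the start of this section that $\mathbb{R}\Gamma_Z$ on $\mathcal{D}_{F,P}$-modules is computed by the same underlying complex (the forgetful functor commutes with $\mathbb{R}\Gamma_Z$), and the tensor product $\otimes^{\mathbb L}_{\mathcal{O}_P}$ of $\mathcal{D}_{F,P}$-modules is the $\mathcal{O}_P$-tensor product equipped with its diagonal connection and Frobenius structure. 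Concretely, one checks that the \v{C}ech complex $\check{C}^\bullet$ carries a natural $\mathcal{D}_{F,P}$-module (indeed $\mathcal{D}_P$-bimodule, and unit-$F$) structure — the localizations $\mathcal{O}_P[1/f_I]$ are $\mathcal{D}_{F,P}$-modules — so that the whole computation lives in complexes of $\mathcal{D}_{F,P}$-modules and the associativity isomorphism is $\mathcal{D}_{F,P}$-linear because the diagonal $\mathcal{D}_P$- and $F$-actions are manifestly compatible with reassociating tensor factors (this is exactly the sort of compatibility already invoked for $\otimes^{\mathbb L}_{\mathcal{O}_X}$ via \cite[2.3.1]{B2} and in Proposition \ref{p5}). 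Since the forgetful functor to $D(\mathcal{O}_P)$ is conservative, the fact that the map is already an isomorphism there finishes the argument.

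\textbf{The main obstacle} I anticipate is the bookkeeping in the second part: making precise that $\mathbb{R}\Gamma_Z$ on $\mathcal{D}_P$- and $\mathcal{D}_{F,P}$-modules is computed by the \emph{same} \v{C}ech complex now regarded with its natural $\mathcal{D}$-module (and Frobenius) structure, and that this complex can be used to compute $\mathbb{R}\Gamma_Z$ of an object of $D^-(\mathcal{D}_{F,P})$ (not merely its image in $D^-(\mathcal{O}_P)$). This requires noting that a complex of $\mathcal{D}_{F,P}$-modules which is a complex of flat $\mathcal{O}_P$-modules, when tensored over $\mathcal{O}_P$ with $\check{C}^\bullet$, computes $\mathbb{R}\Gamma_Z$ in $D^-(\mathcal{D}_{F,P})$ — i.e. that $\Gamma_Z$-acyclicity can be tested after forgetting to $\mathcal{O}_P$ and that the $\mathcal{D}_{F,P}$-structure on the output is the natural one. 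Once this identification is in hand, the rest is the formal manipulation of associativity isomorphisms, and the quasi-coherence hypothesis on $\mathcal{N}$ enters only to guarantee that localization and $\mathbb{R}\Gamma_Z$ commute with $\otimes^{\mathbb L}$ as needed.
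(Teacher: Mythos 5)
There is a genuine gap. Your whole construction rests on the identification $\mathbb{R}\Gamma_{Z}(\mathcal{M})\cong \mathcal{M}\otimes_{\mathcal{O}_{P}}\check{C}^{\bullet}$ with the extended \v{C}ech (stable Koszul) complex, but that identification is only valid when $\mathcal{M}$ has quasi-coherent cohomology: computing $\mathbb{R}j_{Z*}j_{Z}^{-1}\mathcal{M}$ by the \v{C}ech complex of the cover $\{D(f_{i})\}$ requires both that $\Gamma(D(f_{I}),\mathcal{M})$ be the naive localization $M_{f_{I}}$ and that $\mathcal{M}$ have no higher cohomology on these affines, and both fail for a general $\mathcal{O}_{P}$-module. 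The lemma, however, is stated for arbitrary $\mathcal{M}\in D^{-}(\mathcal{O}_{P})$ — quasi-coherence is imposed only on $\mathcal{N}$ — and you even need the \v{C}ech description on both sides (for $\mathbb{R}\Gamma_{Z}(\mathcal{M})$ on the left and for $\mathbb{R}\Gamma_{Z}(\mathcal{M}\otimes^{\mathbb{L}}\mathcal{N})$ on the right, and the latter is again not quasi-coherent when $\mathcal{M}$ is not). So your argument proves the statement only under an additional hypothesis that the paper does not make, and you have also misattributed where quasi-coherence enters: it is the reduction on $\mathcal{N}$, not a property of localization of $\mathcal{N}$, that the hypothesis is for. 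A secondary, fixable worry is the gluing of the locally defined \v{C}ech comparison isomorphisms into a single natural isomorphism in the derived category; this requires checking independence of the chosen equations and compatibility on overlaps, which is not automatic for morphisms in $D(\mathcal{O}_{P})$.

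The paper's route avoids all of this. The map is constructed globally and canonically: $\mathbb{R}\Gamma_{Z}(\mathcal{M})\otimes^{\mathbb{L}}\mathcal{N}$ is supported on $Z$, so by Lemma \ref{l5} the obvious morphism to $\mathcal{M}\otimes^{\mathbb{L}}\mathcal{N}$ factors uniquely through $\mathbb{R}\Gamma_{Z}(\mathcal{M}\otimes^{\mathbb{L}}\mathcal{N})$; this factorization is automatically $\mathcal{D}_{P}$- or $\mathcal{D}_{F,P}$-linear when the inputs are, which disposes of your second part with no \v{C}ech bookkeeping. That it is an isomorphism is then checked in $D(\mathcal{O}_{P})$ by the lemma on way-out functors applied in the variable $\mathcal{N}$ (both sides have finite amplitude in $\mathcal{N}$ since $\mathbb{R}\Gamma_{Z}$ does): one reduces to $P$ affine, then to $\mathcal{N}$ a single quasi-coherent module, then to $\mathcal{N}$ free, then — using that $\mathbb{R}\Gamma_{Z}$ commutes with direct sums — to $\mathcal{N}=\mathcal{O}_{P}$, where the statement is a tautology. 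Note that this reduction never touches $\mathcal{M}$, which is why arbitrary $\mathcal{M}$ is allowed. If you want to keep your \v{C}ech argument, you must either restrict the lemma to $\mathcal{M}\in D^{-}_{\rm qc}(\mathcal{O}_{P})$ (which would still cover the applications in this paper, but is not what is claimed) or supply a separate devissage in $\mathcal{M}$.
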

\begin{proof}
Note first that both sides are well-defined.
Let us construct a natural morphism in the Lemma.
Let ${\mathcal M}$ be an object of $D^{-}({\mathcal O}_{P})$ and ${\mathcal N}$ an object of $D^{-}_{\rm qc}({\mathcal O}_{P})$.
One has ${\mathbb R}{\Gamma}_{Z} \left({\mathcal M}\right) \otimes_{{\mathcal O}_{P}}^{\mathbb L} {\mathcal N}\to {\mathcal M} \otimes_{{\mathcal O}_{P}}^{\mathbb L} {\mathcal N}$. 
Then since ${\mathbb R}{\Gamma}_{Z} \left({\mathcal M}\right) \otimes_{{\mathcal O}_{P}}^{\mathbb L} {\mathcal N}$ is supported on $Z$,
we have ${\mathbb R}{\Gamma}_{Z}\left({\mathbb R}{\Gamma}_{Z} \left({\mathcal M}\right) \otimes_{{\mathcal O}_{P}}^{\mathbb L} {\mathcal N}\right)\xrightarrow{\cong} {\mathbb R}{\Gamma}_{Z} \left({\mathcal M}\right) \otimes_{{\mathcal O}_{P}}^{\mathbb L} {\mathcal N}$ by Lemma \ref{l5}.
So ${\mathbb R}{\Gamma}_{Z} \left({\mathcal M}\right) \otimes_{{\mathcal O}_{P}}^{\mathbb L} {\mathcal N}\to {\mathcal M} \otimes_{{\mathcal O}_{P}}^{\mathbb L} {\mathcal N}$ uniquely factors as 
\begin{equation*}
{\mathbb R}{\Gamma}_{Z} \left({\mathcal M}\right) \otimes_{{\mathcal O}_{P}}^{\mathbb L} {\mathcal N}\to {\mathbb R}{\Gamma}_{Z} \left({\mathcal M}\otimes_{{\mathcal O}_{P}}^{\mathbb L} {\mathcal N} \right)\to  {\mathcal M} \otimes_{{\mathcal O}_{P}}^{\mathbb L} {\mathcal N}
\end{equation*}
and we get the desired morphism.
Note that if ${\mathcal M}$ is an object of $D^{-}({\mathcal D}_{P})$ (resp. $D^{-}({\mathcal D}_{F, P})$) and ${\mathcal N}$ is an object of $D^{-}_{\rm qc}({\mathcal D}_{P})$ (resp. $D^{-}_{\rm qc}({\mathcal D}_{F, P})$) then (\ref{e11}) is ${\mathcal D}_{P}$-linear (resp. ${\mathcal D}_{F, P}$-linear).
Let us prove that (\ref{e11}) is an isomorphism.
It suffices to show that this is an isomorphism in $D({\mathcal O}_{P})$.
The assertion is Zariski local on $P$ and so we may assume that $P$ is affine.
Note that the source is a way-out left functor in $\mathcal N$.
Also, since ${\mathbb R}{\Gamma}_{Z}$ is finite cohomological amplitude, the target is also a way-out left functor in $\mathcal N$.
Using the lemma on way-out functors (cf. \cite[Chap I, Proposition 7.1]{Ha}), we reduce to the case where $\mathcal N$ is a single quasi-coherent ${\mathcal O}_{P}$-module.
Furthermore since any quasi-coherent ${\mathcal O}_{P}$-module is a quotient of a free ${\mathcal O}_{P}$-module (because $P$ is affine), we may assume that $\mathcal N$ is a single free ${\mathcal O}_{P}$-module.
Now since ${\mathbb R}{\Gamma}_{Z}$ commutes with infinite direct sums, we reduce the assertion to prove the case when ${\mathcal N}={\mathcal O}_{P}$.
Then both sides are equal to ${\mathbb R}{\Gamma}_{Z}{\mathcal M}$ and we are done.
\end{proof}

\begin{prop}
Let $P$ be a smooth $W_{n}$-scheme and $Z$ a closed subset of $P$.
The local cohomology functor induces a functor
\begin{equation*}
{\mathbb R}{\Gamma}_{Z}: D^{b}_{\rm lfgu}({\mathcal D}_{F, P})^{\circ}\to D^{b}_{\rm lfgu}({\mathcal D}_{F, P})^{\circ}.
\end{equation*}
\end{prop}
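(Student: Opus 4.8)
The plan is to read the statement off the defining distinguished triangle $(\ref{e5})$ together with the restriction properties of $f^{!}$ and $f_{+}$ recalled in Section~2. Let ${\mathcal M}\in D^{b}_{\rm lfgu}({\mathcal D}_{F,P})^{\circ}$. By $(\ref{e5})$ we have the distinguished triangle
$$
{\mathbb R}{\Gamma}_{Z}{\mathcal M}\to {\mathcal M}\to {\mathbb R}j_{Z*}j_{Z}^{-1}{\mathcal M}\xrightarrow{+}
$$
in $D^{+}({\mathcal D}_{F,P})$. Since $D^{b}_{\rm lfgu}({\mathcal D}_{F,P})^{\circ}$ is a full triangulated subcategory of $D({\mathcal D}_{F,P})$ and the middle vertex lies in it by hypothesis, it suffices to show that the third vertex ${\mathbb R}j_{Z*}j_{Z}^{-1}{\mathcal M}$ also lies in $D^{b}_{\rm lfgu}({\mathcal D}_{F,P})^{\circ}$; then ${\mathbb R}{\Gamma}_{Z}{\mathcal M}$, being a shift of the cone of a morphism between two objects of this subcategory, lies in it as well.

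To handle the third vertex I would use the identifications ${\mathbb R}j_{Z*}=j_{Z+}$ and $j_{Z}^{-1}=j_{Z}^{!}$ noted just after $(\ref{e5})$ (for the latter, $d_{(P\setminus Z)/P}=0$, so $j_{Z}^{!}={\mathbb L}j_{Z}^{*}=j_{Z}^{-1}$). The open subscheme $P\setminus Z$ is smooth over $W_{n}$ and $j_{Z}$ is a morphism of smooth $W_{n}$-schemes, so the restriction property of $f^{!}$ (\cite[Proposition 14.2.6 and Proposition 15.5.1]{EK}) gives $j_{Z}^{!}{\mathcal M}\in D^{b}_{\rm lfgu}({\mathcal D}_{F,P\setminus Z})^{\circ}$, and the restriction property of $f_{+}$ (\cite[Proposition 14.3.9 and Proposition 15.5.1]{EK}) gives $j_{Z+}j_{Z}^{!}{\mathcal M}\in D^{b}_{\rm lfgu}({\mathcal D}_{F,P})^{\circ}$. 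Hence ${\mathbb R}j_{Z*}j_{Z}^{-1}{\mathcal M}\in D^{b}_{\rm lfgu}({\mathcal D}_{F,P})^{\circ}$, and the argument is complete. Boundedness of ${\mathbb R}{\Gamma}_{Z}{\mathcal M}$ is then automatic from the triangle, in agreement with Grothendieck's finite cohomological amplitude of ${\mathbb R}{\Gamma}_{Z}$.

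I do not expect a genuine obstacle here: the content of the proposition is entirely encoded in the triangle $(\ref{e5})$ plus the already-quoted stability of $D^{b}_{\rm lfgu}(\,\cdot\,)^{\circ}$ under $f^{!}$ and $f_{+}$. The one point that deserves a word of care is the identification of the right-hand vertex of the local cohomology triangle with $j_{Z+}j_{Z}^{!}{\mathcal M}$, i.e. that ${\mathbb R}j_{Z*}$ computed in the category of ${\mathcal D}_{F,P}$-modules agrees with the ${\mathcal D}_{F}$-module direct image along the open immersion $j_{Z}$; this is precisely the remark following $(\ref{e5})$ (compare also Remark \ref{r2}).
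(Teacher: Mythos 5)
Your argument is correct, and for the first half (landing in $D^{b}_{\rm lfgu}$) it coincides with the paper's: both read the conclusion off the triangle (\ref{e5}), using that ${\mathcal M}$ and ${\mathbb R}j_{Z*}j_{Z}^{-1}{\mathcal M}$ lie in $D^{b}_{\rm lfgu}({\mathcal D}_{F,P})$ by \cite[Proposition 15.5.1]{EK} and that locally finitely generated unit modules form a thick subcategory. Where you diverge is the finite-Tor-dimension half. You obtain it for free from the same triangle, since $j_{Z+}j_{Z}^{!}{\mathcal M}$ already carries the $\circ$ by the quoted stability of $D^{b}_{\rm lfgu}(\,\cdot\,)^{\circ}$ under $f^{!}$ and $f_{+}$, and since complexes of finite Tor dimension satisfy the two-out-of-three property in distinguished triangles (so the subcategory really is triangulated, as the paper's notational conventions implicitly assert). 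The paper instead argues directly: by \cite[I, Proposition 5.1]{SGA6} it suffices to show that ${\mathbb R}{\Gamma}_{Z}({\mathcal M})\otimes^{\mathbb L}_{{\mathcal O}_{P}}{\mathcal N}$ is bounded for every ${\mathcal O}_{P}$-module ${\mathcal N}$; for quasi-coherent ${\mathcal N}$ this follows from the projection formula of Lemma \ref{l3} together with the finite cohomological amplitude of ${\mathbb R}{\Gamma}_{Z}$, and the general case is reduced to the quasi-coherent one by a stalkwise argument and quasi-compactness of $P$. Your route is shorter but leans more heavily on the black-boxed statement that $j_{Z+}$ preserves the $\circ$ condition; the paper's route is more self-contained at this point and exercises Lemma \ref{l3}, which it needs elsewhere in any case. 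Both are valid.
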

\begin{proof}
Let us first show that, for any ${\mathcal M}\in  D^{b}_{\rm lfgu}({\mathcal D}_{F, P})$, ${\mathbb R}{\Gamma}_{Z}{\mathcal M}$
has locally finitely generated cohomology sheaves.
Let $j_{Z}$ denote the open immersion $P\setminus Z\hookrightarrow P$.
Then there exists a distinguished triangle
\begin{equation*}
{\mathbb R}{\Gamma}_{Z}{\mathcal M}\to {\mathcal M}\to {\mathbb R}j_{Z*}j_{Z}^{-1}{\mathcal M}\xrightarrow{+}.
\end{equation*}
Since ${\mathcal M}$ and ${\mathbb R}j_{Z*}j_{Z}^{-1}{\mathcal M}$ are objects of  $D^{b}_{\rm lfgu}({\mathcal D}_{F, P})$
by \cite[Proposition 15.5.1]{EK}, ${\mathbb R}{\Gamma}_{Z}{\mathcal M}$ is also an object of $D^{b}_{\rm lfgu}({\mathcal D}_{F, P})$ by \cite[Proposition 15.3.4]{EK}.
Next we show that, for any ${\mathcal M}\in  D^{b}_{\rm lfgu}({\mathcal D}_{F, P})^{\circ}$, ${\mathbb R}{\Gamma}_{Z}{\mathcal M}$ is of finite Tor dimension over ${\mathcal O}_{P}$.
According to  \cite[I, Proposition 5.1]{SGA6}, it is enough to show that ${\mathbb R}{\Gamma}_{Z} \left({\mathcal M}\right) \otimes_{{\mathcal O}_{P}}^{\mathbb L} {\mathcal N}$ is a bounded complex for any ${\mathcal O}_{P}$-module $\mathcal N$.
First of all, suppose that $\mathcal N$ is a quasi-coherent ${\mathcal O}_{P}$-module.
Then, by Lemma \ref{l3}, we have ${\mathbb R}{\Gamma}_{Z} \left({\mathcal M}\right) \otimes_{{\mathcal O}_{P}}^{\mathbb L} {\mathcal N}\xrightarrow{\cong} {\mathbb R}{\Gamma}_{Z} \left({\mathcal M}\otimes_{{\mathcal O}_{P}}^{\mathbb L} {\mathcal N} \right)$.
Since ${\mathcal M}$ is of finite Tor dimension, ${\mathcal M}\otimes_{{\mathcal O}_{P}}^{\mathbb L} {\mathcal N}$ is a bounded complex of ${\mathcal O}_{P}$-modules. 
So we know that ${\mathbb R}{\Gamma}_{Z} \left({\mathcal M}\right) \otimes_{{\mathcal O}_{P}}^{\mathbb L} {\mathcal N}$ is bounded since ${\mathbb R}{\Gamma}_{Z}$ is finite cohomological amplitude.
Now if $\mathcal N$ is an arbitrary ${\mathcal O}_{P}$-module, then the stalks of ${\mathbb R}{\Gamma}_{Z} \left({\mathcal M}\right) \otimes_{{\mathcal O}_{P}}^{\mathbb L} {\mathcal N}$ are uniformly bounded, hence so is ${\mathbb R}{\Gamma}_{Z} \left({\mathcal M}\right) \otimes_{{\mathcal O}_{P}}^{\mathbb L} {\mathcal N}$ because $P$ is quasi-compact.
\end{proof}

\begin{lemm}\label{l4}
Let $f: P\to Q$ be a morphism of smooth $W_{n}$-schemes and $Z_{Q}$ a closed subset of $Q$.
We denote by $Z_{P}$ the inverse image of $Z_{Q}$.
There exists a natural isomorphism in $D^{-}({\mathcal D}_{F, P})$
\begin{equation*}
{\mathbb L}f^{*}\circ {\mathbb R}{\Gamma}_{Z_{Q}}({\mathcal O}_{Q})\xrightarrow{\cong} {\mathbb R}{\Gamma}_{Z_{P}}({\mathcal O}_{P}).
\end{equation*}
\end{lemm}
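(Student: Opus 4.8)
The plan is to construct the comparison morphism directly, compatibly with the augmentations to $\mathcal{O}_{P}$, and then verify it is an isomorphism by a local computation after forgetting the $\mathcal{D}_{F,P}$-structure.

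First I would produce the morphism. Since $\mathbb{L}f^{*}\mathcal{M}\cong\mathcal{O}_{P}\otimes^{\mathbb{L}}_{f^{-1}\mathcal{O}_{Q}}f^{-1}\mathcal{M}$ as complexes of $\mathcal{O}_{P}$-modules, the cohomology sheaves of $\mathbb{L}f^{*}\mathbb{R}\Gamma_{Z_{Q}}(\mathcal{O}_{Q})$ are supported in $f^{-1}(Z_{Q})=Z_{P}$; moreover $\mathbb{R}\Gamma_{Z_{Q}}(\mathcal{O}_{Q})$ has finite Tor dimension over $\mathcal{O}_{Q}$ (it is computed locally by the stable Koszul complex below), so $\mathbb{L}f^{*}\mathbb{R}\Gamma_{Z_{Q}}(\mathcal{O}_{Q})$ is bounded and $\mathbb{R}\Gamma_{Z_{P}}$ applies to it. By Lemma \ref{l5} the canonical morphism $c_{M}\colon \mathbb{R}\Gamma_{Z_{P}}\bigl(\mathbb{L}f^{*}\mathbb{R}\Gamma_{Z_{Q}}(\mathcal{O}_{Q})\bigr)\to \mathbb{L}f^{*}\mathbb{R}\Gamma_{Z_{Q}}(\mathcal{O}_{Q})$ is an isomorphism; applying $\mathbb{R}\Gamma_{Z_{P}}$ to $\mathbb{L}f^{*}$ of the canonical morphism $\mathbb{R}\Gamma_{Z_{Q}}(\mathcal{O}_{Q})\to\mathcal{O}_{Q}$ and using $\mathbb{L}f^{*}\mathcal{O}_{Q}\cong\mathcal{O}_{P}$, I set
\[
\phi\;:=\;\mathbb{R}\Gamma_{Z_{P}}\bigl(\mathbb{L}f^{*}(\mathbb{R}\Gamma_{Z_{Q}}(\mathcal{O}_{Q})\to\mathcal{O}_{Q})\bigr)\circ c_{M}^{-1}\colon\;\mathbb{L}f^{*}\mathbb{R}\Gamma_{Z_{Q}}(\mathcal{O}_{Q})\longrightarrow\mathbb{R}\Gamma_{Z_{P}}(\mathcal{O}_{P}).
\]
This is a morphism in $D^{-}(\mathcal{D}_{F,P})$ which is $\mathcal{D}_{F,P}$-linear because each constituent is, and by naturality of the counit $\mathbb{R}\Gamma_{Z_{P}}\Rightarrow\mathrm{id}$ the composite of $\phi$ with $\mathbb{R}\Gamma_{Z_{P}}(\mathcal{O}_{P})\to\mathcal{O}_{P}$ is the canonical map $\mathbb{L}f^{*}\mathbb{R}\Gamma_{Z_{Q}}(\mathcal{O}_{Q})\to\mathcal{O}_{P}$.

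To see $\phi$ is an isomorphism I would pass to underlying $\mathcal{O}_{P}$-complexes: the forgetful functor $D^{-}(\mathcal{D}_{F,P})\to D^{-}(\mathcal{O}_{P})$ is conservative and commutes with $\mathbb{R}\Gamma_{Z_{P}}$ (as recalled above) and with $\mathbb{L}f^{*}$ (by the displayed formula), so it suffices to check $\phi$ is a quasi-isomorphism, which is local on $P$ and on $Q$. Fix affine opens $V\subseteq P$, $U\subseteq Q$ with $f(V)\subseteq U$ and write $Z_{Q}\cap U=V(g_{1},\dots,g_{r})$; then $Z_{P}\cap V=V(f^{*}g_{1},\dots,f^{*}g_{r})$. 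Over $U$ the complex $\mathbb{R}\Gamma_{Z_{Q}}(\mathcal{O}_{Q})$ is represented by the stable Koszul complex $\bigl[\,\mathcal{O}_{U}\to\bigoplus_{i}\mathcal{O}_{U}[g_{i}^{-1}]\to\cdots\to\mathcal{O}_{U}[(g_{1}\cdots g_{r})^{-1}]\,\bigr]$, a bounded complex of flat quasi-coherent $\mathcal{O}_{U}$-modules. Hence $\mathbb{L}f^{*}$ of it is computed by applying $f^{*}$ term by term, and since localization commutes with $f^{*}$ the result is precisely the stable Koszul complex over $V$ for $f^{*}g_{1},\dots,f^{*}g_{r}$, i.e. a representative $\psi_{V}$ of $\mathbb{R}\Gamma_{Z_{P}}(\mathcal{O}_{P})|_{V}$ which is manifestly compatible with the maps of both sides to $\mathcal{O}_{V}$ via the degree-zero terms.

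It remains to identify $\psi_{V}$ with $\phi|_{V}$. From the triangle (\ref{e5}) and the adjunction isomorphism $\mathrm{Hom}(\mathcal{G},\mathbb{R}j_{Z_{P}*}j_{Z_{P}}^{-1}\mathcal{O}_{P})\cong\mathrm{Hom}(j_{Z_{P}}^{-1}\mathcal{G},j_{Z_{P}}^{-1}\mathcal{O}_{P})$, which vanishes when $\mathcal{G}$ is supported in $Z_{P}$, composition with $\mathbb{R}\Gamma_{Z_{P}}(\mathcal{O}_{P})\to\mathcal{O}_{P}$ induces a bijection $\mathrm{Hom}(\mathcal{G},\mathbb{R}\Gamma_{Z_{P}}(\mathcal{O}_{P}))\xrightarrow{\cong}\mathrm{Hom}(\mathcal{G},\mathcal{O}_{P})$ for every $\mathcal{G}$ supported in $Z_{P}$. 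Applying this with $\mathcal{G}=\mathbb{L}f^{*}\mathbb{R}\Gamma_{Z_{Q}}(\mathcal{O}_{Q})|_{V}$ and using that $\psi_{V}$ and $\phi|_{V}$ both compose to the canonical map $\mathcal{G}\to\mathcal{O}_{V}$, we get $\psi_{V}=\phi|_{V}$; since $\psi_{V}$ is an isomorphism and the $V$ cover $P$, so is $\phi$. The one point that needs care is that cones are not functorial, so one cannot simply read $\phi$ off from a base-change isomorphism between the $\mathbb{R}j_{Z_{Q}*}j_{Z_{Q}}^{-1}$-terms of the two triangles defining the local cohomologies; constructing $\phi$ first via $\mathbb{R}\Gamma_{Z_{P}}$ and pinning it down by the above $\mathrm{Hom}$-bijection is exactly what makes the gluing of the local isomorphisms, and the $\mathcal{D}_{F,P}$-linearity, automatic. (Equivalently one could invoke base change for $\mathbb{R}j_{Z_{Q}*}$ along the open immersion $j_{Z_{Q}}$ to get the isomorphism on the $\mathbb{R}j_{*}$-terms and then identify $\phi$ by the same $\mathrm{Hom}$ argument; the stable Koszul computation is just the explicit local form of that base change.)
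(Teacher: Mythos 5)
Your proof is correct, and the construction of the comparison morphism is the same as the paper's: both exploit that $\mathbb{L}f^{*}\mathbb{R}\Gamma_{Z_{Q}}(\mathcal{O}_{Q})$ is supported on $Z_{P}$ (via $j_{Z_{P}}^{-1}\mathbb{L}f^{*}\cong\mathbb{L}f'^{*}j_{Z_{Q}}^{-1}$) to factor the canonical map to $\mathcal{O}_{P}$ uniquely through $\mathbb{R}\Gamma_{Z_{P}}(\mathcal{O}_{P})$, using Lemma \ref{l5}. Where you diverge is in verifying that the resulting map is an isomorphism. The paper completes the factored map to a morphism of the two localization triangles, observes that the middle map $\mathbb{L}f^{*}\mathcal{O}_{Q}\to\mathcal{O}_{P}$ is an isomorphism, and reduces the right-hand term to base change for $\mathbb{R}j_{Z_{Q}*}$ along the open immersion (citing the Stacks Project); the two-out-of-three property then finishes the argument. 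You instead reduce to the underlying $\mathcal{O}_{P}$-complexes and compute locally with the stable Koszul (extended \v{C}ech) complex, whose flat terms let you take $\mathbb{L}f^{*}$ termwise and identify the pullback with the stable Koszul complex of the pulled-back equations; you then pin down the identification of your local quasi-isomorphism with the globally constructed $\phi$ via the Hom-bijection induced by the augmentation, which is exactly the right way to handle the non-functoriality of cones that you flag. Your route is more computational but self-contained (it reproves the needed base change by hand), whereas the paper's is shorter at the cost of an external citation; as you note yourself, the Koszul computation is the explicit local form of that base change, so the two arguments have the same mathematical content. Both establish $\mathcal{D}_{F,P}$-linearity the same way, by building the map before forgetting the structure.
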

\begin{proof}
One has natural morphisms
\begin{equation}\label{e41}
{\mathbb L}f^{*}{\mathbb R}{\Gamma}_{Z_{Q}}({\mathcal O}_{Q})\to {\mathbb L}f^{*}{\mathcal O}_{Q}\to {\mathcal O}_{P}.
\end{equation}
Let us denote by $j_{Z_{Q}}$ (resp. $j_{Z_{P}}$) the open immersion $Q\setminus Z_{Q}\hookrightarrow Q$ (resp. $P\setminus Z_{P}\hookrightarrow P$) and by $f'$ the restriction of $f$ to $P\setminus Z_{P}$.
Then one has 
$$j_{Z_{P}}^{-1}{\mathbb L}f^{*}{\mathbb R}{\Gamma}_{Z_{Q}}({\mathcal O}_{Q})\cong {\mathbb L}f'^{*}j_{Z_{Q}}^{-1} {\mathbb R}{\Gamma}_{Z_{Q}}({\mathcal O}_{Q})\cong 0.$$
Hence, we know that ${\mathbb L}f^{*}{\mathbb R}{\Gamma}_{Z_{Q}}({\mathcal O}_{Q})$ is supported on $Z_{P}$ and the morphism (\ref{e41})  
uniquely factors as
\begin{equation*}
{\mathbb L}f^{*}{\mathbb R}{\Gamma}_{Z_{Q}}{\mathcal O}_{Q}\xrightarrow{a} {\mathbb R}{\Gamma}_{Z_{P}}({\mathcal O}_{P}) \to {\mathcal O}_{P}.
\end{equation*}
It suffices to prove that $a$ is an isomorphism in $D({\mathcal O}_P)$.
One has a morphism of distinguished triangles
\[\xymatrix{
{{\mathbb L}f^{*}{\mathbb R}{\Gamma}_{Z_{Q}}{\mathcal O}_{Q}}\ar[r] \ar[d]_a & {{\mathbb L}f^{*}{\mathcal O}_{Q}} \ar[d]_b \ar[r] & {\mathbb L}f^{*}{\mathbb R}j_{Z_{Q}*}j_{Z_{Q}}^{-1} {\mathcal O}_{Q}\ar[d]_c\ar[r]^-+& \\
{{\mathbb R}{\Gamma}_{Z_{P}}({\mathcal O}_{P})}\ar[r] & {{\mathcal O}_{P}} \ar[r] & {\mathbb R}j_{Z_{P}*}j_{Z_{P}}^{-1}{{\mathcal O}_{P}} \ar[r]^-+&.}
\]
Here $c$ is defined to be the composite of morphisms
\begin{eqnarray*}
{\mathbb L}f^{*}{\mathbb R}j_{Z_{Q}*}j_{Z_{Q}}^{-1}{{\mathcal O}_{Q}} &\to& {\mathbb R}j_{Z_{P}*}j_{Z_{P}}^{-1}{\mathbb L}f^{*}{\mathbb R}j_{Z_{Q}*}j_{Z_{Q}}^{-1}{{\mathcal O}_{Q}}\\
&\cong& {\mathbb R}j_{Z_{P}*}{\mathbb L}f'^{*}j_{Z_{Q}}^{-1}{\mathbb R}j_{Z_{Q}*}j_{Z_{Q}}^{-1}{{\mathcal O}_{Q}}\\
&\xrightarrow{\cong}& {\mathbb R}j_{Z_{P}*}{\mathbb L}f'^{*}j_{Z_{Q}}^{-1}{{\mathcal O}_{Q}}\\
&\cong& {\mathbb R}j_{Z_{P}*}j_{Z_{P}}^{-1}{\mathbb L}f^{*}{{\mathcal O}_{Q}}\\
&\to & {\mathbb R}j_{Z_{P}*}j_{Z_{P}}^{-1}{{\mathcal O}_{P}},
\end{eqnarray*}
where the first morphism and the third one are induced from the adjunction morphisms ${\rm id}\to  {\mathbb R}j_{Z_{P}*}j_{Z_{P}}^{-1}$ and
$j_{Z_{Q}}^{-1}{\mathbb R}j_{Z_{Q}*}\xrightarrow{\cong} {\rm id}$ respectively.
Then $b$ is evidently an isomorphism and $c$ is an isomorphism by \cite[Lemma 35.18.3]{Stacks}.
So $a$ is an isomorphism.

\end{proof}

\begin{prop}\label{p4}
Let $f: P\to Q$ be a morphism of smooth $W_{n}$-schemes and $Z_{Q}$ a closed subset of $Q$.
We denote by $Z_{P}$ the inverse image of $Z_{Q}$.
Then, for any ${\mathcal M}\in D^{-}_{\rm qc}({\mathcal D}_{F, Q})$, there exists a natural isomorphism
\begin{equation*}
 {\mathbb L}f^{*}\circ {\mathbb R}{\Gamma}_{Z_{Q}}{\mathcal M}\xrightarrow{\cong} {\mathbb R}{\Gamma}_{Z_{P}}\circ {\mathbb L}f^{*}{\mathcal M}
\end{equation*}
and also a natural isomorphism
\begin{equation*}
 f^{!}\circ {\mathbb R}{\Gamma}_{Z_{Q}}{\mathcal M}\xrightarrow{\cong} {\mathbb R}{\Gamma}_{Z_{P}}\circ f^{!}{\mathcal M}.
\end{equation*}
\end{prop}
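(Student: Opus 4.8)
The plan is to deduce the proposition from Lemma \ref{l4} together with the projection-type formula of Lemma \ref{l3}. The second isomorphism follows from the first one by applying the shift $[d_{P/Q}]$, since $f^{!}(-)={\mathbb L}f^{*}(-)[d_{P/Q}]$ and ${\mathbb R}{\Gamma}_{Z_{P}}$ commutes with shifts. So we concentrate on the first isomorphism.

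First I would reduce the problem to the case ${\mathcal M}={\mathcal O}_{Q}$, which is precisely Lemma \ref{l4}, by means of the local cohomology version of the projection formula. Concretely, one has ${\mathbb R}{\Gamma}_{Z_{Q}}{\mathcal M}\cong {\mathbb R}{\Gamma}_{Z_{Q}}({\mathcal O}_{Q})\otimes^{\mathbb L}_{{\mathcal O}_{Q}}{\mathcal M}$ by Lemma \ref{l3} (applied with ${\mathcal N}={\mathcal M}\in D^{-}_{\rm qc}$), and this is a ${\mathcal D}_{F, Q}$-linear isomorphism. Applying ${\mathbb L}f^{*}$ and using the monoidal compatibility of ${\mathbb L}f^{*}$ with $\otimes^{\mathbb L}_{{\mathcal O}}$ from Proposition \ref{p5}, we get
\begin{equation*}
{\mathbb L}f^{*}{\mathbb R}{\Gamma}_{Z_{Q}}{\mathcal M}\cong {\mathbb L}f^{*}\bigl({\mathbb R}{\Gamma}_{Z_{Q}}({\mathcal O}_{Q})\bigr)\otimes^{\mathbb L}_{{\mathcal O}_{P}}{\mathbb L}f^{*}{\mathcal M}.
\end{equation*}
By Lemma \ref{l4} the first factor is ${\mathbb R}{\Gamma}_{Z_{P}}({\mathcal O}_{P})$, so the right-hand side becomes ${\mathbb R}{\Gamma}_{Z_{P}}({\mathcal O}_{P})\otimes^{\mathbb L}_{{\mathcal O}_{P}}{\mathbb L}f^{*}{\mathcal M}$. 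Finally, a second application of Lemma \ref{l3} (now over $P$, with ${\mathcal N}={\mathbb L}f^{*}{\mathcal M}\in D^{-}_{\rm qc}({\mathcal D}_{F, P})$) identifies this with ${\mathbb R}{\Gamma}_{Z_{P}}({\mathbb L}f^{*}{\mathcal M})$. Composing these isomorphisms yields the desired one, and each step is ${\mathcal D}_{F, P}$-linear, so the composite is too.

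The main point requiring care is the \emph{naturality} (i.e.\ canonicity) of the resulting isomorphism and the verification that it is the one induced by the obvious adjunction/restriction morphisms, rather than some a priori different abstract isomorphism; this matters because later compatibilities will be proved by diagram chases. To pin it down I would, as in the proof of Lemma \ref{l4}, construct the morphism ${\mathbb L}f^{*}{\mathbb R}{\Gamma}_{Z_{Q}}{\mathcal M}\to {\mathbb R}{\Gamma}_{Z_{P}}{\mathbb L}f^{*}{\mathcal M}$ directly: the composite ${\mathbb L}f^{*}{\mathbb R}{\Gamma}_{Z_{Q}}{\mathcal M}\to {\mathbb L}f^{*}{\mathcal M}$ has source supported on $Z_{P}$ (since its restriction to $P\setminus Z_{P}$ is ${\mathbb L}f'^{*}$ of the restriction of ${\mathbb R}{\Gamma}_{Z_{Q}}{\mathcal M}$ to $Q\setminus Z_{Q}$, which vanishes), hence factors uniquely through ${\mathbb R}{\Gamma}_{Z_{P}}{\mathbb L}f^{*}{\mathcal M}$ by Lemma \ref{l5}; one then checks this canonical arrow agrees with the composite isomorphism built above — equivalently, that it is an isomorphism — by restricting to the case ${\mathcal M}={\mathcal O}_{Q}$ via the projection formula, where it is Lemma \ref{l4}. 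I expect this bookkeeping to be the only real obstacle; the existence of \emph{an} isomorphism is immediate from the chain displayed above. $\square$
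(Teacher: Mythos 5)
Your argument is correct and is essentially identical to the paper's proof: both deduce the statement from the chain ${\mathbb R}{\Gamma}_{Z_{Q}}{\mathcal M}\cong {\mathbb R}{\Gamma}_{Z_{Q}}({\mathcal O}_{Q})\otimes^{\mathbb L}_{{\mathcal O}_{Q}}{\mathcal M}$ (Lemma \ref{l3}), the compatibility of ${\mathbb L}f^{*}$ with $\otimes^{\mathbb L}$ (Proposition \ref{p5}), the case ${\mathcal M}={\mathcal O}_{Q}$ (Lemma \ref{l4}), and a second application of Lemma \ref{l3} over $P$. Your additional remarks on pinning down the canonical morphism go beyond what the paper records but do not change the substance.
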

\begin{proof}
The second isomorphism follows by applying the shift operator to the first one.
By using Proposition \ref{p5}, Lemma \ref{l3} and Lemma \ref{l4}, we obtain
\begin{eqnarray*}
{\mathbb L}f^{*} {\mathbb R}{\Gamma}_{Z_{Q}}({\mathcal M}) &\xleftarrow{\cong}& {\mathbb L}f^{*}\left({\mathbb R}{\Gamma}_{Z_{Q}}({\mathcal O}_{Q})\otimes^{\mathbb L}_{{\mathcal O}_{Q}}{\mathcal M}\right)\\
&\xrightarrow{\cong}& {\mathbb L}f^{*}{\mathbb R}{\Gamma}_{Z_{Q}}({\mathcal O}_{Q})\otimes^{\mathbb L}_{{\mathcal O}_{P}}{\mathbb L}f^{*}{\mathcal M}\\
&\xrightarrow{\cong}&{\mathbb R}{\Gamma}_{Z_{P}}({\mathcal O}_{P}) \otimes^{\mathbb L}_{{\mathcal O}_{P}}{\mathbb L}f^{*}{\mathcal M}={\mathbb R}{\Gamma}_{Z_{P}}({\mathbb L}f^{*}{\mathcal M}). 
\end{eqnarray*}

\end{proof}

Next we show the compatibility of the local cohomology functor and the direct image.
We begin with the corresponding result for usual ${\mathcal D}_{P}$-modules (without Frobenius structures).

\begin{prop}\label{p9}
Let $f: P\to Q$ be a morphism of smooth $W_{n}$-schemes and $Z_{Q}$ a closed subset of $Q$.
We denote by $Z_{P}$ the inverse image of $Z_{Q}$.
Let $\mathcal M$ be an object in $D_{\rm qc}^{b}({\mathcal D}_{P})$.
Then there exists a natural isomorphism of functors
\begin{equation*}
{\mathbb R}{\Gamma}_{Z_{Q}}\circ f^{\mathcal B}_{+}({\mathcal M})\to f^{\mathcal B}_{+}\circ {\mathbb R}{\Gamma}_{Z_{P}}({\mathcal M}).
\end{equation*}
\end{prop}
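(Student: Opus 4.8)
The plan is to compare, in $D^{b}_{\rm qc}({\mathcal D}_{Q})$, the localization triangle of $f^{\mathcal B}_{+}{\mathcal M}$ along $Z_{Q}$ with the image under $f^{\mathcal B}_{+}$ of the localization triangle of ${\mathcal M}$ along $Z_{P}$. Write $j_{Q}:=j_{Z_{Q}}$ and $j_{P}:=j_{Z_{P}}$ for the open immersions $Q\setminus Z_{Q}\hookrightarrow Q$ and $P\setminus Z_{P}\hookrightarrow P$, and let $f':P\setminus Z_{P}\to Q\setminus Z_{Q}$ be the morphism induced by $f$. Since $Z_{P}=f^{-1}(Z_{Q})$, the square with horizontal arrows $j_{P},j_{Q}$ and vertical arrows $f',f$ is cartesian with open immersions as horizontal arrows; recall also ${\mathbb R}j_{P*}=j_{P+}$, ${\mathbb R}j_{Q*}=j_{Q+}$, $j_{P}^{-1}=j_{P}^{!}$ and $j_{Q}^{-1}=j_{Q}^{!}$. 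Applying the exact functor $f^{\mathcal B}_{+}$ to the defining triangle of ${\mathbb R}{\Gamma}_{Z_{P}}$ (the analogue of (\ref{e5}) for ${\mathcal D}_{P}$-modules) and comparing with the defining triangle of ${\mathbb R}{\Gamma}_{Z_{Q}}$ applied to $f^{\mathcal B}_{+}{\mathcal M}$, we obtain two distinguished triangles
\begin{equation*}
f^{\mathcal B}_{+}{\mathbb R}{\Gamma}_{Z_{P}}{\mathcal M}\to f^{\mathcal B}_{+}{\mathcal M}\to f^{\mathcal B}_{+}{\mathbb R}j_{P*}j_{P}^{-1}{\mathcal M}\xrightarrow{+}
\end{equation*}
and
\begin{equation*}
{\mathbb R}{\Gamma}_{Z_{Q}}f^{\mathcal B}_{+}{\mathcal M}\to f^{\mathcal B}_{+}{\mathcal M}\to {\mathbb R}j_{Q*}j_{Q}^{-1}f^{\mathcal B}_{+}{\mathcal M}\xrightarrow{+},
\end{equation*}
whose middle terms coincide.

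The second step identifies the two right-hand terms. Using the compatibility of the ${\mathcal D}$-module direct image with composition (\cite{B2}) and the equality $f\circ j_{P}=j_{Q}\circ f'$, together with base change along the cartesian square above, one has
\begin{equation*}
\begin{aligned}
f^{\mathcal B}_{+}{\mathbb R}j_{P*}j_{P}^{-1}{\mathcal M}
&= f^{\mathcal B}_{+}j_{P+}j_{P}^{-1}{\mathcal M}
\cong (f\circ j_{P})^{\mathcal B}_{+}j_{P}^{-1}{\mathcal M}
= (j_{Q}\circ f')^{\mathcal B}_{+}j_{P}^{-1}{\mathcal M}\\
&\cong j_{Q+}f'^{\mathcal B}_{+}j_{P}^{-1}{\mathcal M}
\cong j_{Q+}j_{Q}^{-1}f^{\mathcal B}_{+}{\mathcal M}
= {\mathbb R}j_{Q*}j_{Q}^{-1}f^{\mathcal B}_{+}{\mathcal M}.
\end{aligned}
\end{equation*}
Granting that this isomorphism is compatible with the two canonical maps out of $f^{\mathcal B}_{+}{\mathcal M}$, it extends to a morphism between the two distinguished triangles above which is the identity on the middle terms, and the induced morphism on the first terms is the asserted natural map ${\mathbb R}{\Gamma}_{Z_{Q}}\circ f^{\mathcal B}_{+}({\mathcal M})\to f^{\mathcal B}_{+}\circ {\mathbb R}{\Gamma}_{Z_{P}}({\mathcal M})$. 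It is an isomorphism by the two-out-of-three property for morphisms of distinguished triangles.

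To see that this map is the unique such fill-in — hence canonical and functorial in ${\mathcal M}$ — it suffices to note ${\rm Hom}_{D({\mathcal D}_{Q})}\bigl({\mathbb R}{\Gamma}_{Z_{Q}}f^{\mathcal B}_{+}{\mathcal M},\,(f^{\mathcal B}_{+}{\mathbb R}j_{P*}j_{P}^{-1}{\mathcal M})[-1]\bigr)=0$: by the identification above the target is $({\mathbb R}j_{Q*}j_{Q}^{-1}f^{\mathcal B}_{+}{\mathcal M})[-1]$, and ${\mathbb R}{\rm Hom}(N,{\mathbb R}j_{Q*}(-))\cong {\mathbb R}{\rm Hom}(j_{Q}^{-1}N,-)$ vanishes when $j_{Q}^{-1}N=0$, which holds for $N={\mathbb R}{\Gamma}_{Z_{Q}}f^{\mathcal B}_{+}{\mathcal M}$ since this complex is supported on $Z_{Q}$ (Lemma \ref{l5}). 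The base-change isomorphism $j_{Q}^{-1}f^{\mathcal B}_{+}{\mathcal M}\cong f'^{\mathcal B}_{+}j_{P}^{-1}{\mathcal M}$ used above reduces, for open immersions, to the fact that ${\mathbb R}f_{*}$ commutes with restriction to an open of the target in a cartesian square, combined with ${\mathcal D}_{Q\leftarrow P}|_{P\setminus Z_{P}}={\mathcal D}_{(Q\setminus Z_{Q})\leftarrow(P\setminus Z_{P})}$; and all functors involved preserve $D^{b}_{\rm qc}$ because ${\mathbb R}f_{*}$, ${\mathbb R}{\Gamma}_{Z_{P}}$ and ${\mathbb R}{\Gamma}_{Z_{Q}}$ have finite cohomological amplitude and ${\mathcal D}_{Q\leftarrow P}$ has finite Tor-dimension as a right ${\mathcal D}_{P}$-module.

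The main obstacle is the compatibility asserted in the second step: one must verify that the chain of isomorphisms between the third vertices — assembled from the composition isomorphism $(g\circ h)^{\mathcal B}_{+}\cong g^{\mathcal B}_{+}\circ h^{\mathcal B}_{+}$ and the open-immersion base change $j_{Q}^{-1}\circ f^{\mathcal B}_{+}\cong f'^{\mathcal B}_{+}\circ j_{P}^{-1}$ — intertwines $f^{\mathcal B}_{+}$ applied to the adjunction unit ${\mathcal M}\to {\mathbb R}j_{P*}j_{P}^{-1}{\mathcal M}$ with the adjunction unit $f^{\mathcal B}_{+}{\mathcal M}\to {\mathbb R}j_{Q*}j_{Q}^{-1}f^{\mathcal B}_{+}{\mathcal M}$. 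This amounts to a coherence diagram chase among standard natural transformations; it is routine but is the only place requiring genuine care, everything else being formal.
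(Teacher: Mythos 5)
Your argument is correct, but it takes a genuinely different route from the paper. The paper never touches the localization triangles for this statement: it writes ${\mathbb R}{\Gamma}_{Z_{Q}}(-)$ as ${\mathbb R}{\Gamma}_{Z_{Q}}({\mathcal O}_{Q})\otimes^{\mathbb L}_{{\mathcal O}_{Q}}(-)$ via Lemma \ref{l3}, applies the projection formula of Proposition \ref{p6} with ${\mathcal N}={\mathcal D}_{Q\leftarrow P}\otimes^{\mathbb L}_{{\mathcal D}_{P}}{\mathcal M}$, moves the tensor factor through ${\mathcal D}_{Q\leftarrow P}$ by the associativity statement of Lemma \ref{l18}, and finally converts ${\mathbb L}f^{*}{\mathbb R}{\Gamma}_{Z_{Q}}({\mathcal O}_{Q})$ into ${\mathbb R}{\Gamma}_{Z_{P}}({\mathcal O}_{P})$ by Lemma \ref{l4} and Proposition \ref{p4}. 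That route produces the morphism as an explicit composite of isomorphisms and reuses machinery the paper has already built, with no appeal to the fill-in axiom. Your route — compare the triangle ${\mathbb R}{\Gamma}_{Z_{Q}}f^{\mathcal B}_{+}{\mathcal M}\to f^{\mathcal B}_{+}{\mathcal M}\to {\mathbb R}j_{Q*}j_{Q}^{-1}f^{\mathcal B}_{+}{\mathcal M}$ with the image under $f^{\mathcal B}_{+}$ of the triangle for ${\mathcal M}$, identify the third vertices by the composition isomorphism and open base change, and pin down the fill-in by the Hom-vanishing coming from ${\rm Supp}\,{\mathbb R}{\Gamma}_{Z_{Q}}f^{\mathcal B}_{+}{\mathcal M}\subset Z_{Q}$ — is more formal and would work in any framework having localization triangles, composition of direct images, and open base change; the uniqueness argument also makes canonicity and functoriality transparent (and is in the same spirit as the paper's own proof of Proposition \ref{p1}, which uses exactly such a support argument once \ref{p9} is available). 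What you pay for this generality is the deferred coherence check that the identification $f^{\mathcal B}_{+}{\mathbb R}j_{P*}j_{P}^{-1}{\mathcal M}\cong {\mathbb R}j_{Q*}j_{Q}^{-1}f^{\mathcal B}_{+}{\mathcal M}$ intertwines the two adjunction units out of $f^{\mathcal B}_{+}{\mathcal M}$; you correctly flag this as the crux, and it does hold (for an open immersion the transfer bimodule is just the restriction of ${\mathcal D}_{Q\leftarrow P}$, so everything reduces to the compatibility of ${\mathbb R}f_{*}$ with restriction to opens), but a complete write-up would have to carry out that diagram chase, whereas the paper's projection-formula argument avoids it entirely.
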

We need some lemmas.
\begin{prop}\label{p6}
Let $f: P\to Q$ be a morphism of smooth $W_{n}$-schemes.
If ${\mathcal M}$ is an object in $D^{-}_{\rm qc}({\mathcal D}_{Q})$ and $\mathcal N$ is an object in $D^{-}(f^{-1}{\mathcal D}_{Q})$, 
then there exists a natural isomorphism
\begin{equation*}
{\mathcal M}\otimes^{\mathbb L}_{{\mathcal O}_{P}} {\mathbb R}f_{*}{\mathcal N}\xrightarrow{\cong}{\mathbb R}f_{*}\left(f^{-1}{\mathcal M}\otimes^{\mathbb L}_{f^{-1}{\mathcal O}_{Q}}{\mathcal N}\right).
\end{equation*}
in $D^{-}({\mathcal D}_{Q})$.
\end{prop}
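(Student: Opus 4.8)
The plan is to produce the arrow by adjunction and then reduce the claim that it is an isomorphism to the classical projection formula for quasi-coherent sheaves, by a way-out functor argument in the spirit of the proof of Lemma \ref{l3}.

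First I would construct the morphism. Since $f^{-1}$ is exact and carries flat ${\mathcal O}_{Q}$-modules to flat $f^{-1}{\mathcal O}_{Q}$-modules, it commutes with $\otimes^{\mathbb L}$, so there is a canonical isomorphism $f^{-1}({\mathcal M}\otimes^{\mathbb L}_{{\mathcal O}_{Q}}{\mathbb R}f_{*}{\mathcal N})\xrightarrow{\cong} f^{-1}{\mathcal M}\otimes^{\mathbb L}_{f^{-1}{\mathcal O}_{Q}}f^{-1}{\mathbb R}f_{*}{\mathcal N}$ in $D^{-}(f^{-1}{\mathcal D}_{Q})$, where both tensor products carry their natural module structure (the one coming from the Leibniz rule, pulled back from $Q$). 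Composing with $\mathrm{id}\otimes\varepsilon$, where $\varepsilon\colon f^{-1}{\mathbb R}f_{*}{\mathcal N}\to {\mathcal N}$ is the counit of the adjunction $(f^{-1},{\mathbb R}f_{*})$ — a morphism of $f^{-1}{\mathcal D}_{Q}$-modules in the derived category — one obtains an $f^{-1}{\mathcal D}_{Q}$-linear morphism $f^{-1}({\mathcal M}\otimes^{\mathbb L}_{{\mathcal O}_{Q}}{\mathbb R}f_{*}{\mathcal N})\to f^{-1}{\mathcal M}\otimes^{\mathbb L}_{f^{-1}{\mathcal O}_{Q}}{\mathcal N}$. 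Since the adjunction between $f^{-1}$ and ${\mathbb R}f_{*}$ holds on the level of the derived categories of ${\mathcal D}_{Q}$- and $f^{-1}{\mathcal D}_{Q}$-modules, this corresponds to a ${\mathcal D}_{Q}$-linear morphism ${\mathcal M}\otimes^{\mathbb L}_{{\mathcal O}_{Q}}{\mathbb R}f_{*}{\mathcal N}\to {\mathbb R}f_{*}(f^{-1}{\mathcal M}\otimes^{\mathbb L}_{f^{-1}{\mathcal O}_{Q}}{\mathcal N})$, which is the natural transformation we want.

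Next I would prove it is an isomorphism. As the forgetful functor $D({\mathcal D}_{Q})\to D({\mathcal O}_{Q})$ reflects isomorphisms, it suffices to check this in $D({\mathcal O}_{Q})$. The assertion is local on $Q$, so we may assume $Q$, and hence $P$, affine and noetherian; then ${\mathbb R}f_{*}$ has finite cohomological amplitude and commutes with arbitrary direct sums. Both functors ${\mathcal M}\mapsto {\mathcal M}\otimes^{\mathbb L}_{{\mathcal O}_{Q}}{\mathbb R}f_{*}{\mathcal N}$ and ${\mathcal M}\mapsto {\mathbb R}f_{*}(f^{-1}{\mathcal M}\otimes^{\mathbb L}_{f^{-1}{\mathcal O}_{Q}}{\mathcal N})$ are way-out left in ${\mathcal M}$ on $D^{-}_{\rm qc}({\mathcal O}_{Q})$ (for the second this uses the finite cohomological amplitude of ${\mathbb R}f_{*}$). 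Since on the affine scheme $Q$ every quasi-coherent sheaf is a quotient of a free ${\mathcal O}_{Q}$-module, the lemma on way-out functors (cf. \cite[Chap I, Proposition 7.1]{Ha}) reduces us to the case where ${\mathcal M}$ is a single free ${\mathcal O}_{Q}$-module $\bigoplus_{I}{\mathcal O}_{Q}$. In that case, using that ${\mathbb R}f_{*}$ commutes with direct sums, both sides are identified with $\bigoplus_{I}{\mathbb R}f_{*}{\mathcal N}$, and by the triangle identities of the adjunction the morphism becomes the identity. Hence it is an isomorphism in general.

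The step I expect to require the most care is purely formal: tracking the ${\mathcal D}_{Q}$-module structures through $f^{-1}$, the counit $\varepsilon$ and the $(f^{-1},{\mathbb R}f_{*})$-adjunction so as to see that the morphism is genuinely ${\mathcal D}_{Q}$-linear. Once that is granted, reduction via the forgetful functor to $D({\mathcal O}_{Q})$ leaves only the standard quasi-coherent projection formula, whose proof by way-out functors is routine; a minor auxiliary point is the (implicit) noetherianness of $Q$, which guarantees that ${\mathbb R}f_{*}$ has finite cohomological amplitude and commutes with infinite direct sums.
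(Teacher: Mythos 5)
Your proof is correct, and its overall skeleton matches the paper's: build a natural ${\mathcal D}_{Q}$-linear morphism, then observe that being an isomorphism can be tested after forgetting down to $D({\mathcal O}_{Q})$. The two halves are executed differently, though. For the construction, the paper takes an $f_{*}$-acyclic resolution ${\mathcal I}$ of ${\mathcal N}$ and a ${\mathcal D}_{Q}$-flat resolution ${\mathcal P}$ of ${\mathcal M}$ and writes down the map ${\mathcal P}\otimes_{{\mathcal O}_{Q}}f_{*}{\mathcal I}\to f_{*}(f^{-1}{\mathcal P}\otimes_{f^{-1}{\mathcal O}_{Q}}{\mathcal I})$ at the level of complexes, which sidesteps the bookkeeping you flag about the counit and the derived adjunction on $D^{-}$ (where one must invoke the finite cohomological amplitude of ${\mathbb R}f_{*}$ to stay in $D^{-}$); your adjunction construction is equivalent but puts that bookkeeping in the foreground. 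For the isomorphism, the paper simply cites the quasi-coherent projection formula \cite[II, Proposition 5.6]{Ha}, whereas you reprove it from scratch by localizing to $Q$ affine and running the way-out argument in the variable ${\mathcal M}$ (quotients of free modules, ${\mathbb R}f_{*}$ commuting with direct sums on a noetherian space, triangle identity for ${\mathcal M}={\mathcal O}_{Q}$) — exactly the template the paper itself uses in the proof of Lemma \ref{l3}, just applied in the other variable. Your route is longer but self-contained; the paper's is a one-line citation. Both are sound.
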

\begin{proof}
Note first that both sides are defined.
Let us take an $f_{*}$-acyclic resolution $\mathcal I$ of $\mathcal N$ and a ${\mathcal D}_{Q}$-flat resolution 
$\mathcal P$ of $\mathcal M$.
Then we have a natural ${\mathcal D}_{Q}$-linear morphism
\begin{eqnarray*}
{\mathcal M}\otimes^{\mathbb L}_{{\mathcal O}_{P}} {\mathbb R}f_{*}{\mathcal N}&:=& {\mathcal P}\otimes_{{\mathcal O}_{P}} f_{*}{\mathcal I}
\to f_{*} \left(f^{-1}{\mathcal P}\otimes_{f^{-1}{\mathcal O}_{Q}}{\mathcal I}\right)\\
&\to& {\mathbb R}f_{*} \left(f^{-1}{\mathcal P}\otimes_{f^{-1}{\mathcal O}_{Y}}{\mathcal I}\right)={\mathbb R}f_{*}\left(f^{-1}{\mathcal M}\otimes^{\mathbb L}_{f^{-1}{\mathcal O}_{Y}}{\mathcal N}\right).
\end{eqnarray*} 
It is enough to prove that this is an isomorphism in $D({\mathcal O}_{Q})$.
Then this follows from \cite[II, Proposition 5.6]{Ha}.
\end{proof}
\begin{lemm}\label{l18}
Let $f: P\to Q$ be a morphism of smooth $W_{n}$-schemes.
For an object $\mathcal E$ in $D^{-}({\mathcal D}_{Q})$ and an object $\mathcal F$ in $D^{-}({\mathcal D}_{P})$, there exists an isomorphism
\begin{equation*}
\left({f^{-1}{\mathcal E}}\otimes^{\mathbb L}_{f^{-1}{\mathcal O}_{Q}}{\mathcal D}_{Q\leftarrow P}\right)\otimes^{\mathbb L}_{{\mathcal D}_{P}}{\mathcal F}\xrightarrow{\cong}{{\mathcal D}_{Q\leftarrow P}}\otimes^{\mathbb L}_{{\mathcal D}_{P}}\left({\mathbb L}f^{*}{\mathcal E}\otimes^{\mathbb L}_{{\mathcal O}_{P}}{\mathcal F}\right)
\end{equation*}
in $D^{b}(f^{-1}{\mathcal D}_{Q})$.
\end{lemm}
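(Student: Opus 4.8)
The plan is to treat this as the local form of the projection formula for $\mathcal{D}$-modules and to prove it by reducing, through the lemma on way-out functors, to the universal case $\mathcal{E}=\mathcal{D}_{Q}$, $\mathcal{F}=\mathcal{D}_{P}$. First I would choose a resolution $\mathcal{L}\to\mathcal{E}$ by flat left $\mathcal{D}_{Q}$-modules and a resolution $\mathcal{G}\to\mathcal{F}$ by flat left $\mathcal{D}_{P}$-modules; because $\mathcal{D}_{Q}$ and $\mathcal{D}_{P}$ are locally free over $\mathcal{O}_{Q}$ and $\mathcal{O}_{P}$, the terms of $\mathcal{L}$ and $\mathcal{G}$ are also $\mathcal{O}$-flat, so $\mathbb{L}f^{*}\mathcal{E}$ is computed by $f^{*}\mathcal{L}$. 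The standard isomorphism of left $\mathcal{D}_{P}$-modules $N\otimes_{\mathcal{O}_{P}}\mathcal{D}_{P}\cong\mathcal{D}_{P}\otimes_{\mathcal{O}_{P}}N$, exchanging the diagonal $\mathcal{D}_{P}$-action with the action on the left factor (a consequence of the comultiplication on $\mathcal{D}_{P}$; cf. \cite[2.3.1]{B2}), shows that $f^{*}\mathcal{L}\otimes_{\mathcal{O}_{P}}\mathcal{G}$ is again a complex of $\mathcal{D}_{P}$-flat modules (each term is a retract of a sum of copies of $f^{*}\mathcal{L}^{i}\otimes_{\mathcal{O}_{P}}\mathcal{D}_{P}\cong\mathcal{D}_{P}\otimes_{\mathcal{O}_{P}}(f^{*}\mathcal{L}^{i})$, which is $\mathcal{D}_{P}$-flat since $f^{*}\mathcal{L}^{i}$ is $\mathcal{O}_{P}$-flat). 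Hence the left-hand side is represented by $f^{-1}\mathcal{L}\otimes_{f^{-1}\mathcal{O}_{Q}}\mathcal{D}_{Q\leftarrow P}\otimes_{\mathcal{D}_{P}}\mathcal{G}$ and the right-hand side by $\mathcal{D}_{Q\leftarrow P}\otimes_{\mathcal{D}_{P}}(f^{*}\mathcal{L}\otimes_{\mathcal{O}_{P}}\mathcal{G})$, with no further resolution needed, and I would define a morphism between these two complexes termwise as the composite of the associativity and commutativity constraints for the tensor products over $\mathcal{O}_{P}$, $f^{-1}\mathcal{O}_{Q}$ and $\mathcal{D}_{P}$ together with the comultiplication maps of $\mathcal{D}_{P}$ governing the various $\mathcal{D}$-module structures, then check that it is $f^{-1}\mathcal{D}_{Q}$-linear and well-defined in the derived category.

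Next I would note that both sides are way-out left triangulated functors on $D^{-}(\mathcal{D}_{Q})\times D^{-}(\mathcal{D}_{P})$, since each is built from right-exact tensor operations. By the lemma on way-out functors \cite[Chap.~I, Proposition 7.1]{Ha}, applied in one variable at a time, it suffices to check that the morphism of the previous paragraph is an isomorphism when $\mathcal{E}$ and $\mathcal{F}$ are single modules; and since both functors commute with arbitrary direct sums and, locally on $Q$ and $P$, every $\mathcal{D}_{Q}$- (resp.\ $\mathcal{D}_{P}$-) module is a quotient of a free one, one reduces to the case $\mathcal{E}=\mathcal{D}_{Q}$, $\mathcal{F}=\mathcal{D}_{P}$.

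For that case the left-hand side is $f^{-1}\mathcal{D}_{Q}\otimes_{f^{-1}\mathcal{O}_{Q}}\mathcal{D}_{Q\leftarrow P}$, with the diagonal left $f^{-1}\mathcal{D}_{Q}$-action and the right $\mathcal{D}_{P}$-action of $\mathcal{D}_{Q\leftarrow P}$, while, using $\mathbb{L}f^{*}\mathcal{D}_{Q}=f^{*}\mathcal{D}_{Q}=\mathcal{O}_{P}\otimes_{f^{-1}\mathcal{O}_{Q}}f^{-1}\mathcal{D}_{Q}$ and the isomorphism $N\otimes_{\mathcal{O}_{P}}\mathcal{D}_{P}\cong\mathcal{D}_{P}\otimes_{\mathcal{O}_{P}}N$ again, the right-hand side collapses to $\mathcal{D}_{Q\leftarrow P}\otimes_{\mathcal{O}_{P}}(\mathcal{O}_{P}\otimes_{f^{-1}\mathcal{O}_{Q}}f^{-1}\mathcal{D}_{Q})\cong\mathcal{D}_{Q\leftarrow P}\otimes_{f^{-1}\mathcal{O}_{Q}}f^{-1}\mathcal{D}_{Q}$, where $\mathcal{D}_{Q\leftarrow P}$ is viewed as an $f^{-1}\mathcal{O}_{Q}$-module through $f^{\#}$ and its $\mathcal{O}_{P}$-structure. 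Thus the statement reduces to a bimodule identity comparing $f^{-1}\mathcal{D}_{Q}\otimes_{f^{-1}\mathcal{O}_{Q}}\mathcal{D}_{Q\leftarrow P}$ (diagonal action, $f^{-1}\mathcal{O}_{Q}$ acting through the given left $f^{-1}\mathcal{D}_{Q}$-structure on $\mathcal{D}_{Q\leftarrow P}$) with $\mathcal{D}_{Q\leftarrow P}\otimes_{f^{-1}\mathcal{O}_{Q}}f^{-1}\mathcal{D}_{Q}$ (action on the first factor, $f^{-1}\mathcal{O}_{Q}$ acting through $f^{\#}$), which is once more an instance of the diagonal/left-factor isomorphism, valid because the left $f^{-1}\mathcal{D}_{Q}$-action on $\mathcal{D}_{Q\leftarrow P}$ commutes with multiplication by $\mathcal{O}_{P}$ — something visible from the explicit construction of $\mathcal{D}_{Q\leftarrow P}$ in \cite[3.4.1]{B2}. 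I expect the only genuine difficulty to lie in precisely this bookkeeping: the $\mathcal{O}_{P}$-module structure on $\mathcal{D}_{Q\leftarrow P}$ and the $f^{-1}\mathcal{O}_{Q}$-structure induced by its left $f^{-1}\mathcal{D}_{Q}$-action do not coincide, so at each step one must verify that the isomorphism used is simultaneously left $f^{-1}\mathcal{D}_{Q}$-linear and right $\mathcal{D}_{P}$-linear, threading the associativity and commutativity morphisms through the comultiplication of $\mathcal{D}_{P}$ and $f^{-1}\mathcal{D}_{Q}$ with care; no individual step is deep.
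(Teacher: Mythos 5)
Your argument is correct and is essentially the proof the paper intends: the paper disposes of this lemma with a one-line citation to \cite[Proposition 1.2.25]{Ca}, whose argument is exactly the projection-formula d\'evissage you describe --- replace $\mathcal E$ and $\mathcal F$ by flat resolutions, reduce by way-out and direct-sum arguments to $\mathcal E={\mathcal D}_{Q}$, $\mathcal F={\mathcal D}_{P}$, and conclude with the diagonal-versus-left-factor transposition isomorphism for the bimodule ${\mathcal D}_{Q\leftarrow P}$. The only loose ends are routine: the reduction to free modules requires either a quasi-coherence hypothesis or the standard flat generators $j_{!}({\mathcal D}_{Q}|_{U})$ in the category of all sheaves of modules, and ``retract of a sum of copies'' should read ``filtered colimit of free modules'' (Lazard) when justifying ${\mathcal D}_{P}$-flatness of the terms of $f^{*}{\mathcal L}\otimes_{{\mathcal O}_{P}}{\mathcal G}$.
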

\begin{proof}
The proof is the same as that of the corresponding proposition for ${\mathcal D}$-modules of higher level proved in \cite[Proposition 1.2.25]{Ca}.
\end{proof}

Let us prove Proposition \ref{p9}.
\begin{proof}
 Applying Proposition \ref{p6} to the case with ${\mathcal M}={\mathbb R}{\Gamma}_{Z_{Q}}({\mathcal O}_{Q})$ and 
 ${\mathcal N}={\mathcal D}_{Q\leftarrow P}\otimes^{\mathbb L}_{{\mathcal D}_{P}}{\mathcal M}$, 
we obtain 
\begin{align*}
&{{\mathbb R}{\Gamma}_{Z_{Q}}({\mathcal O}_{Q})}\otimes^{\mathbb L}_{{\mathcal O}_{Q}} {\mathbb R}f_{*}\left({{\mathcal D}_{Q\leftarrow P}\otimes^{\mathbb L}_{{\mathcal D}_{P}}{\mathcal M}}\right)\\
&\,\,\,\,\,\,\,\,\,\, \xrightarrow{\cong}{\mathbb R}f_{*}\left(f^{-1}\left({{\mathbb R}{\Gamma}_{Z_{Q}}({\mathcal O}_{Q}})\right)\otimes^{\mathbb L}_{f^{-1}{\mathcal O}_{Q}}\left({{\mathcal D}_{Q\leftarrow P}\otimes^{\mathbb L}_{{\mathcal D}_{P}}{\mathcal M}}\right)\right).
\end{align*}
The left hand side is isomorphic to ${\mathbb R}{\Gamma}_{Z_{Q}}\circ f^{\mathcal B}_{+}(\mathcal M)$ by Lemma \ref{l3}.
On the other hand, by Lemma \ref{l18}, we have
\begin{align*}
&f^{-1}{{\mathbb R}{\Gamma}_{Z_{Q}}({\mathcal O}_{Q}})\otimes^{\mathbb L}_{f^{-1}{\mathcal O}_{Q}}{{\mathcal D}_{Q\leftarrow P}\otimes^{\mathbb L}_{{\mathcal D}_{P}}{\mathcal M}}\\
&\,\,\,\,\,\,\,\,\,\, {\cong} ({f^{-1}{{\mathbb R}{\Gamma}_{Z_{Q}}({\mathcal O}_{Q}})\otimes^{\mathbb L}_{f^{-1}{\mathcal O}_{Q}}{\mathcal D}_{Q\leftarrow P}) \otimes^{\mathbb L}_{{\mathcal D}_{P}}{\mathcal M}}\\
&\,\,\,\,\,\,\,\,\,\, {\cong}{\mathcal D}_{Q\leftarrow P} \otimes^{\mathbb L}_{{\mathcal D}_{P}}\left({\mathbb L}f^{*}{{\mathbb R}{\Gamma}_{Z_{Q}}({\mathcal O}_{Q}}) \otimes^{\mathbb L}_{{\mathcal O}_{P}}{\mathcal M}\right).
\end{align*}
Lemma \ref{l3} and Proposition \ref{p4} imply that
\begin{align*}
&{\mathcal D}_{Q\leftarrow P} \otimes^{\mathbb L}_{{\mathcal D}_{P}}\left({\mathbb L}f^{*}{{\mathbb R}{\Gamma}_{Z_{Q}}({\mathcal O}_{Q}}) \otimes^{\mathbb L}_{{\mathcal O}_{P}}{\mathcal M}\right)\\
&\,\,\,\,\,\,\,\,\,\,  \xrightarrow{\cong} {\mathcal D}_{Q\leftarrow P} \otimes^{\mathbb L}_{{\mathcal D}_{P}}\left({{\mathbb R}{\Gamma}_{Z_{P}}({\mathcal O}_{P})}\otimes^{\mathbb L}_{{\mathcal O}_{P}} {\mathcal M}\right)\\
&\,\,\,\,\,\,\,\,\,\,  \xrightarrow{\cong} {\mathcal D}_{Q\leftarrow P} \otimes^{\mathbb L}_{{\mathcal D}_{P}}{{\mathbb R}{\Gamma}_{Z_{P}}({\mathcal M})}.
\end{align*}
Therefore the right hand side of the first isomorphism is isomorphic to 
\begin{equation*}
{\mathbb R}f_{*}  \left({\mathcal D}_{Q\leftarrow P} \otimes^{\mathbb L}_{{\mathcal D}_{P}}{{\mathbb R}{\Gamma}_{Z_{P}}({\mathcal M})}\right)=f^{\mathcal B}_{+}\circ {\mathbb R}{\Gamma}_{Z_{P}}(\mathcal M).
\end{equation*}

\end{proof}

\begin{prop}\label{p1}
Let $f: P\to Q$ be a morphism of smooth $W_{n}$-schemes and $Z_{Q}$ a closed subset of $Q$.
We denote by $Z_{P}$ the inverse image of $Z_{Q}$.
Let $\mathcal M$ be an object in $D^{b}_{\rm qc}({\mathcal D}_{F, P})$.
Then there exists a natural isomorphism 
\begin{equation*}
{\mathbb R}{\Gamma}_{Z_{Q}}\circ f_{+}({\mathcal M})\to f_{+}\circ {\mathbb R}{\Gamma}_{Z_{P}}({\mathcal M}).
\end{equation*}
in $D^{b}({\mathcal D}_{F, Q})$
\end{prop}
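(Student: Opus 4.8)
The plan is to reduce Proposition \ref{p1} to its already-established analogue for ordinary $\mathcal{D}$-modules, Proposition \ref{p9}, using the comparison between $f_{+}$ and $f_{+}^{\mathcal B}$ recorded in Remark \ref{r2} together with the fact that the forgetful functor $D^{+}(\mathcal{D}_{F,P})\to D^{+}(\mathcal{D}_{P})$ (and similarly to $D^{+}(\mathcal{O}_{P})$) commutes with ${\mathbb R}\Gamma_{Z}$, as noted right after the definition of ${\mathbb R}\Gamma_{Z}$. First I would construct the desired morphism ${\mathbb R}\Gamma_{Z_{Q}}\circ f_{+}(\mathcal{M})\to f_{+}\circ{\mathbb R}\Gamma_{Z_{P}}(\mathcal{M})$ in $D^{b}(\mathcal{D}_{F,Q})$. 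The natural way is to mimic the construction in Lemma \ref{l4} and Proposition \ref{p4}: one has the localization triangle ${\mathbb R}\Gamma_{Z_{P}}\mathcal{M}\to\mathcal{M}\to{\mathbb R}j_{Z_{P}*}j_{Z_{P}}^{-1}\mathcal{M}\xrightarrow{+}$, apply $f_{+}$, and then the composite $f_{+}{\mathbb R}\Gamma_{Z_{P}}\mathcal{M}\to f_{+}\mathcal{M}$ together with the vanishing/support considerations forces a canonical factorization through ${\mathbb R}\Gamma_{Z_{Q}}f_{+}\mathcal{M}$; here one uses that $f_{+}$ of something supported on $Z_{P}$ is supported on $Z_{Q}=f^{-1}(Z_{Q})\supseteq f(Z_{P})$—more precisely supported on the closure of $f(Z_{P})$, which is contained in $Z_{Q}$—so Lemma \ref{l5} applies. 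Alternatively, and perhaps more cleanly, I would build the morphism directly by the $\mathcal{O}_{Q}$-tensor formalism: ${\mathbb R}\Gamma_{Z_{Q}}f_{+}\mathcal{M}\cong{\mathbb R}\Gamma_{Z_{Q}}(\mathcal{O}_{Q})\otimes^{\mathbb L}_{\mathcal{O}_{Q}}f_{+}\mathcal{M}$ by Lemma \ref{l3}, and then chase through a projection-formula-type isomorphism as in the proof of Proposition \ref{p9}, keeping track of the $\mathcal{D}_{F}$-structure on the bimodule $\mathcal{D}_{F,Q\leftarrow P}$.

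Next I would check that the morphism just constructed is an isomorphism. Since the forgetful functor $D^{+}(\mathcal{D}_{F,Q})\to D^{+}(\mathcal{D}_{Q})$ is conservative and commutes with both ${\mathbb R}\Gamma_{Z_{Q}}$ and $f_{+}$ (the latter by Remark \ref{r2}, which identifies $f_{+}$ with $f_{+}^{\mathcal B}$ on the underlying $\mathcal{D}$-module complex), it suffices to verify that the image of our morphism in $D^{b}(\mathcal{D}_{Q})$ agrees—up to the canonical identifications—with the isomorphism of Proposition \ref{p9} applied to the underlying complex of $\mathcal{M}$ in $D^{b}_{\rm qc}(\mathcal{D}_{P})$. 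This compatibility is a matter of unwinding definitions: both morphisms are obtained by the same formal procedure (localization triangle plus support-driven factorization, or equivalently the $\mathcal{O}$-tensor projection formula), and the comparison map $f_{+}^{\mathcal B}\to f_{+}$ of Remark \ref{r2} is compatible with the localization triangles and with the isomorphisms of Lemma \ref{l3}, Lemma \ref{l18} and Proposition \ref{p4}, all of which are $\mathcal{D}_{F}$-linear refinements of their $\mathcal{D}$-linear counterparts. Once this identification is in place, Proposition \ref{p9} gives that the underlying morphism is an isomorphism, hence so is the original one.

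The main obstacle will be the bookkeeping in this last compatibility step: one must be careful that the morphism constructed on the $\mathcal{D}_{F,Q}$-level, when passed through the forgetful functor, literally coincides with—rather than merely being abstractly isomorphic to—the morphism of Proposition \ref{p9}. This requires that every intermediate isomorphism used in building the $\mathcal{D}_{F}$-version (the projection formula of Proposition \ref{p6}, the bimodule identity of Lemma \ref{l18}, the commutation of ${\mathbb L}f^{*}$ with ${\mathbb R}\Gamma$ in Proposition \ref{p4}, and the tensor-commutation of Lemma \ref{l3}) be the canonical one and compatible with forgetting the Frobenius structure. Since the excerpt has set up all of these as $\mathcal{D}_{F,P}$-linear (resp. $\mathcal{D}_{F,Q}$-linear) refinements whose construction is visibly natural in the underlying $\mathcal{D}$-modules, this is bureaucratic rather than deep; the honest way to organize it is to perform the entire chain of isomorphisms directly in $D^{b}_{\rm qc}(\mathcal{D}_{F,Q})$ (using $\mathcal{D}_{F,Q\leftarrow P}$ in place of $\mathcal{D}_{Q\leftarrow P}$ throughout the computation that proved Proposition \ref{p9}) and invoke that ${\mathbb R}\Gamma_{Z}$ and the relevant tensor operations have finite cohomological amplitude so that the way-out arguments go through verbatim. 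In short, the proof is essentially a repetition of the proof of Proposition \ref{p9} with $\mathcal{D}_{F}$ in place of $\mathcal{D}$, the only new input being Remark \ref{r2} to know that the resulting object is indeed computing $f_{+}\circ{\mathbb R}\Gamma_{Z_{P}}(\mathcal{M})$.
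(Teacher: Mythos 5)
Your primary route---build the map via the localization triangle and the support-driven factorization, where the support of $f_{+}{\mathbb R}\Gamma_{Z_{P}}{\mathcal M}$ in $Z_{Q}$ and the final isomorphism are both checked after forgetting to ${\mathcal D}_{Q}$-modules using Remark \ref{r2} and Proposition \ref{p9}---is exactly the paper's proof. The compatibility bookkeeping you worry about in the last step is handled in the paper simply by noting that the forgetful functor is conservative and commutes with ${\mathbb R}\Gamma_{Z}$ and (via Remark \ref{r2}) with $f_{+}$, so no literal identification of morphisms beyond naturality of Proposition \ref{p9} is needed.
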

\begin{proof}
Let us construct a natural transformation ${\mathbb R}{\Gamma}_{Z_{Q}}\circ f_{+}\to f_{+}\circ {\mathbb R}{\Gamma}_{Z_{P}}$.
For an object $\mathcal M$ in $D^{b}({\mathcal D}_{F, P})$,
the natural morphism ${\mathbb R}{\Gamma}_{Z_{P}}{\mathcal M}\to {\mathcal M}$ induces a morphism 
${\mathbb R}{\Gamma}_{Z_{Q}}f_{+}{\mathbb R}{\Gamma}_{Z_{P}}{\mathcal M}\to {\mathbb R}{\Gamma}_{Z_{Q}}f_{+}{\mathcal M}$.
Since, by Remark \ref{r2} and Proposition \ref{p9}, $f_{+}{\mathbb R}{\Gamma}_{Z_{P}}{\mathcal M}\cong f^{\mathcal B}_{+}{\mathbb R}{\Gamma}_{Z_{P}}{\mathcal M}\cong{\mathbb R}{\Gamma}_{Z_{Q}}f^{\mathcal B}_{+}{\mathcal M}$ as a complex of ${\mathcal D}_{Q}$-module, we know that 
$f_{+}{\mathbb R}{\Gamma}_{Z_{P}}{\mathcal M}$ is supported on $Z_{Q}$.
Therefore we have a natural morphism 
\begin{equation*}
f_{+}{\mathbb R}{\Gamma}_{Z_{P}}{\mathcal M}\xleftarrow{\cong}{\mathbb R}{\Gamma}_{Z_{Q}}f_{+}{\mathbb R}{\Gamma}_{Z_{P}}{\mathcal M}\to {\mathbb R}{\Gamma}_{Z_{Q}}f_{+}{\mathcal M}.
\end{equation*}
Again by Proposition \ref{p9} we conclude that it is an isomorphism.
\end{proof}

\section{Riemann-Hilbert correspondence for unit $F$-crystals}

\subsection{Category $D^{b}_{\rm lfgu}(X/W_{n})^{\circ}$}\label{s41}

\begin{defi}

Let $P$ be a smooth $W_{n}$-scheme and
let $Z$ and $T$ be closed subsets of $P$.
We define the category ${\mathcal C}_{P, Z, T}$ to be the full triangulated subcategory of $D^{b}_{\rm lfgu}({\mathcal D}_{F, P})^{\circ}$ consisting of complexes $\mathcal M$ satisfying 
\begin{equation}
{\mathbb R}{\Gamma}_{Z}{\mathcal M}\xrightarrow{\cong}{\mathcal M} \textit{ and\,\, } {\mathbb R}{\Gamma}_{T}{\mathcal M}=0.
\end{equation}
\end{defi}

\begin{lemm}\label{l1}
Let $P$ be a smooth $W_{n}$-scheme.
Let $Z$, $Z'$, $T$ and $T'$ be closed subsets of $P$ satisfying $Z\setminus T=Z'\setminus T'$.
Then we have the equality
\begin{equation*}
{\mathcal C}_{P, Z, T}={\mathcal C}_{P, Z', T'}.
\end{equation*}
\end{lemm}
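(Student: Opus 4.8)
The plan is to reduce the equality ${\mathcal C}_{P,Z,T}={\mathcal C}_{P,Z',T'}$ to the single observation that the two conditions in the definition of ${\mathcal C}_{P,Z,T}$ depend only on the locally closed set $Z\setminus T$, via the support of the cohomology sheaves. Concretely, for an object ${\mathcal M}\in D^{b}_{\rm lfgu}({\mathcal D}_{F,P})^{\circ}$, the condition ${\mathbb R}{\Gamma}_{Z}{\mathcal M}\xrightarrow{\cong}{\mathcal M}$ is equivalent, by Lemma \ref{l5} (applied after forgetting down to ${\mathcal O}_{P}$-modules, which is legitimate since the forgetful functor commutes with ${\mathbb R}{\Gamma}_{Z}$), to the inclusion ${\rm Supp}\,{\mathcal M}\subset Z$. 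Similarly, the condition ${\mathbb R}{\Gamma}_{T}{\mathcal M}=0$ is equivalent, again by the distinguished triangle (\ref{e5}) and Lemma \ref{l5}, to the vanishing of $j_{T}^{-1}{\mathcal M}$ — wait, more precisely it says ${\mathcal M}\xrightarrow{\cong}{\mathbb R}j_{T*}j_{T}^{-1}{\mathcal M}$ — so I should be a little careful: ${\mathbb R}{\Gamma}_{T}{\mathcal M}=0$ means the canonical map ${\mathcal M}\to {\mathbb R}j_{T*}j_{T}^{-1}{\mathcal M}$ is an isomorphism, which for a single module means precisely that ${\mathcal M}$ has no sections supported on $T$; for a complex it is the derived version of this. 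I would phrase it as: ${\mathbb R}{\Gamma}_{T}{\mathcal M}=0$ holds if and only if ${\mathbb R}{\Gamma}_{T}{\mathcal M}=0$ as an object of $D({\mathcal O}_{P})$, and this latter condition is intrinsic to the pair $({\mathcal M},T)$.

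The key step is then to show that, \emph{given} ${\rm Supp}\,{\mathcal M}\subset Z$, the condition ${\mathbb R}{\Gamma}_{T}{\mathcal M}=0$ is equivalent to ${\mathbb R}{\Gamma}_{T\cap Z}{\mathcal M}=0$, and more generally depends only on $T\cap{\rm Supp}\,{\mathcal M}$, hence only on $T\cap Z$ and in fact only on $Z\setminus T$. For this I would use the localization triangle for the pair of closed subsets: if $T'\subset T$ are two closed subsets with $T\setminus T'$ disjoint from ${\rm Supp}\,{\mathcal M}$, then ${\mathbb R}{\Gamma}_{T}{\mathcal M}\cong {\mathbb R}{\Gamma}_{T'}{\mathcal M}$, because the cone sits over the locally closed set $T\setminus T'$ on which ${\mathcal M}$ restricts to zero. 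Combining: for ${\mathcal M}$ with support in $Z$ (resp. $Z'$), we have ${\mathbb R}{\Gamma}_{T}{\mathcal M}\cong{\mathbb R}{\Gamma}_{T\cap Z}{\mathcal M}$, and since $Z\setminus T=Z'\setminus T'$ forces $Z\cap({\rm complement\ of\ }T)=Z'\cap({\rm complement\ of\ }T')$ as subsets of $P$, one checks that $T\cap Z$ and $T'\cap Z'$ cut out the same "vanishing locus requirement" on a sheaf supported in $Z=Z'$ — here I would note that $Z\setminus T=Z'\setminus T'$ implies in particular $\overline{Z\setminus T}$ coincide and, since $Z$ and $Z'$ are closed, that the closures agree, allowing me to replace $Z$ by $\overline{Z\setminus T}$ throughout. (One subtlety: $Z$ need not equal $Z'$, only $Z\setminus T=Z'\setminus T'$; but an object ${\mathcal M}$ satisfying both conditions automatically has ${\rm Supp}\,{\mathcal M}\subset \overline{Z\setminus T}$, because ${\rm Supp}\,{\mathcal M}\subset Z$ and ${\rm Supp}\,{\mathcal M}$ meets the complement of $T$ only inside $Z\setminus T$, and ${\rm Supp}\,{\mathcal M}$ is closed, so ${\rm Supp}\,{\mathcal M}=\overline{{\rm Supp}\,{\mathcal M}\cap(P\setminus T)}\subset\overline{Z\setminus T}$; provided ${\rm Supp}\,{\mathcal M}$ has no component inside $T$, which is exactly what ${\mathbb R}{\Gamma}_{T}{\mathcal M}=0$ buys us.)

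So the argument runs: (i) ${\mathcal M}\in{\mathcal C}_{P,Z,T}$ $\Longleftrightarrow$ ${\rm Supp}\,{\mathcal M}\subset Z$ and ${\mathbb R}{\Gamma}_{T}{\mathcal M}=0$; (ii) under ${\mathbb R}{\Gamma}_{T}{\mathcal M}=0$, the support condition upgrades to ${\rm Supp}\,{\mathcal M}\subset\overline{Z\setminus T}$, and conversely ${\rm Supp}\,{\mathcal M}\subset\overline{Z\setminus T}$ together with $({\rm Supp}\,{\mathcal M})\cap T$ nowhere dense in ${\rm Supp}\,{\mathcal M}$ recovers both conditions; (iii) the set $\overline{Z\setminus T}$ and the "${\mathcal M}$ vanishes on $Z\setminus T$ but may be supported anywhere in its closure, with no component in the boundary" package is manifestly symmetric in $(Z,T)$ versus $(Z',T')$ once $Z\setminus T=Z'\setminus T'$; hence the two categories have the same objects, and being full triangulated subcategories of the same ambient category, they are equal. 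The main obstacle I anticipate is handling (ii) cleanly for complexes rather than single modules — i.e. making precise what "${\rm Supp}\,{\mathcal M}$ has no irreducible component contained in $T$" means at the derived level and verifying it is exactly equivalent to the conjunction of ${\rm Supp}\,{\mathcal M}\subset Z$ and ${\mathbb R}{\Gamma}_{T}{\mathcal M}=0$; I would dispatch this by working cohomology sheaf by cohomology sheaf and invoking Lemma \ref{l5} together with the fact that ${\mathbb R}{\Gamma}_{T}$ has finite cohomological amplitude, reducing to the statement for quasi-coherent ${\mathcal O}_{P}$-modules where it is classical that ${\Gamma}_{T}{\mathcal F}=0$ iff no associated point of ${\mathcal F}$ lies in $T$.
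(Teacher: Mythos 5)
Your overall strategy---replace the defining conditions of ${\mathcal C}_{P,Z,T}$ by a characterization that visibly depends only on the locally closed set $Z\setminus T$---is viable and genuinely different from the paper's proof, which instead handles the two special cases $Z=Z'$ and $T=T'$ separately (the first via ${\mathbb R}\Gamma_{T}\circ{\mathbb R}\Gamma_{Z}\cong{\mathbb R}\Gamma_{Z\cap T}$, the second via a flat base change computation of $j_{T}^{-1}{\mathbb R}j_{Z*}j_{Z}^{-1}{\mathcal M}$) and then concatenates the two cases. Your step (ii) is correct as stated, and is best justified without the ``no component inside $T$'' detour: from ${\mathcal M}\xrightarrow{\cong}{\mathbb R}j_{T*}j_{T}^{-1}{\mathcal M}$ and ${\rm Supp}\,j_{T}^{-1}{\mathcal M}\subset Z\setminus T$ one gets ${\rm Supp}\,{\mathcal M}\subset\overline{Z\setminus T}$ directly, since pushforward along an open immersion does not enlarge the support beyond its closure.

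The genuine gap is in the second half of the characterization. You never actually write down a reformulation of ``${\mathbb R}\Gamma_{T}{\mathcal M}=0$'' that depends only on $Z\setminus T$, and the mechanism you propose for producing one---work cohomology sheaf by cohomology sheaf and invoke ``$\Gamma_{T}{\mathcal F}=0$ iff no associated point of ${\mathcal F}$ lies in $T$''---fails for complexes: ${\mathbb R}\Gamma_{T}{\mathcal M}=0$ does not imply $\Gamma_{T}{\rm H}^{i}({\mathcal M})=0$ for each $i$. For instance take $n=1$, $P={\mathbb A}^{2}_{k}$, $T=\{0\}$ and ${\mathcal M}={\mathbb R}j_{T*}j_{T}^{-1}{\mathcal O}_{P}$, which lies in $D^{b}_{\rm lfgu}({\mathcal D}_{F,P})^{\circ}$ by \cite[Proposition 15.5.1]{EK}; then ${\mathbb R}\Gamma_{T}{\mathcal M}=0$, yet ${\rm H}^{1}({\mathcal M})$ is the (nonzero) second local cohomology sheaf of ${\mathcal O}_{P}$ along $T$, a sheaf all of whose associated points lie in $T$. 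So ``no component of the support in the boundary'' is not the right intrinsic condition, and the reduction to single quasi-coherent modules breaks down. The repair: set $\bar{X}:=\overline{Z\setminus T}$ and $\partial:=\bar{X}\setminus(Z\setminus T)$, a closed subset of $P$, and prove that ${\mathcal M}\in{\mathcal C}_{P,Z,T}$ if and only if ${\rm Supp}\,{\mathcal M}\subset\bar{X}$ and ${\mathbb R}\Gamma_{\partial}{\mathcal M}=0$. Both implications follow from Lemma \ref{l5}, the identity ${\mathbb R}\Gamma_{A}\circ{\mathbb R}\Gamma_{B}\cong{\mathbb R}\Gamma_{A\cap B}$, and the elementary facts $\partial\subset T$, $\bar{X}\subset Z$ and $T\cap\bar{X}=\partial$; the right-hand side manifestly depends only on $Z\setminus T$. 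With that statement in place of your (iii), the argument closes.
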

\begin{proof}
First we prove the equality in the case $Z=Z'$.
One has $Z\cap T=Z\cap T'$.
Then an isomorphism ${\mathbb R}{\Gamma}_{Z}{\mathcal M}\xrightarrow{\cong} {\mathcal M}$ induces 
\begin{equation*}
{\mathbb R}{\Gamma}_{T}{\mathcal M}\xleftarrow{\cong}{\mathbb R}{\Gamma}_{Z\cap T}{\mathcal M}={\mathbb R}{\Gamma}_{Z\cap T'}{\mathcal M}\xrightarrow{\cong}{\mathbb R}{\Gamma}_{T'}{\mathcal M}.
\end{equation*}
Next we consider the case $T=T'$.
We have to show that ${\mathbb R}{\Gamma}_{Z}{\mathcal M}\xrightarrow{\cong}{\mathcal M}$ if and only if ${\mathbb R}{\Gamma}_{Z'}{\mathcal M}\xrightarrow{\cong}{\mathcal M}$ under the assumption ${\mathbb R}{\Gamma}_{T}{\mathcal M}=0$.
For a closed subset $C$ of $P$, let us denote by $j_{C}$ the canonical open immersion $P\setminus C\hookrightarrow P$.
Then the condition ${\mathbb R}{\Gamma}_{Z}{\mathcal M}\xrightarrow{\cong}{\mathcal M}$ is equivalent to the condition ${\mathbb R}j_{Z*}j_{Z}^{-1}{\mathcal M}=0$.
One always has ${\mathbb R}{\Gamma}_{T}{\mathbb R}j_{Z*}j_{Z}^{-1}{\mathcal M}\cong {\mathbb R}j_{Z*}j_{Z}^{-1}{\mathbb R}{\Gamma}_{T}{\mathcal M}=0$ by Proposition \ref{p4} and Proposition \ref{p1}.
By the distinguished triangle
\begin{equation*}
{\mathbb R}{\Gamma}_{T}{\mathbb R}j_{Z*}j_{Z}^{-1}{\mathcal M}\to {\mathbb R}j_{Z*}j_{Z}^{-1}{\mathcal M}\to 
{\mathbb R}j_{T*}j_{T}^{-1}{\mathbb R}j_{Z*}j_{Z}^{-1}{\mathcal M}\xrightarrow{+},
\end{equation*}
we see that the condition ${\mathbb R}j_{Z*}j_{Z}^{-1}{\mathcal M}=0$ is equivalent to the condition ${\mathbb R}j_{T*}j_{T}^{-1}{\mathbb R}j_{Z*}j_{Z}^{-1}{\mathcal M}=0$.
Let us denote by $j$ the open immersion $(P\setminus T)\setminus (Z\setminus T)=(P\setminus T)\setminus (Z'\setminus T)\hookrightarrow P\setminus T$ and by $j'$ the open immersion $(P\setminus T)\setminus (Z\setminus T)\hookrightarrow P$.
We have the following cartesian diagram:
\[\xymatrix{
{(P\setminus T)\setminus (Z\setminus T)} \ar[d] \ar@{^{(}-_>}[r]^{\,\,\,\,\,\,\,\,\,\,\,\,\,\,\,\,j} & {P\setminus T}  \ar@{^{(}-_>}[d]^{j_{T}} \\
{P\setminus Z} \ar@{^{(}-_>}[r]^{j_{Z}} & P. }
\]
Applying the flat base change theorem to the complex $j_{Z}^{-1}{\mathcal M}$ of ${\mathcal O}_{P\setminus Z}$-modules, we obtain
\begin{equation*}
j_{T}^{-1}{\mathbb R}j_{Z*}j_{Z}^{-1}{\mathcal M}\cong {\mathbb R}j_{*}j'^{-1}{\mathcal M}\cong j_{T}^{-1}{\mathbb R}j_{Z'*}j_{Z'}^{-1}{\mathcal M}.
\end{equation*}
Hence ${\mathbb R}j_{T*}j_{T}^{-1}{\mathbb R}j_{Z*}j_{Z}^{-1}{\mathcal M}=0$ if and only if
${\mathbb R}j_{T*}j_{T}^{-1}{\mathbb R}j_{Z'*}j_{Z'}^{-1}{\mathcal M}=0$.
The general case follows from these two cases.
\end{proof}

\begin{prop}\label{p2}
Let $P$ be a smooth $W_{n}$-scheme and $X$ a locally closed subset of $P$.
Let $j: U\hookrightarrow P$ be an open immersion of smooth $W_{n}$-schemes such that an immersion $X\hookrightarrow P$ factors as a closed immersion $X\hookrightarrow U$ and the open immersion $U\hookrightarrow P$.
Let $Z$ be a closed subset of $P$ such that $Z\cap U=X$.
We set $T:=\left(P\setminus U\right)\cap Z$.
Then the direct image functor ${\mathbb R}j_{*}(=j_{+})$ induces an equivalence of triangulated categories
\begin{equation*}
{\mathbb R}j_{*}: {\mathcal C}_{U, X, \emptyset}\to {\mathcal C}_{P, Z, T}
\end{equation*}
 with  quasi-inverse $j^{-1}(=j^{!})$.
\end{prop}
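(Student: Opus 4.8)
The strategy is to first verify that ${\mathbb R}j_{*}$ and $j^{-1}$ are well defined between the two subcategories, and then to deduce that they are mutually quasi-inverse directly from the local cohomology triangle (\ref{e5}). Since $j$ is an open immersion of smooth $W_{n}$-schemes we have ${\mathbb R}j_{*}=j_{+}$ and $j^{-1}=j^{!}$, and both $j_{+}$ and $j^{!}$ preserve $D^{b}_{\rm lfgu}({\mathcal D}_{F,-})^{\circ}$ by the results of Emerton--Kisin recalled in Section 2; so for well-definedness only the local cohomology conditions need to be checked.

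For $\mathcal{N}\in{\mathcal C}_{P,Z,T}$ I would apply Proposition \ref{p4} to the morphism $j$: as $j^{-1}(Z)=Z\cap U=X$, it gives $j^{!}{\mathbb R}{\Gamma}_{Z}\mathcal{N}\xrightarrow{\cong}{\mathbb R}{\Gamma}_{X}j^{!}\mathcal{N}$, and since ${\mathbb R}{\Gamma}_{Z}\mathcal{N}\xrightarrow{\cong}\mathcal{N}$ this yields ${\mathbb R}{\Gamma}_{X}j^{-1}\mathcal{N}\xrightarrow{\cong}j^{-1}\mathcal{N}$; the condition ${\mathbb R}{\Gamma}_{\emptyset}(-)=0$ is automatic, so $j^{-1}$ sends ${\mathcal C}_{P,Z,T}$ into ${\mathcal C}_{U,X,\emptyset}$. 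Conversely, let $\mathcal{M}\in{\mathcal C}_{U,X,\emptyset}$, so that ${\rm Supp}\,\mathcal{M}\subseteq X$ by Lemma \ref{l5}. By base change along open immersions ${\mathbb R}j_{*}\mathcal{M}$ restricts to $0$ on any open of $P$ disjoint from ${\rm Supp}\,\mathcal{M}$, hence ${\rm Supp}\,{\mathbb R}j_{*}\mathcal{M}$ is contained in the closure of $X$ in $P$, which lies in $Z$ because $X\subseteq Z$ and $Z$ is closed; by Lemma \ref{l5}, ${\mathbb R}{\Gamma}_{Z}{\mathbb R}j_{*}\mathcal{M}\xrightarrow{\cong}{\mathbb R}j_{*}\mathcal{M}$. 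For the vanishing at $T$ the point is that $T=(P\setminus U)\cap Z\subseteq P\setminus U$, so $U\subseteq P\setminus T$ and $j$ factors as $U\hookrightarrow P\setminus T\xrightarrow{j_{T}}P$, where $j_{T}$ is the open immersion $P\setminus T\hookrightarrow P$; writing $j''$ for the first map we get ${\mathbb R}j_{*}={\mathbb R}j_{T*}{\mathbb R}j''_{*}$, and the counit isomorphism for $j_{T}$ makes the adjunction unit ${\mathbb R}j_{*}\mathcal{M}\to{\mathbb R}j_{T*}j_{T}^{-1}{\mathbb R}j_{*}\mathcal{M}$ an isomorphism (triangle identities); the triangle (\ref{e5}) for the closed set $T$ then forces ${\mathbb R}{\Gamma}_{T}{\mathbb R}j_{*}\mathcal{M}=0$. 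Thus ${\mathbb R}j_{*}$ sends ${\mathcal C}_{U,X,\emptyset}$ into ${\mathcal C}_{P,Z,T}$.

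It remains to see that the two composites are the identity. For $\mathcal{M}\in{\mathcal C}_{U,X,\emptyset}$, $j^{-1}{\mathbb R}j_{*}\mathcal{M}\xrightarrow{\cong}\mathcal{M}$ is the usual counit isomorphism for an open immersion. In the other direction, for $\mathcal{N}\in{\mathcal C}_{P,Z,T}$ the adjunction unit $\mathcal{N}\to{\mathbb R}j_{*}j^{-1}\mathcal{N}$ is, with $j=j_{P\setminus U}$ in the notation of Section 3, precisely the second map of the triangle (\ref{e5}) for the closed set $P\setminus U$, whose remaining term is ${\mathbb R}{\Gamma}_{P\setminus U}\mathcal{N}$ up to shift; and ${\mathbb R}{\Gamma}_{P\setminus U}\mathcal{N}\cong{\mathbb R}{\Gamma}_{P\setminus U}{\mathbb R}{\Gamma}_{Z}\mathcal{N}\cong{\mathbb R}{\Gamma}_{(P\setminus U)\cap Z}\mathcal{N}={\mathbb R}{\Gamma}_{T}\mathcal{N}=0$, using ${\mathbb R}{\Gamma}_{Z}\mathcal{N}\cong\mathcal{N}$, ${\mathbb R}{\Gamma}_{T}\mathcal{N}=0$ and the composition formula ${\mathbb R}{\Gamma}_{A}{\mathbb R}{\Gamma}_{B}\cong{\mathbb R}{\Gamma}_{A\cap B}$ (as already used in the proof of Lemma \ref{l1}). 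Hence the unit is an isomorphism, and ${\mathbb R}j_{*}$ and $j^{-1}$ are quasi-inverse equivalences.

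Once the two subcategories are known to be preserved, everything is formal; the only step needing a genuine (if small) observation is the vanishing ${\mathbb R}{\Gamma}_{T}{\mathbb R}j_{*}\mathcal{M}=0$, which hinges on $T$ being disjoint from $U$, together with the input, imported from \cite{EK}, that $j_{+}$ and $j^{!}$ keep us inside $D^{b}_{\rm lfgu}({\mathcal D}_{F,-})^{\circ}$. I do not expect any further obstacle.
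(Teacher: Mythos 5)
Your proof is correct and follows essentially the same route as the paper: verify well-definedness of both functors, observe that the counit $j^{-1}{\mathbb R}j_{*}\to\mathrm{id}$ is an isomorphism, and reduce the unit isomorphism to the vanishing of ${\mathbb R}\Gamma_{P\setminus U}{\mathcal N}$, deduced from ${\mathbb R}\Gamma_{Z}{\mathcal N}\cong{\mathcal N}$ and ${\mathbb R}\Gamma_{T}{\mathcal N}=0$. The only (harmless) local variations are that the paper gets well-definedness of ${\mathbb R}j_{*}$ from Proposition \ref{p1} rather than your support-plus-factorization argument, and establishes the final vanishing via an auxiliary distinguished triangle rather than by applying the composition formula ${\mathbb R}\Gamma_{A}{\mathbb R}\Gamma_{B}\cong{\mathbb R}\Gamma_{A\cap B}$ in one step.
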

\begin{proof}
Firstly we shall see that the functors ${\mathbb R}j_{*}$ and $j^{-1}$ are well-defined.
Let ${\mathcal M}$ be an object in ${\mathcal C}_{U, X, \emptyset}$.
By Proposition \ref{p1}, we have ${\mathbb R}{\Gamma}_{Z}({\mathbb R}j_{*}{\mathcal M})\cong {\mathbb R}j_{*}{\mathbb R}{\Gamma}_{Z\cap U}{\mathcal M}={\mathbb R}j_{*}{\mathbb R}{\Gamma}_{X}{\mathcal M}\xrightarrow{\cong} {\mathbb R}j_{*}{\mathcal M}$.
We also have ${\mathbb R}{\Gamma}_{T}({\mathbb R}j_{*}{\mathcal M})=0$ as $T\cap U=\emptyset$ and thus know that ${\mathbb R}j_{*}$ restricts to a functor ${\mathcal C}_{U, X, \emptyset}\to {\mathcal C}_{P, Z, T}$.
Conversely, let ${\mathcal N}$ be an object in ${\mathcal C}_{P, Z, T}$.
Applying the functor $j^{-1}$ to ${\mathbb R}{\Gamma}_{Z}{\mathcal N}\xrightarrow{\cong} {\mathcal N}$ we obtain ${\mathbb R}{\Gamma}_{X}{j^{-1}\mathcal N}\xrightarrow\cong {j^{-1}\mathcal N}$.
There exist natural adjunction morphisms (cf. \cite[Lemma 4.3.1]{EK})
\begin{equation*}
j^{-1}{\mathbb R}j_{*}{\mathcal M}\to {\mathcal M} \textit{ and } {\mathcal N}\to {\mathbb R}j_{*}j^{-1}{\mathcal N}.
\end{equation*}
One has $j^{-1}{\mathbb R}j_{*}{\mathcal M}\xrightarrow{\cong} {\mathcal M}$ for any ${\mathcal M}\in {\mathcal C}_{U, X, \emptyset}$.
Let us prove that the adjunction morphism ${\mathcal N}\to {\mathbb R}j_{*}j^{-1}{\mathcal N}$ is an isomorphism for any  ${\mathcal N}\in {\mathcal C}_{P, Z, T}$.
One has a distinguished triangle
\begin{equation*}
{\mathbb R}{\Gamma}_{P\setminus U}{\mathcal N} \to {\mathcal N} \to {\mathbb R}j_{*}j^{-1}{\mathcal N} \xrightarrow{+1}.
\end{equation*}
We need to show that ${\mathbb R}{\Gamma}_{P\setminus U}{\mathcal N}$ is quasi-isomorphic to zero.
Let us consider a distinguished triangle
\begin{equation}\label{e1}
{\mathbb R}{\Gamma}_{Z}{\mathbb R}{\Gamma}_{P\setminus U}{\mathcal N}\to {\mathbb R}{\Gamma}_{P\setminus U}{\mathcal N} \to {\mathbb R}j_{Z*}j_{Z}^{-1}{\mathbb R}{\Gamma}_{P\setminus U}{\mathcal N} \xrightarrow{+1},
\end{equation}
where $j_{Z}$ denotes the open immersion $P\setminus Z\hookrightarrow P$.
One has ${\mathbb R}{\Gamma}_{Z}{\mathbb R}{\Gamma}_{P\setminus U}{\mathcal N}={\mathbb R}{\Gamma}_{T}{\mathcal N}=0$.
On the other hand, we obtain 
\begin{equation*}
{\mathbb R}j_{Z*}j_{Z}^{-1}{\mathbb R}{\Gamma}_{P\setminus U}{\mathcal N}={\mathbb R}{\Gamma}_{P\setminus U}{\mathbb R}j_{Z*}j_{Z}^{-1}{\mathcal N}=0.
\end{equation*}
So the assertion follows from (\ref{e1}).
\end{proof}

Recall that a $W_{n}$-embeddable $k$-scheme is a separated  $k$-scheme $X$ of finite type such that  there exists a  proper smooth $W_{n}$-scheme $P$ and an immersion $X\hookrightarrow P$ which fits in the following commutative diagram:
\[\xymatrix{
{X} \ar[d] \ar@{^{(}-_>}[r] & P \ar[d] \\
{\rm Spec} k \ar[r] & {\rm Spec} W_{n} .}
\]

\begin{defi}
Let $X$ be a  $W_{n}$-embeddable $k$-scheme with an immersion $X\hookrightarrow P$ into a proper smooth $W_{n}$-scheme $P$.
We define the category ${\mathcal C}_{P, X}$ to be ${\mathcal C}_{P, Z, T}$ for some closed subsets $Z$ and $T$ of $P$ with $X=Z\setminus T$.
This definition is well-defined by Lemma \ref{l1}.
\end{defi}

\begin{theo}\label{t1}
Let $f: P \to Q$ be a proper smooth morphism of smooth $W_{n}$-schemes.
Suppose that we are given closed immersions $i_{1}: X\hookrightarrow P$ and $i_{2}: X\hookrightarrow Q$ such that $f\circ i_{1}=i_{2}$.
Then $f_{+}$ induces an equivalence of categories 
\begin{equation}
f_{+}: {\mathcal C}_{P, X, \emptyset}\xrightarrow{\cong} {\mathcal C}_{Q, X, \emptyset}
\end{equation}
with a quasi-inverse ${\mathbb R}\Gamma_{X}\circ f^{!}$.
\end{theo}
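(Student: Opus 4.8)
The plan is to reduce the assertion to an elementary manipulation of constructible \'etale sheaves by transporting the whole situation through the Emerton--Kisin Riemann--Hilbert correspondence (Theorem \ref{t4}) for the smooth $W_{n}$-schemes $P$ and $Q$.

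First I would check that both functors are well-defined. If ${\mathcal M}\in {\mathcal C}_{P, X, \emptyset}$, then $f_{+}{\mathcal M}\in D^{b}_{\rm lfgu}({\mathcal D}_{F, Q})^{\circ}$ by the boundedness of $f_{+}$ recalled above, and, since $i_{1}$ and $i_{2}$ are closed immersions with $f\circ i_{1}=i_{2}$, one has ${\rm Supp}(f_{+}{\mathcal M})\subseteq \overline{f({\rm Supp}\, {\mathcal M})}\subseteq f(i_{1}(X))=i_{2}(X)$; hence ${\mathbb R}{\Gamma}_{X}(f_{+}{\mathcal M})\xrightarrow{\cong} f_{+}{\mathcal M}$ by Lemma \ref{l5}, so $f_{+}{\mathcal M}\in {\mathcal C}_{Q, X, \emptyset}$. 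Dually, for ${\mathcal N}\in {\mathcal C}_{Q, X, \emptyset}$ one has $f^{!}{\mathcal N}\in D^{b}_{\rm lfgu}({\mathcal D}_{F, P})^{\circ}$, hence ${\mathbb R}{\Gamma}_{X}f^{!}{\mathcal N}\in D^{b}_{\rm lfgu}({\mathcal D}_{F, P})^{\circ}$ (since ${\mathbb R}{\Gamma}_{X}$ preserves this category, as shown above), and it is supported on $X$ by Lemma \ref{l5}; thus ${\mathbb R}{\Gamma}_{X}f^{!}{\mathcal N}\in {\mathcal C}_{P, X, \emptyset}$.

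Next I would transport everything through $\rm Sol$. Applying ${\rm Sol}_{P}$ to the distinguished triangle ${\mathbb R}{\Gamma}_{Z}{\mathcal M}\to {\mathcal M}\to {\mathbb R}j_{Z*}j_{Z}^{-1}{\mathcal M}\xrightarrow{+}$ and using that ${\rm Sol}$ interchanges $j_{Z}^{!}=j_{Z}^{-1}$ with $j_{Z}^{-1}$ and $j_{Z+}={\mathbb R}j_{Z*}$ with $(j_{Z})_{!}$ (Theorem \ref{t4}(1) and (2), applied to the open immersion $j_{Z}$), one identifies the ${\rm Sol}$-image of that triangle with the \'etale localization triangle $(j_{Z})_{!}j_{Z}^{-1}\to {\rm id}\to i_{Z*}i_{Z}^{-1}\xrightarrow{+}$, where $i_{Z}$ denotes the closed immersion of $Z$; this produces a natural isomorphism ${\rm Sol}_{P}({\mathbb R}{\Gamma}_{Z}{\mathcal M})\cong i_{Z*}i_{Z}^{-1}{\rm Sol}_{P}({\mathcal M})$. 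Hence ${\rm Sol}_{P}$ identifies ${\mathcal C}_{P, X, \emptyset}$ with the full subcategory of $D^{b}_{\rm ctf}(P_{\rm \acute{e}t}, {\mathbb Z}/p^{n}{\mathbb Z})$ of complexes supported on $X$, and composing with $i_{1}^{-1}$ identifies the latter with $D^{b}_{\rm ctf}(X_{\rm \acute{e}t}, {\mathbb Z}/p^{n}{\mathbb Z})$; likewise for $Q$ via ${\rm Sol}_{Q}$ and $i_{2}^{-1}$. Under these identifications, Theorem \ref{t4}(2) (with $f={\rm id}_{Q}\circ f$) and properness of $f$ turn $f_{+}$ into $f_{!}={\mathbb R}f_{*}$, and ${\mathbb R}f_{*}\,i_{1*}{\mathcal G}\cong {\mathbb R}(f\circ i_{1})_{*}{\mathcal G}=i_{2*}{\mathcal G}$ since $f\circ i_{1}=i_{2}$ is a closed immersion; Theorem \ref{t4}(1) and the isomorphism above turn ${\mathbb R}{\Gamma}_{X}\circ f^{!}$ into $i_{1*}i_{1}^{-1}f^{-1}$, and for the cartesian square defining $W:=f^{-1}(X)$ base change gives $f^{-1}i_{2*}{\mathcal H}\cong i_{W*}f_{W}^{-1}{\mathcal H}$, whence $i_{1}^{-1}f^{-1}i_{2*}{\mathcal H}\cong (f_{W}\circ \iota)^{-1}{\mathcal H}={\mathcal H}$, where $\iota\colon X\hookrightarrow W$ is $({\rm id}_{X}, i_{1})$ and $f_{W}\circ \iota={\rm id}_{X}$. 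Therefore both $f_{+}$ and ${\mathbb R}{\Gamma}_{X}\circ f^{!}$ become the identity functor of $D^{b}_{\rm ctf}(X_{\rm \acute{e}t}, {\mathbb Z}/p^{n}{\mathbb Z})$; so each is an equivalence, the two are mutually quasi-inverse, and unwinding the identifications yields the natural isomorphisms $f_{+}\circ {\mathbb R}{\Gamma}_{X}\circ f^{!}\cong {\rm id}$ and ${\mathbb R}{\Gamma}_{X}\circ f^{!}\circ f_{+}\cong {\rm id}$.

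The one point that needs care is the compatibility of ${\rm Sol}$ with the local cohomology functor ${\mathbb R}{\Gamma}_{Z}$: this is not literally part of Theorem \ref{t4}, and deducing it from the triangle above requires checking that ${\rm Sol}$ carries the adjunction morphism ${\mathcal M}\to {\mathbb R}j_{Z*}j_{Z}^{-1}{\mathcal M}$ to the \'etale adjunction morphism $(j_{Z})_{!}j_{Z}^{-1}\to {\rm id}$, and, more broadly, that every isomorphism invoked above is natural in its arguments, so that the conclusion is an isomorphism of functors and not merely an objectwise one. A more hands-on alternative would be to treat first the case of a closed immersion $f$, where the statement is the Emerton--Kisin analogue of Kashiwara's theorem \cite[Proposition 15.5.3]{EK} restricted to objects supported on $X$ (and there ${\mathbb R}{\Gamma}_{X}\circ f^{!}=f^{!}$ because $f^{-1}(X)=X$); but since a general proper smooth morphism need not factor through a closed immersion into a $W_{n}$-proper scheme, the Riemann--Hilbert reduction above seems the most efficient route to the general case.
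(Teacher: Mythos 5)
Your argument is correct in substance but takes a genuinely different route from the paper's. The paper works entirely on the ${\mathcal D}$-module side: it builds the two natural transformations out of the adjunction between $f_{+}$ and $f^{!}$ from \cite[Corollary 14.5.15]{EK} and proves they are isomorphisms by a double induction --- a d\'evissage along ${\mathcal M}\otimes^{\mathbb L}_{{\mathbb Z}/p^{n}{\mathbb Z}}{\mathbb Z}/p{\mathbb Z}$ reduces to $n=1$, and then an induction on $\dim X$ (stratifying $X$ into a smooth open piece and a smaller closed piece and comparing localization triangles) reduces to the case of $X$ smooth over $k$, which is exactly the Emerton--Kisin form of Kashiwara's theorem \cite[Proposition 15.5.3, Corollary 15.5.4]{EK}. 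You instead transport everything through ${\rm Sol}_{P}$ and ${\rm Sol}_{Q}$, identify ${\mathcal C}_{P,X,\emptyset}$ and ${\mathcal C}_{Q,X,\emptyset}$ with $D^{b}_{\rm ctf}(X_{\rm \acute{e}t},{\mathbb Z}/p^{n}{\mathbb Z})$ (this is in effect Lemma \ref{l10}, whose proof uses only Theorem \ref{t4} and Lemma \ref{l5}, so there is no circularity), and observe that both functors become the identity --- which eliminates the induction entirely. The price is the point you yourself flag: you need ${\rm Sol}$ to intertwine ${\mathbb R}{\Gamma}_{X}\to{\rm id}\to{\mathbb R}j_{*}j^{-1}$ with $j_{!}j^{-1}\to{\rm id}\to i_{X*}i_{X}^{-1}$ naturally and not merely objectwise; this gap is genuinely fillable, most cleanly by characterizing ${\mathbb R}{\Gamma}_{X}{\mathcal M}$ (resp. $i_{X*}i_{X}^{-1}{\mathcal F}$) by its universal property among objects supported on $X$ mapping to ${\mathcal M}$ (resp. receiving a map from ${\mathcal F}$), since a contravariant equivalence that preserves the support condition in both directions must exchange these universal objects. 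One further difference to keep in mind: the paper's proof shows that the \emph{specific} adjunction morphisms $f_{+}{\mathbb R}{\Gamma}_{X}f^{!}{\mathcal N}\to{\mathcal N}$ and ${\mathbb R}{\Gamma}_{X}{\mathcal M}\to{\mathbb R}{\Gamma}_{X}f^{!}f_{+}{\mathcal M}$ are isomorphisms, and it is this stronger form that is re-used in the proof of Corollary \ref{c3}; your construction yields an abstract quasi-inverse, so for the later applications you would still want to check that your isomorphisms agree with the adjunction ones.
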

\begin{proof}

Since the definition of the category ${\mathcal C}_{P, X, \emptyset}$ depends only on the underlying topological space of $X$ by Lemma \ref{l1}, we may assume that $X$ is reduced.
First of all, we note that the functors $f_{+}$ and ${\mathbb R}\Gamma_{X}\circ f^{!}$ are well-defined.
Indeed, for ${\mathcal M}\in {\mathcal C}_{P, X, \emptyset}$, by Proposition \ref{p1}, we have
\begin{eqnarray*}
{\mathbb R}\Gamma_{X}f_{+} {\mathcal M}&\xrightarrow{\cong}&f_{+} {\mathbb R}\Gamma_{f^{-1}(X)}{\mathcal M}\\
&\xleftarrow{\cong} & f_{+} {\mathbb R}\Gamma_{f^{-1}(X)}{\mathbb R}\Gamma_{X}{\mathcal M}\\
&\xrightarrow{\cong}& f_{+}{\mathbb R}\Gamma_{X}{\mathcal M}\\
&\xrightarrow{\cong}& f_{+}{\mathcal M}.
\end{eqnarray*}
We also have ${\mathbb R}\Gamma_{X}\left({\mathbb R}\Gamma_{X}f^{!}{\mathcal N}\right)\xrightarrow{\cong} {\mathbb R}\Gamma_{X}f^{!}{\mathcal N}$ for any ${\mathcal N}\in {\mathcal C}_{Q, X, \emptyset}$.
Next let us construct a natural transformation from $f_{+}$ to ${\mathbb R}\Gamma_{X}\circ f^{!}$ and its inverse.
By \cite[Corollary 14.5.15]{EK}, there are canonical adjunction morphisms
\begin{equation}
f_{+}f^{!}{\mathcal N}\to {\mathcal N} \textit{ and } {\mathcal M}\to f_{+}f^{!}{\mathcal M}.
\end{equation}
We thus obtain natural transformations of functors
\begin{equation}\label{e12}
f_{+}{\mathbb R}\Gamma_{X}f^{!}{\mathcal N}\to f_{+}f^{!}{\mathcal N}\to {\mathcal N}
\end{equation}
and
\begin{equation}\label{e4}
{\mathcal M}\xleftarrow{\cong} {\mathbb R}\Gamma_{X}{\mathcal M} \to {\mathbb R}\Gamma_{X}f^{!}f_{+}{\mathcal M}.
\end{equation}

Let us prove that these morphisms are isomorphisms by induction on $n$.
We begin with the case $n=1$.
Then $P$ and $Q$ are smooth $k$-schemes.
Let us firstly consider the case when $X$ is smooth over $k$.
Then  \cite[Corollary 15.5.4 and Proposition 15.5.3]{EK} imply that
\begin{equation*}
f_{+}{\mathbb R}\Gamma_{X}f^{!}{\mathcal N}\xrightarrow{\cong} f_{+}i_{1+}i_{1}^{!}f^{!}{\mathcal N} \xrightarrow{\cong}
i_{2+}i_{2}^{!}{\mathcal N} \xrightarrow{\cong} {\mathcal N}.
\end{equation*}
This shows that (\ref{e12}) is an isomorphism.
In order to see that (\ref{e4}) is an isomorphism, 
we claim that the natural morphism $i_{1}^{!}{\mathcal M}\to i_{1}^{!}f^{!}f_{+}{\mathcal M}$ is an isomorphism.
Indeed, since ${\mathcal M}\in {\mathcal C}_{P, X, \emptyset}$ is supported on $X$, there exists ${\mathcal M'}\in D^{b}_{\rm lfgu}({\mathcal D}_{F, X})$ such that $i_{1+}{\mathcal M}'\cong{\mathcal M}$ by \cite[Corollary 15.5.4]{EK}.
Then we have $i_{1}^{!}{\mathcal M}\cong i_{1}^{!}i_{1+}{\mathcal M}'\cong{\mathcal M}'$ and $i_{1}^{!}f^{!}f_{+}{\mathcal M}\cong i_{1}^{!}f^{!}f_{+}i_{1+}{\mathcal M}'\cong i_{2}^{!}i_{2+}{\mathcal M}'\cong {\mathcal M}'$, hence we see the claim.
Applying the functor $i_{1+}$ to the isomorphism $i_{1}^{!}{\mathcal M}\xrightarrow{\cong} i_{1}^{!}f^{!}f_{+}{\mathcal M}$ we see that $\Gamma_{X}{\mathcal M} \to {\mathbb R}\Gamma_{X}f^{!}f_{+}{\mathcal M}$ is an isomorphism by  \cite[Proposition 15.5.3]{EK}.

Next let us prove the case $n=1$ for general $X$ by induction on the dimension $d$ of $X$.
If $d=0$, then $X$ is \'etale over $k$ and the assertion follows from the smooth case.
Let $X_{0}$ be a $d$-dimensional smooth open subscheme of $X$ such that $H:=X\setminus X_{0}$ is of dimension $<d$.
Let us consider the following diagram:
\[\xymatrix{
{X\setminus H}\ar[r]^{} \ar@/_/[rd] &\ar@{}[rd]|{\square} {P\setminus f^{-1}(H)} \ar[d]_{f'} \ar[r]^-{j'} & P \ar[d]^{f} \\
& {Q\setminus H} \ar[r]^j & Q .}
\]
Let us consider the following morphism of distinguished triangles
{\small \[\xymatrix{
{{\mathbb R}\Gamma_{f^{-1}(H)}{\mathcal M}}\ar[r] \ar[d] & {\mathcal M} \ar[d] \ar[r] & {\mathbb R}j'_{*}j'^{-1}{\mathcal M} \ar[d]\ar[r]^-+& \\
{{\mathbb R}\Gamma_{X}f^{!}f_{+}{\mathbb R}\Gamma_{f^{-1}(H)}{\mathcal M}}\ar[r] & {\mathbb R}\Gamma_{X}f^{!}f_{+}{\mathcal M} \ar[r] &{\mathbb R}\Gamma_{X}f^{!}f_{+} {\mathbb R}j'_{*}j'^{-1}{\mathcal M} \ar[r]^-+&.}
\]}
In the left term, we have ${\mathbb R}\Gamma_{f^{-1}(H)}{\mathcal M}\xleftarrow{\cong} {\mathbb R}\Gamma_{f^{-1}(H)}{\mathbb R}\Gamma_{X}{\mathcal M}\xrightarrow{\cong} {\mathbb R}\Gamma_{H}{\mathcal M}$ and we 
also calculate
\begin{equation*}
{\mathbb R}\Gamma_{X}f^{!}f_{+}{\mathbb R}\Gamma_{f^{-1}(H)}{\mathcal M}\cong {\mathbb R}\Gamma_{X}f^{!}f_{+}{\mathbb R}\Gamma_{f^{-1}(H)}{\mathbb R}\Gamma_{H}{\mathcal M} \cong {\mathbb R}\Gamma_{H}f^{!}f_{+}{\mathbb R}\Gamma_{H}{\mathcal M}
\end{equation*}
by Proposition \ref{p1} and Proposition \ref{p4}.
Hence the induction hypothesis implies that the left vertical arrow is an isomorphism.
Similarly, by the smooth case, one can see that the right vertical arrow is an isomorphism.
As a consequence, we see that ${\mathcal M}\to{\mathbb R}\Gamma_{X}f^{!}f_{+}{\mathcal M}$ is an isomorphism.
Next let us consider the following morphism of distinguished triangles
\small{\[\xymatrix{
{f_{+}{\mathbb R}\Gamma_{X}f^{!}{\mathbb R}\Gamma_{H}{\mathcal N}}\ar[r] \ar[d] & f_{+}{\mathbb R}\Gamma_{X}f^{!}{\mathbb R}j_{*}j^{-1}{\mathcal N} \ar[d] \ar[r] & f_{+}{\mathbb R}\Gamma_{X}f^{!}{\mathbb R}j_{*}j^{-1}{\mathcal N}  \ar[d]\ar[r]^-+& \\
{{\mathbb R}\Gamma_{H}{\mathcal N}}\ar[r] & {\mathcal N} \ar[r] & {\mathbb R}j_{*}j^{-1}{\mathcal N}\ar[r]^-+&.}
\]}
In the left term, we have ${f_{+}{\mathbb R}\Gamma_{X}f^{!}{\mathbb R}\Gamma_{H}{\mathcal N}}\cong {f_{+}{\mathbb R}\Gamma_{H}f^{!}{\mathbb R}\Gamma_{H}{\mathcal N}}$ by Proposition \ref{p4}.
Hence the left vertical arrow is an isomorphism by the induction hypothesis.
In the right term, we can calculate as 
\begin{eqnarray*}
f_{+}{\mathbb R}\Gamma_{X}f^{!}{\mathbb R}j_{*}\left(j^{-1}{\mathcal N}\right)\cong 
f_{+}{\mathbb R}\Gamma_{X}{\mathbb R}j'_{*}f'^{!}\left(j^{-1}{\mathcal N}\right)&\cong& f_{+}{\mathbb R}j'_{*}{\mathbb R}\Gamma_{X\setminus H}f'^{!}\left(j^{-1}{\mathcal N}\right)\\
&\cong& {\mathbb R}j_{*}f'_{+}{\mathbb R}\Gamma_{X\setminus H}f'^{!}\left(j^{-1}{\mathcal N}\right)
\end{eqnarray*}
by Proposition \ref{p1}.
So the right vertical arrow is an isomorphism by the smooth case and hence the middle arrow is also an isomorphism.
This finishes the proof in the case $n=1$.
Now let us consider a distinguished triangle 
\begin{equation*}
{\mathcal M}\otimes_{{\mathbb Z}/{p^{n}}{\mathbb Z}}^{\mathbb L}{\mathbb Z}/p{\mathbb Z} \to {\mathcal M}\to {\mathcal M}\otimes_{{\mathbb Z}/{p^{n}}{\mathbb Z}}^{\mathbb L}{\mathbb Z}/p^{n-1}{\mathbb Z}\xrightarrow{+}.
\end{equation*}
Then the induction hypothesis, Lemma \ref{l3} and \cite[Proposition 14.8.1]{EK} reduce us to the case $n=1$ and we are done.

\end{proof}

\begin{coro}\label{c3}
Let $f: P \to Q$ be a proper smooth morphism of proper smooth $W_{n}$-schemes.
Suppose that we are given immersions $i_{1}: X\hookrightarrow P$ and $i_{2}: X\hookrightarrow Q$ such that $f\circ i_{1}=i_{2}$.
Then $f_{+}$ induces an equivalence of categories 
\begin{equation}
f_{+}: {\mathcal C}_{P, X}\xrightarrow{\cong} {\mathcal C}_{Q, X}
\end{equation}
with a quasi-inverse ${\mathbb R}\Gamma_{\bar{X}_{P}}\circ f^{!}$.
Here $\bar{X}_{P}$ denotes the closure of $X$ in $P$.
\end{coro}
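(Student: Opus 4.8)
The plan is to deduce this from Theorem \ref{t1} (the case of \emph{closed} immersions) by shrinking $Q$ and $P$ to suitable open subschemes and passing through Proposition \ref{p2}. Since $i_{2}$ is an immersion, I would first choose an open subscheme $U_{Q}\subseteq Q$ such that $i_{2}$ factors as a closed immersion $i_{2}'\colon X\hookrightarrow U_{Q}$ followed by the open immersion $j_{Q}\colon U_{Q}\hookrightarrow Q$. Put $U_{P}:=f^{-1}(U_{Q})$, let $j_{P}\colon U_{P}\hookrightarrow P$ be the open immersion and $f'\colon U_{P}\to U_{Q}$ the morphism induced by $f$; then $f'$ is again proper and smooth, being a base change of $f$, and $f\circ j_{P}=j_{Q}\circ f'$ with this square cartesian. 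Since $f\circ i_{1}=i_{2}$ factors through $U_{Q}$, the immersion $i_{1}$ factors as $j_{P}\circ i_{1}'$ for a morphism $i_{1}'\colon X\to U_{P}$, and the first thing to verify is that $i_{1}'$ is a closed immersion: it is (the second component of) a section of the projection $X\times_{U_{Q}}U_{P}\to X$, and since that projection is separated (a base change of the proper morphism $f'$), the section is a closed immersion, as is the base-changed closed immersion $X\times_{U_{Q}}U_{P}\hookrightarrow U_{P}$, so $i_{1}'$ is a closed immersion; one also checks $f'\circ i_{1}'=i_{2}'$ since $j_{Q}$ is a monomorphism.

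Next, let $\bar{X}_{P}$ (resp. $\bar{X}_{Q}$) denote the closure of $X$ in $P$ (resp. $Q$), and set $T_{P}:=\bar{X}_{P}\cap(P\setminus U_{P})=\bar{X}_{P}\setminus X$ and $T_{Q}:=\bar{X}_{Q}\cap(Q\setminus U_{Q})=\bar{X}_{Q}\setminus X$, so that by definition $\mathcal{C}_{P,X}=\mathcal{C}_{P,\bar{X}_{P},T_{P}}$ and $\mathcal{C}_{Q,X}=\mathcal{C}_{Q,\bar{X}_{Q},T_{Q}}$. Proposition \ref{p2} then provides equivalences $\mathbb{R}j_{P*}\colon\mathcal{C}_{U_{P},X,\emptyset}\xrightarrow{\cong}\mathcal{C}_{P,X}$ and $\mathbb{R}j_{Q*}\colon\mathcal{C}_{U_{Q},X,\emptyset}\xrightarrow{\cong}\mathcal{C}_{Q,X}$, with quasi-inverses $j_{P}^{-1}$ and $j_{Q}^{-1}$. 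Using the transitivity $f_{+}\circ j_{P+}\cong (f\circ j_{P})_{+}$ of the direct image together with $f\circ j_{P}=j_{Q}\circ f'$, one gets $f_{+}\circ\mathbb{R}j_{P*}\cong\mathbb{R}j_{Q*}\circ f'_{+}$ (recall $j_{+}=\mathbb{R}j_{*}$ for open immersions). Since $f'_{+}\colon\mathcal{C}_{U_{P},X,\emptyset}\xrightarrow{\cong}\mathcal{C}_{U_{Q},X,\emptyset}$ is an equivalence by Theorem \ref{t1}, this shows that $f_{+}$ carries $\mathcal{C}_{P,X}$ into $\mathcal{C}_{Q,X}$ and restricts there to the conjugate of $f'_{+}$, hence to an equivalence $\mathcal{C}_{P,X}\xrightarrow{\cong}\mathcal{C}_{Q,X}$.

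It remains to identify the quasi-inverse. Conjugating the quasi-inverse $\mathbb{R}\Gamma_{X}\circ f'^{!}$ of $f'_{+}$ (Theorem \ref{t1}) by the equivalences of Proposition \ref{p2}, the quasi-inverse of $f_{+}$ is $\mathbb{R}j_{P*}\circ(\mathbb{R}\Gamma_{X}\circ f'^{!})\circ j_{Q}^{-1}$, and I would rewrite this in three steps. First, $f'^{!}\circ j_{Q}^{-1}\cong j_{P}^{-1}\circ f^{!}$, since $f^{!}=\mathbb{L}f^{*}[d_{P/Q}]$, the functor $\mathbb{L}f^{*}$ commutes with restriction to open subschemes along the cartesian square, and $d_{U_{P}/U_{Q}}$ is the restriction of $d_{P/Q}$. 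Second, $\mathbb{R}\Gamma_{X}\circ j_{P}^{-1}\cong j_{P}^{-1}\circ\mathbb{R}\Gamma_{\bar{X}_{P}}$, because $X=\bar{X}_{P}\cap U_{P}$ and local cohomology commutes with restriction to opens. Third, for $\mathcal{N}\in\mathcal{C}_{Q,X}$ the canonical map $\mathbb{R}\Gamma_{\bar{X}_{P}}f^{!}\mathcal{N}\to\mathbb{R}j_{P*}j_{P}^{-1}(\mathbb{R}\Gamma_{\bar{X}_{P}}f^{!}\mathcal{N})$ is an isomorphism: its fibre is $\mathbb{R}\Gamma_{P\setminus U_{P}}\mathbb{R}\Gamma_{\bar{X}_{P}}f^{!}\mathcal{N}=\mathbb{R}\Gamma_{T_{P}}f^{!}\mathcal{N}$, and since $T_{P}\subseteq f^{-1}(T_{Q})$ (a point of $\bar{X}_{P}\setminus U_{P}$ maps into $\bar{X}_{Q}$ but not into $X$, hence into $T_{Q}$), Proposition \ref{p4} gives $\mathbb{R}\Gamma_{T_{P}}f^{!}\mathcal{N}\cong\mathbb{R}\Gamma_{T_{P}}\mathbb{R}\Gamma_{f^{-1}(T_{Q})}f^{!}\mathcal{N}\cong\mathbb{R}\Gamma_{T_{P}}f^{!}\mathbb{R}\Gamma_{T_{Q}}\mathcal{N}=0$. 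Combining the three steps identifies the quasi-inverse of $f_{+}$ with $\mathbb{R}\Gamma_{\bar{X}_{P}}\circ f^{!}$, as claimed.

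The one genuinely geometric input is the reduction in the first paragraph, namely that $i_{1}$ becomes a closed immersion after passing to $U_{P}=f^{-1}(U_{Q})$, which rests on separatedness of $f$; the rest is bookkeeping, and the part I expect to require the most care is checking that the natural open-restriction and localization isomorphisms used to rewrite $\mathbb{R}j_{P*}\circ(\mathbb{R}\Gamma_{X}\circ f'^{!})\circ j_{Q}^{-1}$ are compatible with the abstract conjugation isomorphisms coming from Proposition \ref{p2} and Theorem \ref{t1}.
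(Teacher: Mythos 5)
Your proposal is correct and follows essentially the same route as the paper: reduce to the closed-immersion case of Theorem \ref{t1} by passing to $U_Q$ and $U_P=f^{-1}(U_Q)$ and transporting everything through the open-immersion equivalences of Proposition \ref{p2}. The only real difference is in how the quasi-inverse is identified --- the paper checks directly that the adjunction morphisms $f_{+}{\mathbb R}\Gamma_{\bar{X}_{P}}f^{!}{\mathcal N}\to {\mathcal N}$ and ${\mathcal M}\to {\mathbb R}\Gamma_{\bar{X}_{P}}f^{!}f_{+}{\mathcal M}$ become isomorphisms after restricting to the chosen open subsets, whereas you conjugate the quasi-inverse ${\mathbb R}\Gamma_{X}\circ f'^{!}$ from Theorem \ref{t1} and rewrite ${\mathbb R}j_{P*}{\mathbb R}\Gamma_{X}f'^{!}j_{Q}^{-1}$ as ${\mathbb R}\Gamma_{\bar{X}_{P}}\circ f^{!}$ using the vanishing ${\mathbb R}\Gamma_{T_{P}}f^{!}{\mathcal N}=0$; both work, and your explicit verification that $i_{1}$ becomes a closed immersion over $f^{-1}(U_{Q})$ (via separatedness of $f$) supplies a step the paper only asserts.
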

\begin{proof}
Let us prove that $f_{+}$ restricts to a functor ${\mathcal C}_{P, X}\to {\mathcal C}_{Q, X}$.
Let $V$ be an open subset of $P$ such that $i_{2}$ factors as a closed immersion $X\hookrightarrow V$ and the open immersion $j_{2}: V\hookrightarrow Q$.
Denote by $U$ the open subset $f^{-1}(V)$ of $Q$.
Then $i_{1}$ factors as a closed immersion $X\hookrightarrow U$ and the open immersion $j_{1}: U\hookrightarrow P$.
For an object $\mathcal M$ in ${\mathcal C}_{P, X}$,
by Proposition \ref{p2}, there exists ${\mathcal M}'\in {\mathcal C}_{U, X, \emptyset}$ satisfying ${\mathbb R}j_{1*}{\mathcal M'}\cong {\mathcal M}$.
We have $f_{+}{\mathcal M}\cong f_{+}{\mathbb R}j_{1*}{\mathcal M'}\cong {\mathbb R}j_{2*}f_{|U+}{\mathcal M'}$.
In the course of the proof of Theorem \ref{t1}, we saw that $f_{|U+}{\mathcal M'}$ is in ${\mathcal C}_{V, X, \emptyset}$.
Hence we know that $f_{+}{\mathcal M}\cong{\mathbb R}j_{2*}f_{|U+}{\mathcal M'}$ is in ${\mathcal C}_{Q, X}$  by Proposition \ref{p2}.

Next let us prove that ${\mathbb R}\Gamma_{\bar{X}_{P}}\circ f^{!}$ restricts to a functor ${\mathcal C}_{Q, X}\to {\mathcal C}_{P, X}$.
Let $T_{Q}$ be a closed subset of $Q$ such that $\bar{X}_{Q}\setminus T_{Q}=X$ in $Q$, where $\bar{X}_{Q}$ denotes the closure of $X$ in $Q$.
We denote by $\bar{X}_{P}$ the the closure of $X$ in $P$.
Let $T$ be a closed subset of $P$ such that $\bar{X}_{P}\setminus T=X$
and we set $T_{P}:=T\cap f^{-1}(T_{Q})$.
Then $T_{P}$ is a closed subset of $P$ such that $\bar{X}_{P}\setminus T_{P}=X$ and 
we have ${\mathcal C}_{P, X}={\mathcal C}_{P, \bar{X}_{P}, T_{P}}$ and ${\mathcal C}_{Q, X}={\mathcal C}_{Q, \bar{X}_{Q}, T_{Q}}$.
For  an object $\mathcal M$  in ${\mathcal C}_{Q, \bar{X}_{Q}, T_{Q}}$,
one has ${\mathbb R}{\Gamma}_{\bar{X}_{P}}\left({\mathbb R}{\Gamma}_{\bar{X}_{P}}f^{!}{\mathcal M}\right)\xrightarrow{\cong}{\mathbb R}{\Gamma}_{\bar{X}_{P}}f^{!}{\mathcal M}$.
Also by assumption, one has ${\mathbb R}{\Gamma}_{T_{Q}}{\mathcal M}=0$.
Applying the functor ${\mathbb R}{\Gamma}_{\bar{X}_{P}}f^{!}$ to this equality we have $0={\mathbb R}{\Gamma}_{\bar{X}_{P}}f^{!}{\mathbb R}{\Gamma}_{T_{Q}}{\mathcal M}\cong{\mathbb R}{\Gamma}_{\bar{X}_{P}\cap f^{-1}(T_{Q})}f^{!}{\mathcal M}$.
Then we have ${\mathbb R}{\Gamma}_{T_{P}} {\mathbb R}{\Gamma}_{\bar{X}_{P}}f^{!}{\mathcal M}\cong {\mathbb R}{\Gamma}_{T_{P}}{\mathbb R}{\Gamma}_{\bar{X}_{P}\cap f^{-1}(T_{Q})}f^{!}{\mathcal M}=0$.

There are natural adjunction morphisms
\begin{equation*}
\Psi: f_{+}{\mathbb R}\Gamma_{\bar{X}_{P}}f^{!}{\mathcal N}\to {\mathcal N} \textit{ and } \Phi: {\mathcal M}\to {\mathbb R}\Gamma_{\bar{X}_{P}}f^{!}f_{+}{\mathcal M}.
\end{equation*}
By Proposition \ref{p2}, $\Psi$ is an isomorphism if and only if so is $\Psi_{|V}=j_{2}^{-1}\Psi$.
Now we can calculate as 
\begin{eqnarray*}
j_{2}^{-1}f_{+}{\mathbb R}\Gamma_{\bar{X}_{P}}f^{!}{\mathcal N}\cong f_{|U+}j_{1}^{-1}{\mathbb R}\Gamma_{\bar{X}_{P}}f^{!}{\mathcal N}&\cong& f_{|U+}{\mathbb R}\Gamma_{X}j_{1}^{-1}f^{!}{\mathcal N}\\
&\cong&  f_{|U+}{\mathbb R}\Gamma_{X}f_{|U}^{!}j_{2}^{-1}{\mathcal N}.
\end{eqnarray*}
Hence we see that $j_{2}^{-1}\Psi: f_{|U+}{\mathbb R}\Gamma_{X}f_{|U}^{!}j_{2}^{-1}{\mathcal N}\to j_{2}^{-1}{\mathcal N}$ is an isomorphism by Theorem \ref{t1}.
One can prove that $\Phi$ is an isomorphism in a similar manner.

\end{proof}

\begin{defi}
Let $X$ be a $W_{n}$-embeddable scheme. 
Let us take an immersion $X\hookrightarrow P$ into a proper smooth $W_{n}$-scheme.
We define the triangulated category $D^{b}_{\rm lfgu}(X/W_{n})^{\circ}$ by ${\mathcal C}_{P, X}$.
This definition is independent of the choice of embedding $X\hookrightarrow P$ up to natural equivalence by 
Corollary \ref{c3}.
\end{defi}

\subsection{Cohomological operations on $D_{\rm lfgu}^{b}(X/W_{n})^{\circ}$}\label{s42}

Let $f: X\to Y$ be a morphism of $W_{n}$-embeddable schemes.
Let us first define a functor
\begin{equation*}\label{d1}
f^{!}: D_{\rm lfgu}^{b}(Y/W_{n})^{\circ}\to D_{\rm lfgu}^{b}(X/W_{n})^{\circ}.
\end{equation*}

Let us take immersions  $i_1: X\hookrightarrow P$ and $i_2: Y\hookrightarrow Q$, 
where  $P$ and $Q$ are proper smooth $W_n$-schemes.
Then an immersion $i: X\hookrightarrow P\times_{W_{n}}Q$ is defined by the composition $X \xrightarrow{{\rm id}\times f} X\times_{k}Y \xhookrightarrow{i_1\times i_2} P\times_{W_{n}}Q$. 
One has $p_2 \circ i=i_2 \circ f$, where $p_2$ is the second projection $P\times_{W_{n}}Q\to Q$.
Hence, for a morphism $f: X\to Y$ of $W_{n}$-embeddable schemes, we can always obtain the following commutative diagram:
\begin{equation}\label{d2}
\vcenter{
\xymatrix{
{X} \ar[d]_{f} \ar@{^{(}-_>}[r]^{i_{1}} & {P}  \ar[d]^{g} \\
{Y} \ar@{^{(}-_>}[r]^{i_{2}} & Q. }
}
\end{equation}
Here $P$ and $Q$ are proper smooth $W_{n}$-schemes, $i_{1}$ and $i_{2}$ are immersions and $g$ is a proper smooth morphism of $W_{n}$-schemes.

\begin{lemm}\label{l8}
Let $f: X\to Y$ be a morphism of $W_{n}$-embeddable schemes.
Suppose that we are given the diagram (\ref{d2}).
Let $j: U\hookrightarrow P$ be an open immersion of smooth $W_{n}$-schemes such that $i_1$ factors as a closed immersion $X\hookrightarrow U$ and $j$.
Then the functor
\begin{equation*}
{\mathbb R}j_*{\mathbb R}{\Gamma}_{X}g_{|U}^{!}: {\mathcal C}_{Q, Y}\to {\mathcal C}_{P, X}
\end{equation*}
does not depend on the choice of the open immersion $j: U\hookrightarrow P$.
\end{lemm}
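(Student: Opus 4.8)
The plan is to reduce the statement to the case where one of the two open neighbourhoods is contained in the other, and in that case to compare the two functors by means of the compatibility of local cohomology with open restriction (Proposition \ref{p4}) together with a support argument in the spirit of the proof of Lemma \ref{l1}.

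First I would set up the reduction. Suppose $j\colon U\hookrightarrow P$ and $j'\colon U'\hookrightarrow P$ are two open immersions of smooth $W_{n}$-schemes through each of which $i_{1}$ factors as a closed immersion of $X$. Put $V:=U\cap U'$, an open subscheme of $P$. Since $X\subseteq U$ and $X\subseteq U'$ we have $X\subseteq V$, and since $X$ is closed in $U$ it follows that $X=X\cap V$ is closed in $V$; thus $V$ again meets the hypothesis of the lemma. It therefore suffices to construct a natural isomorphism between the functor attached to $U$ and the one attached to $V$; applying the same to the pair $(U',V)$ then gives the assertion. So from now on assume $U'\subseteq U$, with open immersion $k\colon U'\hookrightarrow U$, so that $j'=j\circ k$ and $g_{|U'}=g_{|U}\circ k$; since $k$ is an open immersion this gives $g_{|U'}^{!}=k^{!}g_{|U}^{!}=k^{-1}g_{|U}^{!}$, and also ${\mathbb R}j'_{*}={\mathbb R}j_{*}\circ{\mathbb R}k_{*}$.

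Next I would fix ${\mathcal N}\in{\mathcal C}_{Q,Y}$ and write ${\mathcal M}:=g_{|U}^{!}{\mathcal N}\in D^{b}_{\rm lfgu}({\mathcal D}_{F,U})^{\circ}$. By the two identities just recorded, it is enough to produce a natural isomorphism ${\mathbb R}{\Gamma}_{X}{\mathcal M}\xrightarrow{\cong}{\mathbb R}k_{*}{\mathbb R}{\Gamma}_{X}(k^{-1}{\mathcal M})$ and then apply ${\mathbb R}j_{*}$. Because $X\subseteq U'$, the inverse image of the closed set $X$ under $k$ is again $X$, so Proposition \ref{p4} furnishes a natural isomorphism $k^{-1}{\mathbb R}{\Gamma}_{X}{\mathcal M}\cong{\mathbb R}{\Gamma}_{X}(k^{-1}{\mathcal M})$; hence it remains to check that the adjunction unit ${\mathbb R}{\Gamma}_{X}{\mathcal M}\to{\mathbb R}k_{*}k^{-1}{\mathbb R}{\Gamma}_{X}{\mathcal M}$ is an isomorphism. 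Using the distinguished triangle
\[
{\mathbb R}{\Gamma}_{U\setminus U'}{\mathbb R}{\Gamma}_{X}{\mathcal M}\to{\mathbb R}{\Gamma}_{X}{\mathcal M}\to{\mathbb R}k_{*}k^{-1}{\mathbb R}{\Gamma}_{X}{\mathcal M}\xrightarrow{+},
\]
this reduces to the vanishing of ${\mathbb R}{\Gamma}_{U\setminus U'}{\mathbb R}{\Gamma}_{X}{\mathcal M}$. But ${\mathbb R}{\Gamma}_{X}{\mathcal M}$ is supported on $X$ by Lemma \ref{l5}, so, arguing as in the proof of Lemma \ref{l1}, ${\mathbb R}{\Gamma}_{U\setminus U'}{\mathbb R}{\Gamma}_{X}{\mathcal M}\cong{\mathbb R}{\Gamma}_{X\cap(U\setminus U')}{\mathbb R}{\Gamma}_{X}{\mathcal M}$, and this is $0$ since $X\cap(U\setminus U')=\emptyset$.

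Finally I would observe that all the isomorphisms used — the one from Proposition \ref{p4}, the support identification in the last step, and the adjunction unit for $(k^{-1},{\mathbb R}k_{*})$ — are functorial in ${\mathcal N}$, so that they assemble into an isomorphism of functors ${\mathcal C}_{Q,Y}\to{\mathcal C}_{P,X}$, which is what is needed. The only genuinely delicate point is the reduction to nested opens together with the careful bookkeeping of supports under open restriction; once Proposition \ref{p4} and the support computation of Lemma \ref{l1} are available, no further ingredient is required, and in particular properness of $P$ and $Q$ plays no role here.
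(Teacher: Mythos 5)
Your proof is correct and follows essentially the same route as the paper: reduce to the case of two nested open neighbourhoods of $X$, commute ${\mathbb R}{\Gamma}_{X}$ with the open restriction via Proposition \ref{p4}, and then use that ${\mathbb R}{\Gamma}_{X}g_{|U}^{!}{\mathcal N}$ is supported on $X$ to see that the adjunction unit for the smaller open is an isomorphism. The only (harmless) difference is that you carry out the reduction to nested opens explicitly via $U\cap U'$, whereas the paper treats only the nested case and leaves that step implicit.
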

\begin{proof}
Assume that we are given an open subset $V$ of $U$ such that $X\hookrightarrow U$ factors as a closed immersion $X\hookrightarrow V$ and the open immersion $j': V\hookrightarrow U$.
Put $j'':=j'\circ j$.
Then one has 
$${\mathbb R}j''_*{\mathbb R}{\Gamma}_{X}g_{|V}^{!}\cong {\mathbb R}j_*{\mathbb R}j'_*{\mathbb R}{\Gamma}_{X}j'^{-1}g_{|U}^{!}\cong {\mathbb R}j_*{\mathbb R}j'_*j'^{-1}{\mathbb R}{\Gamma}_{X}g_{|U}^{!}\cong {\mathbb R}j_*{\mathbb R}{\Gamma}_{X}g_{|U}^{!}.$$
This completes the proof.
\end{proof}

Next let us suppose that we are given the following commutative diagram:
\begin{equation}\label{d4}
\vcenter{
\xymatrix{
{X} \ar[d]\ar@{^{(}-_>}[rd] \ar@{^{(}-_>}[rrd]\\
{Y} \ar@{^{(}-_>}[rd] \ar@{^{(}-_>}[rrd] & {P_{1}} \ar[r]^{h_{2}} \ar[d]_{g_{1}}& {P_{2}} \ar[d]^{g_{2}} \\ 
{} & {Q_{1}} \ar[r]_{h_{1}} & {Q_{2}}.
}
}
\end{equation}
Here $P_{1}$, $P_{2}$, $Q_{1}$ and  $Q_{2}$ are proper smooth $W_{n}$-schemes, and $g_{1}$, $g_{2}$, $h_{1}$ and $h_{2}$ are proper smooth morphisms over $W_{n}$, and all slanting allows are immersions.
Let us denote by $\bar{X}_{P_{1}}$ (resp. $\bar{Y}_{Q_{1}}$) the closure of $X$ (resp. $Y$) in $P_{1}$ (resp. $Q_{1}$).
Take an open immersion $j_2: U\hookrightarrow P_2$ such that the immersion $X\hookrightarrow P_2$ factors as a closed immersion $X\hookrightarrow U$ and $j_2$.
We set $V=h_2^{-1}(U)$ and $h'_2=h_{2|V}: V\to U$.
Denote by $j_1$ the open immersion $V\hookrightarrow P_1$.
Then we have the following functors:

\[\xymatrix{
{{\mathcal C}_{P_{1}, X}} & {{\mathcal C}_{P_{2}, X}}  \ar[l]_{{\mathbb R}{\Gamma}_{\bar{X}_{P_{1}}}\circ h_{2}^{!}} \\
{{\mathcal C}_{Q_{1}, Y}} \ar[u]^{{\mathbb R}j_{1*}{\mathbb R}{\Gamma}_{X}g_{1|V}^{!}} & {\mathcal C}_{Q_{2}, Y}\ar[l]^{{\mathbb R}{\Gamma}_{\bar{Y}_{Q_{1}}}\circ h_{1}^{!}}\ar[u]_{{\mathbb R}j_{2*}{\mathbb R}{\Gamma}_{X}\circ g_{2|U}^{!}}. }
\]
This diagram is commutative up to natural isomorphism since we have
\begin{align*}
&{\mathbb R}{\Gamma}_{\bar{X}_{P_{1}}}h_{2}^{!} \circ {\mathbb R}j_{2*}{\mathbb R}{\Gamma}_{X}g_{2|U}^{!}\cong
{\mathbb R}{\Gamma}_{\bar{X}_{P_{1}}} {\mathbb R}j_{1*}{h'_2}^!{\mathbb R}{\Gamma}_{X}g_{2|U}^{!}\\
&\,\,\,\,\,\,\,\,\,\,\cong{\mathbb R}j_{1*}{\mathbb R}{\Gamma}_{X} {\mathbb R}{\Gamma}_{{h'_2}^{-1}(X)}{h'_2}^!g_{2|U}^{!}\cong
{\mathbb R}j_{1*}{\mathbb R}{\Gamma}_{X}{h'_2}^!g_{2|U}^{!}
\end{align*}
(where  the first isomorphism follows from the flat base change theorem) and
\begin{equation*}
{\mathbb R}j_{1*}{\mathbb R}{\Gamma}_{X}g_{1|V}^{!} \circ{\mathbb R}{\Gamma}_{\bar{Y}_{Q_{1}}}h_{1}^{!}\cong
{\mathbb R}j_{1*}{\mathbb R}{\Gamma}_{X}{\Gamma}_{g_{1|V}^{-1}\left(\bar{Y}_{Q_{1}}\right)} g_{1|V}^{!} h_{1}^{!}\cong
{\mathbb R}j_{1*}{\mathbb R}{\Gamma}_{X} g_{1|V}^{!} h_{1}^{!}.
\end{equation*}

For a morphism $f: X\to Y$ of $W_{n}$-embeddable schemes, we take a diagram as in (\ref{d2}) and an open immersion $j: U\hookrightarrow P$ as in Lemma \ref{l8}.
We then define the inverse image functor 
\begin{equation*}
f^{!}: D^{b}_{\rm lfgu}(Y/W_{n})^{\circ}\to D^{b}_{\rm lfgu}(X/W_{n})^{\circ}
\end{equation*}
by $f^{!}={\mathbb R}j_*{\mathbb R}{\Gamma}_{X}g_{|U}^{!}$.
The above argument shows that this definition is independent of the choice of diagram (\ref{d2}) up to natural isomorphism.

Next let us define the direct image functor $f_{+}: D^{b}_{\rm lfgu}(X/W_{n})^{\circ}\to D^{b}_{\rm lfgu}(Y/W_{n})^{\circ}$. 
\begin{lemm}
Let $f: X\to Y$ be a morphism of $W_{n}$-embeddable schemes.
Suppose that we are given a diagram as in (\ref{d2}).
Then the functor $g_{+}$ restricts to a functor
\begin{equation*}
g_{+}: {\mathcal C}_{P, X}\to {\mathcal C}_{Q, Y}.
\end{equation*}
\end{lemm}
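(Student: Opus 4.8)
The plan is to check directly that $g_{+}$ carries an object $\mathcal{M}$ of $\mathcal{C}_{P,X}$ into $\mathcal{C}_{Q,Y}$, using Proposition \ref{p1} together with the elementary properties of the local cohomology functors (in particular the identity ${\mathbb R}{\Gamma}_{A}\circ{\mathbb R}{\Gamma}_{B}\cong{\mathbb R}{\Gamma}_{A\cap B}$ for closed subsets $A,B$, already used in the proof of Lemma \ref{l1}). First I would fix convenient closed subsets: let $\bar{X}_{P}$ (resp.\ $\bar{Y}_{Q}$) be the closure of $X$ in $P$ (resp.\ of $Y$ in $Q$) and set $T_{P}:=\bar{X}_{P}\setminus X$ and $T_{Q}:=\bar{Y}_{Q}\setminus Y$, which are closed since $X$ and $Y$ are locally closed. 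By Lemma \ref{l1} one has $\mathcal{C}_{P,X}=\mathcal{C}_{P,\bar{X}_{P},T_{P}}$ and $\mathcal{C}_{Q,Y}=\mathcal{C}_{Q,\bar{Y}_{Q},T_{Q}}$. From the commutativity $g\circ i_{1}=i_{2}\circ f$ I would extract the two topological facts that carry the whole argument: first, $g(\bar{X}_{P})\subseteq\bar{Y}_{Q}$, equivalently $\bar{X}_{P}\subseteq g^{-1}(\bar{Y}_{Q})$ (because $g^{-1}(\bar{Y}_{Q})$ is closed and contains $i_{1}(X)$); second, $g(X)\subseteq Y$, whence $g^{-1}(T_{Q})\cap\bar{X}_{P}\subseteq T_{P}$, using that $Y\cap T_{Q}=\emptyset$.

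Now take $\mathcal{M}\in\mathcal{C}_{P,X}$. Since $g_{+}$ restricts to a functor $D^{b}_{\rm lfgu}({\mathcal D}_{F,P})^{\circ}\to D^{b}_{\rm lfgu}({\mathcal D}_{F,Q})^{\circ}$, it remains to verify the two conditions defining $\mathcal{C}_{Q,\bar{Y}_{Q},T_{Q}}$. For ${\mathbb R}{\Gamma}_{\bar{Y}_{Q}}(g_{+}\mathcal{M})\xrightarrow{\cong}g_{+}\mathcal{M}$ I would argue through supports: Lemma \ref{l5} gives ${\rm Supp}\,\mathcal{M}\subseteq\bar{X}_{P}$, hence ${\rm Supp}(g_{+}\mathcal{M})\subseteq\bar{Y}_{Q}$ by the standard support estimate for $g_{+}$ together with $g(\bar{X}_{P})\subseteq\bar{Y}_{Q}$, and then Lemma \ref{l5} applied on $Q$ yields the desired isomorphism. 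For ${\mathbb R}{\Gamma}_{T_{Q}}(g_{+}\mathcal{M})=0$ I would invoke Proposition \ref{p1}, which identifies this complex with $g_{+}({\mathbb R}{\Gamma}_{g^{-1}(T_{Q})}\mathcal{M})$, so it suffices to show ${\mathbb R}{\Gamma}_{g^{-1}(T_{Q})}\mathcal{M}=0$. Using ${\mathbb R}{\Gamma}_{\bar{X}_{P}}\mathcal{M}\cong\mathcal{M}$ and the intersection identity one gets ${\mathbb R}{\Gamma}_{g^{-1}(T_{Q})}\mathcal{M}\cong{\mathbb R}{\Gamma}_{g^{-1}(T_{Q})\cap\bar{X}_{P}}\mathcal{M}$; since $g^{-1}(T_{Q})\cap\bar{X}_{P}\subseteq T_{P}$, a further application of the same identity rewrites this as ${\mathbb R}{\Gamma}_{g^{-1}(T_{Q})\cap\bar{X}_{P}}({\mathbb R}{\Gamma}_{T_{P}}\mathcal{M})$, which vanishes because ${\mathbb R}{\Gamma}_{T_{P}}\mathcal{M}=0$.

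I do not anticipate a genuine obstacle; the argument is essentially bookkeeping with supports and local cohomology, and Proposition \ref{p1} is the only non-formal ingredient. The point that deserves care — and the reason I would not try to mimic the proof of Corollary \ref{c3} — is that in general $i_{1}$ cannot be factored as a closed immersion into $g^{-1}(V)$ for an open neighbourhood $V$ of $Y$ (the subscheme $X$ need not be closed in $g^{-1}(V)$), and shrinking the open destroys the properness of the restricted morphism, so Theorem \ref{t1} does not apply on the nose. Treating $g_{+}$ directly, since it is already defined on all of $D^{b}_{\rm lfgu}({\mathcal D}_{F,P})^{\circ}$, bypasses this difficulty; the only price is that membership in $\mathcal{C}_{Q,Y}$ is verified through the support and local cohomology criteria of Lemma \ref{l5} rather than through an explicit equivalence.
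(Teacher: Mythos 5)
Your argument is correct, but it takes a genuinely different route from the paper's. The paper does not verify the two local-cohomology conditions directly on $Q$: it chooses an open $V\subseteq Q$ in which $Y$ is closed and an open $U\subseteq g^{-1}(V)$ in which $X$ is closed, transports $\mathcal{M}$ to an object $\mathcal{N}\in{\mathcal C}_{U,X,\emptyset}$ via the equivalence ${\mathbb R}j_{1*}$ of Proposition \ref{p2}, uses $g_{+}{\mathbb R}j_{1*}{\mathcal N}\cong{\mathbb R}j_{2*}g_{|U+}{\mathcal N}$, quotes the computation from the beginning of the proof of Theorem \ref{t1} to see that $g_{|U+}{\mathcal N}$ lies in ${\mathcal C}_{V,Y,\emptyset}$, and concludes with Proposition \ref{p2} again. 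Your worry that Theorem \ref{t1} cannot be applied on the nose because $g_{|U}$ is no longer proper is reasonable, but the paper only invokes the well-definedness computation from that proof, which needs nothing beyond Proposition \ref{p1}, so its reduction is also sound. Both arguments therefore rest on the same non-formal ingredient, Proposition \ref{p1}; yours trades the open-neighbourhood bookkeeping and the two appeals to Proposition \ref{p2} for direct bookkeeping with closures, boundaries and the identity ${\mathbb R}\Gamma_{A}\circ{\mathbb R}\Gamma_{B}\cong{\mathbb R}\Gamma_{A\cap B}$, which is arguably more self-contained. The one place where you deviate stylistically is the first condition, where you pass through the support estimate for ${\mathbb R}g_{*}$ and Lemma \ref{l5}; you could keep the whole verification uniform by instead applying Proposition \ref{p1} with $Z_{Q}=\bar{Y}_{Q}$ and using $\bar{X}_{P}\subseteq g^{-1}(\bar{Y}_{Q})$ together with the intersection identity, exactly as you did for $T_{Q}$.
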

\begin{proof}
Take an open subset $V$ of $Q$ such that $i_{2}: Y\hookrightarrow Q$ factors as a closed immersion $Y\hookrightarrow V$ and the open immersion $j_{2}: V\hookrightarrow Q$.
There exists an open subset $U$ of $g^{-1}(V)$ such that the immersion $X\hookrightarrow g^{-1}(V)$ factors as a closed immersion $X\hookrightarrow U$ and the open immersion $U\hookrightarrow g^{-1}(V)$.
We denote by $j_{1}$ the open immersion $U\hookrightarrow P$.
Let $\mathcal M$ be an object in ${\mathcal C}_{P, X}$.
Then, by Proposition \ref{p2}, there exists ${\mathcal N}\in {\mathcal C}_{U, X, \emptyset}$ satisfying ${\mathbb R}j_{1*}{\mathcal N}\cong {\mathcal M}$.
We have $g_{+}{\mathcal M}\cong g_{+}{\mathbb R}j_{1*}{\mathcal N}\cong {\mathbb R}j_{2*}g_{|U+}{\mathcal N}$.
In the course of the proof of Theorem \ref{t1}, we saw that $g_{|U+}{\mathcal N}$ is in ${\mathcal C}_{V, Y, \emptyset}$.
By Proposition \ref{p2}, we know that $g_{+}{\mathcal M}\cong{\mathbb R}j_{2*}g_{|U+}{\mathcal N}$ is in ${\mathcal C}_{Q, Y}$.
\end{proof}
Let us assume that we are given a diagram as in (\ref{d4}).
Then we have a natural isomorphism of functors $h_{1+}\circ g_{1+}\cong h_{2+}\circ g_{2+}$.
For a morphism $f: X\to Y$ of $W_{n}$-embeddable schemes,
we take a diagram as in (\ref{d2}) and
define the direct image functor
\begin{equation*}
f_{+}: D^{b}_{\rm lfgu}(X/W_{n})^{\circ}\to D^{b}_{\rm lfgu}(Y/W_{n})^{\circ}
\end{equation*}
by $f_{+}:=g_{+}$.

Finally let us take an immersion $i: X\hookrightarrow P$ into a proper smooth $W_{n}$-scheme
and $Z$ and $T$ closed subsets $P$ such that $X=Z\setminus T$ as a set. 
For ${\mathcal M}$ and ${\mathcal N}\in {\mathcal C}_{P, Z, T}={\mathcal C}_{P, X}$ we consider
${\mathcal M}\otimes_{{\mathcal O}_{P}}^{\mathbb L}{\mathcal N}[-d_{P}]$ in $D^{b}_{\rm lfgu}({\mathcal D}_{F, P})$.
By Lemma \ref{l3}, we have ${\mathbb R}{\Gamma}_{Z}\left({\mathcal M}\otimes_{{\mathcal O}_{P}}^{\mathbb L}{\mathcal N}\right)\cong \left({\mathbb R}{\Gamma}_{Z}{\mathcal M}\right)\otimes_{{\mathcal O}_{P}}^{\mathbb L}{\mathcal N}\xrightarrow{\cong} {\mathcal M}\otimes_{{\mathcal O}_{P}}^{\mathbb L}{\mathcal N}$.
We also have ${\mathbb R}{\Gamma}_{T}\left({\mathcal M}\otimes_{{\mathcal O}_{P}}^{\mathbb L}{\mathcal N}\right)\cong \left({\mathbb R}{\Gamma}_{T}{\mathcal M}\right)\otimes_{{\mathcal O}_{P}}^{\mathbb L}{\mathcal N}=0$.
Hence ${\mathcal M}\otimes_{{\mathcal O}_{P}}^{\mathbb L}{\mathcal N}[-d_{P}]$ is an object in ${\mathcal C}_{P, Z, T}$.
Assume that we are given another immersion $i': X\hookrightarrow Q$ into a proper smooth $W_{n}$-scheme and a proper smooth $W_{n}$-morphism $f: P\to Q$ with $f\circ i=i'$.
There exists an equivalence ${\mathbb R}{\Gamma}_{\bar{X}}f^{!}: {\mathcal C}_{Q, X}\xrightarrow{\cong} {\mathcal C}_{P, X}$ by 
Theorem \ref{t1}, where $\bar{X}$ denotes the closure of $X$ in $P$.
For objects ${\mathcal M}$ and ${\mathcal N}$  in ${\mathcal C}_{Q, X}$,
applying the functor ${\mathbb R}{\Gamma}_{\bar{X}}f^{!}$ to ${\mathcal M}\otimes_{{\mathcal O}_{Q}}^{\mathbb L}{\mathcal N}[-d_{Q}]$,
we compute that 
\begin{equation*}
{\mathbb R}{\Gamma}_{\bar{X}}f^{!}\left({\mathcal M}\otimes_{{\mathcal O}_{Q}}^{\mathbb L}{\mathcal N}[-d_{Q}]\right)\cong 
{\mathbb R}{\Gamma}_{\bar{X}}\left(f^{!}{\mathcal M} \otimes_{{\mathcal O}_{P}}^{\mathbb L} f^{!}{\mathcal N}\right)[-d_{P}]
\end{equation*}
by Proposition \ref{p5}.
On the other hand, there exist isomorphisms 
\begin{eqnarray*}
{\mathbb R}{\Gamma}_{\bar{X}}f^{!}{\mathcal M} \otimes_{{\mathcal O}_{P}}^{\mathbb L} {\mathbb R}{\Gamma}_{\bar{X}}f^{!}{\mathcal N}[-d_{P}]&\cong& {\mathbb R}{\Gamma}_{\bar{X}}\left(f^{!}{\mathcal M} \otimes_{{\mathcal O}_{P}}^{\mathbb L} {\mathbb R}{\Gamma}_{\bar{X}}f^{!}{\mathcal N}\right)[-d_{P}]\\
&\cong& {\mathbb R}{\Gamma}_{\bar{X}}\left(f^{!}{\mathcal M} \otimes_{{\mathcal O}_{P}}^{\mathbb L} f^{!}{\mathcal N}\right)[-d_{P}]
\end{eqnarray*}
by Lemma \ref{l3}. Therefore we can define a bi-functor 
\begin{equation*}
(-)\otimes^{\mathbb L}(-):  D^{b}_{\rm lfgu}(X/W_{n})^{\circ}\times D^{b}_{\rm lfgu}(X/W_{n})^{\circ}\to D^{b}_{\rm lfgu}(X/W_{n})^{\circ}
\end{equation*}
to be ${\mathcal M}\otimes^{\mathbb L}{\mathcal N}:={\mathcal M}\otimes_{{\mathcal O}_{P}}^{\mathbb L}{\mathcal N}[-d_{P}]$
for some immersion $X\hookrightarrow P$ into a proper smooth $W_{n}$-scheme $P$.

\subsection{Riemann-Hilbert correspondence for unit $F$-crystals}\label{s43}

Let $X$ be a $W_{n}$-embeddable scheme with an immersion $i$ from $X$ into a proper smooth $W_{n}$-scheme $P$.
We define a functor ${\rm Sol}_{X}$ to be the composite of the functors
\begin{eqnarray*}
 D^{b}_{\rm lfgu}(X/W_{n})^{\circ}={\mathcal C}_{P, X}\subset D^{b}_{\rm lfgu}({\mathcal D}_{F, P})&\xrightarrow{{\rm Sol}_{P}}& D_{\rm ctf}^{b}(P_{\rm \acute{e}t}, {\mathbb Z}/{p^{n}\mathbb Z})\\
&\xrightarrow{i^{-1}}& D_{\rm ctf}^{b}(X_{\rm \acute{e}t}, {\mathbb Z}/{p^{n}\mathbb Z}),
\end{eqnarray*}
where the first functor is the natural embedding.
\begin{lemm}\label{l19}
This definition is independent of the choice of embedding $i: X\hookrightarrow P$ up to natural isomorphism.
\end{lemm}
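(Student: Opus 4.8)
The goal is to show that, given two immersions $i\colon X\hookrightarrow P$ and $i'\colon X\hookrightarrow Q$ into proper smooth $W_{n}$-schemes, the two functors $i^{-1}\circ{\rm Sol}_{P}$ and $i'^{-1}\circ{\rm Sol}_{Q}$ (restricted to ${\mathcal C}_{P,X}$, resp. ${\mathcal C}_{Q,X}$, and transported by the canonical equivalence of Corollary \ref{c3}) agree up to natural isomorphism. The strategy is the usual \emph{graph trick}: reduce to the case where the two immersions are related by a proper smooth morphism, and then invoke the compatibility of ${\rm Sol}$ with $f_{+}$ and $f^{!}$ from Theorem \ref{t4}, together with the description of the canonical equivalence ${\mathcal C}_{P,X}\cong{\mathcal C}_{Q,X}$ in Corollary \ref{c3}.

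\textbf{Step 1: reduction to a dominating diagram.} Form the product immersion $X\xhookrightarrow{(i,i')}P\times_{W_{n}}Q$, and let $p_{P}\colon P\times_{W_{n}}Q\to P$ and $p_{Q}\colon P\times_{W_{n}}Q\to Q$ be the two projections, which are proper and smooth since $P$ and $Q$ are proper smooth over $W_{n}$. We have $p_{P}\circ(i,i')=i$ and $p_{Q}\circ(i,i')=i'$. By Corollary \ref{c3}, $(p_{P})_{+}$ and $(p_{Q})_{+}$ induce equivalences ${\mathcal C}_{P\times_{W_n}Q,X}\xrightarrow{\cong}{\mathcal C}_{P,X}$ and ${\mathcal C}_{P\times_{W_n}Q,X}\xrightarrow{\cong}{\mathcal C}_{Q,X}$, and the canonical equivalence ${\mathcal C}_{P,X}\cong{\mathcal C}_{Q,X}$ is (by construction in the definition of $D^{b}_{\rm lfgu}(X/W_{n})^{\circ}$) the composite of $(p_{P})_{+}^{-1}={\mathbb R}\Gamma_{\bar X_{P\times Q}}\circ p_{P}^{!}$ followed by $(p_{Q})_{+}$. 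Hence it suffices to treat the case of a single proper smooth morphism $f\colon P\to Q$ with $f\circ i=i'$, i.e.\ to show $i^{-1}\circ{\rm Sol}_{P}\circ({\mathbb R}\Gamma_{\bar X_{P}}\circ f^{!})\cong i'^{-1}\circ{\rm Sol}_{Q}$ as functors on ${\mathcal C}_{Q,X}$.

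\textbf{Step 2: untwisting via Theorem \ref{t4}.} On ${\mathcal C}_{Q,X}$ one has ${\rm Sol}_{Q}\cong{\rm Sol}_{P}\circ f^{!}\circ(\text{something})$ — more precisely, by Theorem \ref{t4}(1), ${\rm Sol}_{P}\circ f^{!}\cong f^{-1}\circ{\rm Sol}_{Q}$ as functors $D^{b}_{\rm lfgu}({\mathcal D}_{F,Q})^{\circ}\to D^{b}_{\rm ctf}(P_{\rm \acute et},{\mathbb Z}/p^{n}{\mathbb Z})$. It remains to absorb the local cohomology functor ${\mathbb R}\Gamma_{\bar X_{P}}$: for ${\mathcal N}\in{\mathcal C}_{Q,X}$ we have $f^{!}{\mathcal N}$ supported on $f^{-1}(\bar X_{Q})\supseteq\bar X_{P}$, but in fact the distinguished triangle ${\mathbb R}\Gamma_{\bar X_{P}}f^{!}{\mathcal N}\to f^{!}{\mathcal N}\to{\mathbb R}j_{*}j^{-1}f^{!}{\mathcal N}\xrightarrow{+}$ and the fact that ${\rm Sol}_{P}$ sends it to an exact triangle show that ${\rm Sol}_{P}({\mathbb R}\Gamma_{\bar X_{P}}f^{!}{\mathcal N})$ and ${\rm Sol}_{P}(f^{!}{\mathcal N})$ have the same restriction to $X_{\rm \acute et}$, because $i^{-1}$ kills the contribution of ${\mathbb R}j_{*}j^{-1}f^{!}{\mathcal N}$ (which is supported away from $X$; here one uses that $i$ factors through an open $U$ with $U\cap\bar X_{P}=X$, and $j^{-1}{\rm Sol}_{P}(-)$ of something supported outside $X$ is zero on $U$). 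Thus $i^{-1}{\rm Sol}_{P}({\mathbb R}\Gamma_{\bar X_{P}}f^{!}{\mathcal N})\cong i^{-1}{\rm Sol}_{P}(f^{!}{\mathcal N})\cong i^{-1}f^{-1}{\rm Sol}_{Q}({\mathcal N})=(f\circ i)^{-1}{\rm Sol}_{Q}({\mathcal N})=i'^{-1}{\rm Sol}_{Q}({\mathcal N})$, and all isomorphisms are natural in ${\mathcal N}$.

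\textbf{Main obstacle.} The routine parts are the graph trick and the bookkeeping of which open neighbourhoods are used; the real content is the naturality and well-definedness of the identification in Step 2, namely checking that the isomorphism $i^{-1}{\rm Sol}_{P}({\mathbb R}\Gamma_{\bar X_{P}}f^{!}{\mathcal N})\cong i^{-1}{\rm Sol}_{P}(f^{!}{\mathcal N})$ built from the local cohomology triangle is compatible with the coherence data coming from the composite $(p_Q)_{+}\circ(p_P)_{+}^{-1}$, so that the final natural isomorphism does not depend on the auxiliary choices (the product $P\times_{W_n}Q$, the open $U$, etc.). This is a diagram-chase using Proposition \ref{p1}, Proposition \ref{p4} and Theorem \ref{t4}(1),(2); I expect it to be somewhat tedious but to present no conceptual difficulty, essentially because all the relevant compatibilities of ${\rm Sol}$ with $f_{+}$, $f^{!}$ and ${\mathbb R}\Gamma_{Z}$ have already been established.
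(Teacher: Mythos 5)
Your proof is correct and shares the paper's overall architecture --- the product/graph trick reducing to a single proper smooth morphism $f$ compatible with the two immersions, followed by the compatibilities of ${\rm Sol}$ from Theorem \ref{t4} --- but you run the key comparison in the opposite direction. The paper first passes to opens $U\subseteq P$, $V\subseteq Q$ in which $X$ is closed (so that Proposition \ref{p2} and ${\rm Sol}_U j^!\cong j^{-1}{\rm Sol}_P$ apply) and then compares along the \emph{direct} image: for proper smooth $f:U\to V$ with closed immersions $i',i''$ it shows $i''^{-1}{\rm Sol}_V f_+\cong i'^{-1}{\rm Sol}_U$ via Theorem \ref{t4}(2), using that ${\rm Sol}_U{\mathcal M}$ is supported on $X$ so that $f_!{\rm Sol}_U{\mathcal M}\cong f_! i'_! i'^{-1}{\rm Sol}_U{\mathcal M}=i''_! i'^{-1}{\rm Sol}_U{\mathcal M}$. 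You instead work with the quasi-inverse ${\mathbb R}\Gamma_{\bar{X}_{P}}\circ f^!$ from Corollary \ref{c3} and apply Theorem \ref{t4}(1) directly on $P$, disposing of the local-cohomology correction via the triangle ${\mathbb R}\Gamma_{\bar{X}_{P}}f^!{\mathcal N}\to f^!{\mathcal N}\to{\mathbb R}j_*j^{-1}f^!{\mathcal N}$. One point to tighten: the vanishing of $i^{-1}{\rm Sol}_P({\mathbb R}j_*j^{-1}f^!{\mathcal N})$ should be justified by ${\rm Sol}_P\circ j_+\cong j_!\circ{\rm Sol}_{P\setminus\bar{X}_{P}}$ (Theorem \ref{t4}(2) applied to the open immersion $j$), whose output is extension by zero from $P\setminus\bar{X}_{P}$ and hence restricts to zero on $X\subseteq\bar{X}_{P}$; your parenthetical slightly conflates the ${\mathcal D}$-module side (where ${\mathbb R}j_*j^{-1}f^!{\mathcal N}$ need not be supported away from $\bar{X}_{P}$) with the \'etale side (where its image under ${\rm Sol}_P$ is). With that clarified, both routes are valid: yours avoids the paper's preliminary reduction to closed immersions, while the paper's avoids the local-cohomology bookkeeping.
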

\begin{proof}
Let us first suppose that we are given an open immersion $j: U\hookrightarrow P$ such that $i$ factors as an  closed immersion $i': X\hookrightarrow U$ and $j$.
Then $j^{-1}$ induces an equivalence ${\mathcal C}_{U, X, \emptyset}\xrightarrow{\cong} {\mathcal C}_{P, X}$ by Proposition \ref{p2}.
Let us consider a functor $i'^{-1}\circ {\rm Sol}_{U}: {\mathcal C}_{U, X, \emptyset}\to D_{\rm ctf}^{b}(X_{\rm \acute{e}t}, {\mathbb Z}/{p^{n}\mathbb Z})$.
Then one has 
\begin{equation}\label{e33}
i'^{-1}{\rm Sol}_{U}j^{!}{\mathcal M}\cong i'^{-1}j^{-1}{\rm Sol}_{P}{\mathcal M}\cong i'^{-1}{\rm Sol}_{P}{\mathcal M}
\end{equation}
 for any ${\mathcal M}\in {\mathcal C}_{P, X}$. 
Next let us suppose that we are given a closed immersion $i'': X\hookrightarrow Q$ into a smooth $W_{n}$-scheme $Q$ and a proper smooth $W_{n}$-morphism $U\to Q$ with $f\circ i'=i''$.
Then $f_{+}$ induces an equivalence ${\mathcal C}_{U, X, \emptyset}\xrightarrow{\cong} {\mathcal C}_{Q, X, \emptyset}$ by Theorem \ref{t1}.
Note that, because ${\rm Sol}_{U}$ is compatible with the inverse image functor by Theorem \ref{t4},
we know that ${\rm Sol}_{U}{\mathcal M}$ is supported on $X$ for any ${\mathcal M}\in {\mathcal C}_{U, X, \emptyset}$.
Then, for ${\mathcal M}\in {\mathcal C}_{U, X, \emptyset}$, we can compute that
\begin{equation*}
i''^{-1}{\rm Sol}_{Q}f_{+}{\mathcal M}\cong i''^{-1}f_{!}{\rm Sol}_{U}{\mathcal M}\cong i''^{-1}f_{!}i'_{!}i'^{-1}{\rm Sol}_{U}{\mathcal M}\cong i'^{-1}{\rm Sol}_{U}{\mathcal M}.
\end{equation*}
We can prove the lemma by combining two claims proved above.
\end{proof}

Next let us define a functor ${\rm M}_{X}: D_{\rm ctf}^{b}(X_{\rm \acute{e}t}, {\mathbb Z}/{p^{n}\mathbb Z})\to  D^{b}_{\rm lfgu}(X/W_{n})^{\circ}$.
We define ${\rm M}_{X}$ to be the composite of the functors
\begin{equation*}
D_{\rm ctf}^{b}(X_{\rm \acute{e}t}, {\mathbb Z}/{p^{n}\mathbb Z})\xrightarrow{i_{*}} D_{\rm ctf}^{b}(P_{\rm \acute{e}t}, {\mathbb Z}/{p^{n}\mathbb Z}) \xrightarrow{{\rm M}_{P}}  D^{b}_{\rm lfgu}({\mathcal D}_{F, P})^{\circ}.
\end{equation*}

\begin{lemm}\label{l0}
The essential image of $M_{X}$ is contained in ${\mathcal C}_{P, X}$.
\end{lemm}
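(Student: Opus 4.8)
The plan is to fix the immersion $i\colon X\hookrightarrow P$ used to define ${\rm M}_{X}$, choose the presentation of ${\mathcal C}_{P,X}$ that makes the verification cleanest, and then check the two conditions defining that subcategory directly from Theorem \ref{t4}. Write $i$ as a closed immersion $X\hookrightarrow U$ into an open subscheme $U\subseteq P$ with $\overline{X}\cap U=X$, and put $Z:=\overline{X}$ (the closure of $X$ in $P$) and $T:=\overline{X}\setminus X=\overline{X}\cap(P\setminus U)$; both are closed in $P$ and $X=Z\setminus T$, so that ${\mathcal C}_{P,X}={\mathcal C}_{P,Z,T}$ by Lemma \ref{l1}. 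The point of this choice is that $X=Z\cap(P\setminus T)$, hence $i$ factors as $i=j_{T}\circ i'$, where $j_{T}\colon P\setminus T\hookrightarrow P$ is an open immersion and $i'\colon X\hookrightarrow P\setminus T$ is now a \emph{closed} immersion. Given ${\mathcal L}\in D^{b}_{\rm ctf}(X_{\rm \acute{e}t},{\mathbb Z}/p^{n}{\mathbb Z})$, I set ${\mathcal M}:={\rm M}_{P}(i_{*}{\mathcal L})\in D^{b}_{\rm lfgu}({\mathcal D}_{F,P})^{\circ}$ and must show ${\mathbb R}{\Gamma}_{Z}{\mathcal M}\xrightarrow{\cong}{\mathcal M}$ and ${\mathbb R}{\Gamma}_{T}{\mathcal M}=0$. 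Throughout I use that $i_{*}{\mathcal L}$ is supported on $X$, so that $j_{Z}^{-1}i_{*}{\mathcal L}=0$ for the open immersion $j_{Z}\colon P\setminus Z\hookrightarrow P$, while $i_{*}{\mathcal L}\cong j_{T!}\,i'_{*}{\mathcal L}$, where $i'_{*}{\mathcal L}:=j_{T}^{-1}i_{*}{\mathcal L}$ is the closed pushforward of ${\mathcal L}$ to $P\setminus T$.

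For the first condition I would show that ${\mathcal M}$ is supported on $Z$. Since ${\rm M}$ interchanges $f^{-1}$ and $f^{!}$ (Theorem \ref{t4}(1)), one has
\[
j_{Z}^{!}{\mathcal M}\cong{\rm M}_{P\setminus Z}\bigl(j_{Z}^{-1}i_{*}{\mathcal L}\bigr)={\rm M}_{P\setminus Z}(0)=0,
\]
hence ${\mathbb R}j_{Z*}j_{Z}^{-1}{\mathcal M}=j_{Z+}j_{Z}^{!}{\mathcal M}=0$, and the distinguished triangle (\ref{e5}), equivalently Lemma \ref{l5}, gives ${\mathbb R}{\Gamma}_{Z}{\mathcal M}\xrightarrow{\cong}{\mathcal M}$.

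For the second condition I would exploit the factorization $i=j_{T}\circ i'$. Because $i'$ is a closed immersion one has $i'_{*}=i'_{!}$, and since ${\rm M}$ interchanges $f_{!}$ and $f_{+}$ (Theorem \ref{t4}(2), applied to the open immersion $j_{T}$, which is in particular an immersion) it follows that
\[
{\mathcal M}\cong{\rm M}_{P}\bigl(j_{T!}\,i'_{*}{\mathcal L}\bigr)\cong j_{T+}\,{\rm M}_{P\setminus T}(i'_{*}{\mathcal L})={\mathbb R}j_{T*}\bigl({\rm M}_{P\setminus T}(i'_{*}{\mathcal L})\bigr),
\]
so ${\mathcal M}$ lies in the essential image of ${\mathbb R}j_{T*}$. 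It then remains to observe that ${\mathbb R}{\Gamma}_{T}\circ{\mathbb R}j_{T*}=0$: in the triangle (\ref{e5}) for an object of the form ${\mathbb R}j_{T*}{\mathcal N}$, the natural map ${\mathbb R}j_{T*}{\mathcal N}\to{\mathbb R}j_{T*}j_{T}^{-1}{\mathbb R}j_{T*}{\mathcal N}$ is, by a triangle identity, a section of the isomorphism obtained by applying ${\mathbb R}j_{T*}$ to the counit $j_{T}^{-1}{\mathbb R}j_{T*}{\mathcal N}\xrightarrow{\cong}{\mathcal N}$, hence is itself an isomorphism, so ${\mathbb R}{\Gamma}_{T}{\mathbb R}j_{T*}{\mathcal N}=0$. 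Therefore ${\mathbb R}{\Gamma}_{T}{\mathcal M}=0$, and together with the first step this gives ${\mathcal M}\in{\mathcal C}_{P,Z,T}={\mathcal C}_{P,X}$, which is the assertion.

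The point requiring the most care is organizational rather than substantive: one must choose $Z=\overline{X}$ and $T=\overline{X}\setminus X$ precisely so that $X$ becomes a closed subscheme of the open set $P\setminus T$, and one must use that the functor $i_{*}$ occurring in the definition of ${\rm M}_{X}$ is the extension-by-zero direct image along the locally closed immersion $i$, so that $i_{*}{\mathcal L}$ is both supported on $X$ and of the form $j_{T!}(-)$; for the naive derived direct image the composite ${\rm M}_{P}\circ{\mathbb R}i_{*}$ would not even be defined, since ${\mathbb R}i_{*}$ need not preserve $D^{b}_{\rm ctf}$ when $i$ is not proper. Everything else is immediate from Theorem \ref{t4} and the elementary formal properties of ${\mathbb R}{\Gamma}_{Z}$ and ${\mathbb R}j_{Z*}$ recalled above. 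Alternatively, the second condition could be obtained by applying the anti-equivalence ${\rm Sol}_{P}$ to the triangle (\ref{e5}) and identifying the result with the standard \'etale localization triangle attached to the open immersion $P\setminus T\hookrightarrow P$ and the closed immersion $T\hookrightarrow P$; that route, however, additionally requires the compatibility of ${\rm Sol}_{P}$ with the relevant adjunction morphisms, so it seems less economical.
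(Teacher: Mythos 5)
Your proof is correct and follows essentially the same route as the paper: both factor the immersion through an open subset of $P$ on which $X$ is closed, identify ${\rm M}_{P}(i_{*}{\mathcal L})$ with ${\mathbb R}j_{*}$ of an object supported on $X$ via the compatibility of ${\rm M}$ with $f_{!}/f_{+}$, and detect the support condition via the compatibility with $f^{-1}/f^{!}$. The only difference is organizational: the paper invokes \cite[Corollary 16.2.8]{EK} together with Proposition \ref{p2} to reduce to the case of a closed immersion, whereas you unwind those reductions into a direct verification of the two conditions defining ${\mathcal C}_{P,Z,T}$ for the specific choice $Z=\overline{X}$, $T=\overline{X}\setminus X$.
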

\begin{proof}
Let us take an open subscheme $U$ of $P$ such that $i$ factors as a closed immersion $i': X\hookrightarrow U$ and 
an open immersion $j: U\hookrightarrow P$.
Then by \cite[Corollary 16.2.8]{EK} $M_{X}$ is naturally isomorphic to the composition
\begin{eqnarray*}
D_{\rm ctf}^{b}(X_{\rm \acute{e}t}, {\mathbb Z}/{p^{n}\mathbb Z})\xrightarrow{i'_{*}} D_{\rm ctf}^{b}(U_{\rm \acute{e}t}, {\mathbb Z}/{p^{n}\mathbb Z}) &\xrightarrow{{\rm M}_{U}}&  D^{b}_{\rm lfgu}({\mathcal D}_{F, U})^{\circ}\\
&\xrightarrow{{\mathbb R}j_{*}}& D^{b}_{\rm lfgu}({\mathcal D}_{F, P})^{\circ}.
\end{eqnarray*}
So we reduce to the case when $X$ is closed in $P$ by Proposition \ref{p2}.
Now since ${\rm M}_{P}$ is compatible with the inverse image functor by Theorem \ref{t4}, ${\mathcal F}\in  D_{\rm ctf}^{b}(P_{\rm \acute{e}t}, {\mathbb Z}/{p^{n}\mathbb Z})$ is supported on $X$ if and only if so is ${\rm M}_{P}({\mathcal F})$.
\end{proof}
One can prove that this functor is independent of the choice of $X\hookrightarrow P$ as in Lemma \ref{l19}.
By Lemma \ref{l0}, we obtain a functor
\begin{equation*}
{\rm M}_{X}: D_{\rm ctf}^{b}(X_{\rm \acute{e}t}, {\mathbb Z}/{p^{n}\mathbb Z})\to  D^{b}_{\rm lfgu}(X/W_{n})^{\circ}.
\end{equation*}
We now state our main result. 
\begin{theo}\label{t5}
Let $X$ be a $W_{n}$-embeddable $k$-scheme.
Then ${\rm Sol}_{X}$ induces an equivalence of triangulated categories
\begin{equation}
{\rm Sol}_{X}: D_{\rm lfgu}^{b}(X/W_{n})^{\circ}\xrightarrow{\cong} D_{\rm ctf}^{b}(X_{\rm \acute{e}t}, {\mathbb Z}/{p^{n}\mathbb Z})
\end{equation}
with quasi-inverse ${\rm M}_{X}$.
\end{theo}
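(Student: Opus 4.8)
The plan is to reduce the statement, via the chosen embedding, to the smooth case of Emerton--Kisin (Theorem \ref{t4}) combined with the Kashiwara-type equivalences already established (Proposition \ref{p2} and Theorem \ref{t1}). Fix an immersion $i\colon X\hookrightarrow P$ into a proper smooth $W_{n}$-scheme and factor it as a closed immersion $i'\colon X\hookrightarrow U$ followed by an open immersion $j\colon U\hookrightarrow P$; since ${\rm Sol}_{X}$ and ${\rm M}_{X}$ are independent of the embedding by Lemma \ref{l19}, Lemma \ref{l0} and the remark following it, it suffices to prove the theorem for this choice. Recall that ${\rm Sol}_{X}$ is the composite of the inclusion ${\mathcal C}_{P,X}\subset D^{b}_{\rm lfgu}({\mathcal D}_{F,P})$, the functor ${\rm Sol}_{P}$ and $i^{-1}$, and that ${\rm M}_{X}$ is naturally isomorphic to ${\mathbb R}j_{*}\circ{\rm M}_{U}\circ i'_{*}$ by the proof of Lemma \ref{l0}.

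First I would record the ``support'' form of the Riemann--Hilbert correspondence on the smooth scheme $U$. Writing $j_{X}\colon U\setminus X\hookrightarrow U$ for the complementary open immersion, Lemma \ref{l5} identifies ${\mathcal C}_{U,X,\emptyset}$ with the full subcategory of $D^{b}_{\rm lfgu}({\mathcal D}_{F,U})^{\circ}$ of complexes ${\mathcal M}$ with $j_{X}^{-1}{\mathcal M}=0$. Since ${\rm Sol}_{U}$ interchanges $j_{X}^{!}=j_{X}^{-1}$ with $j_{X}^{-1}$ by Theorem \ref{t4} (1) and ${\rm Sol}_{U}$ is conservative, an object ${\mathcal M}$ lies in ${\mathcal C}_{U,X,\emptyset}$ if and only if ${\rm Sol}_{U}({\mathcal M})$ is supported on $X$. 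Hence the mutually quasi-inverse equivalences ${\rm Sol}_{U}$ and ${\rm M}_{U}$ restrict to mutually quasi-inverse equivalences between ${\mathcal C}_{U,X,\emptyset}$ and the full subcategory $D^{b}_{{\rm ctf},X}(U_{\rm \acute{e}t},{\mathbb Z}/p^{n}{\mathbb Z})$ of $D^{b}_{\rm ctf}(U_{\rm \acute{e}t},{\mathbb Z}/p^{n}{\mathbb Z})$ consisting of complexes supported on $X$.

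Next I would invoke the elementary fact from \'etale cohomology that, for the closed immersion $i'$ with open complement $V\hookrightarrow U$, the functor $i'_{*}=i'_{!}$ is exact and fully faithful and identifies $D^{b}_{\rm ctf}(X_{\rm \acute{e}t},{\mathbb Z}/p^{n}{\mathbb Z})$ with $D^{b}_{{\rm ctf},X}(U_{\rm \acute{e}t},{\mathbb Z}/p^{n}{\mathbb Z})$, with quasi-inverse $i'^{-1}$ (constructibility and finite Tor dimension over ${\mathbb Z}/p^{n}{\mathbb Z}$ are preserved in both directions, the latter being a condition on stalks). Combining this with Proposition \ref{p2}, which gives an equivalence ${\mathbb R}j_{*}\colon{\mathcal C}_{U,X,\emptyset}\xrightarrow{\cong}{\mathcal C}_{P,X}$ with quasi-inverse $j^{-1}=j^{!}$, and with the compatibility of ${\rm Sol}_{P}$ with $j^{!}$ from Theorem \ref{t4} (1), one obtains that ${\rm Sol}_{X}$ is canonically isomorphic to the composite
\begin{equation*}
{\mathcal C}_{P,X}\xrightarrow{\;j^{!}\;}{\mathcal C}_{U,X,\emptyset}\xrightarrow{\;{\rm Sol}_{U}\;}D^{b}_{{\rm ctf},X}(U_{\rm \acute{e}t},{\mathbb Z}/p^{n}{\mathbb Z})\xrightarrow{\;i'^{-1}\;}D^{b}_{\rm ctf}(X_{\rm \acute{e}t},{\mathbb Z}/p^{n}{\mathbb Z})
\end{equation*}
of three equivalences (using $i^{-1}=i'^{-1}\circ j^{-1}$ and $j^{-1}\circ{\rm Sol}_{P}\cong{\rm Sol}_{U}\circ j^{!}$), while ${\rm M}_{X}\cong{\mathbb R}j_{*}\circ{\rm M}_{U}\circ i'_{*}$ is the composite of the quasi-inverses of these three equivalences taken in the opposite order. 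Therefore ${\rm Sol}_{X}$ is an equivalence with quasi-inverse ${\rm M}_{X}$, the unit and counit being obtained by pasting those of the three constituent adjoint equivalences. The substantive content beyond bookkeeping is the reduction to ``supported on $X$'' subcategories on the ${\mathcal D}$-module side, which rests on Lemma \ref{l5} and Theorem \ref{t4} (1); I expect keeping this reduction and the several independence-of-embedding identifications mutually coherent to be the main point requiring care.
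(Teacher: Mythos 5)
Your proposal is correct and follows essentially the same route as the paper: the paper isolates your "support form" of the smooth Riemann--Hilbert correspondence as a separate lemma (Lemma \ref{l10}, proved exactly as you do via Lemma \ref{l5} and the compatibility of ${\rm Sol}$ with inverse images from Theorem \ref{t4}(1)), and then concludes by composing it with the equivalence ${\mathbb R}j_{*}$ of Proposition \ref{p2} and the equivalence $i_{*}/i^{-1}$ on the \'etale side, just as in your three-step factorization.
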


In order to prove Theorem \ref{t5} we need the following lemma.
\begin{lemm}\label{l10}
Let $X$ be a $W_{n}$-embeddable $k$-scheme with a closed immersion $i$ from $X$ into a smooth $W_{n}$-scheme $P$.
Let us denote by $D_{{\rm ctf}, X}^{b}(P_{\rm \acute{e}t}, {\mathbb Z}/{p^{n}\mathbb Z})$
the full triangulated subcategory of $D_{{\rm ctf}}^{b}(P_{\rm \acute{e}t}, {\mathbb Z}/{p^{n}\mathbb Z})$ consisting of  complexes supported on $X$.
Then ${\rm Sol}_{P}: D_{\rm lfgu}^{b}({\mathcal D}_{F, P})^{\circ}\to D_{{\rm ctf}}^{b}(P_{\rm \acute{e}t}, {\mathbb Z}/{p^{n}\mathbb Z})$
restricts an equivalence
\begin{equation*}
{\mathcal C}_{P, X, \emptyset}\xrightarrow{\cong} D_{{\rm ctf}, X}^{b}(P_{\rm \acute{e}t}, {\mathbb Z}/{p^{n}\mathbb Z}).
\end{equation*}
\end{lemm}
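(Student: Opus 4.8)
The plan is to deduce the statement from the Emerton--Kisin Riemann--Hilbert correspondence (Theorem \ref{t4}) together with its compatibility with restriction to an open subscheme. Since ${\rm Sol}_{P}$ is already an anti-equivalence between $D^{b}_{\rm lfgu}({\mathcal D}_{F,P})^{\circ}$ and $D^{b}_{\rm ctf}(P_{\rm \acute{e}t}, {\mathbb Z}/{p^{n}{\mathbb Z}})$ with quasi-inverse ${\rm M}_{P}$, it is enough to check that these functors match the two support conditions, namely that for ${\mathcal M}\in D^{b}_{\rm lfgu}({\mathcal D}_{F,P})^{\circ}$ one has ${\rm Supp}\,{\mathcal M}\subset X$ if and only if ${\rm Supp}\,{\rm Sol}_{P}({\mathcal M})\subset X$. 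Indeed, once this is known, ${\rm Sol}_{P}$ and ${\rm M}_{P}$ restrict to mutually quasi-inverse anti-equivalences between the two full triangulated subcategories in question; here one uses Lemma \ref{l5}, which identifies the full subcategory of $D^{b}_{\rm lfgu}({\mathcal D}_{F,P})^{\circ}$ of complexes supported on $X$ with ${\mathcal C}_{P,X,\emptyset}$, the extra condition ${\mathbb R}{\Gamma}_{\emptyset}=0$ in the definition of ${\mathcal C}_{P,X,\emptyset}$ being automatic.

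To prove the support criterion, let $j: P\setminus X\hookrightarrow P$ denote the open immersion complementary to $i$; note $P\setminus X$ is again smooth over $W_{n}$. Since $j$ is an open immersion we have $d_{(P\setminus X)/P}=0$, so $j^{!}$ coincides with the restriction functor $j^{-1}$ both on the ${\mathcal D}_{F}$-module side and on the \'etale side, and Theorem \ref{t4}~(1) furnishes a natural isomorphism ${\rm Sol}_{P\setminus X}(j^{!}{\mathcal M})\cong j^{-1}{\rm Sol}_{P}({\mathcal M})$. Now ${\rm Supp}\,{\mathcal M}\subset X$ is equivalent to $j^{!}{\mathcal M}=0$: for ${\mathcal D}_{F,P}$-modules this is Lemma \ref{l5}, and for constructible \'etale sheaves it is the definition of being supported on $X$. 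Because ${\rm Sol}_{P\setminus X}$ is an anti-equivalence, $j^{!}{\mathcal M}=0$ holds if and only if ${\rm Sol}_{P\setminus X}(j^{!}{\mathcal M})=0$, i.e. if and only if $j^{-1}{\rm Sol}_{P}({\mathcal M})=0$, i.e. if and only if ${\rm Supp}\,{\rm Sol}_{P}({\mathcal M})\subset X$. This is exactly the asserted equivalence.

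I do not expect a serious obstacle here; the argument is essentially bookkeeping around Theorem \ref{t4}. The only points requiring care are that the ambient correspondence and its compatibility with $j^{!}$ are available for the smooth $W_{n}$-schemes $P$ and $P\setminus X$, and that ``supported on $X$'' is read uniformly on both sides as vanishing after applying $j^{-1}$. Alternatively one could argue more directly, using the localization triangle (\ref{e5}) and the compatibility of ${\rm Sol}_{P}$ and ${\rm M}_{P}$ with $f^{!}$ and ${\mathbb R}f_{*}$ to show separately that ${\rm Sol}_{P}$ carries ${\mathcal C}_{P,X,\emptyset}$ into $D^{b}_{{\rm ctf},X}(P_{\rm \acute{e}t},{\mathbb Z}/{p^{n}{\mathbb Z}})$ and that ${\rm M}_{P}$ carries $D^{b}_{{\rm ctf},X}(P_{\rm \acute{e}t},{\mathbb Z}/{p^{n}{\mathbb Z}})$ back into ${\mathcal C}_{P,X,\emptyset}$; but the single support criterion above subsumes both inclusions.
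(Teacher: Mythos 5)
Your proof is correct and follows essentially the same route as the paper: identify ${\mathcal C}_{P,X,\emptyset}$ with the complexes supported on $X$ via Lemma \ref{l5}, then transport the condition $j^{-1}(-)=0$ for the complementary open immersion $j$ across ${\rm Sol}_{P}$ using the compatibility with $j^{!}=j^{-1}$ from Theorem \ref{t4}. Your packaging of both inclusions into a single ``if and only if'' via the anti-equivalence ${\rm Sol}_{P\setminus X}$ is a minor streamlining of the paper's two-step check for ${\rm Sol}_{P}$ and ${\rm M}_{P}$ separately.
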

\begin{proof}
By Lemma \ref{l5}, we can write as
\begin{equation*}
{\mathcal C}_{P, X, \emptyset}=\left\{{\mathcal M}\in D_{\rm lfgu}^{b}({\mathcal D}_{F, P})^{\circ}\,|\, {\rm Supp}{\mathcal M}\subset X\right\}.
\end{equation*}
Denote by $j$ the open immersion $P\setminus X\hookrightarrow P$.
For an object $\mathcal M$ in ${\mathcal C}_{P, X, \emptyset}$,
the condition ${\rm Supp}{\mathcal M}\subset X$ is equivalent to the condition $j^{-1}{\mathcal M}=0$.
Applying the functor ${\rm Sol}_{P}$ to $j^{-1}{\mathcal M}=0$, by Theorem \ref{t4},  one has $j^{-1}\left({\rm Sol}_{P}{\mathcal M}\right)\cong {\rm Sol}_{P\setminus X}j^{-1}{\mathcal M}=0$.
Hence we know that ${\rm Sol}_{P}$ restricts to a functor 
\begin{equation*}
{\rm Sol}_{P}: {\mathcal C}_{P, X, \emptyset}\to D_{{\rm ctf}, X}^{b}(P_{\rm \acute{e}t}, {\mathbb Z}/{p^{n}\mathbb Z}).
\end{equation*} 
Similarly, ${\rm M}_{P}$ restricts to a functor ${\rm M}_{P}: D_{{\rm ctf}, X}^{b}(P_{\rm \acute{e}t}, {\mathbb Z}/{p^{n}\mathbb Z})\to {\mathcal C}_{P, X, \emptyset}$ and we are done.
\end{proof}
Let us prove Theorem \ref{t5}.

\begin{proof}
We may assume that there exists a proper smooth $W_{n}$-scheme $P$, an open subset $U$ of $P$ together with a closed immersion $i: X\hookrightarrow U$.
Then ${\rm Sol}_{P}$ is compatible with ${\rm Sol}_{U}$ and
${\rm Sol}_{U}$ induces an equivalence of triangulated categories
\begin{equation*}
{\mathcal C}_{U, X, \emptyset}\xrightarrow{\cong} D_{{\rm ctf}, X}^{b}(U_{\rm \acute{e}t}, {\mathbb Z}/{p^{n}\mathbb Z})
\end{equation*}
with quasi-inverse ${\rm M}_{U}$ by Lemma \ref{l10}.
Also, $i^{-1}: D_{{\rm ctf}, X}^{b}(P_{\rm \acute{e}t}, {\mathbb Z}/{p^{n}\mathbb Z})\to D_{{\rm ctf}}^{b}(X_{\rm \acute{e}t}, {\mathbb Z}/{p^{n}\mathbb Z})$ is an equivalence of triangulated categories 
with quasi-inverse $i_{*}$.
This finishes the proof.
\end{proof}

\begin{theo}\label{t6}
Let $f: X\to Y$ be a morphism of $W_{n}$-embeddable schemes.
Then there exist natural isomorphisms of functors
\begin{equation*}
{\rm Sol}_{Y}\circ f_{+}\cong f_{!}\circ {\rm Sol}_{X}: D^b_{\rm lfgu}(X/W_n)^{\circ}\to D_{{\rm ctf}}^{b}(Y_{\rm \acute{e}t}, {\mathbb Z}/{p^{n}\mathbb Z}),
\end{equation*}
\begin{equation*}
f^{-1}\circ {\rm Sol}_{Y}\xrightarrow{\cong} {\rm Sol}_{X}\circ f^{!}: D^b_{\rm lfgu}(Y/W_n)^{\circ}\to D_{{\rm ctf}}^{b}(X_{\rm \acute{e}t}, {\mathbb Z}/{p^{n}\mathbb Z}) 
\end{equation*}
and a functorial isomorphism
\begin{equation*}
{\rm Sol}_{X}({\mathcal M})\otimes_{{\mathbb Z}/p^{n}{\mathbb Z}}^{\mathbb L}{\rm Sol}_{X}({\mathcal N})\xrightarrow{\cong}{\rm Sol}_{X}\left({\mathcal M}\otimes^{\mathbb L}{\mathcal N}\right)
\end{equation*}
for objects $\mathcal M$ and $\mathcal N$ in $ D^b_{\rm lfgu}(X/W_n)^{\circ}$.
\end{theo}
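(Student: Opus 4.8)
The plan is to reduce each of the three compatibilities to its counterpart for smooth $W_{n}$-schemes, namely Theorem \ref{t4}, the only genuinely new input being the interaction of ${\rm Sol}$ with the local cohomology functor. So the first step is to establish the following claim: for a closed immersion $i\colon X\hookrightarrow P$ into a smooth $W_{n}$-scheme, with complementary open immersion $j_{X}\colon P\setminus X\hookrightarrow P$, there is a functorial isomorphism ${\rm Sol}_{P}\circ{\mathbb R}{\Gamma}_{X}\xrightarrow{\cong}i_{*}\circ i^{-1}\circ{\rm Sol}_{P}$ on $D^{b}_{\rm lfgu}({\mathcal D}_{F, P})^{\circ}$. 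To prove it, apply the contravariant equivalence ${\rm Sol}_{P}$ to the localization triangle (\ref{e5}); since ${\mathbb R}j_{X*}=j_{X+}$, $j_{X}^{-1}=j_{X}^{!}$, and $j_{X}$ is an immersion, Theorem \ref{t4}~(1) and~(2) identify ${\rm Sol}_{P}({\mathbb R}j_{X*}j_{X}^{-1}{\mathcal M})$ with $j_{X!}j_{X}^{-1}{\rm Sol}_{P}({\mathcal M})$, and one checks that the induced map to ${\rm Sol}_{P}({\mathcal M})$ is the counit $j_{X!}j_{X}^{-1}\to{\rm id}$; comparing with the standard open--closed triangle $j_{X!}j_{X}^{-1}(-)\to(-)\to i_{*}i^{-1}(-)$ on $P_{\rm \acute{e}t}$ then yields the claim. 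Two consequences will be used repeatedly: (i) if ${\mathcal M}\in{\mathcal C}_{P, X, \emptyset}$, equivalently ${\rm Supp}\,{\mathcal M}\subseteq X$, then ${\rm Sol}_{P}({\mathcal M})\cong i_{!}{\rm Sol}_{X}({\mathcal M})$ (using $i_{*}=i_{!}$ and $i^{-1}i_{*}\cong{\rm id}$ for a closed immersion); (ii) if $i\colon X\hookrightarrow P$ is a general (locally closed) immersion and ${\mathcal M}\in{\mathcal C}_{P, X}={\mathcal C}_{P, Z, T}$ with $X=Z\setminus T$, then ${\rm Supp}\,{\rm Sol}_{P}({\mathcal M})\subseteq Z$ and, applying the claim to $T$, one gets ${\rm Sol}_{P}({\mathcal M})|_{T}=0$, so ${\rm Sol}_{P}({\mathcal M})$ is set-theoretically supported on $X$ and hence ${\rm Sol}_{P}({\mathcal M})\cong i_{!}i^{-1}{\rm Sol}_{P}({\mathcal M})=i_{!}{\rm Sol}_{X}({\mathcal M})$.

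The tensor product is then immediate. Choose an immersion $i\colon X\hookrightarrow P$ into a proper smooth $W_{n}$-scheme, so that ${\rm Sol}_{X}=i^{-1}\circ{\rm Sol}_{P}$ and ${\mathcal M}\otimes^{\mathbb L}{\mathcal N}={\mathcal M}\otimes^{\mathbb L}_{{\mathcal O}_{P}}{\mathcal N}[-d_{P}]$. Since $i^{-1}$ commutes with $\otimes^{\mathbb L}_{{\mathbb Z}/p^{n}{\mathbb Z}}$, Theorem \ref{t4}~(3) gives ${\rm Sol}_{X}({\mathcal M})\otimes^{\mathbb L}_{{\mathbb Z}/p^{n}{\mathbb Z}}{\rm Sol}_{X}({\mathcal N})\cong i^{-1}\bigl({\rm Sol}_{P}({\mathcal M})\otimes^{\mathbb L}_{{\mathbb Z}/p^{n}{\mathbb Z}}{\rm Sol}_{P}({\mathcal N})\bigr)\cong i^{-1}\bigl({\rm Sol}_{P}({\mathcal M}\otimes^{\mathbb L}_{{\mathcal O}_{P}}{\mathcal N})[d_{P}]\bigr)={\rm Sol}_{X}({\mathcal M}\otimes^{\mathbb L}{\mathcal N})$, the shift $[d_{P}]$ matching, under the contravariant ${\rm Sol}_{P}$, the defining shift $[-d_{P}]$. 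For the inverse image, fix a diagram (\ref{d2}) with $g\colon P\to Q$ proper smooth and write $i_{1}=j_{1}\circ\iota$ for the factorization of $X\hookrightarrow P$ into a closed immersion $\iota\colon X\hookrightarrow U$ and an open immersion $j_{1}\colon U\hookrightarrow P$, so that $f^{!}={\mathbb R}j_{1*}\circ{\mathbb R}{\Gamma}_{X}\circ g_{|U}^{!}$. For ${\mathcal N}\in{\mathcal C}_{Q, Y}$ one computes, using in turn (\ref{e33}) (to replace ${\rm Sol}_{P}\circ{\mathbb R}j_{1*}$ on ${\mathcal C}_{U, X, \emptyset}$ by ${\rm Sol}_{U}$), the claim together with $\iota^{-1}\iota_{*}\cong{\rm id}$, Theorem \ref{t4}~(1) for $g_{|U}\colon U\to Q$, and finally $g\circ i_{1}=i_{2}\circ f$:
\[
{\rm Sol}_{X}(f^{!}{\mathcal N})\cong\iota^{-1}{\rm Sol}_{U}({\mathbb R}{\Gamma}_{X}g_{|U}^{!}{\mathcal N})\cong\iota^{-1}{\rm Sol}_{U}(g_{|U}^{!}{\mathcal N})\cong(g_{|U}\circ\iota)^{-1}{\rm Sol}_{Q}({\mathcal N})=f^{-1}i_{2}^{-1}{\rm Sol}_{Q}({\mathcal N})=f^{-1}{\rm Sol}_{Y}({\mathcal N}).
\]

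For the direct image, keep a diagram (\ref{d2}) with $P$, $Q$ proper smooth, $g\colon P\to Q$ proper smooth and $g\circ i_{1}=i_{2}\circ f$, so that $f_{+}=g_{+}$. Given ${\mathcal M}\in{\mathcal C}_{P, X}$, consequence (ii) gives ${\rm Sol}_{P}({\mathcal M})\cong i_{1!}{\rm Sol}_{X}({\mathcal M})$, and Theorem \ref{t4}~(2) applied to the proper smooth $g$ gives ${\rm Sol}_{Q}(g_{+}{\mathcal M})\cong g_{!}{\rm Sol}_{P}({\mathcal M})$; hence
\[
{\rm Sol}_{Y}(f_{+}{\mathcal M})=i_{2}^{-1}{\rm Sol}_{Q}(g_{+}{\mathcal M})\cong i_{2}^{-1}g_{!}i_{1!}{\rm Sol}_{X}({\mathcal M})=i_{2}^{-1}(g\circ i_{1})_{!}{\rm Sol}_{X}({\mathcal M})=i_{2}^{-1}i_{2!}f_{!}{\rm Sol}_{X}({\mathcal M})\cong f_{!}{\rm Sol}_{X}({\mathcal M}),
\]
using functoriality of $(-)_{!}$ and $i_{2}^{-1}i_{2!}\cong{\rm id}$ for the immersion $i_{2}$.

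The main obstacle is not any of these computations in isolation but assembling them into isomorphisms \emph{of functors}: one must check that the displayed isomorphisms are independent of the auxiliary data (the diagram (\ref{d2}), the open $U$, the closed sets $Z$, $T$), functorial in ${\mathcal M}$ and ${\mathcal N}$, and compatible with composition of morphisms, which forces one to track the adjunction and base-change morphisms underlying each ``$\cong$'' above --- most delicately, the identification in the claim of ${\rm Sol}_{P}$ applied to the unit ${\mathcal M}\to{\mathbb R}j_{X*}j_{X}^{-1}{\mathcal M}$ with the counit $j_{X!}j_{X}^{-1}\to{\rm id}$, and the flat base-change isomorphisms already employed in Subsection \ref{s42}. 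This is the same bookkeeping underlying the smooth case of \cite{EK}, now carried out through the intermediate open immersions $j_{1}$, $j_{2}$.
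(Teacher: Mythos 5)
Your proposal is correct, and for two of the three compatibilities (the tensor product and the direct image) it coincides with the paper's argument: the paper likewise reduces $\otimes^{\mathbb L}$ to Theorem \ref{t4}~(3) via $i^{-1}$, and proves the direct-image statement by the chain $i_{2}^{-1}{\rm Sol}_{Q}g_{+}\cong i_{2}^{-1}g_{!}{\rm Sol}_{P}\cong i_{2}^{-1}g_{!}i_{1*}i_{1}^{-1}{\rm Sol}_{P}\cong f_{!}{\rm Sol}_{X}$, using exactly your observation that ${\rm Sol}_{P}({\mathcal M})$ is supported on $X$ for ${\mathcal M}$ in ${\mathcal C}_{P,X}$. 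Where you genuinely diverge is the inverse-image compatibility. The paper constructs the transformation $f^{-1}{\rm Sol}_{Y}\to{\rm Sol}_{X}f^{!}$ through the map ${\rm Sol}_{P}g^{!}\to{\rm Sol}_{P}{\mathbb R}{\Gamma}_{X}g^{!}$ and then proves it is an isomorphism by a d\'evissage to $n=1$ followed by induction on $\dim X$, the smooth stratum being handled by Kashiwara's theorem \cite[Corollary 15.5.4]{EK} in the form ${\mathbb R}{\Gamma}_{X}g^{!}\cong i_{1+}i_{1}^{!}g^{!}$. You instead isolate the purely formal fact that ${\rm Sol}_{U}$ turns the localization triangle ${\mathbb R}{\Gamma}_{X}\to{\rm id}\to j_{X+}j_{X}^{!}$ into a triangle whose first vertex is $j_{X!}j_{X}^{-1}{\rm Sol}_{U}(-)$, so that applying $\iota^{-1}$ kills it and yields $\iota^{-1}{\rm Sol}_{U}(g_{|U}^{!}{\mathcal N})\xrightarrow{\cong}\iota^{-1}{\rm Sol}_{U}({\mathbb R}{\Gamma}_{X}g_{|U}^{!}{\mathcal N})$ directly. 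This only uses Theorem \ref{t4}~(1),(2) for the \emph{open} immersion $j_{X}$ and the vanishing $\iota^{-1}j_{X!}=0$; it needs no smoothness of $X$, no induction on dimension, and no reduction to $n=1$, so it is a genuine simplification (and, as you note in consequence (ii), it also reproves the support statement used in the direct-image step). The one point you flag as delicate --- identifying the map $j_{X!}j_{X}^{-1}{\rm Sol}_{P}{\mathcal M}\to{\rm Sol}_{P}{\mathcal M}$ with the counit --- is in fact not needed for the isomorphism itself: it suffices that the third vertex of the ${\rm Sol}$-image of the localization triangle is of the form $j_{X!}(-)$, since $\iota^{-1}j_{X!}=0$ and ${\rm Sol}_{U}({\mathbb R}{\Gamma}_{X}{\mathcal M})$ is supported on $X$ (because $j_{X}^{-1}{\rm Sol}_{U}{\mathbb R}{\Gamma}_{X}{\mathcal M}\cong{\rm Sol}_{U\setminus X}(j_{X}^{!}{\mathbb R}{\Gamma}_{X}{\mathcal M})=0$); the counit identification only matters if one wants the resulting isomorphism to agree on the nose with the paper's particular natural transformation, and for the uniqueness/functoriality of the fill-in one can invoke ${\rm Hom}(j_{X!}{\mathcal A},i_{*}{\mathcal B})=0$. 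Your caveats about independence of the auxiliary embeddings are at the same level of detail as the paper's own treatment.
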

\begin{proof}

We may assume that there exists a commutative diagram
\[\xymatrix{
{X} \ar[d]_{f} \ar@{^{(}-_>}[r]^{i_{1}} & {P}  \ar[d]^{g} \\
{Y} \ar@{^{(}-_>}[r]^{i_{2}} & Q }
\]
such that  $P$  is a smooth $W_{n}$-scheme which is an open subscheme of a proper smooth $W_n$-scheme $\tilde{P}$, 
$Q$ is a proper smooth $W_{n}$-scheme,
$i_{1}$ is a closed immersion, $i_{2}$ is an immersion,
and $g$ is the composite of an immersion and a proper smooth morphism.
Also, we can identify the categories $D^b_{\rm lfgu}(X/W_n)$ and $D^b_{\rm lfgu}(Y/W_n)$ in the statement with the categories ${\mathcal C}_{P, X, \emptyset}={\mathcal C}_{\tilde{P}, X}$ and ${\mathcal C}_{Q, Y}$ respectively.
Via this identification, ${\rm Sol}_X$ is identified with $i_1^{-1}\circ{\rm Sol}_{P}$ on ${\mathcal C}_{P, X, \emptyset}$  by (\ref{e33}).
For any object $\mathcal M$ in ${\mathcal C}_{P, X, \emptyset}$,
${\rm Sol}_{P}({\mathcal M})$ is supported on $X$.
So we can compute that
\begin{eqnarray*}
{\rm Sol}_{Y}\circ f_{+}:=i_{2}^{-1}{\rm Sol}_{Q} g_{+}\cong i_{2}^{-1}g_{!} {\rm Sol}_{P}\cong i_{2}^{-1}g_{!}i_{1*}i_{1}^{-1}{\rm Sol}_{P}\cong f_{!}\circ {\rm Sol}_{X}.
\end{eqnarray*}

Let us prove the second isomorphism.
Recall that $f^{!}:={\mathbb R}{\Gamma}_{X}g^{!}: {\mathcal C}_{Q, Y}\to {\mathcal C}_{P, X, \emptyset}$.
We define a natural transformation $f^{-1}\circ {\rm Sol}_{Y}\to {\rm Sol}_{X}\circ f^{!}$ to be the
 composite of natural transformations
\begin{equation*}
f^{-1}\circ {\rm Sol}_{Y} \cong i_{1}^{-1} g^{-1} {\rm Sol}_{Q} \cong i_{1}^{-1} {\rm Sol}_{P}g^{!}\to i_{1}^{-1} {\rm Sol}_{P}{\mathbb R}{\Gamma}_{X}g^{!}= {\rm Sol}_{X}\circ f^{!}.
\end{equation*}
Let us prove that it is an isomorphism.
The usual d\'evissage argument reduces the proof to the case $n=1$. 
First of all, suppose that $X$ is smooth over $k$.
Then using \cite[Corollary 15.5.4]{EK} and Theorem \ref{t4}, we obtain isomorphisms
\begin{eqnarray*}
f^{-1}\circ {\rm Sol}_{Y} =f^{-1} i_{2}^{-1}{\rm Sol}_{Q}
&\cong& i_{1}^{-1} i_{1*} i_{1}^{-1} g^{-1} {\rm Sol}_{Q}
\cong i_{1}^{-1} {\rm Sol}_{P} \circ \left(i_{1+}i_{1}^{!}g^{!}\right)\\
&\cong&i_{1}^{-1} {\rm Sol}_{P} \circ \left({\mathbb R}{\Gamma}_{X}g^{!}\right) \cong {\rm Sol}_{X}\circ f^{!}.
\end{eqnarray*}
In general case, we shall prove by the induction on the dimension $d$ of $X$.
Let $X_{0}$ be a $d$-dimensional smooth open subscheme of $X$ such that $H:=X\setminus X_{0}$ is of dimension $<d$. 
Let $a$ denote the open immersion $P\setminus H\hookrightarrow P$ and  ${\mathcal M}$ an object in ${\mathcal C}_{Q, Y}$.
We have a distinguished triangle in $D_{{\rm c}}^{b}(P_{\rm \acute{e}t}, {\mathbb Z}/{p\mathbb Z})$
\begin{equation}\label{e13}
{\rm Sol}_{P}{\mathbb R}{\Gamma}_{X}a_{+}a^{!}g^{!}{\mathcal M}\to {\rm Sol}_{P}{\mathbb R}{\Gamma}_{X}g^{!}{\mathcal M}
\to {\rm Sol}_{P}{\mathbb R}{\Gamma}_{H}g^{!} {\mathcal M}\xrightarrow{+}.
\end{equation}
Let us denote by $i_{H}$ the closed immersion $H\hookrightarrow P$.
For any object ${\mathcal F}$ in $D_{{\rm c}}^{b}(P_{\rm \acute{e}t}, {\mathbb Z}/{p\mathbb Z})$, there exists a distinguished triangle in $D_{{\rm c}}^{b}(P_{\rm \acute{e}t}, {\mathbb Z}/{p\mathbb Z})$
\begin{equation}\label{e2}
a_{!}a^{-1}{\mathcal F}\to {\mathcal F}\to i_{H!}i_{H}^{-1}{\mathcal F}\xrightarrow{+}.
\end{equation}
Applying (\ref{e2}) to ${\mathcal F}=g^{-1}{\rm Sol}_{Q}{\mathcal M}$, one has a distinguished triangle
\begin{equation}\label{e3}
a_{!}a^{-1}{g^{-1}{\rm Sol}_{Q}{\mathcal M}}\to {g^{-1}{\rm Sol}_{Q}{\mathcal M}}\to i_{H!}i_{H}^{-1}{g^{-1}{\rm Sol}_{Q}{\mathcal M}}\xrightarrow{+}.
\end{equation}
There are natural morphisms
\begin{equation*}
\psi: a_{!}a^{-1}g^{-1}{\rm Sol}_{Q}{\mathcal M}\cong  {\rm Sol}_{P}a_{+}a^{!}g^{!}{\mathcal M}\to {\rm Sol}_{P}{\mathbb R}{\Gamma}_{X}a_{+}a^{!}g^{!}{\mathcal M}
\end{equation*}
and
\begin{eqnarray*}
\phi: i_{H!}i_{H}^{-1}g^{-1}{\rm Sol}_{Q}{\mathcal M} \xrightarrow{\cong} i_{H!}i_{H}^{-1}{\rm Sol}_{P}g^!{\mathcal M}
&\to&  i_{H!}i_{H}^{-1}{\rm Sol}_{P}{\mathbb R}{\Gamma}_{H}g^{!} {\mathcal M}\\
&\cong& {\rm Sol}_{P}{\mathbb R}{\Gamma}_{H}g^{!} {\mathcal M}.
\end{eqnarray*}
Here the last isomorphism follows since ${\rm Sol}_{P}{\mathbb R}{\Gamma}_{H}g^{!} {\mathcal M}$ is supported on $H$.
Hence we obtain a morphism of distinguished triangle from  (\ref{e3}) to (\ref{e13}).
We then claim that $i_1^{-1}\psi$ is an isomorphism.
We can calculate as
\begin{eqnarray*}
{\rm Sol}_{P}{\mathbb R}{\Gamma}_{X}a_{+}a^{!}g^{!}{\mathcal M}&\cong& {\rm Sol}_{P}a_{+}{\mathbb R}{\Gamma}_{X\setminus H}a^{!}g^{!}{\mathcal M}\\
&\cong& a_{!} {\rm Sol}_{P\setminus H}{\mathbb R}{\Gamma}_{X\setminus H}a^{!}g^{!}{\mathcal M}.
\end{eqnarray*}
So $i_1^{-1}a_{!}a^{-1}{g^{-1}{\rm Sol}_{Q}{\mathcal M}}$ and $i_1^{-1}{\rm Sol}_{P}{\mathbb R}{\Gamma}_{X}a_{+}a^{!}g^{!}{\mathcal M}$ are supported on $X\setminus H$.
Hence $i_1^{-1}\psi$ is an isomorphism if and only if so is $a'^{-1}i_1^{-1}\psi$,
where $a'$ denotes the open immersion $X\setminus H\hookrightarrow X$. 
We denote by $i'_1$ the closed immersion $X\setminus H\hookrightarrow P\setminus H$.
Applying the functor $a'^{-1}i_1^{-1}$ to the isomorphism ${\rm Sol}_{P}{\mathbb R}{\Gamma}_{X}a_{+}a^{!}g^{!}{\mathcal M}\cong a_{!} {\rm Sol}_{P\setminus H}{\mathbb R}{\Gamma}_{X\setminus H}a^{!}g^{!}{\mathcal M}$,
we have
\begin{eqnarray*}
a'^{-1}i_1^{-1}{\rm Sol}_{P}{\mathbb R}{\Gamma}_{X}a_{+}a^{!}g^{!}{\mathcal M} &\cong&
a'^{-1}i_1^{-1} a_{!} {\rm Sol}_{P\setminus H}{\mathbb R}{\Gamma}_{X\setminus H}a^{!}g^{!}{\mathcal M}\\
&\cong& {i'}_1^{-1} a^{-1}a_{!} {\rm Sol}_{P\setminus H}f_{|X\setminus H}^{!}{\mathcal M}\\
&\cong& {i'}_1^{-1} {\rm Sol}_{P\setminus H}f_{|X\setminus H}^{!}{\mathcal M}
\cong {\rm Sol}_{X\setminus H}f_{|X\setminus H}^{!}{\mathcal M}.
\end{eqnarray*}
We can also calculate as 
\begin{eqnarray*}
a'^{-1}i_1^{-1}a_{!}a^{-1}{g^{-1}{\rm Sol}_{Q}{\mathcal M}}&\cong& {i'}_1^{-1} a^{-1}a_{!}a^{-1}{g^{-1}{\rm Sol}_{Q}{\mathcal M}}\\
&\cong& {i'}_1^{-1}a^{-1}{g^{-1}{\rm Sol}_{Q}{\mathcal M}}\\
&\cong& a'^{-1} f^{-1} {i_2}^{-1} {\rm Sol}_{Q}{\mathcal M} \cong f_{|X\setminus H}^{-1}{\rm Sol}_{Y}{\mathcal M}.
\end{eqnarray*}
Hence $a'^{-1}i_1^{-1}\psi$ is identified with the morphism 
$$f_{|X\setminus H}^{-1}{\rm Sol}_{Y}{\mathcal M}\to {\rm Sol}_{X\setminus H}f_{|X\setminus H}^{!}{\mathcal M}$$
and it is an isomorphism by the smooth case.
On the other hand, 
since ${\rm Sol}_{P}{\mathbb R}{\Gamma}_{H}g^{!} {\mathcal M}$ and $ i_{H!}i_{H}^{-1}g^{-1}{\rm Sol}_{Q}{\mathcal M}$ are supported on $H$, $\phi$ is an isomorphism if and only if so is $i_{H}^{-1}\phi$.
Let us denote by $f_{|H}$ the composite of morphisms $H\hookrightarrow X\xrightarrow{f} Y$.
Applying $i_{H}^{-1}$ to ${\rm Sol}_{P}{\mathbb R}{\Gamma}_{H}g^{!} {\mathcal M}$ and $i_{H!}i_{H}^{-1}g^{-1}{\rm Sol}_{Q}{\mathcal M}$, 
one has 
\begin{equation*}
i_{H}^{-1}{\rm Sol}_{P}{\mathbb R}{\Gamma}_{H}g^{!} {\mathcal M}\cong {\rm Sol}_{H}f_{|H}^{!} {\mathcal M} \textit{ and }  i_{H}^{-1}i_{H!}i_{H}^{-1}g^{-1}{\rm Sol}_{Q}{\mathcal M}\cong f_{|H}^{-1}{\rm Sol}_{H}{\mathcal M}
\end{equation*}
 respectively.
So  $i_{H}^{-1}\phi$ is identified with the morphism $f_{|H}^{-1}{\rm Sol}_{H}{\mathcal M}\to {\rm Sol}_{H}f_{|H}^{!} {\mathcal M}$ and it is an isomorphism by the induction hypothesis.
Now we know that the morphism of distinguished triangle from (\ref{e3}) to (\ref{e13}) is an isomorphism after we apply the functor $i_{1}^{-1}$ to it.
As a consequence, we obtain the desired isomorphism $f^{-1}{\rm Sol}_{Y}\xrightarrow{\cong} {\rm Sol}_{X}f^{!}$.

Finally let us prove  the last isomorphism.
For objects $\mathcal M$ and $\mathcal N$ of $D^{b}_{\rm lfgu}(X/W_{n})={\mathcal C}_{P, X, \emptyset}$,
there exists a natural isomorphism 
\begin{equation*}
{\rm Sol}_{P}{\mathcal M}\otimes^{\mathbb L}_{{\mathbb Z}/p^{n}{\mathbb Z}}{\rm Sol}_{P}{\mathcal N}\xrightarrow{\cong}{\rm Sol}_{P}\left({\mathcal M}\otimes^{\mathbb L}_{{\mathcal O}_{P}}{\mathcal N}\right)[d_{P}]
\end{equation*}
by Theorem \ref{t4}.
Recall that ${\mathcal M}\otimes^{\mathbb L}{\mathcal N}$ is defined to be ${\mathcal M}\otimes^{\mathbb L}_{{\mathcal O}_{P}}{\mathcal N}[-d_{P}]$.
Applying the functor ${\rm Sol}_{X}:=i^{-1}{\rm Sol}_{P}$ to it, one has
\begin{eqnarray*}
{\rm Sol}_{X}\left({\mathcal M}\otimes^{\mathbb L}{\mathcal N}\right)&\cong& i^{-1}{\rm Sol}_{P}\left({\mathcal M}\otimes^{\mathbb L}_{{\mathcal O}_{P}}{\mathcal N}[-d_{P}]\right)\\
&\cong&  i^{-1}{\rm Sol}_{P}\left({\mathcal M}\otimes^{\mathbb L}_{{\mathcal O}_{P}}{\mathcal N}\right)[d_{P}]\\
&\cong& i^{-1}\left({\rm Sol}_{P}{\mathcal M}\otimes^{\mathbb L}_{{\mathbb Z}/p^{n}{\mathbb Z}}{\rm Sol}_{P}{\mathcal N}\right)\\
&\cong& {\rm Sol}_{X}({\mathcal M})\otimes_{{\mathbb Z}/p^{n}{\mathbb Z}}^{\mathbb L}{\rm Sol}_{X}({\mathcal N}).
\end{eqnarray*}
This finishes the proof.
\end{proof}

\section{$t$-structures on  $D_{\rm lfgu}^{b}(X/k)$}
In this section, we study several $t$-structures on $D_{\rm lfgu}^{b}(X/k)$ for a $k$-embeddable $k$-scheme $X$.
Note that, for a smooth $k$-scheme $P$, one has $D^{b}_{\rm lfgu}({\mathcal D}_{F, P})^{\circ}=D^{b}_{\rm lfgu}({\mathcal D}_{F, P})$ and $D^{b}_{\rm lfgu}({\mathcal D}_{F, P})^{\circ}$ is naturally equivalent to $D^{b}_{\rm lfgu}({\mathcal O}_{F, P})$ (see the subsection \ref{s2}).

\subsection{The standard $t$-structure on $D_{\rm lfgu}^{b}(X/k)$}\label{s51}

For a smooth $k$-scheme $P$, we set
\begin{eqnarray*}
D^{\leq n}_{\rm lfgu}({\mathcal D}_{F, P})&=&\left\{{\mathcal M}\in D^{b}_{\rm lfgu}({\mathcal D}_{F, P}) \,|\, {\rm H}^{k}({\mathcal M})=0 \textit{ for } k>n\right\} \textit{ and}\\
D^{\geq n}_{\rm lfgu}({\mathcal D}_{F, P})&=&\left\{{\mathcal M}\in D^{b}_{\rm lfgu}({\mathcal D}_{F, P}) \,|\, {\rm H}^{k}({\mathcal M})=0 \textit{ for } k<n\right\} .
\end{eqnarray*}

Let $X$ be a $k$-scheme of finite type.
The middle perversity is the function ${\rm p}: X\to {\mathbb Z}$ defined by
\begin{equation*}
{\rm p}(x)=-{\rm dim}\overline{\{x\}}.
\end{equation*}
For $x\in X$, we denote by $i_{x}$ the canonical inclusion $\{x\}\hookrightarrow X$.
We then define a full subcategory ${}^{\rm p}D_{\rm c}^{\leq 0}(X_{\rm \acute{e}t}, {\mathbb Z}/{p\mathbb Z})$ (resp. ${}^{\rm p}D_{\rm c}^{\geq 0}(X_{\rm \acute{e}t}, {\mathbb Z}/{p\mathbb Z})$) of $D_{\rm c}^{b}(X_{\rm \acute{e}t}, {\mathbb Z}/{p\mathbb Z})$ by the condition: 
 ${\mathcal F}$ is in ${}^{\rm p}D_{\rm c}^{\leq 0}(X_{\rm \acute{e}t}, {\mathbb Z}/{p\mathbb Z})$ if and only if 
${\rm H}^{k}(i_{x}^{-1}{\mathcal F})=0$ for any $x\in X$ and $k>{\rm p}(x)$ (resp. ${\mathcal F}$ is in ${}^{\rm p}D_{\rm c}^{\geq 0}(X_{\rm \acute{e}t}, {\mathbb Z}/{p\mathbb Z})$ if and only if ${\rm H}^{k}(i_{x}^{!}{\mathcal F})=0$ for any $x\in X$ and $k<{\rm p}(x)$).

Gabber proved that $\left({}^{\rm p}D_{\rm c}^{\leq 0}(X_{\rm \acute{e}t}, {\mathbb Z}/{p\mathbb Z}), {}^{\rm p}D_{\rm c}^{\geq 0}(X_{\rm \acute{e}t}, {\mathbb Z}/{p\mathbb Z})\right)$ forms a $t$-structure on $D_{\rm c}^{b}(X_{\rm \acute{e}t}, {\mathbb Z}/{p\mathbb Z})$, which we call Gabber's perverse $t$-structure, in \cite[Theorem 10.3]{Ga}.
Emerton and Kisin gave another proof of it in the case when $X$ is smooth over $k$ based on the Riemann-Hilbert correspondence \cite[Theorem 11.5.4]{EK}:
Indeed, they proved that $D_{\rm lfgu}^{\leq 0}(X/k)$ (resp. $D_{\rm lfgu}^{\geq 0}(X/k)$) is equivalent to ${}^{\rm p}D_{\rm c}^{\geq 0}(X_{\rm \acute{e}t}, {\mathbb Z}/{p\mathbb Z})$ (resp. ${}^{\rm p}D_{\rm c}^{\leq 0}(X_{\rm \acute{e}t}, {\mathbb Z}/{p\mathbb Z})$) via ${\rm Sol}_{X}$. 
We generalize  \cite[Theorem 11.5.4]{EK} to the case of $k$-embeddable $k$-schemes.
\begin{defi}
Let $P$ be a smooth $k$-scheme with a closed subset $X$ of $P$.
We set
\begin{eqnarray*}
{\mathcal C}_{P, X, \emptyset}^{\leq 0}&=&\left\{{\mathcal M}\in {\mathcal C}_{P, X, \emptyset} \,|\, {\rm H}^{k}({\mathcal M})=0 \textit{ for } k>0\right\} \textit{ and}\\
{\mathcal C}_{P, X, \emptyset}^{\geq 0}&=&\left\{{\mathcal M}\in {\mathcal C}_{P, X, \emptyset} \,|\, {\rm H}^{k}({\mathcal M})=0 \textit{ for } k<0\right\}.
\end{eqnarray*}
Then $\left({\mathcal C}_{P, X, \emptyset}^{\leq 0}, {\mathcal C}_{P, X, \emptyset}^{\geq 0}\right)$ defines a $t$-structure on ${\mathcal C}_{P, X, \emptyset}$, which we call the standard $t$-structure on ${\mathcal C}_{P, X, \emptyset}$.
For a $k$-embeddable $k$-scheme $X$ with an immersion $X\hookrightarrow P$ into a proper smooth $k$-scheme $P$, we take an open immersion $j: U\hookrightarrow P$ such that the immersion $X\hookrightarrow P$ factors as
a closed immersion $X\hookrightarrow U$ and $j$.
We define the standard $t$-structure $\left({\mathcal C}_{P, X}^{\leq 0}, {\mathcal C}_{P, X}^{\geq 0}\right)$ on ${\mathcal C}_{P, X}$ by the essential image of $\left({\mathcal C}_{U, X, \emptyset}^{\leq 0}, {\mathcal C}_{U, X, \emptyset}^{\geq 0}\right)$ under the equivalence ${\mathbb R}j_{*}$.
This definition is independent of the choice of $U\hookrightarrow P$ by the following lemma.
\end{defi}

\begin{lemm}\label{l15}
Let $j: U\hookrightarrow V$ be an open immersion of smooth $k$-schemes.
For any closed subset $X$ of $U$ which is also closed in $V$, 
the functor
\begin{equation*}
{\mathbb R}j_{*}: {\mathcal C}_{U, X, \emptyset}\to {\mathcal C}_{V, X, \emptyset}
\end{equation*}
induces an equivalence of triangulated categories, which is $t$-exact with respect to the standard $t$-structure.
\end{lemm}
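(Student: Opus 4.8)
The plan is to obtain the equivalence directly from Proposition \ref{p2} and then to verify $t$-exactness by computing cohomology sheaves on a suitable open cover of $V$.

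First I would record that, since $X$ is closed in $V$ and contained in $U$, we have $(V\setminus U)\cap X=\emptyset$. Thus, taking $P=V$, $Z=X$ (a closed subset of $V$ with $Z\cap U=X$) and $T=(V\setminus U)\cap Z=\emptyset$, Proposition \ref{p2} applies and shows that ${\mathbb R}j_{*}$ induces an equivalence of triangulated categories ${\mathcal C}_{U,X,\emptyset}\xrightarrow{\cong}{\mathcal C}_{V,X,\emptyset}$ with quasi-inverse $j^{-1}$. (Here ${\mathcal C}_{V,X,(V\setminus U)\cap X}={\mathcal C}_{V,X,\emptyset}$ because ${\mathbb R}{\Gamma}_{\emptyset}$ is the zero functor, and by Lemma \ref{l5} both sides consist of the complexes supported on $X$.)

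It remains to prove $t$-exactness. Since $X$ is closed in both $U$ and $V$, the standard $t$-structures on ${\mathcal C}_{U,X,\emptyset}$ and ${\mathcal C}_{V,X,\emptyset}$ are the ones defined directly by vanishing of cohomology sheaves. The quasi-inverse $j^{-1}$ is exact on sheaves, hence $t$-exact, so it suffices to show that for ${\mathcal M}\in{\mathcal C}_{U,X,\emptyset}$ and every integer $q$ one has ${\rm H}^{q}({\mathbb R}j_{*}{\mathcal M})=0$ if and only if ${\rm H}^{q}({\mathcal M})=0$. Because $X\subset U$, the opens $U$ and $V\setminus X$ form an open cover of $V$. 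The functor ${\mathbb R}j_{*}$ commutes with restriction to opens, so on one hand $({\mathbb R}j_{*}{\mathcal M})|_{U}\cong j^{-1}{\mathbb R}j_{*}{\mathcal M}\cong{\mathcal M}$, and on the other hand $({\mathbb R}j_{*}{\mathcal M})|_{V\setminus X}\cong{\mathbb R}(j')_{*}\bigl({\mathcal M}|_{U\setminus X}\bigr)$, where $j'\colon U\setminus X\hookrightarrow V\setminus X$ is the induced open immersion. By Lemma \ref{l5}, ${\mathcal M}$ is supported on $X$, so ${\mathcal M}|_{U\setminus X}$ is acyclic and therefore $({\mathbb R}j_{*}{\mathcal M})|_{V\setminus X}\cong 0$. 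Consequently ${\rm H}^{q}({\mathbb R}j_{*}{\mathcal M})|_{U}\cong{\rm H}^{q}({\mathcal M})$ and ${\rm H}^{q}({\mathbb R}j_{*}{\mathcal M})|_{V\setminus X}=0$. As any sheaf on $V$ whose support is contained in the closed subset $X\subset U$ vanishes precisely when its restriction to $U$ vanishes, we conclude that ${\rm H}^{q}({\mathbb R}j_{*}{\mathcal M})=0$ if and only if ${\rm H}^{q}({\mathcal M})=0$, which gives the $t$-exactness of ${\mathbb R}j_{*}$.

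The argument is essentially a reduction to Proposition \ref{p2}; the only step that needs a moment's care is the vanishing $({\mathbb R}j_{*}{\mathcal M})|_{V\setminus X}\cong 0$, i.e. the fact that ${\mathbb R}j_{*}$ creates no higher direct images away from $X$. This is forced by ${\mathcal M}$ being supported on $X$ (Lemma \ref{l5}) together with the compatibility of ${\mathbb R}j_{*}$ with restriction to opens.
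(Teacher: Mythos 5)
Your proof is correct. The equivalence part coincides with the paper's: the paper proves it ``in the same way as in the proof of Proposition \ref{p2}'', and your observation that Proposition \ref{p2} literally applies with $Z=X$ and $T=(V\setminus U)\cap X=\emptyset$ is an accurate (if anything cleaner) version of that. For $t$-exactness the routes differ slightly. The paper invokes the general criterion that an equivalence of triangulated categories with $t$-structures is $t$-exact as soon as it and its quasi-inverse are both left $t$-exact (\cite[Corollary 10.1.18]{KS}), and then notes that left $t$-exactness of ${\mathbb R}j_{*}$ (a right derived functor of a left exact functor) and of the exact functor $j^{-1}$ is evident. You instead compute the cohomology sheaves of ${\mathbb R}j_{*}{\mathcal M}$ directly on the open cover $\{U,\,V\setminus X\}$ of $V$, using that ${\mathcal M}$ is supported on $X$ to kill the higher direct images away from $X$; this yields the stronger statement that ${\rm H}^{q}({\mathbb R}j_{*}{\mathcal M})$ is the extension by zero of ${\rm H}^{q}({\mathcal M})$, from which $t$-exactness in both directions is immediate. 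Your version is more self-contained and makes explicit what the paper dismisses as ``evident''; the paper's version is shorter because the abstract criterion spares it the local computation. Both are complete proofs.
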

\begin{proof}
We can prove that ${\mathbb R}j_{*}$ is an equivalence of triangulated categories with quasi-inverse $j^{-1}$ in the same way as in the proof of Proposition \ref{p2}.
Then it is enough to prove that ${\mathbb R}j_{*}$
and its quasi-inverse $j^{-1}$ are left $t$-exact (cf. \cite[Corollary 10.1.18]{KS}).
These claims are evident.
\end{proof}

We need the following lemma.

\begin{lemm}\label{l9}
Let $P$ be a smooth $W_{n}$-scheme and $i: X\hookrightarrow P$ a closed immersion.
For $\bullet \in\{\leq 0, \geq 0\}$,
we denote by $D_{{\rm c}, X}^{b}(P_{\rm \acute{e}t}, {\mathbb Z}/{p\mathbb Z})$ (resp. ${}^{\rm p}D_{{\rm c}, X}^{\bullet}(P_{\rm \acute{e}t}, {\mathbb Z}/{p\mathbb Z})$)  the full triangulated subcategory of $D_{{\rm c}}^{b}(P_{\rm \acute{e}t}, {\mathbb Z}/{p\mathbb Z})$ (resp. ${}^{\rm p}D_{{\rm c}}^{\bullet}(P_{\rm \acute{e}t}, {\mathbb Z}/{p\mathbb Z})$) consisting of complexes supported on $X$. 
Then the equivalence $i_{*}: D_{{\rm c}}^{b}(X_{\rm \acute{e}t}, {\mathbb Z}/{p\mathbb Z})\xrightarrow{\cong}D_{{\rm c}, X}^{b}(P_{\rm \acute{e}t}, {\mathbb Z}/{p\mathbb Z})$ restricts to an equivalence 
\begin{equation*}
{}^{\rm p}D_{{\rm c}}^{\bullet}(X_{\rm \acute{e}t}, {\mathbb Z}/{p\mathbb Z})\xrightarrow{\cong}{}^{\rm p}D_{{\rm c}, X}^{\bullet}(P_{\rm \acute{e}t}, {\mathbb Z}/{p\mathbb Z})
\end{equation*}
with quasi-inverse $i^{-1}$.
\end{lemm}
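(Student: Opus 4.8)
The plan is to reduce the statement to a pointwise check, using the pointwise description of Gabber's perverse $t$-structure together with the identities $i^{-1}i_{*}\cong\mathrm{id}$ and $i^{!}i_{*}\cong\mathrm{id}$ valid for the closed immersion $i$. Two elementary observations are needed. First, since $X$ is closed in $P$, for a point $x\in X$ the closure of $\{x\}$ taken in $X$ coincides with the closure taken in $P$, so the middle perversity function of $X$ and that of $P$ agree at every point of $X$. Second, if $\mathcal G\in D^{b}_{{\rm c}}(P_{\rm \acute{e}t},{\mathbb Z}/{p\mathbb Z})$ is supported on $X$ and $y\in P\setminus X$, then, writing $i_{y}$ as the composite $\{y\}\hookrightarrow P\setminus X\xrightarrow{j}P$ where $j$ is the open complement of $X$, we get $i_{y}^{-1}\mathcal G\cong i_{y}^{-1}j^{-1}\mathcal G=0$ and likewise $i_{y}^{!}\mathcal G\cong i_{y}^{!}j^{!}\mathcal G=0$ because $j^{!}=j^{-1}$ and $j^{-1}\mathcal G=0$; hence the conditions defining ${}^{\rm p}D^{\bullet}_{{\rm c}}(P_{\rm \acute{e}t},{\mathbb Z}/{p\mathbb Z})$ at the points of $P\setminus X$ are automatic for objects of $D^{b}_{{\rm c},X}(P_{\rm \acute{e}t},{\mathbb Z}/{p\mathbb Z})$.

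Next I would carry out the pointwise comparison. For $x\in X$, write $i^{P}_{x}\colon\{x\}\hookrightarrow P$ and $i^{X}_{x}\colon\{x\}\hookrightarrow X$, so that $i^{P}_{x}=i\circ i^{X}_{x}$. The identities $i^{-1}i_{*}\cong\mathrm{id}$ and $i^{!}i_{*}\cong\mathrm{id}$ then give $(i^{P}_{x})^{-1}i_{*}\mathcal F\cong(i^{X}_{x})^{-1}\mathcal F$ and $(i^{P}_{x})^{!}i_{*}\mathcal F\cong(i^{X}_{x})^{!}\mathcal F$ for any $\mathcal F\in D^{b}_{{\rm c}}(X_{\rm \acute{e}t},{\mathbb Z}/{p\mathbb Z})$. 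Combining this with the two observations above and the defining conditions of the perverse $t$-structures, one sees that, for $\bullet\in\{\le 0,\ge 0\}$, the complex $\mathcal F$ lies in ${}^{\rm p}D^{\bullet}_{{\rm c}}(X_{\rm \acute{e}t},{\mathbb Z}/{p\mathbb Z})$ if and only if $i_{*}\mathcal F$ lies in ${}^{\rm p}D^{\bullet}_{{\rm c},X}(P_{\rm \acute{e}t},{\mathbb Z}/{p\mathbb Z})$. In particular $i_{*}$ restricts to a functor ${}^{\rm p}D^{\bullet}_{{\rm c}}(X_{\rm \acute{e}t},{\mathbb Z}/{p\mathbb Z})\to{}^{\rm p}D^{\bullet}_{{\rm c},X}(P_{\rm \acute{e}t},{\mathbb Z}/{p\mathbb Z})$, which is fully faithful as a restriction of the fully faithful functor $i_{*}$.

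Finally I would deduce essential surjectivity and that $i^{-1}$ is the quasi-inverse. For $\mathcal G\in{}^{\rm p}D^{\bullet}_{{\rm c},X}(P_{\rm \acute{e}t},{\mathbb Z}/{p\mathbb Z})$, the distinguished triangle $j_{!}j^{-1}\mathcal G\to\mathcal G\to i_{*}i^{-1}\mathcal G\xrightarrow{+}$ together with $j^{-1}\mathcal G=0$ gives $\mathcal G\cong i_{*}i^{-1}\mathcal G$, and then the equivalence of conditions established above forces $i^{-1}\mathcal G\in{}^{\rm p}D^{\bullet}_{{\rm c}}(X_{\rm \acute{e}t},{\mathbb Z}/{p\mathbb Z})$; this yields both the essential surjectivity and the identification of the quasi-inverse. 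Since the whole argument is pointwise bookkeeping, there is no serious obstacle; the only step demanding a little care is the vanishing of the costalk $i_{y}^{!}\mathcal G$ at points $y\in P\setminus X$, which is precisely why one factors $i_{y}$ through the open immersion $j$ and invokes $j^{!}=j^{-1}$.
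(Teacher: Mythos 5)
Your proposal is correct and follows essentially the same route as the paper: a pointwise comparison of stalks and costalks using $i^{-1}i_{*}\cong\mathrm{id}$, $i^{!}i_{*}\cong\mathrm{id}$, the vanishing of $i_{y}^{-1}$ and $i_{y}^{!}$ at points $y\in P\setminus X$ for complexes supported on $X$, and $i^{!}\cong i^{-1}$ on such complexes for the quasi-inverse direction. Your explicit remarks that the perversity functions of $X$ and $P$ agree on $X$ and that $i_{y}^{!}$ is handled by factoring through the open complement are exactly the points the paper treats as obvious.
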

\begin{proof}
For an object $\mathcal L$ in ${}^{\rm p}D_{\rm c}^{\leq 0}(X_{\rm \acute{e}t}, {\mathbb Z}/{p\mathbb Z})$,
one obviously has $i_{*}{\mathcal L}\in {}^{\rm p}D_{{\rm c}, X}^{\leq 0}(P_{\rm \acute{e}t}, {\mathbb Z}/{p\mathbb Z})$.
Let $\mathcal L$ be an object of ${}^{\rm p}D_{\rm c}^{\geq 0}(X_{\rm \acute{e}t}, {\mathbb Z}/{p\mathbb Z})$ and $x$ an element of $X\subset P$.
Denote by $i_{x}$ (resp. $i'_{x}$) the canonical inclusion $\{x\}\hookrightarrow X$ (resp. $\{x\}\hookrightarrow P$).
One has $i'^{!}_{x}i_{*}{\mathcal L}\cong i_{x}^{!}i^{!}i_{*}{\mathcal L}\cong i_{x}^{!}{\mathcal L}$.
So we have ${\rm H}^{k}\left(i_{x}'^{!}{i_{*}\mathcal L}\right)=0$ for any $k<{\rm p}(x)$.
If $x\in P\setminus X$, one has ${\rm H}^{k}\left(i_{x}'^{!}{i_{*}\mathcal L}\right)=0$ for any $k$.
Hence we see $i_{*}{\mathcal L}\in {}^{\rm p}D_{{\rm c}, X}^{\geq 0}(P_{\rm \acute{e}t}, {\mathbb Z}/{p\mathbb Z})$.
Conversely, for an object ${\mathcal L}$ in ${}^{\rm p}D_{{\rm c}, X}^{\leq 0}(P_{\rm \acute{e}t}, {\mathbb Z}/{p\mathbb Z})$,
 one obviously has $i^{-1}{\mathcal L}\in {}^{\rm p}D_{{\rm c}}^{\leq 0}(X_{\rm \acute{e}t}, {\mathbb Z}/{p\mathbb Z})$.
Finally let ${\mathcal L}$ be an object of ${}^{\rm p}D_{{\rm c}, X}^{\geq 0}(P_{\rm \acute{e}t}, {\mathbb Z}/{p\mathbb Z})$.
Then, since ${\mathcal L}$ is supported on $X$, we have $i^{!}{\mathcal L}\xrightarrow{\cong} i^{-1}{\mathcal L}$ and hence we have $i^{-1}{\mathcal L}\in {}^{\rm p}D_{{\rm c}}^{\geq 0}(X_{\rm \acute{e}t}, {\mathbb Z}/{p\mathbb Z})$.
\end{proof}

\begin{coro}\label{c4}
Let $X$ be a $k$-embeddable $k$-scheme with a closed immersion $i: X\hookrightarrow P$ into a smooth $k$-scheme $P$.
Then ${\rm Sol}_{X}=i^{-1}{\rm Sol}_{P}: {\mathcal C}_{P, X, \emptyset}\xrightarrow{\cong} D^{b}_{\rm c}(X_{\rm \acute{e}t}, {\mathbb Z}/{p\mathbb Z})$ sends ${\mathcal C}_{P, X, \emptyset}^{\leq 0}$ (resp. ${\mathcal C}_{P, X, \emptyset}^{\geq 0}$) to  ${}^{\rm p}D_{\rm c}^{\geq 0}(X_{\rm \acute{e}t}, {\mathbb Z}/{p\mathbb Z})$ (resp. ${}^{\rm p}D_{\rm c}^{\leq 0}(X_{\rm \acute{e}t}, {\mathbb Z}/{p\mathbb Z})$).
\end{coro}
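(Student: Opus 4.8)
The plan is to reduce the statement to the Emerton--Kisin result \cite[Theorem 11.5.4]{EK} on the smooth ambient scheme $P$, using Lemma \ref{l10} to identify ${\mathcal C}_{P, X, \emptyset}$ with the subcategory $D^{b}_{{\rm c}, X}(P_{\rm \acute{e}t}, {\mathbb Z}/{p\mathbb Z})$ of complexes supported on $X$ (observing that $D^{b}_{\rm ctf} = D^{b}_{\rm c}$ here, since ${\mathbb Z}/{p\mathbb Z}$ is a field so finite Tor dimension is automatic), and then using Lemma \ref{l9} to transport the perverse $t$-structure from $P$ down to $X$ along $i^{-1}$.

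First I would note that, by the very definition of the standard $t$-structure on ${\mathcal C}_{P, X, \emptyset}$, one has ${\mathcal C}^{\leq 0}_{P, X, \emptyset} = {\mathcal C}_{P, X, \emptyset} \cap D^{\leq 0}_{\rm lfgu}({\mathcal D}_{F, P})$ and ${\mathcal C}^{\geq 0}_{P, X, \emptyset} = {\mathcal C}_{P, X, \emptyset} \cap D^{\geq 0}_{\rm lfgu}({\mathcal D}_{F, P})$, where the two right-hand factors are the pieces of the standard $t$-structure on $D^{b}_{\rm lfgu}({\mathcal D}_{F, P})^{\circ} = D^{b}_{\rm lfgu}({\mathcal D}_{F, P})$ recalled in Subsection \ref{s51}. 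Since $P$ is smooth over $k$, \cite[Theorem 11.5.4]{EK} applies and tells us that ${\rm Sol}_{P}$ carries $D^{\leq 0}_{\rm lfgu}({\mathcal D}_{F, P})$ into ${}^{\rm p}D^{\geq 0}_{\rm c}(P_{\rm \acute{e}t}, {\mathbb Z}/{p\mathbb Z})$ and $D^{\geq 0}_{\rm lfgu}({\mathcal D}_{F, P})$ into ${}^{\rm p}D^{\leq 0}_{\rm c}(P_{\rm \acute{e}t}, {\mathbb Z}/{p\mathbb Z})$. Intersecting with the support condition and invoking Lemma \ref{l10}, I obtain that ${\rm Sol}_{P}$ restricts to functors ${\mathcal C}^{\leq 0}_{P, X, \emptyset} \to {}^{\rm p}D^{\geq 0}_{{\rm c}, X}(P_{\rm \acute{e}t}, {\mathbb Z}/{p\mathbb Z})$ and ${\mathcal C}^{\geq 0}_{P, X, \emptyset} \to {}^{\rm p}D^{\leq 0}_{{\rm c}, X}(P_{\rm \acute{e}t}, {\mathbb Z}/{p\mathbb Z})$, where as in Lemma \ref{l9} the subscript $X$ indicates complexes supported on $X$; here one simply uses that, by definition, ${}^{\rm p}D^{\bullet}_{{\rm c}, X} = D^{b}_{{\rm c}, X} \cap {}^{\rm p}D^{\bullet}_{\rm c}$.

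To finish, I would compose with $i^{-1}$. By Lemma \ref{l9} the equivalence $i^{-1}: D^{b}_{{\rm c}, X}(P_{\rm \acute{e}t}, {\mathbb Z}/{p\mathbb Z}) \xrightarrow{\cong} D^{b}_{\rm c}(X_{\rm \acute{e}t}, {\mathbb Z}/{p\mathbb Z})$ restricts to equivalences ${}^{\rm p}D^{\geq 0}_{{\rm c}, X} \xrightarrow{\cong} {}^{\rm p}D^{\geq 0}_{\rm c}(X_{\rm \acute{e}t}, {\mathbb Z}/{p\mathbb Z})$ and ${}^{\rm p}D^{\leq 0}_{{\rm c}, X} \xrightarrow{\cong} {}^{\rm p}D^{\leq 0}_{\rm c}(X_{\rm \acute{e}t}, {\mathbb Z}/{p\mathbb Z})$, so that ${\rm Sol}_{X} = i^{-1} \circ {\rm Sol}_{P}$ sends ${\mathcal C}^{\leq 0}_{P, X, \emptyset}$ into ${}^{\rm p}D^{\geq 0}_{\rm c}(X_{\rm \acute{e}t}, {\mathbb Z}/{p\mathbb Z})$ and ${\mathcal C}^{\geq 0}_{P, X, \emptyset}$ into ${}^{\rm p}D^{\leq 0}_{\rm c}(X_{\rm \acute{e}t}, {\mathbb Z}/{p\mathbb Z})$, which is the claim. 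I do not expect a genuine obstacle: the mathematical content is entirely contained in \cite[Theorem 11.5.4]{EK} together with Lemmas \ref{l10} and \ref{l9}, and the only point requiring a little care is the bookkeeping that makes the standard $t$-structure on ${\mathcal C}_{P, X, \emptyset}$ and the perverse $t$-structure on $D^{b}_{{\rm c}, X}(P_{\rm \acute{e}t}, {\mathbb Z}/{p\mathbb Z})$ behave well under intersection with the support condition, so that the Emerton--Kisin theorem can be applied verbatim on $P$; I would also add one sentence recalling that \cite[Theorem 11.5.4]{EK} is proved for arbitrary smooth $k$-schemes, so the non-properness of $P$ causes no difficulty.
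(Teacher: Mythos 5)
Your proposal is correct and follows essentially the same route as the paper's own proof: identify ${\mathcal C}_{P,X,\emptyset}$ with $D^{b}_{{\rm c},X}(P_{\rm \acute{e}t},{\mathbb Z}/p{\mathbb Z})$ via Lemma \ref{l10}, apply \cite[Theorem 11.5.4]{EK} on the smooth scheme $P$, and transport the perverse $t$-structure down to $X$ along $i^{-1}$ using Lemma \ref{l9}. The extra bookkeeping you spell out (intersecting with the support condition, and the observation that ${\rm ctf}={\rm c}$ when $n=1$) is implicit in the paper's argument and is fine.
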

\begin{proof} 
By Lemma \ref{l10}, ${\rm Sol}_{P}$ restricts to an equivalence of triangulated categories
\begin{equation*}
{\mathcal C}_{P, X, \emptyset}\xrightarrow{\cong}D^{b}_{{\rm c}, X}(P_{\rm \acute{e}t}, {\mathbb Z}/{p\mathbb Z}).
\end{equation*}
We know that ${\rm Sol}_{P}$ sends ${\mathcal C}^{\leq 0}_{P, X, \emptyset}$ to ${}^{\rm p}D_{{\rm c}, X}^{\geq 0}(P_{\rm \acute{e}t}, {\mathbb Z}/{p\mathbb Z})$ and ${\mathcal C}^{\geq 0}_{P, X, \emptyset}$ to ${}^{\rm p}D_{{\rm c}, X}^{\leq 0}(P_{\rm \acute{e}t}, {\mathbb Z}/{p\mathbb Z})$ by \cite[Theorem 11.5.4]{EK}.
By Lemma \ref{l9}, we see that $i^{-1}$ sends ${}^{\rm p}D_{{\rm c}, X}^{\bullet}(P_{\rm \acute{e}t}, {\mathbb Z}/{p\mathbb Z})$ to ${}^{\rm p}D_{{\rm c}}^{\bullet}(X_{\rm \acute{e}t}, {\mathbb Z}/{p\mathbb Z})$ if $\bullet\in \{\leq 0, \geq 0\}$.
This finishes the proof.
\end{proof}

By Lemma \ref{l15} and Corollary \ref{c4}, one has the following theorem.

\begin{theo}\label{t11}
Let $X$ be a $k$-embeddable $k$-scheme with an immersion $X\hookrightarrow P$ into a proper smooth $k$-scheme $P$.
We set
\begin{eqnarray*}
D_{\rm lfgu}^{\leq 0}(X/k)= {\mathcal C}_{P, X}^{\leq 0} \textit{ and } D_{\rm lfgu}^{\geq 0}(X/k)={\mathcal C}_{P, X}^{\geq 0}.
\end{eqnarray*}
Then the $t$-structure $\left( D_{\rm lfgu}^{\leq 0}(X/k), D_{\rm lfgu}^{\geq 0}(X/k)\right)$
is independent of the choice of $X\hookrightarrow P$, which we call the standard $t$-structure on $D_{\rm lfgu}^{b}(X/k)$.
Furthermore, $\left( D_{\rm lfgu}^{\leq 0}(X/k), D_{\rm lfgu}^{\geq 0}(X/k)\right)$ corresponds to Gabber's perverse $t$-structure via ${\rm Sol}_{X}$.
\end{theo}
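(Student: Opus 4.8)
The plan is to combine the closed-immersion computation of Corollary \ref{c4} with the definition of the standard $t$-structure on ${\mathcal C}_{P, X}$ and a formal fact about $t$-structures under equivalences. First I would fix an immersion $X \hookrightarrow P$ into a proper smooth $k$-scheme and choose an open immersion $j: U \hookrightarrow P$ of smooth $k$-schemes through which it factors as a closed immersion $i_{U}: X \hookrightarrow U$ followed by $j$. By construction the standard $t$-structure on ${\mathcal C}_{P, X}$ is the ${\mathbb R}j_{*}$-image of the standard $t$-structure on ${\mathcal C}_{U, X, \emptyset}$, and by (\ref{e33}) the restriction of ${\rm Sol}_{X}$ to ${\mathcal C}_{P, X}$ is naturally isomorphic to $i_{U}^{-1}\circ {\rm Sol}_{U}\circ j^{-1}$, where $j^{-1}$ is the quasi-inverse of ${\mathbb R}j_{*}$ supplied by Proposition \ref{p2}. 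Applying Corollary \ref{c4} to the closed immersion $i_{U}$ then shows directly that ${\rm Sol}_{X}$ sends ${\mathcal C}^{\leq 0}_{P, X}$ into ${}^{\rm p}D_{\rm c}^{\geq 0}(X_{\rm \acute{e}t}, {\mathbb Z}/{p\mathbb Z})$ and ${\mathcal C}^{\geq 0}_{P, X}$ into ${}^{\rm p}D_{\rm c}^{\leq 0}(X_{\rm \acute{e}t}, {\mathbb Z}/{p\mathbb Z})$; here Lemma \ref{l15} is what guarantees that this is insensitive to the auxiliary choice of $U$.

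Next I would promote these two inclusions to an equality of $t$-structures. Since ${\rm Sol}_{X}$ is an anti-equivalence of triangulated categories by Theorem \ref{t5} and the pair $({}^{\rm p}D_{\rm c}^{\leq 0}(X_{\rm \acute{e}t}, {\mathbb Z}/{p\mathbb Z}), {}^{\rm p}D_{\rm c}^{\geq 0}(X_{\rm \acute{e}t}, {\mathbb Z}/{p\mathbb Z}))$ is a $t$-structure by Gabber's theorem, one invokes the standard fact that a contravariant equivalence which carries one $t$-aisle into another (with the two aisles interchanged) in both directions automatically matches up truncation triangles, hence is an equivalence of $t$-structures; I would cite \cite[Corollary 10.1.18]{KS}, just as in the proof of Lemma \ref{l15}. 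This yields that ${\rm Sol}_{X}$ identifies $({\mathcal C}^{\leq 0}_{P, X}, {\mathcal C}^{\geq 0}_{P, X})$ with $({}^{\rm p}D_{\rm c}^{\geq 0}(X_{\rm \acute{e}t}, {\mathbb Z}/{p\mathbb Z}), {}^{\rm p}D_{\rm c}^{\leq 0}(X_{\rm \acute{e}t}, {\mathbb Z}/{p\mathbb Z}))$; in particular $({\mathcal C}^{\leq 0}_{P, X}, {\mathcal C}^{\geq 0}_{P, X})$ is indeed a $t$-structure and it corresponds to Gabber's perverse $t$-structure via ${\rm Sol}_{X}$.

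For the independence statement I would use that, under the canonical identification $D^{b}_{\rm lfgu}(X/k) = {\mathcal C}_{P, X}$ of Corollary \ref{c3}, the functor ${\rm Sol}_{X}$ does not depend on the choice of the immersion $X \hookrightarrow P$ up to natural isomorphism (Lemma \ref{l19}), whereas Gabber's perverse $t$-structure on $D_{\rm c}^{b}(X_{\rm \acute{e}t}, {\mathbb Z}/{p\mathbb Z})$ is intrinsic to $X$. A $t$-structure whose aisles are prescribed to be the ${\rm Sol}_{X}$-preimages of the two perverse aisles is uniquely determined, so $(D_{\rm lfgu}^{\leq 0}(X/k), D_{\rm lfgu}^{\geq 0}(X/k))$ is independent of the chosen immersion and corresponds to Gabber's perverse $t$-structure, as asserted.

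The only genuinely nontrivial point is the passage, in the second step, from the two inclusions of aisles to an actual equivalence of $t$-structures: it rests on the uniqueness of truncation triangles together with the prior knowledge that both pairs in play are already $t$-structures (the target by Gabber, the source by transport along ${\mathbb R}j_{*}$ from ${\mathcal C}_{U, X, \emptyset}$). The rest is bookkeeping — chiefly keeping track of which aisle maps to which under the contravariant ${\rm Sol}_{X}$, and checking that the open part of the embedding contributes nothing, which is exactly the content of the $t$-exactness of ${\mathbb R}j_{*}$ recorded in Lemma \ref{l15}.
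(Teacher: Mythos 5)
Your proposal is correct and follows essentially the same route as the paper, which derives the theorem directly from Lemma \ref{l15} and Corollary \ref{c4}; your additional care in upgrading the aisle inclusions to an equality of $t$-structures via \cite[Corollary 10.1.18]{KS}, and in invoking Lemma \ref{l19} together with the intrinsic nature of Gabber's $t$-structure for the independence claim, only makes explicit what the paper leaves implicit.
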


\subsection{Beilinson's theorem}\label{s52}

In this subsection, we prove an analogue of Beilinson's theorem (Theorem \ref{b1}), which is a generalization of \cite[Corollary 17.2.5]{EK} to the case of $k$-embeddable $k$-schemes.
In the rest of this subsection, we fix a $k$-embeddable $k$-scheme $X$, an immersion $\tilde{i}: X\hookrightarrow \tilde{P}$ into a proper smooth $k$-scheme and an open subscheme $P$ of $\tilde{P}$ such that 
$\tilde{i}$ factors as a closed immersion $i: X\hookrightarrow P$ and the open immersion $P\hookrightarrow \tilde{P}$.
Denote by $\mu_{\rm u}$ (resp. $\mu_{\rm lfgu}$) the category of unit ${\mathcal D}_{F, P}$-modules (resp. locally finitely generated unit ${\mathcal D}_{F, P}$-modules).
We also denote by $\mu_{{\rm u}, X}$ (resp. $\mu_{{\rm lfgu}, X}$) the full subcategory of $\mu_{\rm u}$ (resp. $\mu_{\rm lfgu}$) consisting of objects supported on $X$.
Note that $\mu_{{\rm lfgu}, X}$ is the heart of the standard $t$-structure on $D_{\rm lfgu}^{b}(X/k)={\mathcal C}_{\tilde{P}, X}={\mathcal C}_{P, X, \emptyset}$ and hence it is independent of the choice of $X\hookrightarrow \tilde{P}$ and $P$ by Theorem \ref{t11}.
The following theorem is the main theorem in this subsection.

\begin{theo}\label{b1}
The natural functor 
\begin{equation*}
D^{b}(\mu_{{\rm lfgu}, X})\to D_{\rm lfgu}^{b}(X/k)
\end{equation*}
is an equivalence of triangulated categories.
\end{theo}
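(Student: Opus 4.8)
The statement is the analogue of Beilinson's theorem that the bounded derived category of the heart recovers the whole bounded derived category. The standard strategy (going back to Beilinson, and used in \cite[Corollary 17.2.5]{EK} for the smooth case) is to verify the hypotheses of Beilinson's lemma: it suffices to show that for every pair of objects $M, N \in \mu_{{\rm lfgu}, X}$ and every $i > 0$, every morphism $M \to N[i]$ in $D^{b}_{\rm lfgu}(X/k)$ vanishes after pulling back along some monomorphism $M' \hookrightarrow M$ in $\mu_{{\rm lfgu}, X}$, or dually pushing out along some epimorphism $N \twoheadrightarrow N'$. Equivalently, one shows that the natural map
\begin{equation*}
\operatorname{Ext}^{i}_{\mu_{{\rm lfgu}, X}}(M, N) \to \operatorname{Hom}_{D^{b}_{\rm lfgu}(X/k)}(M, N[i])
\end{equation*}
is an isomorphism for all $i$, by the usual argument that it is enough to check the effaceability condition on $\operatorname{Ext}^{1}$-classes and then bootstrap. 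Since $\mu_{{\rm lfgu}, X}$ is identified, via the equivalence ${\mathcal C}_{P, X, \emptyset} \cong \mu_{{\rm lfgu}, X}$-heart, with the category of locally finitely generated unit ${\mathcal D}_{F, P}$-modules supported on the closed subset $X$, the whole problem reduces to a statement purely inside $D^{b}_{\rm lfgu}({\mathcal D}_{F, P})$ about objects with support in $X$.

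\textbf{Key steps.} First I would reduce to the case where $X$ is closed in a (not necessarily proper) smooth $k$-scheme $P$: by Proposition \ref{p2} the equivalence ${\mathbb R}j_{*}\colon {\mathcal C}_{U, X, \emptyset} \xrightarrow{\cong} {\mathcal C}_{\tilde P, X}$ is $t$-exact (Lemma \ref{l15}), so it identifies the hearts and hence induces an equivalence $D^{b}(\mu_{{\rm lfgu}, X})$ for the two presentations; thus it is enough to prove $D^{b}(\mu_{{\rm lfgu}, X}) \to {\mathcal C}_{P, X, \emptyset}$ is an equivalence, where now $\mu_{{\rm lfgu}, X}$ is the category of locally finitely generated unit ${\mathcal D}_{F, P}$-modules with support in $X$ and ${\mathcal C}_{P, X, \emptyset} = \{{\mathcal M} \in D^{b}_{\rm lfgu}({\mathcal D}_{F, P}) \mid \operatorname{Supp}{\mathcal M} \subset X\}$ by Lemma \ref{l5}. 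Second, I would invoke \cite[Corollary 17.2.5]{EK}: the natural functor $D^{b}(\mu_{\rm lfgu}) \to D^{b}_{\rm lfgu}({\mathcal D}_{F, P})$ is an equivalence of triangulated categories, where $\mu_{\rm lfgu}$ is the full category of locally finitely generated unit ${\mathcal D}_{F, P}$-modules. Third — the heart of the argument — I would show that this equivalence restricts to an equivalence on the subcategories cut out by support in $X$: the subcategory $D^{b}_{X}(\mu_{\rm lfgu})$ of complexes with cohomology supported on $X$ corresponds to $D^{b}_{{\rm lfgu}, \operatorname{Supp} \subset X}({\mathcal D}_{F, P}) = {\mathcal C}_{P, X, \emptyset}$, since support of a complex is read off from the supports of its cohomology sheaves, and these cohomology sheaves are locally finitely generated unit modules; it remains to identify $D^{b}_{X}(\mu_{\rm lfgu})$ with $D^{b}(\mu_{{\rm lfgu}, X})$. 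For this I would use that $\mu_{{\rm lfgu}, X}$ is a Serre subcategory of $\mu_{\rm lfgu}$ (it is thick: closed under subquotients and extensions, being a support condition, and lfgu modules form a thick subcategory by \cite[Proposition 15.3.4]{EK}), so the natural functor $D^{b}(\mu_{{\rm lfgu}, X}) \to D^{b}_{X}(\mu_{\rm lfgu})$ makes sense; then by Kashiwara's equivalence in the Emerton–Kisin form \cite[Proposition 15.5.3 and Corollary 15.5.4]{EK}, the category $\mu_{{\rm lfgu}, X}$ is equivalent, via $i_{+}$, to the category of locally finitely generated unit ${\mathcal D}_{F, X}$-modules, and $i_{+}\colon D^{b}(\mu_{{\rm lfgu}, X}) \to D^{b}_{X}(\mu_{\rm lfgu})$, respectively its derived version, is an equivalence because $i_{+}$ is exact on the relevant module categories and fully faithful. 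Chaining these identifications gives the claimed equivalence.

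\textbf{Main obstacle.} The delicate point is the third step: showing that the restriction of the Beilinson-type equivalence of \cite[Corollary 17.2.5]{EK} to complexes supported on $X$ lands in, and surjects onto, the derived category of the heart-subcategory $D^{b}(\mu_{{\rm lfgu}, X})$, i.e.\ that $D^{b}(\mu_{{\rm lfgu}, X}) \to D^{b}_{X}(\mu_{\rm lfgu})$ is an equivalence. In the general Serre-subcategory situation this map need not be an equivalence (one has only a $t$-exact functor, and essential surjectivity can fail), so one genuinely needs the extra input that $i_{+}$ realizes $\mu_{{\rm lfgu}, X}$ as an \emph{exact} abelian subcategory equivalent to modules on $X$, via Kashiwara's theorem. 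Concretely, I would argue: given ${\mathcal M} \in D^{b}_{X}(\mu_{\rm lfgu})$, Kashiwara's equivalence \cite[Proposition 15.5.3]{EK} gives ${\mathcal M}' \in D^{b}_{\rm lfgu}({\mathcal D}_{F, X})$ with $i_{+}{\mathcal M}' \cong {\mathcal M}$, and then \cite[Corollary 17.2.5]{EK} applied to the smooth scheme $X$ writes ${\mathcal M}'$ as coming from $D^{b}(\mu_{\rm lfgu, X}')$ where $\mu'_{\rm lfgu, X}$ is the heart for $X$; pushing forward by the exact functor $i_{+}$ then exhibits ${\mathcal M}$ as an object of $D^{b}(\mu_{{\rm lfgu}, X})$, and one checks $i_{+}$ is compatible with the two Beilinson functors and with the identification of hearts (which is exactly the content of the standard $t$-structure being transported correctly, Theorem \ref{t11} and Lemma \ref{l9}). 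Full faithfulness then follows from full faithfulness of $i_{+}$ on module categories together with the $\operatorname{Ext}$-comparison. Everything else — the $t$-exactness reductions, the thickness of $\mu_{{\rm lfgu}, X}$ — is routine given the cited results.
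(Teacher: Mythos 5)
Your overall skeleton (reduce to a closed immersion $X\hookrightarrow P$ into a smooth $k$-scheme, then compare with the Emerton--Kisin Beilinson theorem \cite[Corollary 17.2.5]{EK} on $P$) is reasonable, but the step you yourself flag as the heart of the argument contains a genuine gap. You propose to identify $D^{b}(\mu_{{\rm lfgu},X})$ with the subcategory $D^{b}_{X}(\mu_{\rm lfgu})$ of complexes supported on $X$ by invoking Kashiwara's equivalence \cite[Proposition 15.5.3, Corollary 15.5.4]{EK}, i.e.\ by realizing $\mu_{{\rm lfgu},X}$ as $i_{+}$ of the category of locally finitely generated unit ${\mathcal D}_{F,X}$-modules and then applying \cite[Corollary 17.2.5]{EK} \emph{on $X$}. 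But Theorem \ref{b1} is stated for an arbitrary $k$-embeddable $k$-scheme $X$, which may be singular; Kashiwara's equivalence in the Emerton--Kisin form applies only to closed immersions of \emph{smooth} schemes, and for singular $X$ the sheaf ${\mathcal D}_{F,X}$ and the category $D^{b}_{\rm lfgu}({\mathcal D}_{F,X})$ are not even defined --- avoiding exactly this is the raison d'\^etre of the category $D^{b}_{\rm lfgu}(X/k)$. (Throughout the paper, e.g.\ in the proof of Theorem \ref{t1}, Kashiwara's theorem is only ever used on smooth strata, with a separate d\'evissage to handle general $X$.) So your third step, and with it both essential surjectivity and the full faithfulness you derive from ``full faithfulness of $i_{+}$,'' collapses outside the smooth case. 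You also correctly note that for a Serre subcategory the map $D^{b}({\mathcal B})\to D^{b}_{{\mathcal B}}({\mathcal A})$ need not be an equivalence, but the replacement you offer is precisely the unavailable Kashiwara input.

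The paper's actual route is quite different and worth contrasting. Essential surjectivity and surjectivity on ${\rm Hom}$'s (Theorem \ref{b2}) are obtained from the Riemann--Hilbert correspondence (Theorem \ref{t5}): the functor ${\rm M}_{X}$ is computed as $\pi_{P*}\underline{\rm Hom}_{{\mathbb Z}/p{\mathbb Z}}(-,E)\circ i_{*}$ with $E$ the residual complex, so it lands \emph{termwise} in ${\rm Ind}\text{-}\mu_{{\rm lfgu},X}$, and Deligne's result that $D^{b}({\mathfrak C}_{X})\to D^{b}_{\rm c}(X_{\rm \acute{e}t},{\mathbb Z}/p{\mathbb Z})$ is essentially surjective and surjective on ${\rm Hom}$'s is transported across ${\rm Sol}_{X}$; Lemma \ref{b3} then passes from ${\rm Ind}\text{-}\mu_{{\rm lfgu},X}$ to $\mu_{{\rm lfgu},X}$. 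Full faithfulness is then proved by factoring through the chain $D^{b}(\mu_{{\rm lfgu},X})\to D^{b}(\mu_{{\rm u},X})\to D^{b}(\mu_{\rm u})\to D^{b}_{\rm qc}({\mathcal D}_{F,P})$, using that $\mu_{{\rm u},X}$ has enough injectives (Lemma \ref{b4}), that objects of $\mu_{{\rm lfgu},X}$ are acyclic for the right adjoint $L$ (Lemma \ref{b5} --- whose proof itself \emph{uses} the surjectivity on ${\rm Hom}$'s from Theorem \ref{b2}, so the two halves are not independent), a local-cohomology acyclicity argument (Lemmas \ref{b7}, \ref{b8}), and Bernstein's theorem \cite[Corollary 17.2.4]{EK}. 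None of this requires $X$ to be smooth. To repair your argument you would need either to restrict to smooth $X$ or to replace the Kashiwara step by an argument of this kind.
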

The proof of Theorem \ref{b1} is divided into two parts.
First of all, we prove the following theorem.

\begin{theo}\label{b2}
The natural functor
\begin{equation*}
D^{b}(\mu_{{\rm lfgu}, X}) \to D_{\rm lfgu}^{b}(X/k)
\end{equation*}
 is essentially surjective and, for any objects $\mathcal M$ and $\mathcal N$ in $D^{b}(\mu_{{\rm lfgu}, X})$ 
 the map
\begin{equation*}
{\rm Hom}_{D^{b}(\mu_{{\rm lfgu}, X}) }({\mathcal M}, {\mathcal N})\to {\rm Hom}_{D_{\rm lfgu}^{b}(X/k)}({\mathcal M}, {\mathcal N})
\end{equation*}
is surjective.
\end{theo}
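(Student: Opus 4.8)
The plan is to prove the two assertions separately. Essential surjectivity is formal: since $\bigl({\mathcal C}_{P, X, \emptyset}^{\leq 0}, {\mathcal C}_{P, X, \emptyset}^{\geq 0}\bigr)$ is a bounded $t$-structure on $D^b_{\rm lfgu}(X/k) = {\mathcal C}_{P, X, \emptyset}$ with heart $\mu_{{\rm lfgu}, X}$, every object has only finitely many nonzero cohomology objects, each lying in $\mu_{{\rm lfgu}, X}$. The essential image of the natural functor $D^b(\mu_{{\rm lfgu}, X}) \to D^b_{\rm lfgu}(X/k)$ is a strictly full triangulated subcategory containing $\mu_{{\rm lfgu}, X}$, hence all its shifts; applying the truncation triangles $\tau^{\leq m}{\mathcal M} \to {\mathcal M} \to \tau^{\geq m+1}{\mathcal M} \xrightarrow{+}$ and inducting on the number of nonzero cohomology objects shows this subcategory is all of $D^b_{\rm lfgu}(X/k)$.

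For the surjectivity of ${\rm Hom}_{D^b(\mu_{{\rm lfgu}, X})}({\mathcal M}, {\mathcal N}) \to {\rm Hom}_{D^b_{\rm lfgu}(X/k)}({\mathcal M}, {\mathcal N})$, a standard d\'evissage — truncating ${\mathcal M}$ and ${\mathcal N}$ and comparing, via the five lemma, the long exact sequences of $\mathrm{Hom}$'s (in all degrees simultaneously) attached to the truncation triangles in $D^b(\mu_{{\rm lfgu}, X})$ and in $D^b_{\rm lfgu}(X/k)$ — reduces us to showing that for $M, N \in \mu_{{\rm lfgu}, X}$ the natural map ${\rm Ext}^i_{\mu_{{\rm lfgu}, X}}(M, N) \to {\rm Hom}_{D^b_{\rm lfgu}(X/k)}(M, N[i])$ is surjective for all $i$. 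For $i \leq 0$ this is clear, and for $i = 1$ it is an isomorphism because $\mu_{{\rm lfgu}, X}$ is the heart of a $t$-structure. For $i \geq 2$ it suffices, by the standard effaceability argument, to prove: \emph{every $\xi \in {\rm Hom}_{D^b_{\rm lfgu}(X/k)}(M, N[i])$ with $i \geq 1$ becomes zero after composition with $N[i] \to N'[i]$ for some monomorphism $N \hookrightarrow N'$ in $\mu_{{\rm lfgu}, X}$}. Indeed, given such $N'$, complete $N \hookrightarrow N'$ to a distinguished triangle $N \to N' \to N'' \xrightarrow{+}$ with $N'' \in \mu_{{\rm lfgu}, X}$ and all three objects and arrows in the essential image of $D^b(\mu_{{\rm lfgu}, X})$; then $\xi$ lifts to some $\widetilde\xi \colon M \to N''[i-1]$, and descending induction on $i$ (the case $i = 1$ being internal to the heart), together with functoriality of the connecting morphisms, shows $\xi$ lies in the image.

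It remains to prove this effaceability claim, which I would do by induction on $d = \dim X$, and which is the main obstacle. If $X$ is smooth over $k$ — in particular if $d = 0$ — then Kashiwara's theorem \cite[Corollary 15.5.4]{EK} identifies ${\mathcal C}_{P, X, \emptyset}$ with $D^b_{\rm lfgu}({\mathcal D}_{F, X})$ compatibly with standard $t$-structures, hence with $\mu_{{\rm lfgu}, X}$, and \cite[Corollary 17.2.5]{EK} says the realization functor is an equivalence there; so ${\rm Hom}_{D^b_{\rm lfgu}(X/k)}(M, N[i]) = {\rm Ext}^i_{\mu_{{\rm lfgu}, X}}(M, N)$ for all $i$, and any class with $i \geq 1$ is represented by a genuine Yoneda $i$-extension in $\mu_{{\rm lfgu}, X}$, hence effaced by the monomorphism from $N$ into the first term of that extension. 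For general $X$ of dimension $d$, choose a dense smooth open $j \colon X_0 \hookrightarrow X$ with closed complement $i_H \colon H \hookrightarrow X$ of dimension $< d$, and realize this inside $P$ by the open immersion $a \colon P \setminus H \hookrightarrow P$ (so $X_0$ is closed in $P \setminus H$ and $H$ is closed in $P$). The localization triangle ${\mathbb R}\Gamma_H {\mathcal M} \to {\mathcal M} \to {\mathbb R}j_* j^{-1}{\mathcal M} \xrightarrow{+}$ lives inside ${\mathcal C}_{P, X, \emptyset}$, with $j^{-1} = j^!$ landing in $D^b_{\rm lfgu}(X_0/k)$ and ${\mathbb R}\Gamma_H$ in the subcategory supported on $H$, the needed $t$-exactness and base-change compatibilities coming from Propositions \ref{p4}, \ref{p1}, \ref{p2} and Theorem \ref{t1}. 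Given $\xi \colon M \to N[i]$ with $i \geq 1$, first efface $j^{-1}\xi$ over the smooth scheme $X_0$ to obtain a monomorphism $j^{-1}N \hookrightarrow N'_0$ in $\mu_{{\rm lfgu}, X_0}$ killing it; extend it over $X$ to a monomorphism $N \hookrightarrow N'$ in $\mu_{{\rm lfgu}, X}$ (using $\mathrm{H}^0$ of ${\mathbb R}j_*$ and correcting by the subobject of sections of $N$ supported on $H$ to preserve injectivity) so that the image of $\xi$ in ${\rm Hom}(M, N'[i])$ restricts to $0$ over $X_0$; that image then factors through the piece of the localization triangle supported on $H$, i.e. is pulled back from a class on $H$, which is effaced by the induction hypothesis; composing the two monomorphisms gives the required one. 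The genuinely delicate point is precisely this last d\'evissage: matching the recollement of the hearts $\mu_{{\rm lfgu}, X}, \mu_{{\rm lfgu}, X_0}, \mu_{{\rm lfgu}, H}$ with the triangulated localization, keeping the shifts straight, and transporting the effacing monomorphism from the open stratum across the closed stratum; the remaining bookkeeping is formal.
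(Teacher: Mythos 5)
Your overall strategy (reduce to effaceability of classes in ${\rm Hom}_{D^{b}_{\rm lfgu}(X/k)}(M, N[i])$ for $M, N$ in the heart, then prove effaceability by stratification) is not the paper's, and the step you yourself flag as ``genuinely delicate'' is where the argument actually breaks. The paper does something entirely different and much softer: it invokes Deligne's theorem that $D^{b}({\mathfrak C}_{X})\to D^{b}_{\rm c}(X_{\rm \acute{e}t},{\mathbb Z}/p{\mathbb Z})$ is essentially surjective and surjective on ${\rm Hom}$'s, and then observes that ${\rm M}_{X}$ is computed on the nose by the complex-level functor $\pi_{P*}\underline{\rm Hom}_{{\mathbb Z}/p{\mathbb Z}}(-,E)\circ i_{*}$, where $E$ is the residual complex, whose terms lie in ${\rm Ind}\text{-}\mu_{\rm lfgu}$; so the composite $D^{b}({\mathfrak C}_{X})\to D^{b}_{\rm lfgu}(X/k)$ factors through $D^{b}({\rm Ind}\text{-}\mu_{{\rm lfgu},X})$, which by Lemma \ref{b3} is equivalent to $D^{b}(\mu_{{\rm lfgu},X})$. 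Since the composite is already essentially surjective and surjective on ${\rm Hom}$'s (Deligne plus Theorem \ref{t5}), so is the realization functor. No effaceability argument is needed.

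The concrete gaps in your dévissage are the following. First, after you arrange that $\xi'\in{\rm Hom}(M,N'[i])$ dies on $X_{0}$, it lifts to ${\rm Hom}(M,{\mathbb R}\Gamma_{H}N'[i])$; but your induction hypothesis on $\dim H<d$ concerns classes between objects of $\mu_{{\rm lfgu},H}$, whereas here the \emph{source} $M$ is not supported on $H$. Since ${\mathbb R}\Gamma_{H}$ is right adjoint to the inclusion of the $H$-supported subcategory, ${\rm Hom}(M,{\mathbb R}\Gamma_{H}K)$ does not compute anything intrinsic to $H$ when $M$ is not supported there, so ``pulled back from a class on $H$, effaced by induction'' does not parse; you would have to dévisser $M$ as well, which reintroduces exactly the problem you are trying to solve. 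Second, the extension of the effacing monomorphism $j^{-1}N\hookrightarrow N_{0}'$ to a monomorphism $N\hookrightarrow N'$ in $\mu_{{\rm lfgu},X}$ is not a correction by bookkeeping: the kernel of $N\to {\rm H}^{0}({\mathbb R}j_{*}N_{0}')$ is $\Gamma_{H}N$, and to absorb it you need a second summand into which $N$ maps with $\Gamma_{H}N$ mapping injectively \emph{and} on which the pushed-forward class is still effaceable --- taking $N$ itself as that summand defeats the purpose, and producing anything else requires an injectivity/effaceability input you have not supplied. (This is precisely why Beilinson's proof for perverse sheaves needs the nontrivial input that $j_{!}$ and $j_{*}$ are $t$-exact for affine open immersions; ``the remaining bookkeeping is formal'' is not available here.) Finally, a smaller ordering issue: the essential image of the realization functor is not automatically a triangulated subcategory --- closing it under cones requires lifting morphisms, i.e.\ the ${\rm Hom}$-surjectivity you prove second --- so essential surjectivity cannot be dispatched ``formally'' before the second assertion is established.
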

We need the following lemma.
\begin{lemm}\label{b3}
Let ${\rm Ind}\text{-}\mu_{{\rm lfgu},X}$ be the full subcategory of $\mu_{{\rm u}, X}$ consisting of objects which are direct limits of objects in $\mu_{{\rm lfgu},X}$.
Then the natural functor 
\begin{equation*}
D^{b}(\mu_{{\rm lfgu},X})\to D^{b}_{\rm lfgu}({\rm Ind}\text{-}\mu_{{\rm lfgu},X})
\end{equation*}
is an equivalence of triangulated categories.
\end{lemm}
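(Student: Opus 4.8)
The plan is to realize the equivalence by an abstract criterion about derived categories of abelian categories sitting inside one with enough injectives, applied to the inclusion $\mu_{{\rm lfgu},X}\subset {\rm Ind}\text{-}\mu_{{\rm lfgu},X}$. The key observation is that ${\rm Ind}\text{-}\mu_{{\rm lfgu},X}$ is a Grothendieck abelian category (it is a localizing subcategory of the category of quasi-coherent ${\mathcal O}_{F,P}$-modules, or equivalently of $\mathrm{Ind}$-coherent unit modules), so it has enough injectives and the derived category $D^{b}_{\rm lfgu}({\rm Ind}\text{-}\mu_{{\rm lfgu},X})$ — meaning complexes with cohomology in $\mu_{{\rm lfgu},X}$ — is well-behaved. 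I would use the standard dévissage lemma (cf. \cite[Chap.~I]{Ha} or the more refined statement in Beilinson--Bernstein--Deligne-style arguments, e.g. \cite[Lemma]{KS}): the functor $D^{b}(\mathcal A)\to D^{b}_{\mathcal A}(\mathcal B)$ induced by a fully faithful exact embedding $\mathcal A\hookrightarrow\mathcal B$ of abelian categories is an equivalence provided that (i) $\mathcal A$ is closed under subobjects and quotients in $\mathcal B$, and (ii) every object of $\mathcal A$ embeds into an object of $\mathcal A$ that is injective \emph{relative to $\mathcal B$}, i.e. such that $\mathrm{Ext}^{i}_{\mathcal B}$ out of it vanishes for $i>0$ — or, dually, that $\mathcal A$ has enough objects which are ${\rm Ext}$-acyclic in $\mathcal B$ and compute the $\mathrm{Ext}$-groups correctly.

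The steps I would carry out are the following. First, record that $\mu_{{\rm lfgu},X}$ is a thick (Serre) subcategory of $\mu_{{\rm u},X}$ and of ${\rm Ind}\text{-}\mu_{{\rm lfgu},X}$; this is immediate from \cite[Proposition 15.3.4]{EK}, since locally finitely generated unit modules are closed under subquotients and extensions inside unit modules. Second, identify the $\mathrm{Ext}$-acyclic objects: I claim that an object of $\mu_{{\rm lfgu},X}$ which is an $F$-crystal (locally free of finite rank as ${\mathcal O}_{X}$-module, in the sense of the paper) — or more generally a suitable class of ``induced'' unit modules — has vanishing higher $\mathrm{Ext}$ in ${\rm Ind}\text{-}\mu_{{\rm lfgu},X}$ against objects of $\mu_{{\rm lfgu},X}$, and that every object of $\mu_{{\rm lfgu},X}$ is a quotient of such an object, and in fact admits a finite left resolution by such objects (this is where one uses that $P$ is a smooth scheme over a field, so coherent sheaves have finite locally free resolutions, together with the unit-module structure). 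Third, check that the $\mathrm{Ext}$-groups computed in $\mu_{{\rm lfgu},X}$ — via finite resolutions by these acyclic objects — agree with those computed in ${\rm Ind}\text{-}\mu_{{\rm lfgu},X}$; since filtered colimits are exact in the Grothendieck category ${\rm Ind}\text{-}\mu_{{\rm lfgu},X}$ and the acyclic resolving objects are compact-ish, this is a colimit/commuting-with-$\mathrm{Hom}$ argument. Fourth, invoke the dévissage lemma to conclude that $D^{b}(\mu_{{\rm lfgu},X})\xrightarrow{\ \cong\ }D^{b}_{\rm lfgu}({\rm Ind}\text{-}\mu_{{\rm lfgu},X})$.

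I expect the main obstacle to be Step 2–3: producing, functorially enough, a class of objects in $\mu_{{\rm lfgu},X}$ that are simultaneously (a) ${\rm Ext}^{>0}$-acyclic when computed in the large category ${\rm Ind}\text{-}\mu_{{\rm lfgu},X}$, (b) able to resolve any object of $\mu_{{\rm lfgu},X}$ in finitely many steps, and (c) behaving well with respect to the colimit comparison. The unit-module constraint $F^{*}{\mathcal M}\xrightarrow{\cong}{\mathcal M}$ is rigid and a naive locally free resolution of the underlying ${\mathcal O}_{X}$-module need not carry a unit ${\mathcal D}_{F,X}$-structure, so one must instead use the ``standard'' or ``generation'' resolutions coming from ${\mathcal D}_{F,X}\otimes_{{\mathcal O}_{X}}(-)$-type constructions — essentially the bar-type resolution attached to the adjunction between ${\mathcal O}_{F,X}$-modules and unit modules, as in \cite[\S17]{EK}. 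Concretely, I anticipate this paper's proof will mimic the argument of \cite[Corollary 17.2.5]{EK} (which is exactly the smooth case of the same statement) almost verbatim, the only new input being that, by Lemma~\ref{l10} and the identification ${\mathcal C}_{P,X,\emptyset}=\{{\mathcal M}\mid{\rm Supp}\,{\mathcal M}\subset X\}$, everything may be run inside $P$ with the support condition imposed, and supports are preserved by all the functors involved. Once the ${\rm Ind}$-statement of Lemma~\ref{b3} is in hand, Theorem~\ref{b2} and then Theorem~\ref{b1} follow by the formal arguments of \cite[\S17]{EK}: essential surjectivity is clear since the heart generates, and fullness of $\mathrm{Hom}$ on the derived categories reduces to the vanishing of $\mathrm{Ext}$-obstructions supplied by the ${\rm Ind}$-equivalence.
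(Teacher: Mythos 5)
Your proposal does not match the paper's argument, and the route you choose has a genuine gap at exactly the point you flag as ``the main obstacle.'' Your plan hinges on producing objects of $\mu_{{\rm lfgu},X}$ that are ${\rm Ext}^{>0}$-acyclic in ${\rm Ind}\text{-}\mu_{{\rm lfgu},X}$ and that give \emph{finite left resolutions} of every object of $\mu_{{\rm lfgu},X}$. Neither half of this is available. The candidates you name ($F$-crystals, i.e.\ modules locally free of finite rank over ${\mathcal O}$) cannot resolve objects of $\mu_{{\rm lfgu},X}$ while staying inside $\mu_{{\rm lfgu},X}$: the latter consists of modules on $P$ supported on the proper closed subset $X$, whereas a locally free module has full support, so the terms of any such resolution would leave the category. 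More fundamentally, the relevant unit-module categories have enough \emph{injectives} (\cite[Corollary 15.1.6]{EK}, used in Lemma \ref{b4}) but no supply of projective-like or ``induced'' acyclic generators with the unit constraint $F^{*}{\mathcal M}\xrightarrow{\cong}{\mathcal M}$; your Step 2 is asserted, not proved, and your Step 3 (comparison of ${\rm Ext}$-groups via compactness and filtered colimits) inherits the same gap. You have also misplaced the statement: this lemma is not the analogue of \cite[Corollary 17.2.5]{EK} (that is Theorem \ref{b1}, proved later via the functor $L$ and ${\mathbb R}\Gamma_X$-acyclicity); it is a more elementary ``Ind-to-finite'' comparison.

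The actual proof uses only the facts that $\mu_{{\rm lfgu},X}$ is closed under subobjects and quotients (\cite[Proposition 15.3.4]{EK}) and that every object of ${\rm Ind}\text{-}\mu_{{\rm lfgu},X}$ is a filtered union of lfgu subobjects. From these one extracts, from any bounded complex ${\mathcal M}$ in ${\rm Ind}\text{-}\mu_{{\rm lfgu},X}$ with lfgu cohomology, a quasi-isomorphic \emph{subcomplex} ${\mathcal M}'$ whose terms lie in $\mu_{{\rm lfgu},X}$ (the classical coherent-subcomplex argument, by descending induction on the degree). This gives essential surjectivity at once, and full faithfulness follows by applying \cite[Proposition 1.6.5]{KS} to the collection of such quasi-isomorphisms: any quasi-isomorphism ${\mathcal M}\to{\mathcal N}$ in $K^{b}({\rm Ind}\text{-}\mu_{{\rm lfgu},X})$ with ${\mathcal N}$ in $K^{b}(\mu_{{\rm lfgu},X})$ is dominated by one coming from $K^{b}(\mu_{{\rm lfgu},X})$. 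No acyclic objects, resolutions, or ${\rm Ext}$ comparisons are needed. If you want to salvage your framework, the correct sufficient condition for the d\'evissage is not ``enough relative injectives'' but precisely this domination-by-subcomplexes property, which is what the filtered-union structure of ${\rm Ind}\text{-}\mu_{{\rm lfgu},X}$ delivers.
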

\begin{proof}
For an object ${\mathcal M}$ in $D^{b}_{\rm lfgu}({\rm Ind}\text{-}\mu_{{\rm lfgu},X})$, there exists a subcomplex ${\mathcal M}'$ of
${\mathcal M}$ such that  the canonical inclusion ${\mathcal M}'\to {\mathcal M}$ is a quasi-isomorphism and the terms of ${\mathcal M}'$ are locally finitely generated unit.
Since $\mathcal M$ is supported on $X$, so is ${\mathcal M}'$.
Hence $D^{b}(\mu_{{\rm lfgu},X})\to D^{b}_{\rm lfgu}({\rm Ind}\text{-}\mu_{{\rm lfgu},X})$ is essentially surjective.
Let us prove the full faithfulness of the functor.
We denote by $K^{b}(\mu_{{\rm lfgu},X})$ (resp. $K^{b}({\rm Ind}\text{-}\mu_{{\rm lfgu},X})$) the (bounded) homotopy category of $\mu_{{\rm lfgu},X}$ (resp. ${\rm Ind}\text{-}\mu_{{\rm lfgu},X}$) and 
suppose that we are given a quasi-isomorphism ${\mathcal M}\to {\mathcal N}$ in $K^{b}({\rm Ind}\text{-}\mu_{{\rm lfgu},X})$ with  ${\mathcal N}\in K^{b}(\mu_{{\rm lfgu},X})$.
Then all cohomology sheaves of ${\mathcal M}$ are locally finitely generated unit and so there exists a subcomplex ${\mathcal M}'$ of
${\mathcal M}$ such that the canonical inclusion ${\mathcal M}'\to {\mathcal M}$ is a quasi-isomorphism and  the terms of ${\mathcal M}'$ are locally finitely generated unit.
Hence, by \cite[Proposition 1.6.5]{KS}, $D^{b}(\mu_{{\rm lfgu},X})\to D^{b}({\rm Ind}\text{-}\mu_{{\rm lfgu},X})$ is fully faithful
and the assertion follows.
\end{proof}

Let us prove the Theorem \ref{b2}.
\begin{proof}
The proof is a refinement of the proof of \cite[Corollary 17.1.2]{EK}.
For a $k$-scheme $Y$ of finite type, we denote by ${\mathfrak C}_{Y}$ the category of constructible \'etale sheaves of ${\mathbb Z}/{p\mathbb Z}$-modules on $Y_{\rm \acute{e}t}$.
By using the results in \cite[p.94]{De}, we know that the natural functor $D^{b}({\mathfrak C}_{Y})\to D_{\rm c}^{b}(Y_{\rm \acute{e}t}, {\mathbb Z}/{p\mathbb Z})$ is essentially surjective and induces a surjection on ${\rm Hom}$'s.
Let $E$ denote the residual complex of injective quasi-coherent ${\mathcal O}_{P_{\rm \acute{e}t}}$-modules resolving 
${\mathcal O}_{P_{\rm \acute{e}t}}$.
It is proved in \cite[Proposition 17.1.1]{EK} that $E$ naturally forms a complex of unit ${\mathcal D}_{F, P_{\rm \acute{e}t}}$-modules and the terms of $E$ are in ${\rm Ind}\text{-}\mu_{\rm lfgu}$.
Then, as in the proof of \cite[Corollary 17.1.2]{EK}, ${\rm M}_{X}$ may be computed as ${\pi}_{P*}\underline{\rm Hom}_{{\mathbb Z}/{p\mathbb Z}}(-, E)\circ i_{*}$ and we have the following commutative diagram of categories:
\[\xymatrix{
{D^{b}({\mathfrak C}_{X})} \ar[d]_{i_{*}}\ar[r] & {D_{\rm c}^{b}(X_{\rm \acute{e}t}, {\mathbb Z}/{p\mathbb Z})}\ar[d]^{i_{*}} \\
{D^{b}({\mathfrak C}_{P})} \ar[d]_{{\pi}_{P*}\underline{\rm Hom}_{{\mathbb Z}/{p\mathbb Z}}(-, E)}\ar[r] & {D_{\rm c}^{b}(P_{\rm \acute{e}t}, {\mathbb Z}/{p\mathbb Z})}\ar[d]^{{\rm M}_{P}}\\
{D^{b}_{\rm lfgu}({\rm Ind}\text{-}\mu_{\rm lfgu})} \ar[r] & {D^{b}_{\rm lfgu}({\mathcal D}_{F, P})} .}
\]
The composite of the functors 
$$D^{b}({\mathfrak C}_{X})\to D_{\rm c}^{b}(X_{\rm \acute{e}t}, {\mathbb Z}/{p\mathbb Z})\to D_{\rm c}^{b}(P_{\rm \acute{e}t}, {\mathbb Z}/{p\mathbb Z})\to D^{b}_{\rm lfgu}({\mathcal D}_{F, P})$$
induces a functor $D^{b}({\mathfrak C}_{X})\to D^{b}_{\rm lfgu}(X/k)$ which is essentially surjective and induces a surjection on ${\rm Hom}$'s by Theorem \ref{t5}.
On the other hand, the essential image of the composite of the functors $D^{b}({\mathfrak C}_{X})\to D^{b}({\mathfrak C}_{P})\to D^{b}_{\rm lfgu}({\rm Ind}\text{-}\mu_{\rm lfgu})$ is contained in $D^{b}_{\rm lfgu}({\rm Ind}\text{-}\mu_{{\rm lfgu}, X})$
because for an object ${\mathcal G}$ in $D^{b}({\mathfrak C}_{X})$, we have the natural isomorphism
\begin{equation*}
\pi_{P*}\underline{\rm Hom}_{{\mathbb Z}/{p\mathbb Z}}(i_{*}{\mathcal G}, E)\cong \pi_{P*}i_{*}\underline{\rm Hom}_{{\mathbb Z}/{p\mathbb Z}}({\mathcal G}, i^{!}E_{|X}).
\end{equation*}
Hence we know that the functor $D^{b}_{\rm lfgu}({\rm Ind}\text{-}\mu_{{\rm lfgu}, X})\to D^{b}_{\rm lfgu}(X/k)$ is essentially surjective.
 So, by using Theorem \ref{t5} and Lemma \ref{b3}, we see that the functor induces a surjection on ${\rm Hom}$'s.
Now the assertion follows from Lemma \ref{b3}.
\end{proof}
In order to prove the full faithfulness of the functor $D^{b}(\mu_{{\rm lfgu}, X}) \to D_{\rm lfgu}^{b}(X/k)$, we need some preparation. 

\begin{lemm}\label{b4}
The category $\mu_{{\rm u},X}$ has enough injectives.
\end{lemm}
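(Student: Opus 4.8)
The plan is to realize $\mu_{{\rm u}, X}$ as a category equivalent to a category of the form ``quasi-coherent sheaves with an extra algebraic structure'' and then produce injectives by an adjoint-functor/Baer-type argument. Recall that by the equivalence of \cite[Proposition 13.3.7]{EK}, a unit ${\mathcal D}_{F, P}$-module is the same as a quasi-coherent ${\mathcal O}_P$-module ${\mathcal M}$ together with an isomorphism $\psi_{\mathcal M}: F^{*}{\mathcal M}\xrightarrow{\cong}{\mathcal M}$ compatible with the ${\mathcal D}_P$-structure; imposing ${\rm Supp}\,{\mathcal M}\subset X$ cuts out $\mu_{{\rm u}, X}$. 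The key point is that $\mu_{{\rm u},X}$ is a Grothendieck abelian category: it is cocomplete (direct limits of unit modules supported on $X$ are again unit and supported on $X$, since $F^{*}$ and ${\rm Supp}$ commute with filtered colimits), filtered colimits are exact (they are so for quasi-coherent ${\mathcal O}_P$-modules and the structure maps pass to the colimit), and it admits a generator. Therefore, by the Grothendieck–Gabriel theorem, $\mu_{{\rm u},X}$ has enough injectives.

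The one substantive thing to verify is the existence of a generator. First I would reduce to the case where $X$ is closed in $P$, which is the standing hypothesis in this subsection anyway ($i: X\hookrightarrow P$ is a closed immersion). Let ${\mathcal I}_X\subset{\mathcal O}_P$ be the ideal sheaf of $X$. For a unit ${\mathcal D}_{F,P}$-module ${\mathcal M}$ supported on $X$, each section is killed by a power of ${\mathcal I}_X$ locally; combined with the unit structure one sees that ${\mathcal M}$ is the filtered colimit of its finitely generated ${\mathcal D}_{F,P}$-submodules, and each such submodule is again unit (being a ${\mathcal D}_{F,P}$-submodule of a unit module that is generated over ${\mathcal O}_X$-coherent data — here one uses \cite[Proposition 15.3.4]{EK}, that locally finitely generated unit modules form a thick subcategory) and supported on $X$. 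Hence the locally finitely generated unit modules supported on $X$, i.e.\ the objects of $\mu_{{\rm lfgu}, X}$, form a set of generators of $\mu_{{\rm u}, X}$ up to isomorphism (there is, up to isomorphism, only a set of them since each is a quotient of ${\mathcal D}_{F,P}\otimes_{{\mathcal O}_P}{\mathcal G}$ for a coherent ${\mathcal O}_P$-module ${\mathcal G}$, and there is only a set of such ${\mathcal G}$). Taking the direct sum of one representative from each isomorphism class yields a single generator of $\mu_{{\rm u},X}$.

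With a generator in hand, the standard transfinite small-object argument (or the Grothendieck–Gabriel embedding theorem, cf.\ \cite[Theorem 9.6.2]{KS}) gives that every object of $\mu_{{\rm u},X}$ embeds into an injective. I expect the main obstacle to be purely bookkeeping: checking carefully that ``supported on $X$'' is preserved under the colimits and subobjects one forms, and that the unit condition $\psi_{\mathcal M}$ being an \emph{isomorphism} (not merely a morphism) is stable under the constructions used — in particular that a ${\mathcal D}_{F,P}$-submodule of a unit module which is stable in the appropriate sense is again unit, which is exactly the content of the thickness statement \cite[Proposition 15.3.4]{EK} at the level of locally finitely generated objects and passes to general unit modules by writing them as filtered colimits. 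None of this requires new ideas beyond the cited results.
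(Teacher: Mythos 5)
There is a genuine gap at the step you yourself single out as the substantive one: the existence of a generator. You claim that every object of $\mu_{{\rm u},X}$ is the filtered colimit of its finitely generated ${\mathcal D}_{F,P}$-submodules and that each such submodule is again unit, so that the objects of $\mu_{{\rm lfgu},X}$ generate. Two things go wrong. First, a ${\mathcal D}_{F,P}$-submodule ${\mathcal N}$ of a unit module ${\mathcal M}$ need not be unit: since the Frobenius of a smooth $k$-scheme is flat, the structural morphism $F^{*}{\mathcal N}\to{\mathcal N}$ is injective, but its surjectivity is a real condition that fails in general; and \cite[Proposition 15.3.4]{EK} (thickness of the lfgu modules inside quasi-coherent ${\mathcal D}_{F,P}$-modules) only controls subobjects of modules that are already lfgu, not submodules of an arbitrary unit module. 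Second, and more structurally, the assertion that every object of $\mu_{{\rm u},X}$ is a filtered colimit of lfgu submodules is precisely the statement $\mu_{{\rm u},X}={\rm Ind}\text{-}\mu_{{\rm lfgu},X}$. This is not available (and should not be expected): it is exactly the failure of this equality that forces the introduction, in subsection \ref{s52} following \cite[\S 17.2]{EK}, of the coreflection $L\colon\mu_{{\rm u},X}\to{\rm Ind}\text{-}\mu_{{\rm lfgu},X}$ and its right derived functor ${\mathbb R}L$; if every unit module lay in ${\rm Ind}\text{-}\mu_{{\rm lfgu}}$ then $L$ would be the identity and the whole apparatus around Lemmas \ref{b5} and \ref{b6} would collapse. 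So your proposed generating family does not generate, and the Grothendieck-category route is not established by your argument (whether $\mu_{{\rm u},X}$ admits a generator of some other kind is a separate question you have not addressed).

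The paper takes a much shorter and safer route that you should compare with. It quotes \cite[Corollary 15.1.6]{EK} for the fact that the ambient category $\mu_{\rm u}$ already has enough injectives, embeds ${\mathcal M}\hookrightarrow{\mathcal I}$ there, and applies $\Gamma_{X}$: one has ${\mathcal M}=\Gamma_{X}{\mathcal M}\hookrightarrow\Gamma_{X}{\mathcal I}$, and $\Gamma_{X}{\mathcal I}$ is injective in $\mu_{{\rm u},X}$ because $\Gamma_{X}$ is right adjoint to the exact inclusion $\mu_{{\rm u},X}\hookrightarrow\mu_{\rm u}$ (the paper checks the lifting property by hand). If you want to salvage your strategy, you would in any case have to import from \cite{EK} whatever makes $\mu_{\rm u}$ itself well behaved, at which point the adjunction argument is strictly easier.
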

\begin{proof}
For an object $\mathcal M$ in $\mu_{{\rm u},X}$, we can take an injection $\mathcal M \to I$ into an injective object $\mathcal I$ in $\mu_{\rm u}$ by \cite[Corollary 15.1.6]{EK}.
Applying ${\Gamma}_{X}$ to the injection $\mathcal M \to I$, one has an injection ${\mathcal M}={\Gamma}_{X}{\mathcal M}\to {\Gamma}_{X}{\mathcal I}$.
Hence it is enough to prove that $\Gamma_X {\mathcal I}$ is an injective object in $\mu_{{\rm u},X}$.
Suppose that we are given an injection $i: {\mathcal N}' \to {\mathcal N}$ and a morphism $f: {\mathcal N}' \to \Gamma_X {\mathcal I}$ in $\mu_{{\rm u},X}$.
Let us denote by $g$ the natural morphism $\Gamma_X{\mathcal I} \to {\mathcal I}$.
Since ${\mathcal I}$ is an injective object, there exists
a morphism $h: {\mathcal N \to I}$ satisfying $h \circ i = g \circ f$.
Then one has $\Gamma_X h \circ \Gamma_X i = \Gamma_X g \circ \Gamma_X f$.
Since $\Gamma_X i$ is equal to $i: {\mathcal N}' = \Gamma_X {\mathcal N}' \to \Gamma_X {\mathcal N = N}$
and $\Gamma_X g \circ \Gamma_X f: {\mathcal N}' = \Gamma_X {\mathcal N}' \to \Gamma_X \Gamma_X {\mathcal I} \to \Gamma_X {\mathcal I}$ is equal to
$f$, we know the equality $\Gamma_X h \circ i = f$. 
Hence $\Gamma_X {\mathcal I}$ is an injective object in $\mu_{{\rm u},X}$.
\end{proof}

For an object ${\mathcal M}$ in $\mu_{{\rm u},X}$, 
there exists a unique maximal subobject $L({\mathcal M})$ of $\mathcal M$ which lies in ${\rm Ind}\text{-}\mu_{{\rm lfgu}}$ by  \cite[Lemma 17.2.1.(i)]{EK}.
Then $L({\mathcal M})$ belongs to ${\rm Ind}\text{-}\mu_{{\rm lfgu}, X}$ and it is a unique maximal subobject  of $\mathcal M$ which lies in ${\rm Ind}\text{-}\mu_{{\rm lfgu}, X}$.
By \cite[Lemma 17.2.1.(ii)]{EK}, the correspondence ${\mathcal M}\mapsto L({\mathcal M})$ defines a left exact functor
$$ L: \mu_{{\rm u},X} \to {\rm Ind}\text{-}\mu_{{\rm lfgu},X}$$
which is right adjoint to the natural functor $\mu_{{\rm lfgu},X}\to \mu_{{\rm u},X}$.
Since $\mu_{{\rm u},X}$ has enough injectives by Lemma \ref{b4}, we obtain the right derived functor 
$$ {\mathbb R}L: D^+( \mu_{{\rm u},X} ) \to D^+( {\rm Ind}\text{-}\mu_{{\rm lfgu},X}). $$
By using Theorem \ref{b2}, one can prove the following lemma in the same way as \cite[Lemma 17.2.2]{EK}.

\begin{lemm}\label{b5}
Objects in $\mu_{{\rm lfgu},X}$ are acyclic for ${\mathbb R}L$.
\end{lemm}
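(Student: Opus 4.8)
The plan is to follow the strategy of \cite[Lemma 17.2.2]{EK}: rather than analysing an arbitrary injective resolution of $\mathcal{M}$ in $\mu_{{\rm u},X}$, one exhibits for each $\mathcal{M}\in\mu_{{\rm lfgu},X}$ an explicit bounded complex $\mathcal{J}^{\bullet}$, quasi-isomorphic to $\mathcal{M}$, whose terms are at once acyclic for $L$ and fixed by $L$. Two elementary observations make this sufficient. First, $L$ restricts to the identity functor on ${\rm Ind}\text{-}\mu_{{\rm lfgu},X}$, since the maximal ${\rm Ind}\text{-}\mu_{{\rm lfgu}}$-subobject of an object already lying in ${\rm Ind}\text{-}\mu_{{\rm lfgu},X}$ is that object itself. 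Second, a bounded complex of objects of $\mu_{{\rm u},X}$ that are acyclic for $L$ computes $\mathbb{R}L$. Granting a quasi-isomorphism $\mathcal{M}\simeq\mathcal{J}^{\bullet}$ in $D^{b}(\mu_{{\rm u},X})$ with each $\mathcal{J}^{k}$ an object of ${\rm Ind}\text{-}\mu_{{\rm lfgu},X}$ acyclic for $L$, we then get $\mathbb{R}L(\mathcal{M})\cong L(\mathcal{J}^{\bullet})=\mathcal{J}^{\bullet}\cong\mathcal{M}$, whence $R^{i}L(\mathcal{M})=H^{i}(\mathcal{M})=0$ for $i>0$, which is the assertion.

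To construct $\mathcal{J}^{\bullet}$ I would reuse the device appearing in the proof of Theorem \ref{b2}. By the Riemann--Hilbert correspondence (Theorem \ref{t5}) we have $\mathcal{M}\cong{\rm M}_{X}(\mathcal{L})$ with $\mathcal{L}={\rm Sol}_{X}(\mathcal{M})\in D^{b}_{\rm ctf}(X_{\rm \acute{e}t},{\mathbb Z}/{p\mathbb Z})$, and, as recalled in the proof of Theorem \ref{b2}, ${\rm M}_{X}$ is computed by $\pi_{P*}\,\underline{\rm Hom}_{{\mathbb Z}/{p\mathbb Z}}(i_{*}(-),E)$, where $E$ is the residual complex of injective quasi-coherent $\mathcal{O}_{P_{\rm \acute{e}t}}$-modules resolving $\mathcal{O}_{P_{\rm \acute{e}t}}$. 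Choosing, by Deligne's theorem (\cite[p.94]{De}), a bounded complex $\mathcal{G}^{\bullet}$ of constructible \'etale ${\mathbb Z}/{p\mathbb Z}$-sheaves on $X$ that represents $\mathcal{L}$, one obtains the bounded complex $\mathcal{J}^{\bullet}:=\pi_{P*}\,{\rm Tot}\,\underline{\rm Hom}_{{\mathbb Z}/{p\mathbb Z}}(i_{*}\mathcal{G}^{\bullet},E)[d_{P}]$. It is quasi-isomorphic to $\mathcal{M}$ in $D^{b}_{\rm lfgu}({\mathcal D}_{F,P})$, and, since its terms lie in the abelian subcategory $\mu_{{\rm u},X}$, the quasi-isomorphism already holds in $D^{b}(\mu_{{\rm u},X})$.

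It remains to check that each $\mathcal{J}^{k}$, a finite direct sum of sheaves of the form $\pi_{P*}\,\underline{\rm Hom}_{{\mathbb Z}/{p\mathbb Z}}(i_{*}\mathcal{G}^{j},E^{\ell})$, lies in ${\rm Ind}\text{-}\mu_{{\rm lfgu},X}$ and is acyclic for $L$. These sheaves are supported on $X$, and they belong to ${\rm Ind}\text{-}\mu_{\rm lfgu}$ by \cite[Proposition 17.1.1]{EK} --- this is exactly what is invoked in the proof of Theorem \ref{b2} --- so they lie in ${\rm Ind}\text{-}\mu_{{\rm lfgu},X}$. For the $L$-acyclicity one uses that the terms $E^{\ell}$ are injective in the category of unit ${\mathcal D}_{F,P_{\rm \acute{e}t}}$-modules, that $\underline{\rm Hom}_{{\mathbb Z}/{p\mathbb Z}}(i_{*}\mathcal{G}^{j},-)$ is right adjoint to the exact functor $-\otimes_{{\mathbb Z}/{p\mathbb Z}}i_{*}\mathcal{G}^{j}$ (exact because ${\mathbb Z}/{p\mathbb Z}$ is a field), and that $\pi_{P*}$ is right adjoint to the exact functor $\pi_{P}^{*}$; consequently $\mathcal{J}^{k}$ is injective in the category of unit ${\mathcal D}_{F,P}$-modules, hence --- being supported on $X$, by Lemma \ref{b4} and the computation in its proof --- injective, in particular acyclic for $L$, in $\mu_{{\rm u},X}$. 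This produces the complex $\mathcal{J}^{\bullet}$ demanded above.

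The main obstacle is precisely this last acyclicity: one must know that the terms of the residual complex $E$, and the internal-$\underline{\rm Hom}$ sheaves formed from them, are genuinely injective in the category of unit ${\mathcal D}_{F}$-modules, not merely as quasi-coherent $\mathcal{O}$-modules. This rests on the injectivity results of \cite[\S15,\S17]{EK}. Once these are granted, the remaining steps are the bookkeeping above, carried out as in \cite[Lemma 17.2.2]{EK}.
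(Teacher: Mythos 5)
Your strategy is not in fact the one of \cite[Lemma 17.2.2]{EK} (whose proof, like the paper's, is an ${\rm Ext}$-class lifting argument), and it has a genuine gap. The paper takes an injective resolution ${\mathcal M}\to{\mathcal I}$ in $\mu_{{\rm u},X}$ and reduces the acyclicity to the surjectivity of $L({\mathcal I}^{n})\to L({\mathcal E}^{n})$, which it obtains by realizing the extension class $c_{\mathcal F}$ of a locally finitely generated unit submodule ${\mathcal F}\subset{\mathcal E}^{n}$ by an extension inside $\mu_{{\rm lfgu},X}$; the essential input is the surjectivity on ${\rm Hom}$'s from Theorem \ref{b2}. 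You instead want to compute ${\mathbb R}L({\mathcal M})$ on an explicit bounded complex ${\mathcal J}^{\bullet}=\pi_{P*}\,\underline{\rm Hom}_{{\mathbb Z}/{p\mathbb Z}}(i_{*}{\mathcal G}^{\bullet},E)[d_{P}]$, and for that you must know that each ${\mathcal J}^{k}$ is acyclic for $L$. You derive this from the assertion that the terms $E^{\ell}$ of the residual complex are injective objects of the category of unit ${\mathcal D}_{F,P_{\rm \acute{e}t}}$-modules. That assertion is exactly what is not available: \cite[Proposition 17.1.1]{EK} only says that $E$ is a complex of unit modules whose terms lie in ${\rm Ind}\text{-}\mu_{\rm lfgu}$, and \cite[Corollary 15.1.6]{EK} goes in the opposite direction (injectives of $\mu_{\rm u}$ are injective as quasi-coherent modules); neither \cite{EK} nor this paper proves that an injective quasi-coherent sheaf carrying a unit structure is injective in $\mu_{\rm u}$. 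You flag this yourself as ``the main obstacle'' and then grant it, but it is precisely the content that has to be supplied. Worse, the ${\mathcal J}^{k}$ are at best objects of ${\rm Ind}\text{-}\mu_{{\rm lfgu},X}$, and the statement that such objects are $L$-acyclic contains the lemma you are proving as a special case, so this cannot come for free without circularity.

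Two further points would need attention even if that injectivity were granted: (i) the isomorphism ${\mathcal M}\cong{\mathcal J}^{\bullet}$ is produced via Theorem \ref{t5} in $D^{b}_{\rm lfgu}({\mathcal D}_{F,P})$, and to use it to compute ${\mathbb R}L$ you must transport it to $D^{+}(\mu_{{\rm u},X})$; this requires the full faithfulness of $D^{b}(\mu_{{\rm u},X})\to D^{b}_{\rm qc}({\mathcal D}_{F,P})$ (Lemma \ref{b7} combined with \cite[Lemma 17.2.3 and Corollary 17.2.4]{EK}), which you do not invoke; (ii) before applying the adjunction argument you should check that $\underline{\rm Hom}_{{\mathbb Z}/{p\mathbb Z}}(i_{*}{\mathcal G}^{j},-)$ actually preserves quasi-coherence and the unit property, i.e.\ is a functor on $\mu_{\rm u}$. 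The paper's route avoids all of this: it needs only the existence of injectives in $\mu_{{\rm u},X}$ (Lemma \ref{b4}), the identification of ${\rm Ext}$-groups through the fully faithful chain of derived categories, and the ${\rm Hom}$-surjectivity of Theorem \ref{b2} to produce a locally finitely generated unit extension whose image witnesses the surjectivity of $L({\mathcal I}^{n})\to L({\mathcal E}^{n})$. I would redo the argument along those lines.
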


\begin{proof}
For an object $\mathcal M$  in $\mu_{{\rm lfgu},X}$,
one can choose an injective resolution ${\mathcal M}\to {\mathcal I}$ in $\mu_{u,X}$ by Lemma \ref{b4}.
For a natural number $n\geq 0$, we denote by ${\mathcal E}^{n}$ the image of the differential ${\mathcal I}^n\to {\mathcal I}^{n+1}$.
In order to prove that $\mathcal M$ is ${\mathbb R}L$-acyclic, it is enough to prove that the map $L({\mathcal I}^n)\to L({\mathcal E}^n)$ is surjective.
We denote by ${\mathcal I}^{\leq n}$ the complex defined by $\left({\mathcal I}^{\leq n}\right)^i={\mathcal I}^i$ for $i\leq n$ and by $\left({\mathcal I}^{\leq n}\right)^i=0$ for $i>n$.
Then one has an $(n+1)$-extension
$$0\to {\mathcal M}\to {\mathcal I}^0\to {\mathcal I}^1\to \cdots \to {\mathcal I}^n\to {\mathcal E}^n\to 0$$
 of ${\mathcal E}^n$ by $\mathcal M$ and
 denotes by $c$ the class of this extension in ${\rm Ext}^{n+1}_{\mu_{{\rm u},X}}({\mathcal I}^n, {\mathcal M})$.
For a locally finitely generated unit ${\mathcal D}_{F, P}$-submodule $\mathcal F$ of ${\mathcal E}^n$, 
we denote by $c_{{\mathcal F}}$ the image of $c$ under the map 
\begin{eqnarray*}
{\rm Ext}^{n+1}_{\mu_{{\rm u},X}}({\mathcal E}^n, {\mathcal M})\to {\rm Ext}^{n+1}_{\mu_{{\rm u},X}}({\mathcal F}, {\mathcal M})&=&{\rm Hom}_{D^b(\mu_{{\rm u},X})}({\mathcal F}[-n], {\mathcal M})\\
&\xrightarrow{\alpha}& {\rm Hom}_{D^b_{\rm qc}({\mathcal D}_{F, P})}({\mathcal F}[-n], {\mathcal M}).
\end{eqnarray*}
Note that $\alpha$ is an isomorphism:
Indeed we have the diagram of functors
$$D^b(\mu_{{\rm u},X})\to D^b(\mu_{{\rm u}})\to D^b(\mu_{{\rm qc}})\to D^b_{\rm qc}({\mathcal D}_{F, P})$$
(where $\mu_{{\rm qc}}$ denotes the category of ${\mathcal O}_{P}$-quasi-coherent ${\mathcal D}_{F, P}$-modules)
in which the first (resp. the second, the third) functor is fully faithful by Lemma \ref{b7} below (resp. \cite[Lemma 17.2.3 (iii)]{EK}, Bernstein's theorem  \cite[Corollary 17.2.4]{EK}).
Hence we can regard $c_{\mathcal F}$ also as an element in ${\rm Ext}^{n+1}_{\mu_{{\rm u},X}}({\mathcal F}, {\mathcal M})$.

By Theorem \ref{b2}, there exists an $(n+1)$-extension in $\mu_{{\rm lfgu},X}$ which is sent to $c_{\mathcal F}$ by the map 
\begin{eqnarray*}
{\rm Hom}_{D^{b}(\mu_{{\rm lfgu}, X}) }({\mathcal F}[-n], {\mathcal M})&\to& {\rm Hom}_{D_{\rm lfgu}^{b}(X/k)}({\mathcal F}[-n], {\mathcal M})\\
&=&{\rm Hom}_{D^b_{\rm qc}({\mathcal D}_{F, P})}({\mathcal F}[-n], {\mathcal M}).
\end{eqnarray*}
Let us denote this $(n+1)$-extension by
$$0\to {\mathcal M}\to {\mathcal N}\to {\mathcal F}\to 0,$$
where ${\mathcal N}$ is a complex of locally finitely generated unit ${\mathcal D}_{F, P}$-modules whose terms are supported on $X$ and are $0$ outside $[0, n]$ and such that ${\rm H}^i({\mathcal N})={\mathcal M}$ if $i=0$,  ${\rm H}^i({\mathcal N})={\mathcal F}$ if $i=n$ and  ${\rm H}^i({\mathcal N})=0$ otherwise.
Since ${\mathcal I}$ is a complex of injective objects in $\mu_{u,X}$ there exists a map of extensions

\[\xymatrix{
0 \ar[r] & {\mathcal M} \ar[r] &{{\mathcal I}^{\leq n}} \ar[r]  & {{\mathcal E}^{n}}\ar[r] &0\\
0 \ar[r] & {\mathcal M} \ar[r]\ar[u]^{\rm id} & {{\mathcal N}}\ar[r]\ar[u]^{\phi} & {\mathcal F} \ar[r]\ar[u]^{\psi} & 0.
}\]
Let us consider the exact sequence 
$${\rm Hom}_{\mu_{{\rm u},X}}({\mathcal F}, {\mathcal I}^n)\to {\rm Hom}_{\mu_{{\rm u},X}}({\mathcal F}, {\mathcal E}^n)\xrightarrow{\delta} {\rm Ext}^{n+1}_{\mu_{{\rm u},X}}({\mathcal F}, {\mathcal M})\to 0.$$
By construction of $c_{\mathcal F}$ one has $\delta(\psi)=c_{{\mathcal F}}$. 
If we denote by $\psi'$ the natural inclusion ${\mathcal F}\to {\mathcal E}^n$, then we also have $\delta(\psi')=c_{{\mathcal F}}$.
Thus $\psi-\psi'$ lifts to a map $\widetilde{\psi-\psi'}: {\mathcal F}\to {\mathcal I}^n$.
Let us also denote by  $\widetilde{\psi-\psi'}$ the composite of morphisms ${\mathcal N}^n\to {\mathcal F}\xrightarrow{\widetilde{\psi-\psi'}} {\mathcal I}^n$.
Then we have a locally finitely generated unit ${\mathcal D}_{F, P}$-submodule $\left(\phi^n-\widetilde{\psi-\psi'}\right)\left({\mathcal N}^n\right)$ of ${\mathcal I}^n$ which surjects on $\psi'({\mathcal F})$.
This finishes the proof.

\end{proof}

\begin{lemm}\label{b6}
Let  $\mu_{{L\text{-ac}}, {X}}$ denote the full subcategory of $\mu_{{\rm u}, X}$ consisting of
$L$-acyclic objects. 
Then the natural functors 
$D^b(\mu_{{\rm lfgu}, X}) \to D^+(\mu_{{L\text{-ac}, X}})$ and $D^+(\mu_{L\text{-ac}, X}) \to D^+(\mu_{{\rm u}, {X}})$ are fully faithful.
As a consequence, the natural functor 
$$D^b(\mu_{{\rm lfgu}, X}) \to D^b(\mu_{{\rm u}, X})$$
is fully faithful.
\end{lemm}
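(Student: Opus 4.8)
The plan is to identify the first functor as a retract, via the derived functor of $L$, of the fully faithful functor of Lemma \ref{b3}, and to obtain the statement on the second functor from the standard theory of acyclic resolutions. Full faithfulness is inherited through a fully faithful functor, so once the second functor is known to be fully faithful it will be enough, for the first, to prove that the composite $D^{b}(\mu_{{\rm lfgu},X})\to D^{+}(\mu_{{\rm u},X})$ is fully faithful; and the last assertion will then follow from full faithfulness of $D^{b}(\mu_{{\rm u},X})\hookrightarrow D^{+}(\mu_{{\rm u},X})$.

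I would first record two formal facts about $L$. Since $\mu_{{\rm u},X}$ has enough injectives by Lemma \ref{b4} and $L\colon \mu_{{\rm u},X}\to {\rm Ind}\text{-}\mu_{{\rm lfgu},X}$ is left exact, the right derived functor ${\mathbb R}L\colon D^{+}(\mu_{{\rm u},X})\to D^{+}({\rm Ind}\text{-}\mu_{{\rm lfgu},X})$ exists; as $L$ is right adjoint to the exact inclusion $\iota\colon {\rm Ind}\text{-}\mu_{{\rm lfgu},X}\hookrightarrow \mu_{{\rm u},X}$, the pair $(\iota,{\mathbb R}L)$ remains adjoint, with counit $\varepsilon\colon \iota{\mathbb R}L\to {\rm id}$. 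Moreover every injective object of $\mu_{{\rm u},X}$ is $L$-acyclic, and the long exact sequence of the functors $R^{i}L$ (together with left exactness of $L$) shows that $\mu_{{L\text{-ac}},X}$ is stable under passing to the third term of a short exact sequence whose other two terms lie in $\mu_{{L\text{-ac}},X}$. Thus $\mu_{{L\text{-ac}},X}$ is a class of $L$-acyclic objects large enough to compute ${\mathbb R}L$: every bounded-below complex over $\mu_{{\rm u},X}$ admits a quasi-isomorphism to one with terms in $\mu_{{L\text{-ac}},X}$, and the usual argument with acyclic resolutions shows $D^{+}(\mu_{{L\text{-ac}},X})\to D^{+}(\mu_{{\rm u},X})$ is fully faithful (indeed an equivalence); see \cite{KS}. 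This handles the second functor.

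For the composite $D^{b}(\mu_{{\rm lfgu},X})\to D^{+}(\mu_{{\rm u},X})$, the key point is that by Lemma \ref{b5} the objects of $\mu_{{\rm lfgu},X}$ are ${\mathbb R}L$-acyclic, and $L$ restricts on $\mu_{{\rm lfgu},X}$ to the tautological inclusion $\mu_{{\rm lfgu},X}\hookrightarrow {\rm Ind}\text{-}\mu_{{\rm lfgu},X}$; hence ${\mathbb R}L$ precomposed with $D^{b}(\mu_{{\rm lfgu},X})\to D^{+}(\mu_{{\rm u},X})$ is canonically identified with the natural functor $\nu\colon D^{b}(\mu_{{\rm lfgu},X})\to D^{+}({\rm Ind}\text{-}\mu_{{\rm lfgu},X})$, and under this identification $\varepsilon_{\mathcal M}$ is an isomorphism whenever $\mathcal M$ lies in the essential image of $D^{b}(\mu_{{\rm lfgu},X})$, being the identity termwise. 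Now $\nu$ is fully faithful, being the composite of the equivalence $D^{b}(\mu_{{\rm lfgu},X})\xrightarrow{\cong}D^{b}_{\rm lfgu}({\rm Ind}\text{-}\mu_{{\rm lfgu},X})$ of Lemma \ref{b3} with the full embedding $D^{b}_{\rm lfgu}({\rm Ind}\text{-}\mu_{{\rm lfgu},X})\hookrightarrow D^{+}({\rm Ind}\text{-}\mu_{{\rm lfgu},X})$. Faithfulness of $D^{b}(\mu_{{\rm lfgu},X})\to D^{+}(\mu_{{\rm u},X})$ follows since composing it with ${\mathbb R}L$ gives $\nu$; for fullness, a morphism $\phi\colon {\mathcal M}\to {\mathcal N}$ between objects of the essential image determines the unique $f$ with $\nu(f)={\mathbb R}L(\phi)$, and the naturality square for $\varepsilon$, together with the fact that $\varepsilon_{\mathcal M}$ and $\varepsilon_{\mathcal N}$ are isomorphisms, forces $\phi$ to be the image of $f$. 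Combining with the first paragraph gives full faithfulness of both functors, and the final assertion is immediate.

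I expect the main obstacle to be the fullness step: making the ``retract through the counit'' argument precise, and in particular checking that ${\mathbb R}L$ restricted to the essential image of $D^{b}(\mu_{{\rm lfgu},X})$ is genuinely computed by $\nu$ with $\varepsilon$ an isomorphism there, which is exactly where Lemma \ref{b5} enters. A secondary, bookkeeping difficulty is to give a clean meaning to $D^{+}(\mu_{{L\text{-ac}},X})$ and its comparison with $D^{+}(\mu_{{\rm u},X})$, since $\mu_{{L\text{-ac}},X}$ is only an exact, not an abelian, subcategory of $\mu_{{\rm u},X}$.
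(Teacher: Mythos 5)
Your argument is correct, but it organizes the proof differently from the paper. For the second functor you and the paper do the same thing: injective objects of $\mu_{{\rm u},X}$ are $L$-acyclic, so injective resolutions (Lemma \ref{b4}) verify the localization criterion of \cite[Proposition 1.6.5]{KS} and give full faithfulness of $D^{+}(\mu_{L\text{-ac},X})\to D^{+}(\mu_{{\rm u},X})$. For the first functor the paper again applies the same criterion directly: given a quasi-isomorphism ${\mathcal N}\to{\mathcal M}$ in $K^{+}(\mu_{L\text{-ac},X})$ with ${\mathcal M}\in K^{b}(\mu_{{\rm lfgu},X})$, it shows that $L({\mathcal N})\to{\mathcal N}$ is a quasi-isomorphism (since the terms of ${\mathcal N}$ and its cohomology sheaves are $L$-acyclic, the latter by Lemma \ref{b5}) and then extracts a bounded subcomplex of $L({\mathcal N})$ lying in $K^{b}(\mu_{{\rm lfgu},X})$, exactly as in Lemma \ref{b3}. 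You instead prove full faithfulness of the composite $D^{b}(\mu_{{\rm lfgu},X})\to D^{+}(\mu_{{\rm u},X})$ by a retraction through ${\mathbb R}L$ — identifying ${\mathbb R}L\circ\mathrm{incl}$ with the fully faithful $\nu$ of Lemma \ref{b3} via the acyclicity of Lemma \ref{b5}, and using the counit — and then deduce the first functor by the two-out-of-three property through the already-established second functor. Both routes rest on the same two inputs (Lemmas \ref{b3} and \ref{b5}); yours trades the explicit manipulation of quasi-isomorphisms for adjunction formalism, at the cost of having to check that the inclusion ${\rm Ind}\text{-}\mu_{{\rm lfgu},X}\hookrightarrow\mu_{{\rm u},X}$ is exact and that $L$ is right adjoint to it on all of ${\rm Ind}\text{-}\mu_{{\rm lfgu},X}$ (the paper only records the adjunction against $\mu_{{\rm lfgu},X}$, but the extension is immediate since quotients of objects of ${\rm Ind}\text{-}\mu_{{\rm lfgu},X}$ remain in it), so that the derived adjunction and its counit are available. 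Your closing caveats are the right ones, and neither affects the validity of the argument.
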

\begin{proof}
The strategy of the proof is the same as that of \cite[Lemma 17.2.3]{EK} but we slightly modify their proof.
Let us suppose that we are given a quasi-isomorphism $\mathcal N \to M$ in $K^+(\mu_{{L\text{-ac}}, {X}})$ with ${\mathcal M}\in K^b(\mu_{{\rm lfgu}, X})$.
Then the adjunction morphism $L({\mathcal N}) \to {\mathcal N}$ is a quasi-isomorphism since the terms of $\mathcal N$ and its cohomology sheaves are acyclic for $L$  by Lemma \ref{b5}.
Now since the terms of $L({\mathcal N})$ are in ${\rm Ind}\text{-}\mu_{{\rm lfgu}, X}$ and cohomology sheaves of it are in 
$\mu_{{\rm lfgu}, {X}}$, 
there exists a bounded subcomplex $L({\mathcal N})'$ of  $L({\mathcal N})$ such that $L({\mathcal N})'\to L({\mathcal N})$ is a quasi-isomorphism and $L({\mathcal N})'$ belongs to $K^b(\mu_{{\rm lfgu}, X})$.
Hence the first functor is fully faithful by  \cite[Proposition 1.6.5]{KS}.
Next suppose that we are given a quasi-isomorphism $\mathcal N\to M$ in $K^+(\mu_{L\text{-ac}, X})$ with ${\mathcal N} \in K^+(\mu_{{\rm u}, X})$.
Then, by Lemma \ref{b4}, one has an injective resolution $\mathcal I$ of ${\mathcal M}$ in $K^+(\mu_{{\rm u}, X})$.
So the full faithfulness of the second functor also follows from \cite[Proposition 1.6.5]{KS}.
\end{proof}

Let us consider the following commutative diagram of categories:
\[\xymatrix{
{D^b(\mu_{{\rm lfgu}, X})} \ar[d]\ar[r] & D^b(\mu_{{\rm lfgu}})\ar[d] \\
{D^b(\mu_{{\rm u}, X})} \ar[d]\ar[r] & D^b(\mu_{{\rm u}})\ar[d]\\
{D_{\rm lfgu}^{b}(X/k)} \ar[r] & {D_{\rm lfgu}^{b}({\mathcal D}_{F, P})} .}
\]
In order to prove the full faithfulness of the functor $D^b(\mu_{{\rm lfgu}, X})\to D_{\rm lfgu}^{b}(X/k)$,
by Lemma \ref{b6} and \cite[Corollary 17.2.4]{EK}, it suffices to prove the following lemma. 

\begin{lemm}\label{b7}
The natural functor $D^b(\mu_{{\rm u},X}) \to D^b(\mu_{\rm u})$ is fully faithful.
\end{lemm}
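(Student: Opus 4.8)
The plan is to reduce the assertion to a comparison of $\operatorname{Ext}$-groups between the two abelian categories and then to analyse these $\operatorname{Ext}$'s by means of the functor $\Gamma_{X}$ and Lemma \ref{l5}. First I would record that $\mu_{{\rm u},X}$ is a thick abelian subcategory of $\mu_{{\rm u}}$: it is full by definition, and it is stable under subobjects, quotients and extensions, since the condition ``supported on $X$'' visibly is and the condition ``unit'' is preserved by kernels, cokernels and extensions because the absolute Frobenius of the smooth $k$-scheme $P$ is flat, so $F^{*}$ is exact. In particular, for a bounded complex of objects of $\mu_{{\rm u},X}$ the truncations $\tau^{\leq n}$ and $\tau^{\geq n}$ are again bounded complexes of objects of $\mu_{{\rm u},X}$. (Note that, in contrast with Lemmas \ref{b3} and \ref{b6}, one cannot here resolve a bounded complex of unit modules with cohomology supported on $X$ by a \emph{subcomplex} with terms supported on $X$ — there may be no such nonzero subcomplex — so the argument has to go through $\operatorname{Ext}$'s instead.)

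Next I would carry out the standard dévissage reducing full faithfulness to the statement that the natural map
\begin{equation*}
\operatorname{Ext}^{i}_{\mu_{{\rm u},X}}({\mathcal A},{\mathcal B})\to \operatorname{Ext}^{i}_{\mu_{{\rm u}}}({\mathcal A},{\mathcal B})
\end{equation*}
is an isomorphism for all objects ${\mathcal A},{\mathcal B}$ of $\mu_{{\rm u},X}$ and all $i\geq 0$: given bounded complexes ${\mathcal M}$ and ${\mathcal N}$ in $D^{b}(\mu_{{\rm u},X})$, the truncation triangles for ${\mathcal M}$ and for ${\mathcal N}$ (which lie in $D^{b}(\mu_{{\rm u},X})$ by the previous paragraph, and whose images under the exact functor $\iota\colon \mu_{{\rm u},X}\hookrightarrow\mu_{{\rm u}}$ are the corresponding truncation triangles in $D^{b}(\mu_{{\rm u}})$) yield morphisms of long exact $\operatorname{Hom}$-sequences to which one applies the five lemma repeatedly, thereby reducing $\operatorname{Hom}_{D^{b}(\mu_{{\rm u},X})}({\mathcal M},{\mathcal N})\to\operatorname{Hom}_{D^{b}(\mu_{{\rm u}})}({\mathcal M},{\mathcal N})$ to the case ${\mathcal M}={\mathcal A}[i]$, ${\mathcal N}={\mathcal B}[j]$, i.e. to the displayed comparison (for $j=i$ it is an isomorphism because $\mu_{{\rm u},X}\subset\mu_{{\rm u}}$ is full).

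To prove the $\operatorname{Ext}$ comparison I would use the facts, established in the proof of Lemma \ref{b4}, that $\Gamma_{X}$ is right adjoint to $\iota$ and sends injective objects of $\mu_{{\rm u}}$ to injective objects of $\mu_{{\rm u},X}$. Pick an injective resolution ${\mathcal B}\to{\mathcal I}^{\bullet}$ in $\mu_{{\rm u}}$ with $\Gamma_{X}$-acyclic (e.g. flasque) terms — such a resolution exists because the injective objects of $\mu_{{\rm u}}$ produced in \cite[\S15.1, \S17.1]{EK} are flasque as sheaves, being built from residual complexes. Then $\Gamma_{X}{\mathcal I}^{\bullet}$ is a complex of injectives of $\mu_{{\rm u},X}$, and it is a resolution of ${\mathcal B}$: its $i$-th cohomology is $R^{i}\Gamma_{X}{\mathcal B}$, which for $i>0$ vanishes because, the ${\mathcal I}^{\bullet}$ being $\Gamma_{X}$-acyclic as sheaves, $\Gamma_{X}{\mathcal I}^{\bullet}$ also computes the topological local cohomology $\mathbb{R}\Gamma_{X}{\mathcal B}$, which equals ${\mathcal B}$ by Lemma \ref{l5} since ${\mathcal B}$ is supported on $X$. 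Hence, by the adjunction $\iota\dashv\Gamma_{X}$, for ${\mathcal A}\in\mu_{{\rm u},X}$ one gets an isomorphism of complexes $\operatorname{Hom}_{\mu_{{\rm u},X}}({\mathcal A},\Gamma_{X}{\mathcal I}^{\bullet})\cong\operatorname{Hom}_{\mu_{{\rm u}}}({\mathcal A},{\mathcal I}^{\bullet})$, whose $i$-th cohomology is $\operatorname{Ext}^{i}_{\mu_{{\rm u},X}}({\mathcal A},{\mathcal B})$ on the left and $\operatorname{Ext}^{i}_{\mu_{{\rm u}}}({\mathcal A},{\mathcal B})$ on the right, compatibly with the natural map; this is exactly the required isomorphism.

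The delicate point — the one I expect to require the most care — is precisely the vanishing $R^{i}\Gamma_{X}{\mathcal B}=0$ for $i>0$, namely the fact that the derived functor of $\Gamma_{X}$ formed inside the abelian category $\mu_{{\rm u}}$ of unit modules agrees, on objects supported on $X$, with the topological local cohomology functor of Lemma \ref{l5}. This is genuinely nontrivial since $\Gamma_{X}$ is only left exact on $\mu_{{\rm u}}$ (for instance it is not right exact for $P={\mathbb A}^{1}_{k}$ and $X=\{0\}$), so one really must know that injective resolutions in $\mu_{{\rm u}}$ can be chosen with $\Gamma_{X}$-acyclic terms; once this input from \cite{EK} is in place, every remaining step is routine.
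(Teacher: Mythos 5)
Your argument is correct, and its crucial ingredient --- the vanishing of $R^{i}\Gamma_{X}{\mathcal B}$ for $i>0$ when ${\mathcal B}\in\mu_{{\rm u},X}$, where the derived functor is formed in $\mu_{\rm u}$, deduced from the fact that injectives of $\mu_{\rm u}$ are acyclic for the topological local cohomology (via \cite[Corollary 15.1.6]{EK}) combined with Lemma \ref{l5} --- is exactly the content of the paper's Lemma \ref{b8}, and you prove it the same way. Where you diverge is in how full faithfulness is extracted from this acyclicity. The paper factors the inclusion through the intermediate category of $\Gamma_{X}$-acyclic objects and applies the criterion of \cite[Proposition 1.6.5]{KS} twice: every quasi-isomorphism ${\mathcal N}\to{\mathcal M}$ with ${\mathcal M}$ in $K^{b}(\mu_{{\rm u},X})$ and ${\mathcal N}$ a complex of acyclics is refined, via $\Gamma_{X}{\mathcal N}\to{\mathcal N}$ and a bounded subcomplex, to a quasi-isomorphism from $K^{b}(\mu_{{\rm u},X})$, and the passage from acyclics to all of $\mu_{\rm u}$ is handled with injective resolutions. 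You instead reduce by d\'evissage to the comparison of $\operatorname{Ext}$-groups and conclude with the adjunction $\iota\dashv\Gamma_{X}$ applied to an injective resolution; this requires the (routine but worth stating) check that the isomorphism produced by the counit $\Gamma_{X}{\mathcal I}^{\bullet}\to{\mathcal I}^{\bullet}$ of resolutions of ${\mathcal B}$ computes the canonical map on $\operatorname{Ext}$'s. The two derivations are of comparable weight: yours makes the adjunction and the $\delta$-functor structure explicit at the cost of the five-lemma bookkeeping, while the paper's cofinality argument avoids $\operatorname{Ext}$'s altogether and works verbatim for $D^{+}$.
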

In order to prove Lemma \ref{b7}, we define a functor
$${\mathbb R}'\Gamma_X: D^+(\mu_{\rm u}) \to D^+(\mu_{{\rm u},X})$$
to be the right derived functor of the left exact functor $\Gamma_X: \mu_{\rm u} \to \mu_{{\rm u},X}$.
Here we use the notation ${\mathbb R}'\Gamma_X$ instead of ${\mathbb R}{\Gamma}_{X}: D^b_{\rm qc}({\mathcal D}_{F, P})\to D^b_{\rm qc}({\mathcal D}_{F, P})$ to avoid confusion.
We have the following lemma.

\begin{lemm}\label{b8}
Objects in $\mu_{{\rm u},X}$ are acyclic for ${\mathbb R}'\Gamma_X$.
\end{lemm}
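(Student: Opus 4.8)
The plan is to compute the right derived functor ${\mathbb R}'\Gamma_X$ by means of an injective resolution in $\mu_{\rm u}$, to recognize the resulting complex as the local cohomology functor ${\mathbb R}\Gamma_X$ of Section 3, and then to conclude by Lemma \ref{l5}. So let ${\mathcal M}$ be an object of $\mu_{{\rm u},X}$. By Lemma \ref{b4} (equivalently \cite[Corollary 15.1.6]{EK}) we may fix an injective resolution ${\mathcal M}\to {\mathcal I}^{\bullet}$ in $\mu_{\rm u}$, and then by definition $R^{i}\Gamma_X{\mathcal M}={\rm H}^{i}(\Gamma_X{\mathcal I}^{\bullet})$; note also that $\Gamma_X{\mathcal M}={\mathcal M}$ since ${\mathcal M}$ is supported on $X$. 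It therefore suffices to show that ${\rm H}^{i}(\Gamma_X{\mathcal I}^{\bullet})=0$ for $i>0$.

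The essential point is that $\Gamma_X{\mathcal I}^{\bullet}$ also computes ${\mathbb R}{\Gamma}_{X}({\mathcal M})$ in the sense of Section 3, regarded in $D^{+}({\mathcal O}_{P})$ via the forgetful functor. For this one checks that the injective objects of $\mu_{\rm u}$ produced by \cite[Corollary 15.1.6]{EK} are acyclic for $\Gamma_X$ as sheaves: they are assembled from injective quasi-coherent ${\mathcal O}_{P}$-modules, which are flasque since $P$ is noetherian, hence $\Gamma_X$-acyclic, and this acyclicity is inherited by the limits and sums occurring in the construction; one also uses that the forgetful functor $D^{+}({\mathcal D}_{F,P})\to D^{+}({\mathcal O}_{P})$ commutes with ${\mathbb R}{\Gamma}_{X}$. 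Granting this, $\Gamma_X{\mathcal I}^{\bullet}$ represents ${\mathbb R}{\Gamma}_{X}({\mathcal M})$. Since ${\rm Supp}\,{\mathcal M}\subset X$, Lemma \ref{l5} gives ${\mathbb R}{\Gamma}_{X}({\mathcal M})\xrightarrow{\cong}{\mathcal M}$, so ${\rm H}^{i}(\Gamma_X{\mathcal I}^{\bullet})=0$ for $i\neq 0$; in particular $R^{i}\Gamma_X{\mathcal M}=0$ for $i>0$, which is the assertion.

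The main obstacle is precisely the acyclicity claim used above, namely that an injective resolution in $\mu_{\rm u}$ may be used to compute the sheaf-theoretic local cohomology ${\mathbb R}{\Gamma}_{X}$ — equivalently, that injective unit ${\mathcal D}_{F,P}$-modules are flasque, or at least $\Gamma_X$-acyclic, as sheaves of abelian groups. Everything else is formal. If one prefers not to appeal to the precise shape of the Emerton--Kisin injectives, the alternative is to build the canonical comparison morphism $\Gamma_X{\mathcal I}^{\bullet}\to {\mathbb R}{\Gamma}_{X}({\mathcal M})$ directly — using that $\Gamma_X{\mathcal I}^{\bullet}$ is supported on $X$, so that the map to ${\mathcal I}^{\bullet}\simeq{\mathcal M}$ factors through ${\mathbb R}{\Gamma}_{X}$ exactly as in the proof of Lemma \ref{l3} — and then to verify that it is an isomorphism by a local argument on $P$ paralleling the dévissage in Section 3; I expect this route to be the one requiring genuine care, whereas the passage from the acyclicity to the statement is immediate.
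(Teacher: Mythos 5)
Your proposal is correct and follows essentially the same route as the paper: take an injective resolution $\mathcal{M}\to\mathcal{I}^{\bullet}$ in $\mu_{\rm u}$, observe that the terms $\mathcal{I}^{n}$ are acyclic for the sheaf-level local cohomology so that $\Gamma_X\mathcal{I}^{\bullet}$ also computes ${\mathbb R}\Gamma_X(\mathcal{M})\cong\mathcal{M}$ (Lemma \ref{l5}), whence $R^{i}\Gamma_X\mathcal{M}=0$ for $i>0$. The only cosmetic difference is in the acyclicity step you flag as the main obstacle: the paper settles it at once by citing \cite[Corollary 15.1.6]{EK}, which says that an injective object of $\mu_{\rm u}$ is injective in the category of ${\mathcal O}_P$-quasi-coherent ${\mathcal D}_{F,P}$-modules, hence ${\rm H}^{i}({\mathbb R}\Gamma_X\mathcal{I}^{n})=0$ for $i>0$, rather than unwinding the construction of the injectives and arguing via flasqueness.
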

\begin{proof} 
For an object $\mathcal M$ in $\mu_{{\rm u},X}$, take an injective resolution $\mathcal M \to I$ in $\mu_{{\rm u}}$.
Since $\mathcal M$ is supported on $X$, we have ${\rm H}^i({\mathbb R}\Gamma_X{\mathcal M}) = 0$ for $i > 0$.
On the other hand, each ${\mathcal I}^n$ is injective in $\mu_{{\rm u}}$ by definition and it is known by
 \cite[Corollary 15.1.6]{EK} that such an object is always injective in the category of ${\mathcal O}_{P}$-quasi-coherent ${\mathcal D}_{F, P}$-modules.
So we have ${\rm H}^i({\mathbb R}\Gamma_X{\mathcal I}^n) = 0$ for  $i > 0$.
By considering the long exact sequence for ${\mathbb R}\Gamma_X$, we deduce that
$0 \to {\mathcal M} = \Gamma_X {\mathcal M} \to \Gamma_X {\mathcal I}$ is exact. 
Hence $\mathcal M$ is acyclic for ${\mathbb R}'\Gamma_X$.
\end{proof}
Let us prove Lemma \ref{b7}.
\begin{proof}
Let us denote by $\mu_{\Gamma_X\text{-ac}}$ the full subcategory of $\mu_{{\rm u}, X}$ consisting of
${\mathbb R}'\Gamma_X$-acyclic objects. 
It is enough to prove that the natural functors $D^b(\mu_{{\rm u},X}) \to D^+(\mu_{\Gamma_X\text{-ac}})$ and
$D^+(\mu_{\Gamma_X\text{-ac}}) \to D^+(\mu_{\rm u})$ are fully faithful.
Let us suppose that we are given a quasi-isomorphism $\mathcal N \to M$ in $K^+(\mu_{\Gamma_X\text{-ac}})$ with ${\mathcal M}\in K^b(\mu_{{\rm u},X})$.
Then the natural morphism $\Gamma_X {\mathcal N} \to {\mathcal N}$ is a quasi-isomorphism since the terms of $\mathcal N$ and its cohomology sheaves are acyclic for ${\mathbb R}'\Gamma_X$  by Lemma \ref{b8}.
Moreover, since $\Gamma_X {\mathcal N}$ is cohomologically bounded, there exists a bounded subcomplex ${\Gamma_X} {\mathcal N}'$ of $\Gamma_X {\mathcal N}$ such that $\Gamma_X {\mathcal N}'\to \Gamma_X {\mathcal N}$ is a quasi-isomorphism and $\Gamma_X {\mathcal N}'$ belongs to $K^b(\mu_{{\rm u},X})$.
Hence, by \cite[Proposition 1.6.5]{KS}, we know that the first functor is fully faithful.
For the second assertion, let us take a quasi-isomorphism $\mathcal N \to M$ in $K^+(\mu_{\Gamma_X\text{-ac}})$ with 
 ${\mathcal N} \in K^+(\mu_{{\rm u}})$.
Then $\mathcal M$ is quasi-isomorphic to its injective resolution $\mathcal I$ in $K^+(\mu_{\rm u})$.
Hence the second functor is fully faithful by \cite[Proposition 1.6.5]{KS}.

\end{proof}
By Theorem \ref{b2} and Lemma \ref{b7}, we finish the proof of Theorem \ref{b1}.

\subsection{The constructible $t$-structure on $D^{b}_{\rm lfgu}(X/k)$}\label{s53}
Let $P$ be a smooth $k$-scheme.
Let ${\mathcal A}$ be a sheaf of ${\mathcal O}_{P}$-algebra which is quasi-coherent as a left ${\mathcal O}_{P}$-module and left noetherian.
Let us first recall a $t$-structure on $D_{\rm qc}^{b}({\mathcal A})$ introduced by Kashiwara in \cite{Kas2}.
For more detail, we refer the reader to \cite[\S3]{Kas2}.
We define a support datum ${\mathfrak S}=\{{\mathfrak S}^{n}\}$ by
\begin{equation*}
{\mathfrak S}^{n}:=\left\{ Z\,|\, Z \textit{ is a closed subset of }P \text{ of codimension}\geq n  \right\}.
\end{equation*}
Then ${\mathfrak S}^{n}$ has the structure of an ordered set by the natural inclusion.
For a sheaf $\mathcal F$ of $\mathcal A$-modules, we define ${\Gamma}_{{\mathfrak S}^{n}}({\mathcal F}):=\displaystyle\varinjlim_{Z\in {\mathfrak S}^{n}}{\Gamma}_{Z}({\mathcal F})$.
Then ${\Gamma}_{{\mathfrak S}^{n}}$ defines a left exact functor from the category of $\mathcal A$-modules to itself
and we obtain the right derived functor ${\mathbb R}{{\Gamma}_{{\mathfrak S}^{n}}}: D^{b}_{\rm qc}({\mathcal A})\to D^{b}_{\rm qc}({\mathcal A})$. 

We define a full subcategory ${}^{\mathfrak S}D_{\rm qc}^{\leq k}({\mathcal A})$ (resp. ${}^{\mathfrak S}D_{\rm qc}^{\geq k}({\mathcal A})$) of $D_{\rm qc}^{b}({\mathcal A}) $ by the condition: 
 ${\mathcal M}$ is in ${}^{\mathfrak S}D_{\rm qc}^{\leq k}({\mathcal A})$ if and only if 
 ${\mathbb R}{{\Gamma}_{{\mathfrak S}^{n-k}}}{\rm H}^{n}\left({\mathcal M}\right)\xrightarrow{\cong} {\rm H}^{n}\left({\mathcal M}\right)$ for any $n$ (resp. ${\mathcal M}$ is in ${}^{\mathfrak S}D_{\rm qc}^{\geq k}({\mathcal A})$ if and only if 
 ${\mathbb R}{\Gamma}_{Z}{\mathcal M}\in D_{\rm qc}^{\geq n+k}({\mathcal A})$ for any $n$ and  $Z\in {\mathfrak S}^{n}$).

Kashiwara proved that $\left({}^{\mathfrak S}D_{\rm qc}^{\leq 0}({\mathcal A}), {}^{\mathfrak S}D_{\rm qc}^{\geq 0}({\mathcal A}) \right)$ forms a $t$-structure on $D_{\rm qc}^{b}({\mathcal A})$.
We call this $t$-structure the constructible $t$-structure.
In particular, we have a $t$-structure $\left({}^{\mathfrak S}D_{\rm qc}^{\leq 0}({\mathcal O}_{F, P}), {}^{\mathfrak S}D_{\rm qc}^{\geq 0}({\mathcal O}_{F, P}) \right)$ on $D_{\rm qc}^{b}({\mathcal O}_{F, P})$.
Moreover, we have the following theorem.

\begin{theo}\label{t2}
Let $P$ be a smooth separated $k$-scheme.
We set
\begin{eqnarray*}
{}^{\mathfrak S}D_{\rm lfgu}^{\leq 0}({\mathcal O}_{F, P})&:=& {}^{\mathfrak S}D_{\rm qc}^{\leq 0}({\mathcal O}_{F, P})\cap D_{\rm lfgu}^{b}({\mathcal O}_{F, P})\textit{ and}\\
{}^{\mathfrak S}D_{\rm lfgu}^{\geq 0}({\mathcal O}_{F, P})&:=& {}^{\mathfrak S}D_{\rm qc}^{\geq 0}({\mathcal O}_{F, P})\cap D_{\rm lfgu}^{b}({\mathcal O}_{F, P}).
\end{eqnarray*}
Then $\left({}^{\mathfrak S}D_{\rm lfgu}^{\leq 0}({\mathcal O}_{F, P}), {}^{\mathfrak S}D_{\rm lfgu}^{\geq 0}({\mathcal O}_{F, P})\right)$ defines a $t$-structure on $D_{\rm lfgu}^{b}({\mathcal O}_{F, P})$. 
\end{theo}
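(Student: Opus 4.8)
The plan is to show that Kashiwara's constructible $t$-structure $\left({}^{\mathfrak S}D_{\rm qc}^{\leq 0}({\mathcal O}_{F, P}), {}^{\mathfrak S}D_{\rm qc}^{\geq 0}({\mathcal O}_{F, P})\right)$ on $D_{\rm qc}^{b}({\mathcal O}_{F, P})$ restricts to the full triangulated subcategory $D_{\rm lfgu}^{b}({\mathcal O}_{F, P})$. By the standard criterion for restricting a $t$-structure to a full triangulated subcategory (a full triangulated $\mathcal{T}'\subset\mathcal{T}$ inherits a $t$-structure as soon as $\mathcal{T}'$ is stable under one of the two truncation functors, the other being obtained as a cone), and since $D_{\rm lfgu}^{b}({\mathcal O}_{F, P})$ is closed under cones and shifts, it suffices to prove that it is stable under ${}^{\mathfrak S}\tau^{\leq 0}$; the stability under ${}^{\mathfrak S}\tau^{\geq 1}$ then follows from the triangle ${}^{\mathfrak S}\tau^{\leq 0}{\mathcal M}\to {\mathcal M}\to {}^{\mathfrak S}\tau^{\geq 1}{\mathcal M}\xrightarrow{+}$. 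Shifting, it is enough to show that for every ${\mathcal M}\in D_{\rm lfgu}^{b}({\mathcal O}_{F, P})$ and every integer $k$, the cohomology sheaves of ${}^{\mathfrak S}\tau^{\leq k}{\mathcal M}$ are locally finitely generated unit.

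The next step is to reduce this to a statement about a single sheaf. Using Kashiwara's analysis of the constructible truncation functors in \cite{Kas2} (which, via a Postnikov-type system, assembles the cohomology of ${}^{\mathfrak S}\tau^{\leq k}{\mathcal M}$ out of the complexes ${\mathbb R}{\Gamma}_{{\mathfrak S}^{n}}{\rm H}^{n}({\mathcal M})$ for varying $n$), together if necessary with induction on the cohomological amplitude of ${\mathcal M}$ via the ordinary truncation triangles $\tau^{\leq m}{\mathcal M}\to {\mathcal M}\to \tau^{\geq m+1}{\mathcal M}\xrightarrow{+}$ (whose terms lie again in $D_{\rm lfgu}^{b}({\mathcal O}_{F, P})$ because $\mu_{\rm lfgu}$ is a thick subcategory of the category of ${\mathcal O}_{P}$-quasi-coherent ${\mathcal O}_{F, P}$-modules), the problem is reduced to the following key point: for a single locally finitely generated unit ${\mathcal O}_{F, P}$-module ${\mathcal H}$ and any $n$, the complex ${\mathbb R}{\Gamma}_{{\mathfrak S}^{n}}{\mathcal H}=\varinjlim_{Z\in {\mathfrak S}^{n}}{\mathbb R}{\Gamma}_{Z}{\mathcal H}$ lies in $D_{\rm lfgu}^{b}({\mathcal O}_{F, P})$.

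For this key point the strategy is to show that the \emph{a priori} infinite directed colimit defining ${\mathbb R}{\Gamma}_{{\mathfrak S}^{n}}{\mathcal H}$ stabilizes at a single closed subset, after which one invokes the local cohomology result already established in Section 3. Since $P$ is smooth over the perfect field $k$ it is regular, so by Kunz's theorem the absolute Frobenius $F\colon P\to P$ is finite and flat; hence for any coherent ${\mathcal O}_{P}$-module $M$ the associated points of $F^{*}M$ coincide with those of $M$ under the homeomorphism $F$. Writing ${\mathcal H}$ Zariski-locally as $\varinjlim_{r}(F^{*})^{r}M$ for a coherent ${\mathcal O}_{P}$-submodule $M$ generating ${\mathcal H}$ over ${\mathcal O}_{F, P}$, it follows that ${\mathcal H}$ has only finitely many associated points. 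Consequently ${\mathbb R}{\Gamma}_{Z}{\mathcal H}$ depends on $Z$ only through these finitely many points, and the colimit is attained at the fixed closed subset $Z_{0}\in{\mathfrak S}^{n}$ equal to the union of the closures of the associated points of ${\mathcal H}$ of codimension $\geq n$; thus ${\mathbb R}{\Gamma}_{{\mathfrak S}^{n}}{\mathcal H}\cong{\mathbb R}{\Gamma}_{Z_{0}}{\mathcal H}$. It then remains to note that ${\mathbb R}{\Gamma}_{Z_{0}}$ preserves locally finitely generated unit complexes: this is the ${\mathcal O}_{F, P}$-analogue (the case $n=1$) of the Proposition of Section 3, and it transports through the equivalence $D_{\rm lfgu}^{b}({\mathcal D}_{F, P})\cong D_{\rm lfgu}^{b}({\mathcal O}_{F, P})$ of Subsection \ref{s2}, since ${\mathbb R}{\Gamma}_{Z_{0}}$ is computed on underlying ${\mathcal O}_{P}$-modules and is therefore compatible with that equivalence.

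The hard part will be the middle step: one must make precise, following \cite{Kas2}, how the cohomology of ${}^{\mathfrak S}\tau^{\leq k}{\mathcal M}$ is built from the pieces ${\mathbb R}{\Gamma}_{{\mathfrak S}^{n}}{\rm H}^{n}({\mathcal M})$, and one must carefully justify the stabilization of $\varinjlim_{Z\in{\mathfrak S}^{n}}{\mathbb R}{\Gamma}_{Z}{\mathcal H}$ — i.e. that local cohomology of a locally finitely generated unit module commutes with the relevant colimit and is governed by its finite set of associated points. Once these two ingredients are in hand, all of the $t$-structure axioms for $\left({}^{\mathfrak S}D_{\rm lfgu}^{\leq 0}({\mathcal O}_{F, P}), {}^{\mathfrak S}D_{\rm lfgu}^{\geq 0}({\mathcal O}_{F, P})\right)$ are inherited from those on $D_{\rm qc}^{b}({\mathcal O}_{F, P})$, completing the proof.
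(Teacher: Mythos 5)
There is a genuine gap, and it is located exactly where you flagged ``the hard part'': the stabilization claim for $\varinjlim_{Z\in{\mathfrak S}^{n}}{\mathbb R}{\Gamma}_{Z}{\mathcal H}$ is false. Associated points control only the underived functor ${\Gamma}_{Z}={\rm H}^{0}{\mathbb R}{\Gamma}_{Z}$; the higher local cohomology of ${\mathcal H}$ along $Z$ is emphatically not determined by the associated points of ${\mathcal H}$ lying in $Z$. Take $P={\mathbb A}^{1}_{k}$, ${\mathcal H}={\mathcal O}_{P}$ (a unit $F$-crystal) and $n=1$. The only associated point is the generic point, of codimension $0$, so your $Z_{0}$ is empty and your argument would give ${\mathbb R}{\Gamma}_{{\mathfrak S}^{1}}{\mathcal O}_{P}=0$. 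But ${\rm H}^{1}\left({\mathbb R}{\Gamma}_{{\mathfrak S}^{1}}{\mathcal O}_{P}\right)=\varinjlim_{Z}{\rm H}^{1}_{Z}({\mathcal O}_{P})=\bigoplus_{x}{\rm H}^{1}_{x}({\mathcal O}_{P})$, the sum running over all closed points, and each summand is nonzero (e.g. $k[t,t^{-1}]/k[t]$ at the origin). This colimit does not stabilize, and the resulting module is not locally finitely generated over ${\mathcal O}_{F,P}$: any finite set of its sections is supported at finitely many closed points, and applying $F$ does not enlarge supports, so no finite set generates it. Hence the ``key point'' that ${\mathbb R}{\Gamma}_{{\mathfrak S}^{n}}{\mathcal H}$ lies in $D^{b}_{\rm lfgu}({\mathcal O}_{F,P})$ fails already for the simplest unit module. (The finiteness of the set of associated points of ${\mathcal H}$ is fine, and Kunz's theorem is correctly invoked, but it buys you nothing here.)

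The reduction preceding it is also not available: the constructible truncation is not assembled from the complexes ${\mathbb R}{\Gamma}_{{\mathfrak S}^{n}}{\rm H}^{n}({\mathcal M})$. For instance ${}^{\mathfrak S}\tau^{\leq 0}\left({\mathcal O}_{P}[-1]\right)=0$ (since ${\mathbb R}{\Gamma}_{Z}{\mathcal O}_{P}[-1]\in D^{\geq n+1}$ for $Z$ of codimension $\geq n$, by Grothendieck vanishing), whereas ${\mathbb R}{\Gamma}_{{\mathfrak S}^{1}}({\mathcal O}_{P})[-1]\neq 0$; no Postnikov system built from the derived functors ${\mathbb R}{\Gamma}_{{\mathfrak S}^{n}}$ computes the truncation. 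What the paper does instead is construct the decomposition triangle ${\mathcal M}'\to{\mathcal M}\to{\mathcal M}''\xrightarrow{+}$ directly inside $D^{b}_{\rm lfgu}({\mathcal O}_{F,P})$ by Noetherian induction on the codimension of ${\rm Supp}({\mathcal M})$: the ordinary truncation $\tau^{<d}{\mathcal M}$ already lies in ${}^{\mathfrak S}D^{<0}_{\rm lfgu}$ because its support has codimension $\geq d$; on a smooth dense open $S_{0}$ of the support one uses Kashiwara's equivalence to replace ${\mathcal M}$ by a complex of unit $F$-crystals ${\mathcal N}$, for which ${\mathbb R}{\Gamma}_{Z}({\mathcal N})\cong{\mathbb R}{\Gamma}_{Z}({\mathcal O})\otimes{\mathcal N}$ and Grothendieck vanishing give membership in ${}^{\mathfrak S}D^{\geq 0}_{\rm lfgu}$; the left $t$-exactness of ${\mathbb R}j_{*}$ and the triangle ${\mathbb R}{\Gamma}_{H}{\mathcal M}\to{\mathcal M}\to{\mathbb R}j_{*}j^{-1}{\mathcal M}\xrightarrow{+}$ then push the problem to the higher-codimensional piece ${\mathbb R}{\Gamma}_{H}{\mathcal M}$, and one concludes with Kashiwara's gluing lemma. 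If you want to salvage your plan, you would need to replace the ${\mathbb R}{\Gamma}_{{\mathfrak S}^{n}}$-based reduction by a dévissage of this kind; the abstract ``stable under one truncation functor'' criterion in your first paragraph is correct but does not by itself produce the required stability.
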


\begin{proof}
It suffices to show that for any ${\mathcal M}\in D_{\rm lfgu}^{b}({\mathcal O}_{F, P})$, 
there exists a distinguished triangle
\begin{equation*}
\mathcal M'\to M\to M''\xrightarrow{+}
\end{equation*}
such that ${\mathcal M'}\in {}^{\mathfrak S}D_{\rm lfgu}^{< 0}({\mathcal O}_{F, P})$ and ${\mathcal M''}\in {}^{\mathfrak S}D_{\rm lfgu}^{\geq 0}({\mathcal O}_{F, P})$.
We show it by  induction on the codimension $d$ of $S:={\rm Supp}({\mathcal M})$. 
Let us consider a distinguish triangle
\begin{equation}\label{e14}
\tau^{<d}{\mathcal M}\to {\mathcal M}\to \tau^{\geq d}{\mathcal M}\xrightarrow{+},
\end{equation}
where $\tau$ denotes the truncation functor with respect to the standard $t$-structure.
Evidently, one has ${\mathbb R}{\Gamma}_{{\mathfrak S}^{k+1}}\left({\rm H}^{k}\left(\tau^{<d}{\mathcal M}\right)\right)\xrightarrow{\cong} {\rm H}^{k}\left(\tau^{<d}{\mathcal M}\right)$ for any $k\geq d$.
For any $k<d$, one has $S\in {\mathfrak S}^{k+1}$ and 
$${\mathbb R}{\Gamma}_{{\mathfrak S}^{k+1}}\left({\rm H}^{k}\left(\tau^{<d}{\mathcal M}\right)\right)\xrightarrow{\cong} {\rm H}^{k}\left(\tau^{<d}{\mathcal M}\right).$$
Hence we have $\tau^{<d}{\mathcal M}\in  {}^{\mathfrak S}D_{\rm lfgu}^{< 0}({\mathcal O}_{F, P})$.
By using \cite[Lemma 2.1]{Kas2} with (\ref{e14}), we are reduced to the case where ${\mathcal M}$ is an object in $D_{\rm lfgu}^{\geq d}({\mathcal O}_{F, P})$.
Let $S_{0}$ be a $d$-codimensional smooth open subscheme of $S$ such that $H:=S\setminus S_{0}$ is of codimension $>d$.
We set $U:=X\setminus H$. 
Then we have a closed immersion $i: S_{0}\hookrightarrow U$ and the open immersion $j: U\hookrightarrow P$.
Since ${\mathcal M}_{|U}$ is supported on $S_{0}$, by \cite[Corollary 5.11.3]{EK}, there exists an object ${\mathcal N}\in D_{\rm lfgu}^{b}({\mathcal O}_{F, S_{0}})$ such that $i_{+}{\mathcal N}\cong {\mathcal M}_{|U}$.
Note that, by \cite[Corollary 3.3.6]{EK}, $i_{+}$ is $t$-exact with respect to the standard $t$-structure.
So $\mathcal N$ belongs to $D_{\rm lfgu}^{\geq d}({\mathcal O}_{F, S_{0}})$.
Applying \cite[Proposition 6.9.6]{EK}, by shrinking $S_{0}$ if necessary, we may assume that all cohomology sheaves of $\mathcal N$ are unit $F$-crystals.
In particular, these are locally free of finite rank.
Then we claim that $i_{+}{\mathcal N}$ belongs to ${}^{\mathfrak S}D_{\rm lfgu}^{\geq 0}({\mathcal O}_{F, U})$.
In order to see this claim, by the induction on the cohomological length of $\mathcal N$, we may assume that $\mathcal N$ is a single unit $F$-crystal supported on degree $\geq d$. 
Then for any $n$-codimensional closed subset $Z$ of $U$, we have ${\mathbb R}{\Gamma}_{Z\cap S_{0}}({\mathcal N})\cong {\mathbb R}{\Gamma}_{Z\cap S_{0}}({\mathcal O}_{S_{0}})\otimes {\mathcal N}\in D_{\rm lfgu}^{\geq n}({\mathcal O}_{F, S_{0}})$.
Then since $i_{+}$ is left $t$-exact with respect to the standard $t$-structure, we have ${\mathbb R}{\Gamma}_{Z}i_{+}{\mathcal N}\cong i_{+}{\mathbb R}{\Gamma}_{Z\cap S_{0}}{\mathcal N}\in D_{\rm lfgu}^{\geq n}({\mathcal O}_{F, U})$ as desired.
Because ${\mathbb R}j_{*}$ is left $t$-exact with respect to the constructible $t$-structure by \cite[Lemma 3.7]{Kas2},
 one has ${\mathbb R}j_{*}i_{+}{\mathcal N}{\cong}{\mathbb R}j_{*}j^{-1}{\mathcal M}\in {}^{\mathfrak S}D_{\rm lfgu}^{\geq 0}({\mathcal O}_{F, P})$.
Let us consider a distinguished triangle
\begin{equation*}
{\mathbb R}{\Gamma}_{H}{\mathcal M}\to {\mathcal M}\to {\mathbb R}j_{*}j^{-1}{\mathcal M}\xrightarrow{+}.
\end{equation*}
Since the codimension of ${\rm Supp}\left({\mathbb R}{\Gamma}_{H}{\mathcal M}\right)$ is greater than $d$, then the induction proceeds by \cite[Lemma 2.1]{Kas2}.
\end{proof}
\begin{coro}
For a smooth separated $k$-scheme $P$ with closed subsets $Z$ and $T$ of $P$, 
we set
\begin{eqnarray*}
{}^{\mathfrak S}{\mathcal C}^{\leq 0}_{P, Z, T}&:=& {}^{\mathfrak S}D_{\rm qc}^{\leq 0}({\mathcal O}_{F, P})\cap {\mathcal C}_{P, Z, T}\textit{ and}\\
{}^{\mathfrak S}{\mathcal C}^{\geq 0}_{P, Z, T}&:=& {}^{\mathfrak S}D_{\rm qc}^{\geq 0}({\mathcal O}_{F, P})\cap {\mathcal C}_{P, Z, T}.
\end{eqnarray*}
Then $\left({}^{\mathfrak S}{\mathcal C}^{\leq 0}_{P, Z, T}, {}^{\mathfrak S}{\mathcal C}^{\geq 0}_{P, Z, T}\right)$ defines a $t$-structure on ${\mathcal C}_{P, Z, T}$, which we call the constructible $t$-structure on ${\mathcal C}_{P, Z, T}$.

\end{coro}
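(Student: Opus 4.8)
The plan is to show that the constructible $t$-structure $\left({}^{\mathfrak S}D_{\rm lfgu}^{\leq 0}({\mathcal O}_{F, P}), {}^{\mathfrak S}D_{\rm lfgu}^{\geq 0}({\mathcal O}_{F, P})\right)$ on $D_{\rm lfgu}^{b}({\mathcal O}_{F, P})$ of Theorem \ref{t2} restricts to the full triangulated subcategory ${\mathcal C}_{P, Z, T}$. Since for a full triangulated subcategory the restriction of a $t$-structure exists as soon as the subcategory is stable under the truncation functors, and since the truncation triangle ${}^{\mathfrak S}\tau^{\leq 0}{\mathcal M}\to {\mathcal M}\to {}^{\mathfrak S}\tau^{\geq 1}{\mathcal M}\xrightarrow{+}$ exhibits ${}^{\mathfrak S}\tau^{\geq 1}{\mathcal M}$ as a cone, it suffices to prove that ${\mathcal C}_{P, Z, T}$ is stable under ${}^{\mathfrak S}\tau^{\leq 0}$. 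By Lemma \ref{l5}, an object ${\mathcal M}$ of $D_{\rm lfgu}^{b}({\mathcal O}_{F, P})$ lies in ${\mathcal C}_{P, Z, T}$ precisely when ${\rm Supp}\,{\mathcal M}\subseteq Z$ and ${\mathbb R}{\Gamma}_{T}{\mathcal M}=0$, and by Lemma \ref{l1} we may assume $T\subseteq Z$. So I would check separately that each of these two conditions is preserved by ${}^{\mathfrak S}\tau^{\leq 0}$.

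For the support condition, observe first that for an open immersion $a\colon V\hookrightarrow P$ of smooth $k$-schemes the inverse image $a^{-1}$ is $t$-exact for the constructible $t$-structure: it commutes with cohomology sheaves, and the closure in $P$ of a codimension $\geq n$ closed subset of $V$ again has codimension $\geq n$, so the defining conditions of ${}^{\mathfrak S}D_{\rm qc}^{\leq k}$ and of ${}^{\mathfrak S}D_{\rm qc}^{\geq k}$ are preserved in both directions (this is essentially \cite[Lemma 3.7]{Kas2}). Hence $a^{-1}$ commutes with ${}^{\mathfrak S}\tau^{\leq 0}$; applying this with $V=P\setminus Z$ gives ${\rm Supp}\!\left({}^{\mathfrak S}\tau^{\leq 0}{\mathcal M}\right)\subseteq Z$ whenever ${\rm Supp}\,{\mathcal M}\subseteq Z$. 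The same argument applied to $(U,P\setminus X)$ shows that the constructible $t$-structure restricts to the subcategory ${\mathcal C}_{U,X,\emptyset}$ of complexes supported on $X$, for any closed $X$ in a smooth $k$-scheme $U$.

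For the condition ${\mathbb R}{\Gamma}_{T}{\mathcal M}=0$ I would pass through Proposition \ref{p2}. Put $U:=P\setminus T$, $X:=Z\setminus T$ (closed in $U$), and let $j\colon U\hookrightarrow P$ be the open immersion; Proposition \ref{p2} gives an equivalence ${\mathbb R}j_{*}\colon {\mathcal C}_{U, X, \emptyset}\xrightarrow{\cong}{\mathcal C}_{P, Z, T}$ with quasi-inverse $j^{-1}$, and ${\mathcal M}\xrightarrow{\cong}{\mathbb R}j_{*}j^{-1}{\mathcal M}$ for ${\mathcal M}\in {\mathcal C}_{P, Z, T}$. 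By the previous paragraph the constructible $t$-structure restricts to ${\mathcal C}_{U, X, \emptyset}$, hence transports along ${\mathbb R}j_{*}$ to a genuine $t$-structure $\left(\mathcal T^{\leq 0}, \mathcal T^{\geq 0}\right)$ on ${\mathcal C}_{P, Z, T}$ with $\mathcal T^{\leq 0}=\{{\mathcal M}\mid j^{-1}{\mathcal M}\in {}^{\mathfrak S}D_{\rm lfgu}^{\leq 0}({\mathcal O}_{F, U})\}$ and similarly for $\mathcal T^{\geq 0}$. Using that $j^{-1}$ is $t$-exact and ${\mathbb R}j_{*}$ is left $t$-exact for the constructible $t$-structure (\cite[Lemma 3.7]{Kas2}), one checks directly that $\mathcal T^{\geq 0}={}^{\mathfrak S}D_{\rm lfgu}^{\geq 0}({\mathcal O}_{F, P})\cap {\mathcal C}_{P, Z, T}={}^{\mathfrak S}{\mathcal C}^{\geq 0}_{P, Z, T}$. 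Since the aisle of a $t$-structure is the left orthogonal of $\mathcal T^{\geq 1}:=\mathcal T^{\geq 0}[-1]$, and since ${}^{\mathfrak S}{\mathcal C}^{\geq 1}_{P, Z, T}\subseteq {}^{\mathfrak S}D_{\rm lfgu}^{\geq 1}({\mathcal O}_{F, P})$, it follows formally that ${}^{\mathfrak S}{\mathcal C}^{\leq 0}_{P, Z, T}\subseteq \mathcal T^{\leq 0}$. So everything reduces to the reverse inclusion $\mathcal T^{\leq 0}\subseteq {}^{\mathfrak S}{\mathcal C}^{\leq 0}_{P, Z, T}$, that is, to showing that ${\mathbb R}j_{*}$ sends ${}^{\mathfrak S}D_{\rm lfgu}^{\leq 0}({\mathcal O}_{F, U})\cap {\mathcal C}_{U, X, \emptyset}$ into ${}^{\mathfrak S}D_{\rm lfgu}^{\leq 0}({\mathcal O}_{F, P})$.

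This last point is the main obstacle: it is precisely the right $t$-exactness of ${\mathbb R}j_{*}$, for the constructible $t$-structure, on complexes supported on $X$. Spelled out in terms of codimensions of supports of cohomology sheaves, it is equivalent to the estimate: if $\mathcal N$ is a locally finitely generated unit ${\mathcal O}_{F, U}$-module whose support has codimension $\geq c$, then ${\rm Supp}\,{\mathbb R}^{p}j_{*}\mathcal N$ has codimension $\geq c+p$ for all $p$. I would prove this by induction on ${\rm dim}\,{\rm Supp}\,\mathcal N$, following the dévissage already used for Theorem \ref{t2}: pick a smooth dense open $S_{0}$ of a component of ${\rm Supp}\,\mathcal N$, set $H:={\rm Supp}\,\mathcal N\setminus S_{0}$ (of strictly larger codimension), and use the triangle ${\mathbb R}{\Gamma}_{H}\mathcal N\to \mathcal N\to {\mathbb R}a_{*}a^{-1}\mathcal N\xrightarrow{+}$ with $a\colon U\setminus H\hookrightarrow U$; over $U\setminus H$, Kashiwara's theorem (\cite[Corollary 5.11.3]{EK}) and \cite[Proposition 6.9.6]{EK} let us, after shrinking $S_{0}$, write $a^{-1}\mathcal N$ as the direct image of a unit $F$-crystal on $S_{0}$, whose higher direct images along the boundary are computed by Grothendieck's theorem on the local cohomology of a locally free sheaf (concentrated in the codimension degree), which gives the estimate there; the term ${\mathbb R}{\Gamma}_{H}\mathcal N$ has support of codimension $>c$ and is handled by the induction hypothesis, again via the left $t$-exactness of pushforward for the constructible $t$-structure. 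I expect this codimension estimate to be the only step requiring genuine work — the remainder of the argument being formal.
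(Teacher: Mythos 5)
Your overall strategy is sound and reaches the correct conclusion, but it is considerably longer than the paper's argument, and the one step you flag as ``the main obstacle'' is in fact a citation away. The paper's proof is essentially immediate: given ${\mathcal M}\in{\mathcal C}_{P,Z,T}$, take the truncation triangle ${\mathcal M}'\to{\mathcal M}\to{\mathcal M}''\xrightarrow{+}$ in $D^{b}_{\rm lfgu}({\mathcal O}_{F,P})$ supplied by Theorem \ref{t2} and apply the functor ${\mathbb R}j_{*}j^{-1}{\mathbb R}\Gamma_{Z}$ (with $j\colon P\setminus T\hookrightarrow P$), which is naturally isomorphic to the identity on ${\mathcal C}_{P,Z,T}$, lands in ${\mathcal C}_{P,Z,T}$, and preserves both halves of the constructible $t$-structure because ${\mathbb R}\Gamma_{Z}$, ${\mathbb R}j_{*}$ and $j^{-1}$ are all $t$-exact for it by \cite[Proposition 4.1, Proposition 4.2 and Lemma 3.7]{Kas2}. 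Your reduction to stability of ${\mathcal C}_{P,Z,T}$ under ${}^{\mathfrak S}\tau^{\leq 0}$, the treatment of the support condition via $t$-exactness of restriction to $P\setminus Z$, and the orthogonality argument identifying the transported $t$-structure with the intersection one are all correct; as a by-product your route also yields Lemma \ref{l16}. But the right $t$-exactness of ${\mathbb R}j_{*}$ for an open immersion with respect to the constructible $t$-structure is precisely \cite[Proposition 4.2]{Kas2} (valid on all of $D^{b}_{\rm qc}$, with no support hypothesis), so there is no need to reprove it. If you do want to reprove it, your d\'evissage has a soft spot at the final computation: after the Kashiwara-equivalence reduction the object whose higher direct images must be bounded is $i_{+}{\mathcal E}$, which as an ${\mathcal O}$-module is not locally free (it is ${\mathcal E}$ twisted by the top local cohomology of ${\mathcal O}$ along $S_{0}$), and the closure of $S_{0}$ in $P$ need not be smooth, so ``Grothendieck's theorem on the local cohomology of a locally free sheaf'' does not apply as written; one would have to rewrite $i_{+}{\mathcal E}$ as ${\mathbb R}\Gamma_{S_{0}}$ of a locally free extension shifted by the codimension, or stratify further. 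Also note that if ${\rm Supp}\,{\mathcal N}$ has several components of the same codimension, $H$ need not have strictly larger codimension, so the induction must indeed be run on the dimension of the support with the codimension $c$ carried along as a fixed parameter.
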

\begin{proof}
Denote by $j$ the open immersion $P\setminus T\hookrightarrow P$.
For an object ${\mathcal M} \in{\mathcal C}_{P, Z, T}\subset D_{\rm lfgu}^{b}({\mathcal O}_{F, P})$.
 there exists a distinguished triangle
\begin{equation*}
\mathcal M'\to M\to M''\xrightarrow{+}
\end{equation*}
such that ${\mathcal M'}\in {}^{\mathfrak S}D_{\rm lfgu}^{< 0}({\mathcal O}_{F, P})$ and ${\mathcal M''}\in {}^{\mathfrak S}D_{\rm lfgu}^{\geq 0}({\mathcal O}_{F, P})$.
Since ${\mathbb R}{\Gamma}_{Z}$, ${\mathbb R}j_{*}$ and $j^{-1}$ are $t$-exact with respect to the constructible $t$-structure by \cite[Proposition 4.1, Proposition 4.2 and Lemma 3.7]{Kas2} respectively, we have a 
desired distinguished triangle
\begin{equation*}
 {\mathbb R}j_{*}j^{-1}{\mathbb R}{\Gamma}_{Z}{\mathcal M'}\to{\mathcal M}\to {\mathbb R}j_{*}j^{-1}{\mathbb R}{\Gamma}_{Z}{\mathcal M''}\xrightarrow{+}
\end{equation*} 
such that ${\mathbb R}j_{*}j^{-1}{\mathbb R}{\Gamma}_{Z}{\mathcal M'}\in {}^{\mathfrak S}{\mathcal C}^{<0}_{P, Z, T}$ and ${\mathbb R}j_{*}j^{-1}{\mathbb R}{\Gamma}_{Z}{\mathcal M''}\in {}^{\mathfrak S}{\mathcal C}^{\geq 0}_{P, Z, T}$.
This finishes the proof.
\end{proof}
For a $k$-embeddable $k$-scheme $X$ with an immersion $X\hookrightarrow P$ into a proper smooth $k$-scheme $P$, we define by
$${}^{\mathfrak S}{\mathcal C}^{\leq 0}_{P, X}:={}^{\mathfrak S}{\mathcal C}^{\leq 0}_{P, Z, T} \textit{ and } {}^{\mathfrak S}{\mathcal C}^{\geq 0}_{P, X}:={}^{\mathfrak S}{\mathcal C}^{\geq 0}_{P, Z, T}$$
for some closed subsets $Z$ and $T$ of $P$ satisfying $X=Z\setminus T$.
Then this definition is independent of the choice of $Z$ and $T$ by Lemma \ref{l1} and $\left({}^{\mathfrak S}{\mathcal C}^{\leq 0}_{P, X}, {}^{\mathfrak S}{\mathcal C}^{\geq 0}_{P, X}\right)$ defines a $t$-structure on ${\mathcal C}_{P, X}$, which we call the constructible $t$-structure on ${\mathcal C}_{P, X}$. 
By \cite[Proposition 4.2 and Lemma 3.7]{Kas2}, one immediately obtains the following lemma.

\begin{lemm}\label{l16}
Let $X$ be a $k$-embeddable $k$-scheme with an immersion $X\hookrightarrow P$ into a proper smooth $k$-scheme $P$.
Let $U$ be an open subscheme of $P$ such that the immersion $X\hookrightarrow P$ factors as a closed immersion $X\hookrightarrow U$ and the open immersion $j: U\hookrightarrow X$.
Then the equivalence in Proposition \ref{p2}
\begin{equation*}
{\mathbb R}j_{*}: {\mathcal C}_{U, X, \emptyset}\xrightarrow{\cong} {\mathcal C}_{P, X}
\end{equation*}
is $t$-exact with respect to the constructible $t$-structure.
\end{lemm}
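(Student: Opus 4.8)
The plan is to mimic the proof of Lemma \ref{l15}, replacing the standard $t$-structure by the constructible one. By Proposition \ref{p2} the functor ${\mathbb R}j_{*}: {\mathcal C}_{U, X, \emptyset}\to{\mathcal C}_{P, X}$ is an equivalence of triangulated categories with quasi-inverse $j^{-1}$; in particular both functors restrict to functors between the full subcategories ${\mathcal C}_{U, X, \emptyset}$ and ${\mathcal C}_{P, X}$. Hence, by \cite[Corollary 10.1.18]{KS}, it is enough to show that both ${\mathbb R}j_{*}$ and $j^{-1}$ are left $t$-exact with respect to the constructible $t$-structures on ${\mathcal C}_{U, X, \emptyset}$ and ${\mathcal C}_{P, X}$.

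The point is then that these constructible $t$-structures are, by construction (Theorem \ref{t2} and the Corollary that follows it), nothing but the restrictions of the constructible $t$-structures on $D^{b}_{\rm qc}({\mathcal O}_{F, U})$ and $D^{b}_{\rm qc}({\mathcal O}_{F, P})$. So I would invoke the exactness statements of \cite{Kas2} on these ambient categories: by \cite[Proposition 4.2 and Lemma 3.7]{Kas2} the functor ${\mathbb R}j_{*}$ is left $t$-exact and the functor $j^{-1}$ is $t$-exact (in particular left $t$-exact) for the constructible $t$-structure. Concretely, if ${\mathcal M}$ lies in ${}^{\mathfrak S}{\mathcal C}^{\geq 0}_{U, X, \emptyset}={}^{\mathfrak S}D^{\geq 0}_{\rm qc}({\mathcal O}_{F, U})\cap{\mathcal C}_{U, X, \emptyset}$, then ${\mathbb R}j_{*}{\mathcal M}$ lies in ${}^{\mathfrak S}D^{\geq 0}_{\rm qc}({\mathcal O}_{F, P})$ by \cite{Kas2} and in ${\mathcal C}_{P, X}$ by Proposition \ref{p2}, hence in ${}^{\mathfrak S}{\mathcal C}^{\geq 0}_{P, X}$; symmetrically $j^{-1}$ carries ${}^{\mathfrak S}{\mathcal C}^{\geq 0}_{P, X}$ into ${}^{\mathfrak S}{\mathcal C}^{\geq 0}_{U, X, \emptyset}$. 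Feeding both left $t$-exactness statements into \cite[Corollary 10.1.18]{KS} gives that ${\mathbb R}j_{*}$ is $t$-exact.

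Since every step is a direct appeal to a cited result, I do not anticipate a genuine obstacle; the one place that requires a little care is precisely the transfer of (one-sided) $t$-exactness from the ambient categories $D^{b}_{\rm qc}({\mathcal O}_{F, U})$, $D^{b}_{\rm qc}({\mathcal O}_{F, P})$ to the full subcategories ${\mathcal C}_{U, X, \emptyset}$, ${\mathcal C}_{P, X}$. This is immediate once one notes, via Proposition \ref{p2}, that ${\mathbb R}j_{*}$ and $j^{-1}$ respect these subcategories, so that the defining inequalities of the restricted $t$-structures are simply inherited.
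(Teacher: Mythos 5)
Your argument is correct and is essentially the paper's own proof: the paper derives this lemma "immediately" from the same two facts of \cite{Kas2} (exactness of ${\mathbb R}j_{*}$ and $j^{-1}$ for the constructible $t$-structure on the ambient categories $D^{b}_{\rm qc}({\mathcal O}_{F, U})$ and $D^{b}_{\rm qc}({\mathcal O}_{F, P})$), combined with the fact that the constructible $t$-structures on ${\mathcal C}_{U, X, \emptyset}$ and ${\mathcal C}_{P, X}$ are by definition the restrictions of the ambient ones. Your extra step of reducing to left $t$-exactness of both ${\mathbb R}j_{*}$ and its quasi-inverse via \cite[Corollary 10.1.18]{KS} mirrors exactly the paper's argument for the standard $t$-structure in Lemma \ref{l15}, so there is no substantive difference.
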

\begin{theo}\label{t8}
Let $X$ be a $k$-embeddable $k$-scheme with a closed immersion $i$ into a smooth separated $k$-scheme $P$.
We set 
\begin{eqnarray*}
D_{\rm c}^{\leq 0}(X_{\rm \acute{e}t}) &=&\left\{{\mathcal F}\in D_{\rm c}^{b}(X_{\rm \acute{e}t}, {\mathbb Z}/{p\mathbb Z}) \,|\, {\rm H}^{k}({\mathcal F})=0 \textit{ for } k>0\right\} \textit{ and}\\
D_{\rm c}^{\geq 0}(X_{\rm \acute{e}t}) &=&\left\{{\mathcal F}\in D_{\rm c}^{b}(X_{\rm \acute{e}t}, {\mathbb Z}/{p\mathbb Z}) \,|\, {\rm H}^{k}({\mathcal F})=0 \textit{ for } k<0\right\} . 
\end{eqnarray*}
Then the equivalence of triangulated categories
\begin{equation*}
{\rm Sol}_{X}=i^{-1}{\rm Sol}_{P}: {\mathcal C}_{P, X, \emptyset}\xrightarrow{\cong} D_{\rm c}^{b}(X_{\rm \acute{e}t}, {\mathbb Z}/{p\mathbb Z})
\end{equation*}
sends $\left({}^{\mathfrak S}{\mathcal C}^{\leq -d_{P}}_{P, X, \emptyset}, {}^{\mathfrak S}{\mathcal C}^{\geq -d_{P}}_{P, X, \emptyset}\right)$ to  $\left(D_{\rm c}^{\leq 0}(X_{\rm \acute{e}t}), D_{\rm c}^{\geq 0}(X_{\rm \acute{e}t})\right)$.
\end{theo}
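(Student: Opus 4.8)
The plan is to test membership in the two $t$-structures pointwise and to translate one condition into the other through the Emerton--Kisin Riemann--Hilbert correspondence on the ambient smooth scheme $P$, combined with Kashiwara's description of the constructible $t$-structure. By the corollary to Theorem~\ref{t2} the pair $\bigl({}^{\mathfrak S}{\mathcal C}^{\leq -d_{P}}_{P, X, \emptyset},\, {}^{\mathfrak S}{\mathcal C}^{\geq -d_{P}}_{P, X, \emptyset}\bigr)$ is a $t$-structure on ${\mathcal C}_{P, X, \emptyset}$, and ${\rm Sol}_{X}=i^{-1}{\rm Sol}_{P}$ is an anti-equivalence of triangulated categories by Theorem~\ref{t5}; hence it carries this $t$-structure to a $t$-structure on $D^{b}_{\rm c}(X_{\rm \acute{e}t})$ whose aisle is ${\rm Sol}_{X}\bigl({}^{\mathfrak S}{\mathcal C}^{\geq -d_{P}}_{P, X, \emptyset}\bigr)$. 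Since a $t$-structure is determined by its aisle, and since $\bigl(D^{\leq 0}_{\rm c}(X_{\rm \acute{e}t}), D^{\geq 0}_{\rm c}(X_{\rm \acute{e}t})\bigr)$ is the standard $t$-structure, it suffices to prove the single equivalence
\[
{\mathcal M}\in {}^{\mathfrak S}{\mathcal C}^{\geq -d_{P}}_{P, X, \emptyset}\ \Longleftrightarrow\ {\rm Sol}_{X}({\mathcal M})\in D^{\leq 0}_{\rm c}(X_{\rm \acute{e}t}),
\]
after which the assertion about ${}^{\mathfrak S}{\mathcal C}^{\leq -d_{P}}_{P, X, \emptyset}$ and $D^{\geq 0}_{\rm c}(X_{\rm \acute{e}t})$ follows formally.

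After an evident shift by the locally constant function $d_{P}$, and using $D^{b}_{\rm lfgu}({\mathcal O}_{F,P})\cong D^{b}_{\rm lfgu}({\mathcal D}_{F,P})$ together with the fact that ${\rm Sol}_{P}({\mathcal M})$ is supported on $X$ (so its cohomology sheaves may be tested on $X$), this reduces to: for ${\mathcal M}\in {\mathcal C}_{P, X, \emptyset}$, one has ${\mathcal M}\in {}^{\mathfrak S}D^{\geq 0}_{\rm qc}({\mathcal O}_{F,P})$ if and only if ${\rm Sol}_{P}({\mathcal M})\in D^{\leq -d_{P}}_{\rm c}(P_{\rm \acute{e}t})$, the latter meaning that on each connected component of $P$ the cohomology sheaves vanish in degrees above minus its dimension. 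By definition the left condition says ${\mathbb R}\Gamma_{Z}{\mathcal M}\in D^{\geq n}$ for every $n$ and every closed $Z\subseteq P$ of codimension $\geq n$. Here ${\mathbb R}\Gamma_{Z}{\mathcal M}$ is again in $D^{b}_{\rm lfgu}$ by the proposition of Section~3, so applying ${\rm Sol}_{P}$ to the distinguished triangle ${\mathbb R}\Gamma_{Z}{\mathcal M}\to {\mathcal M}\to {\mathbb R}j_{Z*}j_{Z}^{-1}{\mathcal M}\xrightarrow{+}$ and using that ${\rm Sol}$ interchanges $j_{Z+}={\mathbb R}j_{Z*}$ with $j_{Z!}$ and $j_{Z}^{!}=j_{Z}^{-1}$ with $j_{Z}^{-1}$ (Theorem~\ref{t4}), comparison with the recollement triangle on $P_{\rm \acute{e}t}$ identifies ${\rm Sol}_{P}({\mathbb R}\Gamma_{Z}{\mathcal M})\cong i_{Z*}i_{Z}^{-1}{\rm Sol}_{P}({\mathcal M})$. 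Moreover, by \cite[Theorem~11.5.4]{EK} applied to the smooth scheme $P$, ${\rm Sol}_{P}$ restricts to an equivalence $D^{\geq n}_{\rm lfgu}({\mathcal D}_{F,P})\xrightarrow{\cong}{}^{\rm p}D^{\leq -n}_{\rm c}(P_{\rm \acute{e}t})$. Therefore ${\mathcal M}\in {}^{\mathfrak S}D^{\geq 0}_{\rm qc}({\mathcal O}_{F,P})$ if and only if $i_{Z*}i_{Z}^{-1}{\rm Sol}_{P}({\mathcal M})\in {}^{\rm p}D^{\leq -n}_{\rm c}(P_{\rm \acute{e}t})$ for all admissible pairs $(n,Z)$, which by Lemma~\ref{l9} and the definition of the perverse $t$-structure means that for every $z\in Z$ one has ${\rm H}^{k}\bigl(i_{z}^{-1}{\rm Sol}_{P}({\mathcal M})\bigr)=0$ whenever $k> -\dim\overline{\{z\}}-n$.

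The remaining work is the combinatorial matching over all pairs $(n,Z)$. For a fixed $z\in P$ the sharpest constraint comes from $Z=\overline{\{z\}}$ with $n={\rm codim}_{P}\overline{\{z\}}$, giving the threshold $-\dim\overline{\{z\}}-{\rm codim}_{P}\overline{\{z\}}=-d_{P}(z)$; conversely, whenever $z\in Z$ and ${\rm codim}_{P}Z\geq n$ one has $n\leq {\rm codim}_{P}\overline{\{z\}}$, so this one threshold subsumes all the others. This uses equidimensionality of the connected components of $P$, which guarantees $\dim\overline{\{z\}}+{\rm codim}_{P}\overline{\{z\}}=d_{P}(z)$ even when $d_{P}$ is only locally constant. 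Hence ${\mathcal M}\in {}^{\mathfrak S}D^{\geq 0}_{\rm qc}({\mathcal O}_{F,P})$ if and only if ${\rm H}^{k}\bigl(i_{z}^{-1}{\rm Sol}_{P}({\mathcal M})\bigr)=0$ for all $z\in P$ and $k>-d_{P}(z)$, that is, ${\rm Sol}_{P}({\mathcal M})\in D^{\leq -d_{P}}_{\rm c}(P_{\rm \acute{e}t})$; and since ${\rm Sol}_{P}({\mathcal M})=i_{*}i^{-1}{\rm Sol}_{P}({\mathcal M})$ this is equivalent to ${\rm Sol}_{X}({\mathcal M})\in D^{\leq -d_{P}}_{\rm c}(X_{\rm \acute{e}t})$, so shifting back by $d_{P}$ yields the displayed equivalence, and the first paragraph completes the proof. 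The main obstacle is precisely this collapse of the family of perverse vanishing conditions, indexed by the codimension filtration $\{{\mathfrak S}^{n}\}$, to the single standard vanishing condition with uniform threshold $-d_{P}$; a lesser technical point is the identification ${\rm Sol}_{P}({\mathbb R}\Gamma_{Z}{\mathcal M})\cong i_{Z*}i_{Z}^{-1}{\rm Sol}_{P}({\mathcal M})$, which rests on ${\mathbb R}\Gamma_{Z}$ preserving $D^{b}_{\rm lfgu}$ and on the compatibility of ${\rm Sol}$ with the operations $j_{Z+}$ and $j_{Z}^{!}$ from Theorem~\ref{t4}.
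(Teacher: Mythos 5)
Your proof is correct, but it takes a genuinely different route from the paper's. The paper's argument pivots on Lemma \ref{t3}, which characterizes ${}^{\mathfrak S}D_{\rm lfgu}^{\geq -d_{P}}({\mathcal O}_{F, P})$ both by the existence of a flat resolution concentrated in degrees $\geq -d_{P}$ (via \cite[Proposition 4.6]{Kas2}) and by the costalk condition ${\rm H}^{k}(i_{x}^{!}{\mathcal M})=0$ for $k<0$ at closed points, the lemma itself being proved by induction on the codimension of the support; the two inclusions of Theorem \ref{t8} are then checked by explicitly computing $i_{U}^{!}{\mathcal M}$ and ${\rm Sol}_{U}(i_{U}^{!}{\mathcal M})$ on a dense open stratum of each $\overline{\{x\}}$ (using the acyclicity of unit $F$-crystals, \cite[Proposition 9.3.2]{EK}) in one direction, and $i_{x}^{!}{\rm M}_{P}({\mathcal F})$ at closed points in the other. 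You bypass Lemma \ref{t3} entirely: you push each defining condition ${\mathbb R}{\Gamma}_{Z}{\mathcal M}\in D^{\geq n}$ of Kashiwara's co-aisle through the dictionary ${\rm Sol}_{P}\circ{\mathbb R}{\Gamma}_{Z}\cong i_{Z*}i_{Z}^{-1}\circ{\rm Sol}_{P}$ and through \cite[Theorem 11.5.4]{EK}, and then collapse the resulting family of perverse vanishing conditions, indexed by the codimension filtration, to the single standard condition with threshold $-d_{P}(z)$ realized by the pair $Z=\overline{\{z\}}$, $n={\rm codim}_{P}\overline{\{z\}}$. This is shorter and avoids flat resolutions and the stratification induction, at the price of invoking \cite[Theorem 11.5.4]{EK} as a black box for every shift $n$ (legitimate, since it is an equivalence of $t$-structures and ${\mathbb R}{\Gamma}_{Z}{\mathcal M}$ stays in $D^{b}_{\rm lfgu}$, as you note); the paper's route yields Lemma \ref{t3} as a reusable costalk criterion. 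Two points worth making explicit in a write-up: the identification ${\rm Sol}_{P}({\mathbb R}{\Gamma}_{Z}{\mathcal M})\cong i_{Z*}i_{Z}^{-1}{\rm Sol}_{P}({\mathcal M})$ is most safely extracted stalkwise (for $z\in Z$ one has $i_{z}^{-1}j_{Z!}(-)=0$, so the triangle already forces $i_{z}^{-1}{\rm Sol}_{P}({\mathcal M})\xrightarrow{\cong}i_{z}^{-1}{\rm Sol}_{P}({\mathbb R}{\Gamma}_{Z}{\mathcal M})$ without having to match the morphism with the counit of the recollement), and the dimension formula $\dim\overline{\{z\}}+{\rm codim}_{P}\overline{\{z\}}=d_{P}(z)$ that drives the collapse does hold because each connected component of the smooth $k$-scheme $P$ is irreducible of finite type over a field.
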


In order to prove Theorem \ref{t8} we need the following lemma.

\begin{lemm}\label{t3}
Let $P$ be a smooth $k$-scheme of dimension $d_{P}$ and $\mathcal M$ a complex in  $D_{\rm lfgu}^{b}({\mathcal O}_{F, P})$.
The following conditions are equivalent.
\begin{enumerate}
\item ${\mathcal M}\in {}^{\mathfrak S}D_{\rm lfgu}^{\geq -d_{P}}({\mathcal O}_{F, P})$.
\item ${\mathcal M}$ is quasi-isomorphic to a bounded complex $\mathcal N$ of flat ${\mathcal O}_{P}$-modules such that ${\mathcal N}^{n}=0$ for any $n<-d_{P}$.
\item ${\rm H}^{k}\left(i_{x}^{!}{\mathcal M}\right)=0$ for any $k< 0$ and any closed point $x$ of $P$,
where $i_{x}$ denotes the canonical closed immersion $\{x\}\hookrightarrow P$.
\end{enumerate}
\end{lemm}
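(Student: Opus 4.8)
The plan is to prove the cycle of implications $(2)\Rightarrow(1)\Rightarrow(3)\Rightarrow(2)$, since $(2)$ is the concrete statement that makes both other conditions transparent. First I would observe that $(2)\Rightarrow(1)$ is essentially a direct computation: if $\mathcal M$ is represented by a bounded complex $\mathcal N$ of flat $\mathcal O_P$-modules vanishing in degrees $<-d_P$, then for any $n$-codimensional closed subset $Z\subset P$ one has ${\mathbb R}{\Gamma}_Z({\mathcal O}_P)\in D^{\geq n}({\mathcal O}_P)$ by Grothendieck's vanishing theorem for local cohomology (the local cohomological dimension along a closed subset of codimension $n$ in a regular scheme is $\geq n$), so by Lemma \ref{l3} one gets ${\mathbb R}{\Gamma}_Z({\mathcal M})\cong{\mathbb R}{\Gamma}_Z({\mathcal O}_P)\otimes^{\mathbb L}_{{\mathcal O}_P}{\mathcal N}$, and tensoring the complex ${\mathbb R}{\Gamma}_Z({\mathcal O}_P)$, which is concentrated in degrees $\geq n$, with the flat complex $\mathcal N$, which is concentrated in degrees $\geq -d_P$, lands in degrees $\geq n-d_P$. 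This is exactly the condition defining ${}^{\mathfrak S}D^{\geq -d_P}_{\rm qc}$ applied to $Z\in{\mathfrak S}^n$, and combined with $\mathcal M\in D^b_{\rm lfgu}$ this gives $(1)$.

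Next, for $(1)\Rightarrow(3)$, I would use that a closed point $x\in P$ has codimension $d_P$, so $\{x\}\in{\mathfrak S}^{d_P}$; writing $i_x$ for the closed immersion and unwinding the definition of $i_x^!$ in terms of ${\mathbb R}{\Gamma}_{\{x\}}$ together with the (co)restriction to the point (using Kashiwara's equivalence, \cite[Proposition 15.5.3]{EK}, and the shift $d_{\{x\}/P}=-d_P$ built into $f^!$), one sees that $i_x^!{\mathcal M}$ computes, up to the shift by $-d_P$, the restriction of ${\mathbb R}{\Gamma}_{\{x\}}{\mathcal M}$, which by $(1)$ lies in $D^{\geq d_P-d_P}=D^{\geq 0}$. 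Hence ${\rm H}^k(i_x^!{\mathcal M})=0$ for $k<0$. One needs to be a little careful to translate the $t$-structure condition formulated with arbitrary $Z\in{\mathfrak S}^n$ into the statement about the single point $x$; the point is that ${\mathbb R}{\Gamma}_{\{x\}}{\mathcal M}\in D^{\geq d_P}$ is precisely one of the defining conditions.

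For the remaining implication $(3)\Rightarrow(2)$, the plan is an induction on the dimension of $P$ (or equivalently on $d_P$), reducing to the smooth affine case and using the structure theory of locally finitely generated unit modules. One replaces $\mathcal M$ by a bounded complex $\mathcal N_0$ of flat (e.g.\ locally free) $\mathcal O_P$-modules; the task is to show that under $(3)$ one may take this representative with $\mathcal N_0^n=0$ for $n<-d_P$. Equivalently, one must show that the cohomology of $\mathcal M$ is "Tor-concentrated" in degrees $\geq -d_P$ in a suitable sense, and here I would argue via the stalks: for a locally finitely generated unit $\mathcal O_{F,P}$-module, being of the correct local Tor-dimension at $x$ is controlled by $i_x^!$, by a local duality / Nakayama-type computation as in \cite[\S6]{EK}. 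The main obstacle, and the step I expect to be the most delicate, is exactly this last implication: translating the pointwise $i_x^!$-vanishing into a global statement about the existence of a flat representative bounded below at $-d_P$. I would handle it by a stratification argument --- choosing a dense smooth open $S_0$ in the support on which all cohomology sheaves become unit $F$-crystals (hence locally free) after \cite[Proposition 6.9.6]{EK}, checking the bound there by a direct computation with local cohomology of ${\mathcal O}_{S_0}$, and then using the distinguished triangle
\begin{equation*}
{\mathbb R}{\Gamma}_{H}{\mathcal M}\to{\mathcal M}\to{\mathbb R}j_*j^{-1}{\mathcal M}\xrightarrow{+}
\end{equation*}
with $H$ of larger codimension to run the induction, exactly as in the proof of Theorem \ref{t2}, together with the fact (already used there) that ${\mathbb R}j_*$ and $i_+$ have the expected exactness properties.
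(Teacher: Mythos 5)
Your cycle $(2)\Rightarrow(1)\Rightarrow(3)\Rightarrow(2)$ is a genuinely different routing from the paper's, which establishes $1\Leftrightarrow 2$ by simply citing Kashiwara's \cite[Proposition 4.6]{Kas2}, proves $2\Rightarrow 3$ by the one-line computation $i_x^!{\mathcal M}\cong\kappa(x)\otimes_{i_x^{-1}{\mathcal O}_P}i_x^{-1}{\mathcal N}[-d_P]$, and then devotes the stratification induction to $3\Rightarrow 1$. Your $(2)\Rightarrow(1)$ via depth (i.e. ${\mathbb R}\Gamma_Z({\mathcal O}_P)\in D^{\geq n}$ for $Z$ of codimension $n$ in a regular scheme, plus Lemma \ref{l3}) is sound and more self-contained than the citation; your $(1)\Rightarrow(3)$ via $\{x\}\in{\mathfrak S}^{d_P}$ and Kashiwara's equivalence also works, though the paper's $2\Rightarrow 3$ is shorter.

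The one place where I would press you is $(3)\Rightarrow(2)$. The stratification induction you describe is exactly the engine the paper uses, but the paper aims it at condition $(1)$, and that choice is not cosmetic: condition $(1)$ is visibly stable under the cone in the triangle ${\mathbb R}\Gamma_H{\mathcal M}\to{\mathcal M}\to{\mathbb R}j_*j^{-1}{\mathcal M}$, and ${\mathbb R}j_*$ is left $t$-exact for the constructible $t$-structure by \cite[Lemma 3.7]{Kas2}. If you instead target condition $(2)$, you must additionally verify (i) that the lower Tor-amplitude bound is preserved by ${\mathbb R}j_*$ (this does hold, via the projection formula and reduction of the test modules to quasi-coherent ones, but it is not free) and by the cone construction, and that Tor-amplitude in $[-d_P,\infty)$ really does yield a global flat representative in those degrees; and (ii) that the pointwise criterion "$\,{\rm H}^k(\kappa(x)\otimes^{\mathbb L}{\mathcal M}_x)=0$ for $k<-d_P$ at all closed points implies Tor-amplitude $\geq -d_P$" is \emph{only} available after you have reduced, via \cite[Corollary 5.11.3]{EK} and \cite[Proposition 6.9.6]{EK}, to a stratum where the cohomology sheaves are unit $F$-crystals, hence locally free of finite rank; for a general object of $D^b_{\rm lfgu}$ the cohomology sheaves are merely quasi-coherent and the residue-field (Nakayama) criterion is not applicable. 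You gesture at both points, so I would call this a surmountable but genuinely unfinished step rather than a wrong turn; the cleanest fix is the paper's: run the induction to prove $(3)\Rightarrow(1)$ and let $(1)\Leftrightarrow(2)$ be handled once and for all by \cite[Proposition 4.6]{Kas2} (or by your depth argument in one direction).
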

\begin{proof}
The equivalence of $1$ and $2$ follows from \cite[Proposition 4.6]{Kas2}.
Let us prove that the condition $2$ implies the condition $3$.
Suppose that $\mathcal M$ is quasi-isomorphic to a bounded complex $\mathcal N$ of flat ${\mathcal O}_{P}$-modules such that ${\mathcal N}^{n}=0$ for any $n<-d_{P}$.
Let us denote by $\kappa(x)$ the residue field at $x$.
Then, as a complex of $\kappa(x)$-modules, we can calculate as
\begin{eqnarray*}
i_{x}^{!}{\mathcal M}&{\cong}& \kappa(x) \otimes_{i_{x}^{-1}{\mathcal O}_{X}}^{\mathbb L}i_{x}^{-1}{\mathcal M}[-d_{P}]\\
&=& \kappa(x) \otimes_{i_{x}^{-1}{\mathcal O}_{X}}i_{x}^{-1}{\mathcal N}[-d_{P}].
\end{eqnarray*}
We have the condition $3$ from this description.
Next we show the condition $3$ implies the condition $1$.
Suppose that $\mathcal M$ satisfies the condition $3$.
We prove that  ${\mathcal M}$ belongs to ${}^{\mathfrak S}D_{\rm lfgu}^{\geq -d_{P}}({\mathcal O}_{F, P})$ by the induction on the
codimension $d$ of $S:={\rm Supp}(\mathcal M)$.
Let $S_{0}$ be a $d$-codimensional smooth open subscheme of $S$ such that $H:=S\setminus S_{0}$ is of codimension $>d$.
Then we have a closed immersion $i: S_{0}\hookrightarrow U:=P\setminus H$ and the open immersion $j: U\hookrightarrow P$.
Since ${\mathcal M}_{|U}$ is supported on $U$, by \cite[Corollary 5.11.3]{EK}, there exists ${\mathcal N}\in D_{\rm lfgu}^{b}({\mathcal O}_{F, S_{0}})$ such that $i_{+}{\mathcal N}\cong {\mathcal M}_{|U}$.
By shrinking $S_{0}$ if necessary, we may assume that all cohomology sheaves of $\mathcal N$ are unit $F$-crystals.
We fix a closed point $x\in S_{0}$ and denote by $i_{x}$ (resp. $i'_{x}$) the closed immersion $\{x\}\hookrightarrow P$ (resp. $\{x\}\hookrightarrow S_{0}$).
By pulling back the isomorphism $i_{+}{\mathcal N}\cong {\mathcal M}_{|U}$ to $\{x\}$, we have $i'^{!}_{x}{\mathcal N}\cong i^{!}_{x}{\mathcal M}$.
Let us take a flat resolution $\mathcal F\to N$ as ${\mathcal O}_{S_{0}}$-modules.
One has 
${\kappa}(x)\otimes i_{x}'^{-1}{\mathcal F}\cong i^{!}_{x}{\mathcal M}[d_{S_{0}}]$.
By this description combined with the condition $3$,
we know 
$${\mathcal N}\in D_{\rm lfgu}^{\geq -d_{S_{0}}}({\mathcal O}_{F, S_{0}}).$$
By a similar argument in the proof of Theorem \ref{t2}, one has $i_{+}{\mathcal N}\in {}^{\mathfrak S}D_{\rm lfgu}^{\geq -d_{P}}({\mathcal O}_{F, U})$.
Since ${\mathbb R}j_{*}$ is left $t$-exact with respect to the constructible $t$-structure by \cite[Lemma 3.7]{Kas2}, we have
${\mathbb R}j_{*}i_{+}{\mathcal N}\in {}^{\mathfrak S}D_{\rm lfgu}^{\geq -d_{P}}({\mathcal O}_{F, P})$.
Let us consider a distinguished triangle
\begin{equation*}
i^{!}_{x}{\mathbb R}{\Gamma}_{H}{\mathcal M}\to i^{!}_{x}{\mathcal M}\to i^{!}_{x}j_{+}j^{-1}{\mathcal M}\xrightarrow{+}.
\end{equation*}
By taking the long exact sequence, we see ${\rm H}^{k}\left(i^{!}_{x}{\mathbb R}{\Gamma}_{H}{\mathcal M} \right)=0$ for any $k<0$.
Hence the induction hypothesis implies ${\mathbb R}{\Gamma}_{H}{\mathcal M}\in {}^{\mathfrak S}D_{\rm lfgu}^{\geq -d_{P}}({\mathcal O}_{F, P})$
and we obtain ${\mathcal M}\in {}^{\mathfrak S}D_{\rm lfgu}^{\geq -d_{P}}({\mathcal O}_{F, P})$.

\end{proof}
Now we may start to prove Theorem \ref{t8}.

\begin{proof}
First of all, we shall prove that the equivalence
\begin{equation*}
{\rm Sol}_{P}: D_{\rm lfgu}^{b}({\mathcal O}_{F, P})\xrightarrow{\cong} D_{\rm c}^{b}(P_{\rm \acute{e}t}, {\mathbb Z}/{p\mathbb Z})
\end{equation*}
sends $\left({}^{\mathfrak S}D_{\rm lfgu}^{\leq -d_{P}}({\mathcal O}_{F, P}), {}^{\mathfrak S}D_{\rm lfgu}^{\geq -d_{P}}({\mathcal O}_{F, P})\right)$ to $\left(D_{\rm c}^{\leq 0}(P_{\rm \acute{e}t}), D_{\rm c}^{\geq 0}(P_{\rm \acute{e}t})\right)$.
Since ${\rm Sol}_{P}$ is an equivalence of triangulated categories, it suffices to show that 
${}^{\mathfrak S}D_{\rm lfgu}^{\geq -d_{P}}({\mathcal O}_{F, P})$ corresponds to $D_{\rm c}^{\leq 0}(P_{\rm \acute{e}t}, {\mathbb Z}/{p\mathbb Z})$ via ${\rm Sol}_{P}$ (cf. \cite[Corollary 10.1.18]{KS}).
Let us first suppose that ${\mathcal M}$ is an object in ${}^{\mathfrak S}D_{\rm lfgu}^{\geq -d_{P}}({\mathcal O}_{F, P})$.
Let $x$ be a point in $P$.
Denote by $\overline{\{x\}}$ the closure of $\{x\}$ in $P$.
For an open subset $U$ of $\overline{\{x\}}$, we denote by $i_{U}$ the canonical immersion $U\hookrightarrow P$.
By Lemma \ref{t3}, there exists an ${\mathcal O}_{P}$-flat resolution $\mathcal N$ of ${\mathcal M}$ such that ${\mathcal N}^{n}=0$ for any $n<-d_{P}$.
We then calculate
\begin{eqnarray*}
i_{U}^{!}{\mathcal M}&{\cong}& {\mathcal O}_{U} \otimes_{i_{U}^{-1}{\mathcal O}_{P}}^{\mathbb L}i_{U}^{-1}{\mathcal M}[d_{U/P}]\\
&=&{\mathcal O}_{U}  \otimes_{i_{U}^{-1}{\mathcal O}_{P}}i_{U}^{-1}{\mathcal N}[d_{U/P}].
\end{eqnarray*}
By this description, we have ${\rm H}^{k}\left({i_{U}^{!}{\mathcal M}}\right)=0$ for any $k<-d_{U}$.
By shrinking $U$ if necessary, we may assume that all cohomology sheaves of $i_{U}^{!}{\mathcal M}$ are unit $F$-crystals.
Then, by \cite[Proposition 9.3.2]{EK}, $i_{U}^{!}{\mathcal M}$ is $\underline{\rm Hom}_{{\mathcal O}_{F, U_{{\rm \acute{e}t}}}}(\pi_{U}^{*}(-), {\mathcal O}_{U_{\rm \acute{e}t}})$-acyclic.
Hence we can calculate as
\begin{eqnarray*}
i_{U}^{-1}{\rm Sol}_{P}({\mathcal M})&\cong& {\rm Sol}_{U}\left(i_{U}^{!}{\mathcal M}\right)\\
&=& \underline{\rm Hom}_{{\mathcal O}_{F, U_{{\rm \acute{e}t}}}}(\pi_{U}^{*}(i_{U}^{!}{\mathcal M} ), {\mathcal O}_{U_{\rm \acute{e}t}})[d_{U}].
\end{eqnarray*}
By this description, for any $n>0$ the equality ${\rm H}^{n}\left(i_{U}^{-1}{\rm Sol}_{P}({\mathcal M})\right)=0$ holds. 
So we have ${\rm Sol}_{P}({\mathcal M})\in D_{\rm c}^{\leq 0}(P_{\rm \acute{e}t}, {\mathbb Z}/{p\mathbb Z})$.
Conversely, suppose that we are given an object ${\mathcal F}$ in $D_{\rm c}^{\leq 0}(P_{\rm \acute{e}t}, {\mathbb Z}/{p\mathbb Z})$.
By \cite[p.94, Lemma 4.7]{De}, we may assume that ${\mathcal F}$ is a bounded complex of constructible ${\mathbb Z}/{p\mathbb Z}$-modules.
For any closed point $x$, we can calculate as
\begin{eqnarray*}
i_{x}^{!}{\rm M}_{P}({\mathcal F})&\cong& {\rm M}_{\{x\}}\left(i_{x}^{-1}{\mathcal F}\right)\\
&=&{\mathbb R}\underline{\rm Hom}_{{\mathbb Z}/{p\mathbb Z}}(i_{x}^{-1}{\mathcal F}, \kappa(x))\\
&=&\underline{\rm Hom}_{{\mathbb Z}/{p\mathbb Z}}(i_{x}^{-1}{\mathcal F}, \kappa(x)).
\end{eqnarray*} 
By this description, we see the condition $3$ in Lemma \ref{t3} for ${\rm M}_{P}({\mathcal F})$ and thus
${\rm M}_{P}({\mathcal F})\in {}^{\mathfrak S}D_{\rm lfgu}^{\geq -d_{P}}({\mathcal O}_{F, P})$.

Now let $X$ be a $k$-embeddable $k$-scheme with a closed immersion $i: X\hookrightarrow P$.
Let $D^{b}_{{\rm c}, X}(P_{\rm \acute{e}t}, {\mathbb Z}/{p\mathbb Z})$ denote the full triangulated subcategory of 
 $D_{\rm c}^{b}(P_{\rm \acute{e}t}, {\mathbb Z}/{p\mathbb Z})$ consisting of complexes supported on $X$.
By Lemma \ref{l10}, ${\rm Sol}_{P}$ restricts to an equivalence of triangulated categories
\begin{equation*}
{\rm Sol}_{P}: {\mathcal C}_{P, X, \emptyset}\xrightarrow{\cong} D^{b}_{{\rm c}, X}(P_{\rm \acute{e}t}, {\mathbb Z}/{p\mathbb Z}).
\end{equation*}
Then ${}^{\mathfrak S}{\mathcal C}^{\geq -d_{P}}_{P, X, \emptyset}$ corresponds to $D_{{\rm c}, X}^{\leq 0}(P_{\rm \acute{e}t}):=D_{\rm c}^{\leq 0}(P_{\rm \acute{e}t})\cap D^{b}_{{\rm c}, X}(P_{\rm \acute{e}t}, {\mathbb Z}/{p\mathbb Z})$ via ${\rm Sol}_{P}$. 
Moreover, since the equivalence
\begin{equation*}
i^{-1}: D^{b}_{{\rm c}, X}(P_{\rm \acute{e}t}, {\mathbb Z}/{p\mathbb Z}) \xrightarrow{\cong}D^{b}_{{\rm c}}(X_{\rm \acute{e}t}, {\mathbb Z}/{p\mathbb Z})
\end{equation*}
is $t$-exact with respect to the standard $t$-structure, we see that $D_{{\rm c}, X}^{\leq 0}(P_{\rm \acute{e}t})$ corresponds to $D_{{\rm c}}^{\leq 0}(X_{\rm \acute{e}t})$ via $i^{-1}$.
As a consequence, we know that ${\mathcal M}\in {}^{\mathfrak S}{\mathcal C}^{\geq -d_{P}}_{P, X, \emptyset}$ if and only if ${\rm Sol}_{X}({\mathcal M})\in D_{{\rm c}}^{\leq 0}(X_{\rm \acute{e}t})$.

\end{proof}
\begin{coro}\label{c5}
Let $X$ be a $k$-embeddable $k$-scheme with an immersion from $X$ into a proper smooth $k$-scheme $P$.
We set
\begin{eqnarray*}
{}^{\mathfrak S}D_{\rm lfgu}^{\leq 0}(X/k):= {}^{\mathfrak S}{\mathcal C}^{\leq -d_{P}}_{P, X}\textit{ and }
{}^{\mathfrak S}D_{\rm lfgu}^{\geq 0}(X/k):= {}^{\mathfrak S}{\mathcal C}^{\geq -d_{P}}_{P, X}.
\end{eqnarray*} 
Then the $t$-structure $\left({}^{\mathfrak S}D_{\rm lfgu}^{\leq 0}(X/k), {}^{\mathfrak S}D_{\rm lfgu}^{\geq 0}(X/k)\right)$ is independent of the choice of $X\hookrightarrow P$, which we call the constructible $t$-structure.
Moreover, the constructible $t$-structure corresponds to the standard $t$-structure on $D^{b}_{{\rm c}}(X_{\rm \acute{e}t}, {\mathbb Z}/{p\mathbb Z})$ via ${\rm Sol}_{X}$.
\end{coro}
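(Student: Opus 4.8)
The plan is to reduce the statement to Theorem~\ref{t8} by passing to an open subscheme in which $X$ becomes closed, and then to deduce the independence of the embedding from the fact that ${\rm Sol}_{X}$ is a canonical equivalence.

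First I would fix an immersion $X\hookrightarrow P$ into a proper smooth $k$-scheme and factor it as a closed immersion $i'\colon X\hookrightarrow U$ followed by an open immersion $j\colon U\hookrightarrow P$, where $U$ is smooth and separated. Under the equivalence ${\mathbb R}j_{*}\colon {\mathcal C}_{U,X,\emptyset}\xrightarrow{\cong}{\mathcal C}_{P,X}$ of Proposition~\ref{p2}, the functor ${\rm Sol}_{X}$ on ${\mathcal C}_{P,X}$ is identified with ${i'}^{-1}{\rm Sol}_{U}$ on ${\mathcal C}_{U,X,\emptyset}$; this is essentially (\ref{e33}), since for ${\mathcal N}\in{\mathcal C}_{U,X,\emptyset}$ one has $i^{-1}{\rm Sol}_{P}{\mathbb R}j_{*}{\mathcal N}\cong i^{-1}j_{!}{\rm Sol}_{U}{\mathcal N}\cong {i'}^{-1}j^{-1}j_{!}{\rm Sol}_{U}{\mathcal N}\cong {i'}^{-1}{\rm Sol}_{U}{\mathcal N}$ by Theorem~\ref{t4}~(2) (applied to the open immersion $j$) and $j^{-1}j_{!}\cong{\rm id}$. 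On the other hand, by Lemma~\ref{l16} the equivalence ${\mathbb R}j_{*}$ is $t$-exact with respect to the constructible $t$-structures, and since $d_{P}|_{U}=d_{U}$ and ${\mathbb R}j_{*}$ commutes with the shift by this locally constant function, ${\mathbb R}j_{*}$ carries $\left({}^{\mathfrak S}{\mathcal C}^{\leq -d_{U}}_{U,X,\emptyset},{}^{\mathfrak S}{\mathcal C}^{\geq -d_{U}}_{U,X,\emptyset}\right)$ to $\left({}^{\mathfrak S}{\mathcal C}^{\leq -d_{P}}_{P,X},{}^{\mathfrak S}{\mathcal C}^{\geq -d_{P}}_{P,X}\right)$.

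Combining these two observations with Theorem~\ref{t8} applied to the closed immersion $i'\colon X\hookrightarrow U$, and using that ${\rm Sol}_{X}\colon {\mathcal C}_{P,X}\xrightarrow{\cong}D^{b}_{\rm c}(X_{\rm \acute{e}t},{\mathbb Z}/{p\mathbb Z})$ is an equivalence by Theorem~\ref{t5}, I would conclude that ${\rm Sol}_{X}$ carries $\left({}^{\mathfrak S}D^{\leq 0}_{\rm lfgu}(X/k),{}^{\mathfrak S}D^{\geq 0}_{\rm lfgu}(X/k)\right)$ precisely to the standard $t$-structure $\left(D^{\leq 0}_{\rm c}(X_{\rm \acute{e}t}),D^{\geq 0}_{\rm c}(X_{\rm \acute{e}t})\right)$ on $D^{b}_{\rm c}(X_{\rm \acute{e}t},{\mathbb Z}/{p\mathbb Z})$, which is the second assertion of the corollary. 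The independence of the choice of $X\hookrightarrow P$ then follows formally: by Theorem~\ref{t5} and Lemma~\ref{l19} the equivalence ${\rm Sol}_{X}$ itself does not depend on the chosen embedding (up to canonical isomorphism), while the standard $t$-structure on the \'etale side is intrinsic to $X$; hence its preimage under ${\rm Sol}_{X}$, namely the constructible $t$-structure, is intrinsic as well.

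I expect the only mildly delicate point to be the bookkeeping of the dimension shift: one must verify that in passing from $P$ to the open subscheme $U$ the function $d_{P}$ is simply replaced by its restriction $d_{U}$, so that the $(-d_{P})$-shifted truncations on ${\mathcal C}_{P,X}$ correspond to the $(-d_{U})$-shifted truncations on ${\mathcal C}_{U,X,\emptyset}$ under ${\mathbb R}j_{*}$. Since every connected component of $P$ meeting the closure of $X$ already meets $U$, this causes no difficulty, and all of the genuine work is already contained in Theorem~\ref{t8} (and Lemma~\ref{t3}), so the corollary is essentially a matter of assembling the pieces.
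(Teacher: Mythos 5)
Your proposal is correct and follows essentially the route the paper intends (the corollary is left without an explicit proof precisely because it is this assembly of Lemma~\ref{l16}, the identification of ${\rm Sol}_{X}$ with ${i'}^{-1}{\rm Sol}_{U}$ from (\ref{e33}), Theorem~\ref{t8} applied to the closed immersion $X\hookrightarrow U$, and the embedding-independence of ${\rm Sol}_{X}$ from Lemma~\ref{l19}). Your dimension bookkeeping $d_{P}|_{U}=d_{U}$ is also fine, since the components of the smooth scheme $P$ are irreducible.
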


\end{document}